\theoremstyle{thmstyleone}%
\newdimen\bibspace
\renewenvironment{thebibliography}[1]{%
 \section*{\refname 
\@mkboth{\MakeUppercase\refname}{\MakeUppercase\refname}}%
\list{\@biblabel{\@arabic\c@enumiv}}%
{\settowidth\labelwidth{\@biblabel{#1}}%
     \leftmargin\labelwidth
   \advance\leftmargin\labelsep
        \itemsep\bibspace
           \parsep\z@skip     %
           \@openbib@code
           \usecounter{enumiv}%
           \let\p@enumiv\@empty 
\renewcommand\theenumiv{\@arabic\c@enumiv}}%
\sloppy\clubpenalty4000\widowpenalty4000%
     \sfcode`\.\@m}
    {\def\@noitemerr
      {\@latex@warning{Empty `thebibliography' environment}}%
     \endlist}
\newtheorem{thm}{Theorem}[section]
\newtheorem{lem}[thm]{Lemma}
\newtheorem{prop}[thm]{Proposition}
\newtheorem{defn}[thm]{Definition}
\newtheorem{cor}[thm]{Corollary}
\newtheorem{rem}[thm]{Remark}
\def\XXint#1#2#3{{\setbox0=\hbox{$#1{#2#3}{\int}$}
  \vcenter{\hbox{$#2#3$}}\kern-.5\wd0}}
           \newcommand{\ud}{\mathrm{d}}
\newcommand{\be}{\begin{equation}}      \newcommand{\ee}{\end{equation}}
\begin{document}

\title {\textbf{Heat Flows with Prescribed Singularities from 3-dimensional Manifold}\bigskip}
\author{\medskip  Jie Ji\footnote{Corresponding author: jie.ji@mail.bnu.edu.cn}\quad and \quad
Jingru Niu\footnote{Contributing author: 202331130026@mail.bnu.edu.cn}}

\affil{School of Mathematical Sciences, Beijing Normal University,  Beijing 100875, The People’s Republic of China}
\maketitle

\abstract{In this paper, we study singular heat flows from a 3-dimensional complete bounded Riemannian manifold without boundary into the hyperbolic space with prescribe singularity along a closed curve. We prove the existence and regularity of the singular heat flows. Furthermore, we prove that the singular heat flows converge to a singular harmonic map at an exponential rate.}

 \noindent
 \textbf{Mathematics Subject Classification:} 80A19 $\cdot$ 58E20 $\cdot$ 35A21 $\cdot$ 58J35

\section{Introduction}
\numberwithin{equation}{section}
There has been many works on the harmonic map and heat flow to non-positive sectional curvature manifolds. Eells and Sampson \cite {ES} established the existence of heat flow and proved its convergence to a harmonic map when both the domain manifold and the target manifold are compact and without boundary. Later, Hamilton \cite{H} presented a more generalized theory for harmonic maps from manifolds with boundary, utilizing the heat flow method. 
Other studies, see \cite{LT1}, \cite{LT2} and \cite{W}. These literatures studied the existence and convergence of solutions for heat flow. Moreover, the relevant theories of heat flow are useful for proving the Schoen-Li-Wang conjecture proposed in \cite{S} and \cite{LW2}. The related results can be found in \cite{LT3, LT4, LW2, M, LM}.


Regarding harmonic maps with prescribed singularities, they were derived from the physical background.
The Einstein vacuum equations in the stationary axially symmetric case reduce to a harmonic map from $\mathbb{R}^3$ into $\mathbb{H}^2$, the hyperbolic plane, with prescribed singularities along the axis of symmetry. These represent rotating black holes in equilibrium. Weinstein \cite{G3} early proved the existence and uniqueness of solutions in the case of zero total angular momentum.  He extensively studied the related problem in \cite{G}, \cite{G1}, and \cite{G2},  established a comprehensive existence and uniqueness theory for harmonic maps from a domain to a target manifold with prescribed singularities. 
 Motivated from one of Hawking's conjectures on the uniqueness of 
Kerr solutions, Li-Tian \cite{LT} examined the boundedness and the regularity of harmonic maps into hyperbolic spaces with prescribed singularities along codimension two closed submanifolds. Recently, Han-Marcus-Weinstein-Xiong \cite{HX} concluded their study of the regularity and asymptotic analysis of harmonic maps arising from stationary vacuum spacetimes, where they derived the geodesic equation in the hyperbolic plane and established the rates of convergence to the tangent map. In a more general case, Fioravanti \cite{GF} deriveed the Schauder estimates for the Dirichlet problem on lower dimensional manifolds, specifically for a class of weighted elliptic equations where the coefficients are singular on such sets. These findings provide a comprehensive regularity theory for harmonic maps into hyperbolic spaces with prescribed singularities. \cite{A} and \cite{LS1} have studied the asymptotic convergence of nonlinear equations. We primarily employ the method outlined in \cite{HX}, wherein the Łojasiewicz-Simon gradient inequality serves as a pivotal tool. This inequality is adept at handling more general asymptotic analyses of gradient-type differential equation solutions. Given our specific energy and negative curvature conditions for the target manifolds, we can adopt simpler methods adapted to this particular scenario.

In this paper, we consider the prescribed singularity problem which extend the results presented in Li-Tian \cite{LT} to the corresponding parabolic equation. Let $(M,g)$ be a 3-dimensional complete and bounded Riemannian manifold without boundary and $\Gamma \subset M$ be a 1-dimensional closed submanifold. Let $h:M \rightarrow \mathbf{\overline{H}}^2$ be a smooth map, where $\mathbf{\overline{H}}^2$ denotes the upper half-plane model of 2-dimensional hyperbolic space, and assume that $h(M\backslash \Gamma)>0$. Furthermore, let $\rho(x)$ be a function defined on $M$ such that $\rho(x)=dist(x,\Gamma)$, where $dist(x,\Gamma)$ represents the distance from the point $x$ to the submanifold $\Gamma$. 

From Li-Tian \cite{LT}, we know that the solution $u(x)$ of 
\begin{equation*}
	\begin{aligned}
		-\Delta u(x)=\Delta(\log \rho(x)), \ x\in M
	\end{aligned}
\end{equation*}
is smooth away from $\Gamma$ and H\"older continuous across $\Gamma$, such that for any $\varepsilon\in(0,1)$, there exists a positive constant $C(\varepsilon)>0$ satisfying
\[
|\nabla u(x)|\leq C(\varepsilon)\rho(x)^{\varepsilon-1}\quad \forall x\in M\backslash \Gamma.
\]
Let $h(x)=\rho(x)e^{u(x)}, x\in M$. Then we have
\begin{equation*}
	\begin{aligned}
		\Delta (\log h)=0 \ \mbox{ on } M\backslash \Gamma,
	\end{aligned}\label {h1}
\end{equation*}
and
\begin{equation*}
	\begin{aligned}
		\lim\limits_{\rho(x)\rightarrow 0}\frac{\log h(x)}{\log \rho(x)}=1.
	\end{aligned}\label {h2}
\end{equation*}
For any $\alpha>1$, Li-Tian \cite{LT} want to find a harmonic map into $\mathbf{\overline{H}}^2$ of form $(\varphi_1, h^\alpha e^{\varphi_2})$ satisfying $\varphi_1=0$ on $\Gamma$, equivalently, they want to find the critical point of the functional defined as follows:
\begin{equation*}\label{energy}
	\begin{aligned}
		F(\varphi_1, \varphi_2)=\int_M  h^{-2\alpha}e^{-2\varphi_2}|\nabla\varphi_1|^2 +|\nabla \varphi_2|^2 + \frac{2\alpha\nabla h\nabla \varphi_2}{h}\ud V,
	\end{aligned}
\end{equation*}
where $\ud V$ is the volume form on $M$.
In the case where $\Delta (\log h)=0 \mbox{ on } M\backslash \Gamma$, the last term in the above integration may be taken from $E$. There we have the reduced energy
\begin{equation}\label{reducedenergy}
	\begin{aligned}
		H(\varphi_1, \varphi_2)=\int_M  h^{-2\alpha}e^{-2\varphi_2}|\nabla\varphi_1|^2 +|\nabla \varphi_2|^2 \ud V.
	\end{aligned}
\end{equation}
The Euler-Lagrange equations of \eqref{reducedenergy} are
\begin{equation*}
	\left\{
	\begin{aligned}
		\mbox{div}(h^{-2\alpha}e^{-2\varphi_2}\nabla \varphi_1)&=0 \quad  \mbox{ in }\ M\setminus \Gamma\\
		\Delta\varphi_2+h^{-2\alpha}e^{-2\varphi_2}|\nabla \varphi_1|^2&=0 \quad \mbox{ in } \  M \setminus \Gamma.\\
	\end{aligned}
	\right. 
\end{equation*}
We will study the corresponding parabolic equation. For fixed $T>0$, we set $Q_T= M \backslash \Gamma \times(0,T]$ and consider the following problem, for $\alpha>1$
\begin{equation}
	\left\{
	\begin{aligned}
		\partial_t\varphi_1-h^{2\alpha}e^{2\varphi_2}\mbox{div}(h^{-2\alpha}e^{-2\varphi_2}\nabla \varphi_1)&=0 \ &\mbox{ in } Q_T\\
		\partial_t\varphi_2-\Delta\varphi_2-h^{-2\alpha}e^{-2\varphi_2}|\nabla \varphi_1|^2&=0 \ &\mbox{ in } Q_T\\
		\varphi_1=\varphi_1^0, \varphi_2&=\varphi_2^0 \ &\mbox{ on } M\times\{0\}\\
		\varphi_1&=0 \ &\mbox{ on }\Gamma \times [0,T]
	\end{aligned}
	\right. \label{eq:equation}
\end{equation}
where $\varphi_1^0$ and $\varphi_2^0$ are functions on $M$, serving as initial values for this parabolic problem. 

We define the following spaces and norm: 
\begin{equation*}
	\begin{aligned}
		\mathcal{A}_0&:=\{u(x) | u\in  C^{[2\alpha+2],2\alpha+2- [2\alpha+2]}(M), u=0 \mbox{ on } \Gamma\},\\
		\mathcal{B}_0&:=C^{[2\alpha+2],2\alpha+2- [2\alpha+2]}(M),\\
		\| w \|_{C_{*}^{2}(M)}(t)&:= \sum_{k=0}^{2} \max_M
		(|\nabla^k w_1| \rho^{k+\frac{3}{2}-\alpha}+|\nabla^kw_2| \rho^{k+\frac{3}{2}})(t).\\
	\end{aligned}
\end{equation*}

	Under the condition that $(\varphi _0^1, \varphi _0^2) \in \mathcal{A}_0 \times \mathcal{B}_0$, 
	our main concerns are the existence, regularity and convergence of the solution in weighted spaces. Namely, we will show
	\begin{thm}\label{thm:existence}
		Let $\varphi^0=(\varphi^0_1,\varphi^0_2)\in \mathcal{A}_0\times \mathcal{B}_0$. For any $0<T\leq\infty $, there exists a vector-valued function $\varphi=(\varphi_1,\varphi_2) $ solving the equation (\ref{eq:equation}) in $M\times [0,T]$.
		For any $0<\varepsilon<\min\{\frac{1}{2}, \alpha-1\}$, there is a uniform
		constant $C_{\varepsilon}>0$ such that
		\begin{align*}
			\|\varphi_j\|_{C^{k+\lambda,\frac{k+\lambda}{2}}(M\times [0,T])}\leq C_{\varepsilon}, j=1,2,
		\end{align*}
		where $k=[2\alpha-2\varepsilon]$, $\lambda=2\alpha-k-2\varepsilon$ and $C_{\varepsilon}$ depending only on $M,\varphi^0$. 
		
		Let $\varphi(t)=\varphi(\cdot,t)$ for brevity, and consider $\varphi(t)$ as a map from $M$ into $\mathbf{\overline{H}}^2$. Then, as $t\rightarrow +\infty$,  $\varphi(t)\rightarrow\overline {\varphi}$ in $C_{*}^{2}(M)$,
		where $\overline{\varphi}$ satisfies the system
		\begin{equation*}
			\left\{\begin{aligned}
				\Delta\overline{\varphi}_1-2(\nabla\overline{\varphi}_2+\frac{\alpha\nabla h}{h})\nabla\overline{\varphi}_1&=0 \ \mbox{ in } M\\
				\Delta\overline{\varphi}_2+h^{-2\alpha}e^{-2\overline{\varphi}_2}|\nabla\overline{\varphi}_1|^2&=0 \ \mbox{ in } M.\\
			\end{aligned}\right.
		\end{equation*}
		Moreover, for $t\geqslant1$, there exist positive constants $C$ and $C_0$ depending only on $M,\varphi_0$ such that
		\begin{align*}
			\|\varphi(t)-\overline{\varphi}\|_{C_{*}^{2}(M)}
			(s)\leq C e^{-\frac{C_0}{4}t}.
		\end{align*}
		For any $0<\varepsilon<2\alpha$, $
		(\overline{\varphi}_1,\overline{\varphi}_2)\in C^{k,\lambda}(M)$ with $k=[2\alpha-2\varepsilon], \lambda=2\alpha-2\varepsilon-k$.
		\end{thm}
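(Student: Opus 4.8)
The plan is to proceed in three stages matching the three assertions of the theorem: short-time existence and uniform a priori bounds, global existence with uniform $C^{k+\lambda,\frac{k+\lambda}{2}}$ estimates, and convergence to the singular harmonic map with exponential rate. For the first stage I would work on the smooth open manifold $Q_T = (M\setminus\Gamma)\times(0,T]$, where the system \eqref{eq:equation} is strictly parabolic (note $h>0$ there), and obtain a local-in-time solution by a fixed-point/linearization argument in Hölder spaces, treating the coefficients $h^{-2\alpha}e^{-2\varphi_2}$ as known at each iteration. The key structural feature to exploit is that the target is the hyperbolic plane with nonpositive curvature: writing $\varphi=(\varphi_1,\varphi_2)$ as a map into $\mathbf{\overline H}^2$, the energy \eqref{reducedenergy} is a Lyapunov functional, $\frac{d}{dt}H(\varphi(t)) = -\int_M (|\partial_t\varphi_1|^2 h^{-2\alpha}e^{-2\varphi_2} + |\partial_t\varphi_2|^2)\,\ud V \le 0$, which gives the basic energy bound. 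The distance-to-$\Gamma$ behavior is controlled by the weighted norm $\|\cdot\|_{C^2_*(M)}$: near $\Gamma$ one compares $\varphi$ with the model singular solution $h^\alpha$ built from $\rho e^u$, and uses the decay $|\nabla u|\le C(\varepsilon)\rho^{\varepsilon-1}$ from Li--Tian together with a maximum-principle/barrier argument on the cylinder to propagate the initial weighted bounds forward in time, yielding $|\nabla^k\varphi_1|\rho^{k+3/2-\alpha}+|\nabla^k\varphi_2|\rho^{k+3/2}\le C$ uniformly.

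For the global existence and the uniform Schauder bound $\|\varphi_j\|_{C^{k+\lambda,(k+\lambda)/2}(M\times[0,T])}\le C_\varepsilon$ with $k=[2\alpha-2\varepsilon]$, $\lambda=2\alpha-k-2\varepsilon$, the idea is that once the weighted $C^2_*$ bounds are available uniformly in $t$, the coefficients of the linearized operators are under control away from $\Gamma$, and near $\Gamma$ the singularity is exactly of the prescribed order $h^\alpha\sim\rho^\alpha$, so after the substitution that factors out the singular part the equations become uniformly parabolic with Hölder coefficients up to $\Gamma$ with the stated regularity $2\alpha-2\varepsilon$. Interior and boundary parabolic Schauder estimates (the boundary here being $\Gamma\times[0,T]$ where $\varphi_1=0$) then bootstrap the solution to the claimed regularity, with constants independent of $T$ because the $C^2_*$ bounds are. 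Continuation from short-time to $T=\infty$ is standard once these a priori bounds are in hand: the solution cannot blow up in the relevant norm in finite time.

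For convergence as $t\to\infty$, I would first note that the energy monotonicity forces $\int_1^\infty\!\!\int_M(|\partial_t\varphi_1|^2h^{-2\alpha}e^{-2\varphi_2}+|\partial_t\varphi_2|^2)\,\ud V\,dt<\infty$, so along a subsequence $t_n\to\infty$ the maps $\varphi(t_n)$ converge (using the uniform $C^{k+\lambda}$ bounds and Arzelà--Ascoli, and the weighted bounds near $\Gamma$) to some $\overline\varphi$ solving the stated elliptic system — which is precisely the Euler--Lagrange system of \eqref{reducedenergy} written in the form $\Delta\overline\varphi_1 - 2(\nabla\overline\varphi_2+\alpha\nabla h/h)\nabla\overline\varphi_1=0$, $\Delta\overline\varphi_2 + h^{-2\alpha}e^{-2\overline\varphi_2}|\nabla\overline\varphi_1|^2=0$, with $\overline\varphi_1=0$ on $\Gamma$. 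To upgrade subconvergence to full convergence with exponential rate, I would exploit the nonpositive curvature of $\mathbf{\overline H}^2$: the key is a convexity/eigenvalue estimate showing that the second variation of $H$ at $\overline\varphi$ is coercive (no nontrivial Jacobi fields, because geodesic homotopies between harmonic maps into a nonpositively curved target force uniqueness in the given homotopy/boundary class), giving a spectral gap $C_0>0$ for the linearized operator. Then a differential inequality of the form $\frac{d}{dt}\|\varphi(t)-\overline\varphi\|^2 \le -C_0\|\varphi(t)-\overline\varphi\|^2 + (\text{higher order})$, combined with the $C^2_*$-smallness coming from subconvergence, yields $\|\varphi(t)-\overline\varphi\|_{C^2_*(M)}\le Ce^{-C_0 t/4}$; interpolation with the uniform higher-order bounds transfers this to the $C^2_*$ norm as stated. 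The main obstacle I anticipate is the interplay between the singular weight near $\Gamma$ and the spectral-gap/Lyapunov argument: one must ensure all the integrations by parts and the weighted Poincaré-type inequality underlying the coercivity of the second variation are valid across $\Gamma$ despite the degeneracy of $h^{-2\alpha}e^{-2\varphi_2}$, which is exactly where the sharp weighted estimates and the prescribed order $\alpha>1$ of the singularity must be used carefully — this is the technical heart of the proof and the reason the restriction $0<\varepsilon<\min\{1/2,\alpha-1\}$ appears.
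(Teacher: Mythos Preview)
Your three-stage outline is broadly correct, but two of the stages take routes that differ substantially from the paper's, and in one place your proposed mechanism is more complicated than necessary.

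For the convergence and exponential rate, you plan a spectral-gap/\L{}ojasiewicz--Simon argument based on coercivity of the second variation at $\overline\varphi$. The paper avoids this entirely. The key observation (Lemma~\ref{lem:L^2decay}) is the Bochner identity for the \emph{time derivative}: writing $\Phi=(\varphi_1,h^\alpha e^{\varphi_2})$ and $\theta=|\partial_t\Phi|^2_{\mathbb H^2}=h^{-2\alpha}e^{-2\varphi_2}|\partial_t\varphi_1|^2+|\partial_t\varphi_2|^2$, nonpositive curvature gives $(\partial_t-\Delta)\theta\le 0$. Multiplying by $\theta$, integrating, and applying the ordinary Poincar\'e inequality on $M$ yields $\frac{d}{dt}\int_M\theta^2\le -C_0\int_M\theta^2$, hence $\int_M\theta^2\le Ce^{-C_0t}$ directly. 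A local sup bound from the subsolution property then gives pointwise decay of $\partial_t\varphi$, and integrating in $t$ makes $\varphi(t)$ Cauchy; Schauder estimates on the difference $\varphi(\cdot,t)-\varphi(\cdot,t+\tilde t)$ upgrade this to $C^2_*$. No analysis of Jacobi fields, second variation, or weighted spectral gap is needed; the constant $C_0$ is just the first nonzero eigenvalue of $-\Delta$ on $M$. Your worry about justifying coercivity across $\Gamma$ is therefore a self-created difficulty.

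For the uniform $C^{k+\lambda}$ estimates, your plan to ``factor out the singular part'' so the equations become uniformly parabolic up to $\Gamma$ is not what is done and would be hard to implement as stated. The paper instead runs a parabolic version of the Li--Tian $\varepsilon$-regularity scheme: a weighted monotonicity-type bound (Lemma~\ref{lem:bounded}), smallness of the scale-invariant energy $E_\sigma$ at some scale (Proposition~\ref{prop:smallnessenergy}), a decay iteration $E_{\lambda_0\sigma}\le\frac12 E_\sigma$ proved by contradiction/blow-up (Proposition~\ref{prop:diedai1}), a Caccioppoli inequality (Lemma~\ref{lem:Caccioppoli-inequality}), and then H\"older continuity via Campanato (Theorem~\ref{thm:holder}). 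Only after this does one get $|\nabla\varphi_2|\le C\rho^{-1+\varepsilon_0}$ by a Green's-function argument (Lemma~\ref{lem:gradient1}) and the barrier estimate $|\varphi_1|\le C_\varepsilon\rho^{2\alpha-2\varepsilon}$ (Remark~\ref{gkey}), which feed the Schauder bootstrap. The exponential $L^2$ decay of $\partial_t\varphi$ from the Bochner identity is used repeatedly in this section as well (e.g.\ to bound $g_\sigma$), so that lemma is the engine of both the a priori estimates and the convergence.

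Your short-time existence plan is essentially the paper's: linearize and invert. The paper carries this out via the inverse function theorem after showing $D\mathcal P(\varphi^0,\cdot):\mathcal A\times\mathcal B\to\mathcal C\times\mathcal D$ is an isomorphism, which requires the full machinery of Section~\ref{section2} (Galerkin existence in weighted Sobolev spaces, then weighted Schauder estimates built by splitting into a homogeneous interior problem and a zero-boundary inhomogeneous problem). This is where most of the technical weight lies, not in the convergence step.
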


The primary challenge lies in proving the weighted estimates. To address this, we integrate the Schauder theory of linear parabolic equations with the method of weighted H\"older estimates for elliptic equations, as presented in Chapter 4 of \cite{GT}. This approach enables us to derive the weighted Schauder estimate for the linear parabolic equation.
Specifically, we decompose the original equation into two components: a homogeneous equation without boundary conditions and a non-homogeneous equation with zero Dirichlet boundary conditions. Subsequently, instead of directly estimating the explicit expression of solutions in Chapter 4 of \cite{GT}, we utilize the properties of classical parabolic equation solutions to obtain weighted estimates of these solutions.

When proving the optimal regularity and its convergence of the solution in Sect. \ref{section3}, to ensure that the integrations  involved in Remark \ref{rem} are meaningful, we need to choose an appropriate weighted space  in Sect. \ref{section2}. The singularity of the space $M\setminus \Gamma$ restricts the regularity of the solution only in the direction passing through $\Gamma$, and the regularity of the solution in other directions is constrained by the initial values. Therefore, to obtain an appropriate weighted space in Sect. \ref{section2}, the regularity of the initial values we need to assume must be higher than that of the solution itself. This leads to the need for additional proofs when demonstrating the long-term existence of the solution in Theorem \ref{longtime}.


The organization of this paper is as follows. In Sect. \ref{section2}, we will establish the regularity results for the linearized equations of problem (\ref{eq:equation}). During this process, we adopt the Galerkin method to prove the existence of weak solutions for a system of linear parabolic equations in a weighted space and use interpolation inequality to make weighted Schauder estimates. In Sect. \ref{section3}, we employ the inverse function theorem of Banach spaces to demonstrate the existence of short time solutions. In Sect. \ref{section4}, we present consistent estimate of optimal regularity and the existence of long time solutions. In this section, we extend the method of Li-Tian\cite{LT}  to the parabolic equations. We hope to generalize the results to the higher dimensional case. However, a current limitation is that we have utilized the Sobolev embedding theorem and are aiming to derive a monotonicity formula in Sect. \ref{section4}.  Finally, we establish the  convergence rate of heat flow as $t\rightarrow  \infty $ and find its limit as a singular harmonic map in Sect. \ref{section5}. 

\section{Linearized equations of the reduced heat flow}\label{section2}

This section is devoted to obtain the existence and regularity for the linearized equations of (\ref{eq:equation}).
For convenience, we define the following Banach spaces and norms in order to discuss our problems.  For any $X=(x,s)$, $Y=(y,t)$, define $\delta(X,Y)=\max\{|x-y|,|s-t|^{\frac{1}{2}}\}$ as the parabolic distance. Regarding the additional notation $\rho_{X} =\rho(x)$ and $\rho_{X,Y}=\max\{\rho_{X},\rho_{Y}\}$, these definitions are somewhat vague without more context. Fix $0<\beta<\min\{2\alpha-2,1\}$ and take $2+\beta<\gamma<2\alpha$.

\begin{defn}\label{def1}
(i) The weighted Sobolev spaces  $ W^{4,2}_{2}(Q_T;\rho^{-\alpha})$ and $W^{2,1}_{2}(Q_T;\rho^{-\alpha})$	consist of all functions $u: Q_T \rightarrow \mathbb{R}$ with
	\begin{equation*}
		\begin{aligned}
			\|u\|_{W^{4,2}_{2}(Q_T;\rho^{-\alpha})}&:=(\sum_{0\leq i+2j\leq4}\|\rho ^{-\alpha-1+i+(2j-2)^{+}}\partial_t^{j}\nabla_x^i u\|^2_{L^2(Q_T)})^{\frac{1}{2}}
		\end{aligned}
	\end{equation*}
		and 
	\begin{equation*}
		\begin{aligned}
			\|u\|_{W^{2,1}_{2}(Q_T;\rho^{-\alpha})}&:=(\sum_{0\leq i+2j\leq2}\|\rho ^{-\alpha+1+i}\partial_t^{j}\nabla_x^i u\|^2_{L^2(Q_T)})^{\frac{1}{2}}
		\end{aligned}
	\end{equation*}
	are finite.\\
 (ii) The weighted H\"older spaces
	$C^{2+\beta,1+\frac{\beta}{2}}(\overline{Q_T};\rho^{-\gamma}) $ and $C^{\beta,\frac{\beta}{2}}(\overline{Q_T};\rho^{-\gamma})$
	consist of all functions $u: Q_T \rightarrow \mathbb{R}$ which the norm
	\begin{equation*}
		\begin{aligned}
			\|u\|_{C^{2+\beta,1+\frac{\beta}{2}}(\overline{Q_T};\rho^{-\gamma}) }:= &\sum\limits_{0 \leqslant i+2j\leqslant 2}(\sup\limits_{X\in Q_T}\rho_{X}^{i+2j-\gamma}|\partial_t^{j}\nabla^i u(X)|\\
			&+\sup\limits_{X,Y\in Q_T}\rho_{X,Y}^{i+2j+\beta-\gamma}\frac{|\partial_t^{j}\nabla^iu(X)-\partial_t^{j}\nabla^iu(Y)|}{\delta(X,Y)^{\beta}})
		\end{aligned}
	\end{equation*}
		and
	\begin{equation*}
		\begin{aligned}
			\|u\|_{C^{\beta,\frac{\beta}{2}}(\overline{Q_T};\rho^{-\gamma})}:=& \sup\limits_{X\in Q_T}\rho_{X}^{2-\gamma}| u(X)|+\sup\limits_{X,Y\in Q_T}\rho_{X,Y}^{2+\beta-\gamma}\frac{|u(X)-u(Y)|}{\delta(X,Y)^{\beta}}
		\end{aligned}
	\end{equation*}
	are finite.\\
\end{defn}
Analogous to the above definition, we define the space $ W^{1,1}_{2}(Q_T;\rho^{-\alpha})$
 with the finite norm
	\begin{equation*}
		\begin{aligned}
		\|u\|_{W^{1,1}_{2}(Q_T;\rho^{-\alpha})}:=(\|\rho^{-\alpha-1}u\|^2_{L^2(Q_T)}+\|\rho^{-\alpha}\nabla u\|^2_{L^2(Q_T)}+\|\rho^{-\alpha}\partial_t u\|^2_{L^2(Q_T)})^{\frac{1}{2}}
		\end{aligned}
		\end{equation*}
and 		
the space $ C({M};\rho^{-\gamma})$ with the finite norm
\begin{equation*}
	\begin{aligned}
		\|u\|_{C({M};\rho^{-\gamma})}:=\|\rho^{-\gamma}u\|_{C^0(M)}.
	\end{aligned}
\end{equation*}

 Given $0<\beta<1 $, $l\geqslant 1$, the Sobolev space $W^{2l,l}_2(Q_T)$, the H\"older space $C^{2l+\beta,l+\frac{\beta}{2}}(\overline{Q_T})$ and the $\beta$-H\"older seminorm of $u: Q_T \rightarrow \mathbb{R}$ are as usual.

In addition, we define the following spaces: 
\begin{equation*}
	\begin{aligned}
		\mathcal{A}:=&\{u(x,t)| u , \partial_t u  \in  C^{2+\beta,1+\frac{\beta}{2}}(\overline{Q_T};\rho^{-\gamma}),u\in W^{4,2}_{2}(Q_T;\rho^{-\alpha}),\rho^{-\alpha+3}\partial_t u  \in  W^{4,2}_{2}(Q_T),\\
		&u(x,0)=0 \},\\
		\mathcal{B}:=&\{u(x,t)| u , \partial_t u \in C^{2+\beta,1+\frac{\beta}{2}}(\overline{Q_T})\cap W^{4,2}_{2}(Q_T) ,u(x,0)=0\},\\
		\mathcal{C}:=& \{u(x,t) | u , \partial_t u \in C^{\beta,\frac{\beta}{2}}(\overline{Q_T};\rho^{-\gamma}),u\in  W^{2,1}_{2}(Q_T;\rho^{-\alpha}),  \rho^{-\alpha+3}\partial_tu\in W^{2,1}_2(Q_T) , \\
	&u(x,0)\in C({M};\rho^{-\gamma}),\rho^{-\alpha+3}u(0)\in H^3(M)\},	\\  
	\mathcal{D}:=& \{u(x,t) | u , \partial_t u\in C^{\beta,\frac{\beta}{2}}(\overline{Q_T})\cap W^{2,1}_{2}(Q_T),u(x,0)\in H^3(M)\}.
	\end{aligned}
\end{equation*}	

Denote $\varphi^{b} :=\varphi-\varphi^0$, then $\varphi^{b}=0$ at $\{t=0\}$ and $\varphi^{b}\in \mathcal{A}\times \mathcal{B} $. Also	define the  operator
\begin{equation}
	\mathcal{P}(\varphi)=\left\{
	\begin{aligned}
		& \partial_t\varphi_1-h^{2\alpha}e^{2\varphi_2}\mbox{div} (h^{-2\alpha}e^{-2\varphi_2}\nabla \varphi_1)\\
		&\partial_t\varphi_2-\Delta\varphi_2-h^{-2\alpha}e^{-2\varphi_2}|\nabla \varphi_1|^2 .\\
	\end{aligned}\right.
\end{equation}	

Consider $\varphi^b$ as the variable function. Then $\varphi^b\rightarrow \mathcal{P}(\varphi^b+\varphi^0)$ defines a differentiable map of $\mathcal{A}\times \mathcal{B}\rightarrow\mathcal{C}\times \mathcal{D}$. Its derivative in the direction $k\in \mathcal{A}\times\mathcal{B}$ at $\varphi^b=0$ is 
\begin{equation*}
	D\mathcal{P}(\varphi^0,k)=\left\{
	\begin{aligned}
		& \partial_tk_1-h^{2\alpha}e^{2\varphi^0_2}\mbox{div}(h^{-2\alpha}e^{-2\varphi^0_2}\nabla k_1)+2\nabla\varphi^0_1\nabla k_2\\
		&\partial_tk_2-\Delta k_2+2h^{-2\alpha}e^{-2\varphi_2^0}|\nabla \varphi^0_1|^2k_2-2h^{-2\alpha}e^{-2\varphi^0_2}\nabla \varphi^0_1\nabla k_1.\\
	\end{aligned}
	\right.\label {5}
\end{equation*}

Consider the problem
\begin{equation}\label {8'}
	\left\{
	\begin{aligned}
		 	D\mathcal{P}(\varphi^0,k)&=f \quad \mbox{ in } Q_T\\
		k_1=k_2&=0 \quad \mbox{ in } M\times \{ 0 \}.\\
	\end{aligned}\right.
\end{equation}	
where $f=(f_1,f_2) \in \mathcal{C}\times \mathcal{D} $ and $(\varphi_1^0,\varphi_2^0) \in \mathcal{A}_0\times \mathcal{B}_0$.

Combined with  $\varphi_1^0=0$ on $\Gamma$, we know that $|\nabla^k\varphi_1^0(x)|\leqslant C\rho(x)^{2\alpha+2-k}$ and $[\varphi_1^0(x)]_{k+\beta}\leqslant C\rho(x)^{2\alpha+2-k-\beta}$, for any $\beta\in (0,1),  k<2\alpha$. We will use this estimate in the proof later in this section. The main result of this section is the following theorem.
\begin{thm}\label{thm:theorem2}
	For every $f=(f_1,f_2) \in \mathcal{C}\times \mathcal{D} $, there exists a unique weak solution $k=(k_1,k_2)\in  \mathcal{A}\times\mathcal{B}$ satisfying 	$D\mathcal{P}(\varphi^0,k)=f$.
\end{thm}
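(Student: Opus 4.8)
The plan is to establish existence and uniqueness for the linear system \eqref{8'} by the standard two-step scheme: first produce a weak solution via the Galerkin method in the weighted Sobolev spaces of Definition \ref{def1}, then bootstrap its regularity to land in $\mathcal{A}\times\mathcal{B}$ using weighted Schauder estimates. Since $D\mathcal{P}(\varphi^0,\cdot)$ is, up to lower-order and zeroth-order terms, a decoupled pair consisting of the divergence-form operator $\partial_t k_1 - h^{2\alpha}e^{2\varphi_2^0}\,\mathrm{div}(h^{-2\alpha}e^{-2\varphi_2^0}\nabla k_1)$ acting on $k_1$ and the heat operator $\partial_t k_2 - \Delta k_2$ acting on $k_2$, the coupling terms $2\nabla\varphi_1^0\nabla k_2$ and $-2h^{-2\alpha}e^{-2\varphi_2^0}\nabla\varphi_1^0\nabla k_1$ together with $2h^{-2\alpha}e^{-2\varphi_2^0}|\nabla\varphi_1^0|^2 k_2$ can be absorbed as perturbations once the right weights are in place; here the crucial input is the a priori bound $|\nabla^k\varphi_1^0(x)|\le C\rho(x)^{2\alpha+2-k}$ recorded just before the theorem, which controls the weight of these terms.

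First I would set up the weak formulation. Multiplying the first equation by $h^{-2\alpha}e^{-2\varphi_2^0}\rho^{-2\alpha}$ times a test function (respectively the second by $\rho^{-2\alpha}$ times a test function) and integrating over $M\setminus\Gamma$, one gets a bilinear form on $W^{1,1}_2(Q_T;\rho^{-\alpha})$; the key point is that the weight $\rho^{-2\alpha}$ is exactly matched to the $L^2(Q_T;\rho^{-\alpha})$-type norms of Definition \ref{def1}, and that the singular coefficient $h^{-2\alpha}\asymp\rho^{-2\alpha}e^{-2\alpha u}$ (from $h=\rho e^u$ with $u$ bounded) gives a uniformly elliptic principal part after the weight is accounted for. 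I would then run the Galerkin approximation: pick a countable basis of the weighted space adapted to the Dirichlet condition $k_1=0$ on $\Gamma$, solve the resulting finite ODE systems, derive energy estimates by testing against the solution itself — the zeroth-order term $2h^{-2\alpha}e^{-2\varphi_2^0}|\nabla\varphi_1^0|^2 k_2$ has a good sign and the off-diagonal gradient terms are handled by Cauchy--Schwarz plus the weighted Poincaré-type inequality (using $2\alpha-2>\beta>0$ so the weights are in the admissible range) — and then pass to the limit to obtain a weak solution in $W^{1,1}_2(Q_T;\rho^{-\alpha})$, with $k_1(x,0)=k_2(x,0)=0$.

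Next I would upgrade regularity. Away from $\Gamma$ the coefficients are smooth and standard interior and boundary parabolic Schauder theory applies, so the only issue is uniform control as $\rho\to 0$. Here I would invoke the weighted Schauder estimate for the linear parabolic equation described in the introduction (obtained by combining the classical Schauder theory with the Gilbarg--Trudinger \cite{GT}, Chapter 4, weighted elliptic technique): decompose $k_j$ into a piece solving a homogeneous problem and a piece solving a zero-Dirichlet inhomogeneous problem, rescale on dyadic annuli $\{\rho\sim 2^{-m}\}$, apply the unweighted parabolic Schauder estimate on each unit-scale rescaled problem, and sum. This yields $k_j,\partial_t k_j\in C^{2+\beta,1+\frac{\beta}{2}}$ in the appropriate weighted spaces and, combined with the weighted $L^2$ bounds and the time-regularity statements, places $(k_1,k_2)\in\mathcal{A}\times\mathcal{B}$. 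In particular, to get $\rho^{-\alpha+3}\partial_t k_1\in W^{4,2}_2(Q_T)$ one differentiates the equation in $t$ and repeats the weighted estimate for $\partial_t k_1$, using that $f_1\in\mathcal{C}$ carries exactly the data regularity needed (including $\rho^{-\alpha+3}f_1\in W^{2,1}_2$ and the initial-value conditions on $f$).

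Uniqueness is the easy part: the difference of two solutions solves \eqref{8'} with $f=0$ and zero initial data, so testing against the solution in the weighted energy identity and using the Grönwall inequality (the off-diagonal and zeroth-order terms being absorbed as above) forces it to vanish. \textbf{The main obstacle} I anticipate is not uniqueness or the passage to the limit but the weighted Schauder step: verifying that the dyadic rescaling genuinely decouples the two equations at unit scale despite the coupling term $2\nabla\varphi_1^0\nabla k_2$, and checking that the exponents $\beta<\min\{2\alpha-2,1\}$, $2+\beta<\gamma<2\alpha$, and the anomalous weight shift $\rho^{-\alpha+3}$ are all consistent so that every norm in the definitions of $\mathcal{A},\mathcal{B},\mathcal{C},\mathcal{D}$ is finite — this bookkeeping of weights, rather than any single hard estimate, is where the real work lies.
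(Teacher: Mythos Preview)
Your overall architecture (Galerkin for weak existence, energy identity for uniqueness, then weighted Schauder to land in $\mathcal{A}\times\mathcal{B}$) is exactly the paper's, and your identification of the coupling terms as harmless perturbations thanks to $|\nabla^k\varphi_1^0|\le C\rho^{2\alpha+2-k}$ is correct. Two points, one minor and one substantive.

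\emph{Minor.} You put both components into the weighted space $W^{1,1}_2(Q_T;\rho^{-\alpha})$, but in fact $k_2\in\mathcal{B}$ lives in \emph{unweighted} spaces: the second equation is a genuine heat equation with bounded lower-order terms (since $h^{-2\alpha}|\nabla\varphi_1^0|^2\le C\rho^{2}$), so no weight is needed there. Correspondingly, the paper tests the first equation against $h^{-2\alpha}e^{-2\varphi_2^0}$ times a test function (the weight $\rho^{-2\alpha}$ is already inside $h^{-2\alpha}$, so your extra factor $\rho^{-2\alpha}$ double-counts) and the second equation against a plain test function.

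\emph{Substantive gap.} Your weighted Schauder step is missing a key ingredient. Working on balls $B_R(x_0)$ with $R\sim\rho(x_0)$ (or equivalently rescaling dyadic annuli) and applying unweighted Schauder produces an estimate whose right-hand side contains the term $\sup_{Q_T}\rho^{-\gamma}|k_1|$; this is unavoidable because the first-order coefficient $\alpha\nabla h/h\sim\alpha\nabla\rho/\rho$ is genuinely singular and does not become bounded after rescaling to unit size. You therefore need an \emph{independent} argument that $|k_1(x,t)|\le C\rho(x)^{\gamma}$ before the Schauder machinery can close. The paper supplies this via a barrier/maximum-principle lemma (its Lemma~\ref{key}): one checks that $\rho^{\gamma}+\rho^{\gamma-\delta}r_{\bar x_0}(x)^2$ is a supersolution of $(\partial_t-\Delta-2(\alpha\nabla\rho/\rho+\nabla v)\nabla)$ near $\Gamma$ for $\gamma<2\alpha$, and iterates in $\gamma$ to reach the desired exponent. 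This step---not the bookkeeping of weights---is where the singular structure of the operator is actually confronted, and it is absent from your plan. Without it, the dyadic-rescaling argument gives an a priori estimate but not one you can close, and the proof does not go through.
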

\subsection{Existence and uniqueness of weak solutions}
In this section, we employ the Galerkin method to construct a weak solution of (\ref{8'}) and subsequently enhance its regularity.

Assume $\{\psi^{2}_{m} \}_{m=1}^{\infty}$ is a standard orthogonal basis of $L^{2}(M )$ and an orthogonal basis of $H^{1}(M)$.
We can choose $\{\psi^{2}_{m} \}_{m=1}^{\infty}$ to be the complete set of appropriately normalized eigenfunctions for $L=-\Delta$ in $H^{1}(M)$ , with $\lambda_k$ being the corresponding eigenvalues. That is
\begin{equation}
	\begin{aligned}
		-\Delta \psi^{2}_{m}=\lambda_m \psi^{2}_{m},\psi^{2}_{m} \in  H^{1}(M).
	\end{aligned}\label{8+}
\end{equation}
Set $\psi^{1}_{m} =h^{\alpha} e^{\varphi^0_2}\psi_{m}^2 $ . Then, the set $\{h^{-\alpha}e^{-\varphi^0_2}\psi^{1}_{m} \}_{m=1}^{\infty} $ is the subset of $H^{1}(M)$, and hence \begin{align*}
	\int_M \frac{|\psi^{1}_{m}|^2}{h^{2\alpha+2}}\ud V+\int_M \frac{|\nabla \psi^{1}_{m}|^2}{h^{2\alpha}}\ud V < \infty.
\end{align*}
We denote by $H^{1}_{0}(M; h^{-\alpha} )$ the completion of $C^{\infty}_0(M \backslash \Gamma)$ under the norm

\[
\|u\|_{H^{1}(M; h^{-\alpha})}^2:=\int_M h^{-2\alpha-2}u^2+h^{-2\alpha}|\nabla u|^2 \ud V .
\]
Then,  $\{\psi^{1}_{m} \}_{m=1}^{\infty}$ forms an  orthogonal basis of $H^{1}_{0}(M; h^{-\alpha})$.
Additionally, we define
\begin{align*}
	\|u\|^2_{L^{2}(M; h^{-\alpha})}:=\int_M h^{-2\alpha} e^{-2\varphi^0_1}|u|^2\ud V.
\end{align*}
We can see that $\|\psi _m^1\|^2_{L^{2}(M; h^{-\alpha})}=1$  since $\{\psi^{2}_{m} \}_{m=1}^{\infty}$ is a standard orthogonal basis of $L^{2}(M )$.

	\begin{defn}
		We say that $k=(k_1,k_2)\in (L^\infty (0,T;L^{2}(M; h^{-\alpha}))\cap L^2(0,T;H^1_{0}(M; h^{-\alpha})))\times (L^\infty (0,T;L^2(M))\cap L^2(0,T;H^1(M)))$ is a weak solution of problem (\ref{8'}) if
		\begin{equation}\label{eq:weaksolution}
			\left\{
			\begin{aligned}
				& (k_1(\cdot,t),h^{-2\alpha}e^{-2\varphi^0_2}\zeta_1(\cdot,t)) - \int_0^t (k_1,h^{-2\alpha}e^{-2\varphi^0_2}\partial_t\zeta_1(\cdot,t))\\&+(h^{-2\alpha}e^{-2\varphi^0_2}\nabla k_1, \nabla \zeta_1)\ud t+\int_0^t(2\nabla\varphi^0_1\nabla k_2,h^{-2\alpha}e^{-2\varphi^0_2}\zeta_1)\ud s\\
				&=\int_0^t(f_1,h^{-2\alpha}e^{-2\varphi^0_2}\zeta_1)\ud s\\
					&(k_2(\cdot,t),\zeta_2(\cdot,t))-\int_0^t(k_2,\partial_t\zeta_2)\ud s+\int_0^t(\nabla k_2,\nabla\zeta_2)\\
				&+(2h^{-2\alpha}e^{-2\varphi^0_2}|\nabla \varphi^0_1|^2k_2-2h^{-2\alpha}e^{-2\varphi^0_2}\nabla \varphi^0_1\nabla k_1,\zeta_2)\ud s\\&=\int_0^t(f_2,\zeta_2)\ud s\\
			\end{aligned}\right.
		\end{equation}
		for each $\zeta_1\in W^{1,1}_{2}(Q_T;\rho^{-\alpha})$, $\zeta_2\in W^{1,1}_{2}(Q_T)$ and any $t\in(0,T)$, where $(\cdot ,\cdot )$ denotes the inner product in $L^2(M)$.
	\end{defn}
	\begin{prop}\label{prop:prop1}
		For every $f=(f_1,f_2) \in L^{2}(0,T;L^2(M; h^{-\alpha+1}))\times L^{2}(Q_T)$, there exists a unique weak solution $k=(k_1,k_2)$ satisfying (\ref{8'}). 
	\end{prop}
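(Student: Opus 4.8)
### Proof proposal for Proposition \ref{prop:prop1}

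\textbf{Overall strategy.} The plan is to use the Galerkin method with the bases $\{\psi^1_m\}$ of $H^1_0(M;h^{-\alpha})$ and $\{\psi^2_m\}$ of $H^1(M)$ constructed above. First I would look for approximate solutions $k_1^N(\cdot,t)=\sum_{m=1}^N d_m^1(t)\psi^1_m$ and $k_2^N(\cdot,t)=\sum_{m=1}^N d_m^2(t)\psi^2_m$, plug them into the weak formulation \eqref{eq:weaksolution} tested against $\psi^1_\ell$, $\psi^2_\ell$ for $\ell\le N$, and observe that (using that $\{\psi^1_m\}$ is orthogonal in both $L^2(M;h^{-\alpha})$ and $H^1_0(M;h^{-\alpha})$, and similarly for $\{\psi^2_m\}$) this reduces to a linear ODE system in the coefficient vector $(d^1(t),d^2(t))$ with bounded coefficients — the cross terms $\int 2\nabla\varphi_1^0\nabla k_2^N\, h^{-2\alpha}e^{-2\varphi_2^0}\psi^1_\ell$ and $\int 2h^{-2\alpha}e^{-2\varphi_2^0}\nabla\varphi_1^0\nabla k_1^N\,\psi^2_\ell$ are controlled because $|\nabla\varphi_1^0|\le C\rho^{2\alpha+1}$ kills the weight $h^{-2\alpha}\sim\rho^{-2\alpha}$. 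Hence by Picard--Lindelöf the system has a unique absolutely continuous solution on $[0,T]$ with $d^j(0)=0$.

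\textbf{Energy estimates.} Next I would test the Galerkin equations against $k_1^N$ and $k_2^N$ themselves (i.e. take $\zeta_j$ built from the current solution), and differentiate in $t$ to obtain the differential inequality
\begin{align*}
\frac{1}{2}\frac{d}{dt}\Big(\|k_1^N\|_{L^2(M;h^{-\alpha})}^2+\|k_2^N\|_{L^2(M)}^2\Big)
+\|\nabla k_1^N\|_{L^2(M;h^{-\alpha})}^2+\|\nabla k_2^N\|_{L^2(M)}^2 \le I,
\end{align*}
where $I$ collects the cross terms and the right-hand side. For the cross terms, $\big|\int 2\nabla\varphi_1^0\nabla k_2^N h^{-2\alpha}e^{-2\varphi_2^0}k_1^N\big|$ is bounded, after Cauchy--Schwarz, by $C\|\nabla k_2^N\|_{L^2(M)}\|h^{-\alpha}k_1^N\|_{L^2(M)}$ since $h^{-\alpha}|\nabla\varphi_1^0|\le C\rho^{\alpha+1}$ is bounded and $M$ is bounded; absorb the gradient factor into the left side and keep the zeroth-order one; the term with $|\nabla\varphi_1^0|^2$ is even better. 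For the forcing, $\int f_1 h^{-2\alpha}e^{-2\varphi_2^0}k_1^N\le \|h^{-\alpha+1}f_1\|_{L^2}\|h^{-\alpha-1}k_1^N\|_{L^2}$ — here I must be a little careful: $\|h^{-\alpha-1}k_1^N\|_{L^2}$ is not directly a norm I control, but by the Hardy-type/Poincaré inequality implicit in the definition of $H^1_0(M;h^{-\alpha})$ (equivalently: the constant function $1$ is not in that space, which forces $\|h^{-\alpha-1}u\|_{L^2}\le C\|h^{-\alpha}\nabla u\|_{L^2}$ for $u\in H^1_0(M;h^{-\alpha})$) I can bound it by $\|\nabla k_1^N\|_{L^2(M;h^{-\alpha})}$ plus lower order, then use Cauchy--Schwarz with $\epsilon$. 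Collecting everything and applying Grönwall gives
\begin{align*}
\sup_{0\le t\le T}\Big(\|k_1^N\|_{L^2(M;h^{-\alpha})}^2+\|k_2^N\|_{L^2(M)}^2\Big)
+\int_0^T\Big(\|\nabla k_1^N\|_{L^2(M;h^{-\alpha})}^2+\|\nabla k_2^N\|_{L^2(M)}^2\Big)\,dt
\le C\Big(\|f_1\|_{L^2(0,T;L^2(M;h^{-\alpha+1}))}^2+\|f_2\|_{L^2(Q_T)}^2\Big),
\end{align*}
uniformly in $N$.

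\textbf{Passage to the limit and uniqueness.} With the uniform bound I would extract a subsequence converging weakly-$*$ in $L^\infty(0,T;L^2(M;h^{-\alpha}))\times L^\infty(0,T;L^2(M))$ and weakly in $L^2(0,T;H^1_0(M;h^{-\alpha}))\times L^2(0,T;H^1(M))$ to some $k=(k_1,k_2)$; standard density of finite linear combinations of the $\psi^j_m$ in the relevant test-function spaces lets me pass to the limit in each term of \eqref{eq:weaksolution}, where again the weights $h^{-2\alpha}e^{-2\varphi_2^0}|\nabla\varphi_1^0|\lesssim\rho^{?}$ ensure the bilinear/cross terms are continuous under weak convergence. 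The initial condition $k_j(\cdot,0)=0$ is recovered from the integrated (in $t$) form of the weak formulation, as is standard. For uniqueness, if $k',k''$ are two weak solutions with the same data, the difference $w=k'-k''$ solves the homogeneous problem; testing with $w$ itself and running the same energy estimate (now with $f=0$) yields $\|w_1\|_{L^2(M;h^{-\alpha})}^2+\|w_2\|_{L^2(M)}^2\le C\int_0^t(\cdots)$, and Grönwall forces $w\equiv 0$.

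\textbf{Main obstacle.} The delicate point is the treatment of the weighted zeroth-order terms and the forcing term against the degenerate/singular weight $h^{-\alpha}$: one needs the correct Hardy/Poincaré inequality on $H^1_0(M;h^{-\alpha})$ — namely $\|h^{-\alpha-1}u\|_{L^2(M)}\le C\|h^{-\alpha}\nabla u\|_{L^2(M)}$ — to close the energy estimate, and one must verify that the cross terms involving $\nabla\varphi_1^0$ are genuinely subcritical using the decay $|\nabla^k\varphi_1^0|\le C\rho^{2\alpha+2-k}$ noted before the theorem together with $\alpha>1$. Everything else (ODE solvability of the Galerkin system, weak compactness, recovery of initial data) is routine once these weighted inequalities are in hand.
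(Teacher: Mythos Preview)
Your proposal is correct and follows essentially the same approach as the paper: Galerkin approximation with the bases $\{\psi^j_m\}$, the energy estimate (the paper's Lemma~\ref{lem:2.4}, which invokes precisely the weighted Hardy inequality $\|h^{-\alpha-1}u\|_{L^2}\le C\|h^{-\alpha}\nabla u\|_{L^2}$ from \cite{LT} that you anticipate), weak compactness, and uniqueness via the homogeneous energy estimate. The only cosmetic difference is that the paper, in passing to the limit, additionally proves equicontinuity of the Galerkin coefficients $C_m^{N,j}(t)$ and applies Arzel\`a--Ascoli to handle the pointwise-in-$t$ pairings in the weak formulation \eqref{eq:weaksolution}, rather than relying purely on weak/weak-$*$ compactness as you propose.
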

	
	We will look for functions $k_1^N,k_2^N$ of the form
	\begin{equation}
		k_1^N=\sum ^N_{m=1}C^{N,1}_m(t)\psi^{1}_{m},\quad k_2^N=\sum ^N_{m=1}C^{N,2}_m(t)\psi^{2}_{m},\label{9}
	\end{equation}
	\\where we hope to select the coefficients $C^{N,1}_m(t),C^{N,2}_m(t)$ such that
	\begin{equation}
		\left \{
		\begin {aligned}
		&(\partial_t k_1^N, h^{-2\alpha} e^{-2\varphi^0_2}\psi^1_m)- (\mbox{div}(h^{-2\alpha}e^{-2\varphi^0_2}\nabla k^N_1),\psi^1_m)\\&+2(\nabla\varphi^0_1\nabla k^N_2, h^{-2\alpha} e^{-2\varphi^0_2}\psi^1_m)=(f_1,h^{-2\alpha}e^{-2\varphi^0_2}\psi^1_m),\\
		&(\partial_t k_2^N,\psi^2_m)-(\Delta k_2^N,\psi^2_m) +2(h^{-2\alpha}e^{-2\varphi ^0_2}|\nabla \varphi^{0}_{1}|^2k_2^N,\psi^2_m)\\
		&-2(h^{-2\alpha}e^{-2\varphi^0_2}\nabla \varphi^{0}_{1}\nabla k_1^N,\psi^2_m)=(f_2,\psi^2_m)\\
		&C_m^{N,1}(0)=0, C_m^{N,2}(0)=0
		\end {aligned}
		\label {10}\right.
	\end{equation}
	for $0 \leqslant t\leqslant T$, $m=1,2,\ldots,N$.
	Then (\ref{9}) and (\ref{10}) imply
	\begin{equation}\left \{
		\begin {aligned}
		\partial_tC_m^{N,1}(t) +\sum _{l=1}^{N}C_l^{N,1}(t)A_{ml} +\sum _{l=1}^{N}C_l^{N,2}(t)B_{ml}&=F_m^1	\\
		\partial_tC_m^{N,2}(t)+\sum _{l=1}^{N}C_l^{N,2}(t)C_{ml}+\sum _{l=1}^{N}C_l^{N,1}(t)D_{ml}&=F_m^2 \\
		C_m^{N,1}(0)&=0\\
		C_m^{N,2}(0)&=0 ,
		\end {aligned}\right.
		\label {11}
	\end{equation}
	where the coefficients are as follows
	\begin{equation*}
		\begin{aligned}
		A_{ml}&=(h^{-2\alpha}e^{-2\varphi_2^0}\nabla \psi_l^1,\nabla\psi_m^1),\\
		B_{ml}&=(2\nabla \varphi ^0_1\nabla \psi ^2_l, h^{-2\alpha}e^{-2\varphi^0_2}\psi _m^1),\\
		C_{ml}&=2(h^{-2\alpha}e^{-2\varphi ^0_2}|\nabla \varphi ^0_1|^2\psi ^2_l,\psi ^2_m)-(\Delta \psi _l^2,\psi _m^2),\\
		D_{ml}&=(-2h^{-2\alpha}e^{-2\varphi ^0_2}\nabla \varphi^0_1\nabla \psi _l^1,\psi _m^2),\\	
		F_m^1&=(f_1,h^{-2\alpha}e^{-2\varphi^0_2}\psi _m^1),\quad  F_m^2=(f_2,\psi _m^2).	
		\end{aligned}\label{12}
	\end{equation*}
	This forms a $2N$ dimensional linear system of ODEs subject to initial conditions $C_m^{N,1}(0)=0, C_m^{N,2}(0)=0$. According to standard existence theory for ODEs, there exists a unique solution $\{C_1^{N,1}(t),\ldots,C_N^{N,1}(t),C_1^{N,2}(t),\ldots,C_N^{N,2}(t)\}$ on $[0,T]$ such that (\ref{11}) holds. And then $k_1^N, k_2^N$ defined by (\ref{9}) solve (\ref{10}).
						
	To obtain the solution of (\ref{8'}), we need to derive the energy estimate for (\ref{10}). This will allow us to establish the corresponding weak convergence. To prove this, we will utilize the following remark.
  \begin{rem}\label{rem1} Using Corollary 4.1 of \cite{LT}, we have
 	\begin{equation}
 	    \begin{aligned}
 	\int_M \frac{|k_1^N|^2}{h^{2\alpha+2}}\mathrm{d} V(t)\leq C\int_M  \frac{|\nabla k_1^N|^2}{h^{2\alpha}}\mathrm{d} V(t) ,\quad \text{for}\ N=1,2,\dots, \text{and} \ t\in [0,T].
	    \end{aligned}\label{40}
	\end{equation}
 \end{rem}
						
  \begin{lem}\label{lem:2.4} (Energy estimate).
There exists a constant $C$, depending on $M$ , $T$ and $\varphi^0$ such that
	\begin{equation*}
	     \begin{aligned}
 		&\max\limits_{0\leqslant t \leqslant T}(\|\rho^{-\alpha}k^N_1\|_{L^2(M)}+\|k^N_2\|^2_{L^2(M)})
		+\|\rho^{-\alpha}k^N_1\|_{L^2(0,T;H^1(M))}+\| k^N_2\|_{L^2(0,T;H^1(M))}\\
		\leq& C(\|\rho^{-\alpha+1}f_1\|_{{L^2(0,T;L^2(M))}}+\| f_2\|_{{L^2(0,T;L^2(M))}})
		 \end{aligned}\label {13}
	\end{equation*}
for $N=1,2,\cdots$.
  \end{lem}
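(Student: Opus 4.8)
The plan is to derive the standard parabolic energy estimate by testing the Galerkin equations \eqref{10} against the natural coefficients and summing over $m=1,\dots,N$. Specifically, I would multiply the first equation of \eqref{10} by $C_m^{N,1}(t)$ and the second by $C_m^{N,2}(t)$, sum in $m$, and add the two resulting identities. Using \eqref{9}, the first group of terms becomes
\[
(\partial_t k_1^N, h^{-2\alpha}e^{-2\varphi_2^0}k_1^N) = \tfrac12 \tfrac{d}{dt}\|k_1^N\|_{L^2(M;h^{-\alpha})}^2,\qquad
-(\mathrm{div}(h^{-2\alpha}e^{-2\varphi_2^0}\nabla k_1^N),k_1^N) = \|k_1^N\|_{H^1_0(M;h^{-\alpha})}^2 - \text{(boundary)},
\]
where the boundary term vanishes because $k_1^N$ is a finite combination of the $\psi_m^1$, which lie in $H^1_0(M;h^{-\alpha})$; likewise the $k_2^N$ terms give $\tfrac12\tfrac{d}{dt}\|k_2^N\|_{L^2(M)}^2 + \|\nabla k_2^N\|_{L^2(M)}^2 + 2\||\nabla\varphi_1^0|\, h^{-\alpha}e^{-\varphi_2^0}k_2^N\|_{L^2}^2$. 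The last term is nonnegative and can be dropped. So after integrating in time from $0$ to $t$ and using $C_m^{N,i}(0)=0$, I obtain an identity of the form: half the sum of the squared $L^2$-norms at time $t$, plus the integrated $H^1$-type norms, equals the integral of the right-hand side pairings plus the two cross terms $-\int_0^t 2(\nabla\varphi_1^0\nabla k_2^N, h^{-2\alpha}e^{-2\varphi_2^0}k_1^N)$ and $+\int_0^t 2(h^{-2\alpha}e^{-2\varphi_2^0}\nabla\varphi_1^0\nabla k_1^N, k_2^N)$, which in fact cancel exactly — this is the key algebraic structure that makes the estimate closed, mirroring the antisymmetry in $D\mathcal{P}$.

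Next I would bound the source terms. Writing $\int_0^t(f_1, h^{-2\alpha}e^{-2\varphi_2^0}k_1^N)$, I split $h^{-2\alpha} = h^{-2\alpha-1}\cdot h$ and use Cauchy–Schwarz together with the fact that $h$ is comparable to $\rho$ near $\Gamma$ (from the properties of $h$ recalled in the introduction) and bounded away from $\Gamma$: this gives $\int_0^t \|h^{-\alpha+1}f_1\|_{L^2}\|h^{-\alpha-1}k_1^N\|_{L^2} \lesssim \int_0^t \|\rho^{-\alpha+1}f_1\|_{L^2}\|k_1^N\|_{L^2(M;h^{-\alpha})}$, and then Remark \ref{rem1} (Corollary 4.1 of \cite{LT}) controls $\|k_1^N\|_{L^2(M;h^{-\alpha})}$ by $\|k_1^N\|_{H^1_0(M;h^{-\alpha})}$ up to a constant, so this term can be absorbed via Young's inequality with a small parameter into the good $H^1_0(M;h^{-\alpha})$ term on the left. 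The $f_2$ term is handled identically but more simply, absorbing into $\|\nabla k_2^N\|_{L^2}$ after using the Poincaré/spectral-gap inequality for $-\Delta$ on $M$ (or, if $M$ has no Poincaré inequality because it is closed, keeping the $\|k_2^N\|_{L^2}$ term and closing with Grönwall). Finally Grönwall's inequality in $t\in[0,T]$ turns the differential inequality into the stated bound, with $C$ depending on $M$, $T$, $\varphi^0$ through $\|h\|$, $\|\nabla h\|$, $\|\varphi_2^0\|_\infty$, the constant in Remark \ref{rem1}, and the spectral data; and since these bounds are uniform in $N$, taking $\max_{0\le t\le T}$ is legitimate. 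Note $\|\rho^{-\alpha}k_1^N\|_{L^2(M)}$ and $\|k_1^N\|_{L^2(M;h^{-\alpha})}$ are comparable by the same $h\sim\rho$ argument, so the norms in the statement match what the energy identity produces.

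The main obstacle, and the only nonroutine point, is justifying the treatment of the weighted cross terms and the boundary term near $\Gamma$: one must be sure that for the finite Galerkin sums $k_1^N \in \mathrm{span}\{\psi_m^1\}\subset H^1_0(M;h^{-\alpha})$ all integrations by parts are valid with no boundary contribution at $\Gamma$, that the integral $\int_0^t 2(\nabla\varphi_1^0\nabla k_2^N, h^{-2\alpha}e^{-2\varphi_2^0}k_1^N)$ is finite (using $|\nabla\varphi_1^0|\le C\rho^{2\alpha+1}$ from the estimate recorded just before Theorem \ref{thm:theorem2}, which beats the $h^{-2\alpha}\sim\rho^{-2\alpha}$ singularity with room to spare), and that the exact cancellation with the other cross term holds — it does, because both equal $2\int_0^t\int_M h^{-2\alpha}e^{-2\varphi_2^0}\nabla\varphi_1^0\cdot(k_1^N\nabla k_2^N)$ up to sign after noting $\nabla k_2^N$ pairs against $k_1^N$ in one and $\nabla k_2^N$ against $k_1^N$ in the other. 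Once this structural cancellation is in hand, the rest is Cauchy–Schwarz, Young, Remark \ref{rem1}, and Grönwall.
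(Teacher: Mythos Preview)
Your overall strategy---multiply \eqref{10} by $C_m^{N,1}$ and $C_m^{N,2}$, sum, integrate by parts, and close with Young, Remark~\ref{rem1}, and Gronwall---is exactly what the paper does. But the claimed exact cancellation of the cross terms is wrong. After testing and summing, the two cross terms on the left are
\[
+2\int_M h^{-2\alpha}e^{-2\varphi_2^0}\,(\nabla\varphi_1^0\cdot\nabla k_2^N)\,k_1^N
\qquad\text{and}\qquad
-2\int_M h^{-2\alpha}e^{-2\varphi_2^0}\,(\nabla\varphi_1^0\cdot\nabla k_1^N)\,k_2^N,
\]
whose sum is $2\int_M h^{-2\alpha}e^{-2\varphi_2^0}\,\nabla\varphi_1^0\cdot(k_1^N\nabla k_2^N-k_2^N\nabla k_1^N)$. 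The integrands involve $k_1^N\nabla k_2^N$ and $k_2^N\nabla k_1^N$ respectively, which are genuinely different; there is no antisymmetry here and no integration by parts that kills this combination without producing derivatives of the singular weight $h^{-2\alpha}e^{-2\varphi_2^0}\nabla\varphi_1^0$. Your justification (``$\nabla k_2^N$ pairs against $k_1^N$ in one and $\nabla k_2^N$ against $k_1^N$ in the other'') is circular.

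The fix, and what the paper actually does, is to bound each cross term separately by Cauchy's inequality, exploiting the decay $|\nabla\varphi_1^0|\le C\rho^{2\alpha+1}$ (recorded just before Theorem~\ref{thm:theorem2}). For instance
\[
\Bigl|2\int_M h^{-2\alpha}e^{-2\varphi_2^0}k_1^N\,\nabla\varphi_1^0\cdot\nabla k_2^N\Bigr|
\le \delta\int_M|\nabla k_2^N|^2 + C_\delta\int_M h^{-4\alpha}|\nabla\varphi_1^0|^2|k_1^N|^2,
\]
and since $h^{-4\alpha}|\nabla\varphi_1^0|^2\le C\rho^2$ the last integral is dominated by $C\int_M h^{-2\alpha}|k_1^N|^2$, which goes into the Gronwall term. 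The second cross term is handled the same way, yielding $\delta\int_M h^{-2\alpha}|\nabla k_1^N|^2 + C\int_M|k_2^N|^2$. With $\delta$ small these are absorbed on the left, and the rest of your argument (source terms via Young and Remark~\ref{rem1}, then Gronwall) goes through exactly as you wrote.
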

						
	\begin{proof}
Multiply the first equation of (\ref{10}) by $C^{N,1}_m(t)$ and the second equation of (\ref{10}) by $C^{N,2}_m(t)$, then sum for $m=1,\cdots, N$. Recall (\ref{9}) to obtain the following system of equations for $0 \leqslant t\leqslant T$:
							
  \begin{equation}
	\left \{
	 \begin{aligned}
	 	&(\partial_tk_1^N,h^{-2\alpha}e^{-2\varphi^0_2}k_1^N)-(\mbox{div} (h^{-2\alpha}e^{-2\varphi^0_2}\nabla k^N_1),k_1^N)\\&+2(\nabla\varphi^0_1\nabla k^N_2,h^{-2\alpha}e^{-2\varphi^0_2}k_1^N)=(f_1,h^{-2\alpha}e^{-2\varphi^0_2}k_1^N),\\
		&	(\partial_t k_2^N,k_2^N)-(\Delta k_2^N,k_2^N) +2(h^{-2\alpha}e^{-2\varphi ^0_2}|\nabla \varphi^{0}_{1}|^2k_2^N ,k_2^N)\\&-2(h^{-2\alpha}e^{-2\varphi ^0_2}\nabla \varphi^{0}_{1}\nabla k_1^N,k_2^N) =(f_2,k_2^N).
    	\end{aligned}
	\right.\label{14}
 \end{equation}
  
 	Notice that 
\[
	 (\partial_tk_1^N,h^{-2\alpha}e^{-2\varphi^0_2}k_1^N)=\partial_t \frac{1}{2}\int_M h^{-2\alpha}e^{-2\varphi^0_2}|k_1|^2   \ud V  
	 \mbox{  and  }	(\partial_t k_2^N,k_2^N)=\partial_t \frac{1}{2}\int_M| k^N_2|^2\ud V . 
\]
Then (\ref{14}) and Cauchy's inequality  yield
	\begin{equation}
		\begin{aligned}
		   &\partial_t \frac{1}{2}\int_M h^{-2\alpha}e^{-2\varphi^0_2}|k^N_1|^2   \ud V +\int_M h^{-2\alpha}e^{-2\varphi^0_2}|\nabla k^N_1|^2\ud V\\
		   \leq &C\int_M h^{-2\alpha}e^{-2\varphi^0_2}| k^N_1|^2 + h^{-2\alpha+2}e^{-2\varphi^0_2} |f_1|^2 \ud V \\&+\delta\int_M |\nabla k^N_2|^2+ h^{-2\alpha-2}| k^N_1|^2\ud V \\
		\end{aligned}\label{15-}
	\end{equation}
	and\begin{equation}
		\begin{aligned}	   &\partial_t \frac{1}{2}\int_M| k^N_2|^2\ud V+\int_M|\nabla k^N_2|^2 \ud V\\ \leq & C\int_M|k^N_2|^2+|f_2|^2 \ud V+\delta\int_M h^{-2\alpha}e^{-2\varphi^0_2}|\nabla k^N_1|^2 \ud V.\\
		\end{aligned}\label{15}
	\end{equation}
In fact, we used integration by parts in $M\setminus  \Gamma$, which holds true and is not difficult to be deduced from Lemma 3.1 in \cite{LT}. Adding (\ref{15-}) and (\ref{15}),  we take $\delta>0$ small to get
	\begin{equation}
 		\begin{aligned}
			&\partial_t \frac{1}{2}\int_Mh^{-2\alpha}e^{-2\varphi^0_2}| k^N_1|^2+|k^N_2|^2 \ud V+\int_Mh^{-2\alpha}e^{-2\varphi^0_2}|\nabla k^N_1|^2+|\nabla k^N_2|^2 \ud V\\
			\leq & C(\int_Mh^{-2\alpha}e^{-2\varphi^0_2}| k^N_1|^2+|k^N_2|^2 \ud V+\int_M  h^{-2\alpha+2}e^{-2\varphi^0_2}|f_1|^2+|f_2|^2  \ud V).
		\end{aligned}\label{17}
	\end{equation}
	where we used (\ref{40}) in the first inequality.
Thus the Gronwall's inequality yields the estimate
	\begin{equation*}
		\begin{aligned}
			\max\limits_{0\leq{t}\leq T}\int_Mh^{-2\alpha}e^{-2\varphi^0_2}|k^N_1|^2+| k^N_2|^2 \ud V({t}) \leq C\int_0^T\int_M h^{-2\alpha+2}e^{-2\varphi^0_2}|f_1|^2 + |f_2|^2 \ud V\ud s.
		\end{aligned}\label{18}
	\end{equation*}

Returning to inequality (\ref{17}), we integrate from $0$ to $T$ with respect to $t$ and employ the inequality above to find
	\begin{equation*}
		\begin{aligned}
			\int_0^T\int_M h^{-2\alpha}e^{-2\varphi^0_2}|\nabla k^N_1|^2 +|\nabla k^N_2|^2\ud V\ud s
			\leq C\int_0^T\int_M h^{-2\alpha+2}e^{-2\varphi^0_2}|f_1|^2 + |f_2|^2 \ud V\ud s.
		\end{aligned}\label{19}
	\end{equation*}
		\end{proof}
 \begin{rem}\label{rem1-1} Take $\varepsilon$ such that $0<\varepsilon<\frac{1}{2}(\alpha-1)$. Then, Lemma \ref{lem:2.4} still holds provided that the initial function $(\varphi_1^0,\varphi_2^0)\in C^{[2\alpha-2\varepsilon],2\alpha-2\varepsilon- [2\alpha-2\varepsilon]}(M)\times \{u\in  C^{[2\alpha-2\varepsilon],2\alpha-2\varepsilon- [2\alpha-2\varepsilon]}(M)| u=0 \mbox{ on } \Gamma\}$.
 \end{rem}
Next, we give the proof of Proposition \ref{prop:prop1}.
		\begin{proof}
For any $N,m,t$,
	\begin{equation*}
		\begin{aligned}
			C_m^{N,1}(t)&=\int_M k_1^N h^{-2\alpha}e^{-2\varphi^0_2}\psi^1_m \ud V\leq \|k_1^N\|_{L^{2}(M;h^{-\alpha})},\\
			  C_m^{N,2}(t)&=\int_M k_2^N \psi_m^2\ud V\leq \| k_2^N\|_{L^2(M)}.\\
		\end{aligned}\label{20}
	\end{equation*}
Lemma \ref{lem:2.4} yields that $C_m^{N,1}(t)$ and $C_m^{N,2}(t)$ are uniformly bounded about $N,m,t$. 
Next, we will prove that for fixed $m$, $C_m^{N,1}(t)$ and $C_m^{N,2}(t)$ are equi-continuous about $N$. With the uniform estimates above, we integrate from $t$ to  $t+\Delta t$ in (\ref{11}) to find
	\begin{equation*}
		\begin{aligned}
			&|C_m^{N,1}(t+\Delta t)-C_m^{N,1}(t)|\\
			\leq &C\left(\|f_1\|_{L^{2}(0,T;L^2(M;h^{-\alpha+1}))}+\|\nabla k_2^N\|_{L^2(Q_T)}+\|\nabla k_1^N\|_{L^{2}(0,T;L^2(M;h^{-\alpha}))}\right)\\
			&\cdot\|\psi_m^1\|_{H^{1}(M;h^{-\alpha})}|\Delta t|^{\frac{1}{2}}
			\leq C |\Delta t|^{\frac{1}{2}}
		\end{aligned}\label {22}
	\end{equation*}
	and
		\begin{equation*}
		\begin{aligned}
			&|C_m^{N,2}(t+\Delta t)-C_m^{N,2}(t)|\\
			\leq	& C \left( \|f_2\|_{L^{2}(Q_T)}+\| k_2^N\|_{L^2(Q_T)}+\| \nabla k_2^N\|_{L^2(Q_T)}+\|\nabla k_1^N\|_{L^{2}(Q_T;\rho^{-\alpha})}\right)\cdot\|\psi _m^2\|_{H^1(M)}|\Delta t|^{\frac{1}{2}}\\
			\leq	& C |\Delta t|^{\frac{1}{2}},
		\end{aligned}\label {21}
	\end{equation*}
where we have used the H\"{o}lder inequality and $C$ is independent of $N,m$ and $\Delta t$. 

Hence, for every fixed $m$, $\{(C_m^{N,1}(t),C_m^{N,2}(t))\}_N$ is equi-continuous about $N$. By the Generalized Arzela-Ascoli's theorem, there exist a subsequence $\{(C_m^{N_i,1}(t), C_m^{N_i,2}(t))\}_{N_i}$ and functions $C^{1}_m(t),C^{2}_m(t)$ such that  
\[
	(C_m^{N_i,1}(t), C_m^{N_i,2}(t))\rightarrow  (C^{1}_m(t),C^{2}_m(t)) \ \mbox{ uniformly as }  \enspace N_i \rightarrow \infty \enspace in \enspace[0,T].
\]
Using the orthogonality of the basis functions and the energy estimate provided in Lemma \ref{lem:2.4} , for any $r \in \mathbb {N}$ with $r\leq N$, we can derive the following result:
	\begin{equation*}
		\begin{aligned}
			\sum ^r_{m=1}[C_m^{N,1}(t)]^2\leq \|k_1^N(\cdot ,t)\|^2_{L^{2}(M; h^{-\alpha})}\leq C, \\
			\sum ^r_{m=1}[C_m^{N,2}(t)]^2\leq \|k_2^N(\cdot ,t)\|^2_{L^{2}(M)}\leq C, 
		\end{aligned}\label {23}
	\end{equation*}
where $t\in [0,T]$ and $C$ is independent of $N$ and $t$. Let $N$ tend to $+\infty$ as $N_i$, then for any $ r\in \mathbb {N}$,
	\begin{equation}
		\sum ^{r}_{m=1}[C_m^{1}(t)]^2+[C_m^{2}(t)]^2\leq C,\quad \forall t\in [0,T].	\label {24}
	\end{equation}
							
Set $k_1=\sum \limits^\infty _{m=1}C^{1}_m(t)\psi^{1}_{m}$, $k_2=\sum \limits^\infty _{m=1}C^{2}_m(t)\psi^{2}_{m}$.
According to \eqref{24}, for any $ t\in [0,T]$, $k_1(\cdot,t) \in L^2(M; h^{-\alpha}), k_2(\cdot,t) \in L^{2}(M)$ and 
$$\sup\limits_{0\leqslant t\leqslant T} \|k_1(\cdot ,t)\|_{L^2(M; h^{-\alpha})}+\sup\limits_{0\leqslant  t\leqslant T}\|k_2(\cdot ,t)\|_{L^{2}(M)}\leq C.$$ Therefore, for $m\leqslant N_i$,
	\begin{equation*}
		\begin{aligned}
			&(k_1^{N_i}(\cdot,t)-k_1(\cdot,t),h^{-2\alpha}e^{-2\varphi_2^0}\psi _m^1)+(k_2^{N_i}(\cdot,t)-k_2(\cdot,t),\psi _m^2)\\
			&=C_m^{N_i,1}(t)-C_m^{1}(t)+C_m^{N_i,2}(t)-C_m^{2}(t)\\
			&\rightarrow 0,  \mbox{uniformly in } [0,T].
		\end{aligned}\label {26}
	\end{equation*}
Thus for any $ \phi _1\in L^2(M ;h^{-\alpha})$, $ \phi _2\in L^{2}(M)$, we have
	\begin{equation*}
		\begin{aligned}
			(k_1^{N_i}(\cdot,t)-k_1(\cdot,t), h^{-2\alpha}e^{-2\varphi_2^0}\phi _1)+(k_2^{N_i}(\cdot,t)-k_2(\cdot,t), \phi_2)\rightarrow 0, \mbox{uniformly in } [0,T].
		\end{aligned}\label {27}
	\end{equation*}
By Lemma \ref{lem:2.4}, there also exist subsequences of $\{h^{-\alpha} e^{-\varphi_2^0} k_1^{N_i}\}_{i=1}^{\infty}$ and $\{k_2^{N_i}\}_{i=1}^{\infty}$ such that the following weak convergences hold:
							
	\begin{equation*}
		\left \{
		\begin{aligned}
		h^{-\alpha} e^{-\varphi_2^0}k_1^{N_i}\rightharpoonup h^{-\alpha} e^{-\varphi_2^0}k_1, \	k_2^{N_i}\rightharpoonup k_2 \quad &in \ L^2(M),\\
		h^{-\alpha} e^{-\varphi_2^0}\nabla k_1^{N_i}\rightharpoonup h^{-\alpha} e^{-\varphi_2^0}\nabla k_1, \ \nabla k_2^{N_i}\rightharpoonup  \nabla k_2  \quad &in \ L^2(Q_T).
		\end{aligned}
		\right.\label {28}
	\end{equation*}
						
Next we will check that $k_1,k_2$ satisfy \eqref{eq:weaksolution}.
Set
	\begin{equation}\label{a28}
		\phi_1^r(x,t)=\sum \limits^r_{m=1}\theta _m^1(t)\psi _m^1(x), \
		\phi_2^r(x,t)=\sum \limits^r_{m=1}\theta _m^2(t)\psi _m^2(x),
	\end{equation}
where $\theta _m^1,\theta _m^2(m=1,2,\cdots, r.)$ are smooth functions in [0,T]. 
It's easy to check that $k_1^{N_i}$ and $k_2^{N_i}$ satisfy	\begin{equation*}
		   \begin{aligned}
				&(k_1^{N_i}(\cdot ,t),h^{-2\alpha}e^{-2\varphi_2^0}\phi_1^r(\cdot ,t))-\int_0^t (k_1^{N_i},h^{-2\alpha}e^{-2\varphi_2^0}\partial_t\phi_1^r) \ud s+\int_0^t (h^{-2\alpha}e^{-2\varphi^0_2}\nabla k^{N_i}_1,\nabla\phi_1^r) \ud s\\
				&=\int _0^t-(2\nabla\varphi^0_1\nabla k^{N_i}_2,h^{-2\alpha}e^{-2\varphi_2^0}\phi_1^r)+(f_1,h^{-2\alpha}e^{-2\varphi_2^0}\phi_1^r)\ud s
				\end{aligned}\end{equation*}
	and			\begin{equation*}
		   \begin{aligned}
					&(k_2^{N_i}(\cdot ,t),\phi_2^r(\cdot ,t))-\int _0^t( k_2^{N_i},\partial_t \phi_2^r)\ud s+\int _0^t(\nabla k_2^{N_i},\nabla\phi_2^r)\ud s\\
				&=\int _0^t(f_2,\phi_2^r)-2(h^{-2\alpha}e^{-2\varphi_2^0}|\nabla \varphi _1^0|^2k_2^{N_i},\phi_2^r)+(2h^{-2\alpha}e^{-2\varphi _2^0}\nabla \varphi _1^0\nabla k_1^{N_i},\phi_2^r)\ud s.\\
			\end{aligned}
\label {30}
	\end{equation*}
As $N_i$ tends to infinity, $k_1$ and $k_2$ satisfy the following equatities:	\begin{equation*}
			\begin{aligned}
				&(k_1(\cdot ,t),h^{-2\alpha}e^{-2\varphi_2^0}\phi_1^r(\cdot ,t))-\int_0^t (k_1,h^{-2\alpha}e^{-2\varphi_2^0}\partial_t\phi_1^r) \ud s+\int_0^t (h^{-2\alpha}e^{-2\varphi^0_2}\nabla k_1,\nabla\phi_1^r) \ud s\\
				&=\int _0^t-(2\nabla\varphi^0_1\nabla k_2,h^{-2\alpha}e^{-2\varphi_2^0}\phi_1^r)+(f_1,h^{-2\alpha}e^{-2\varphi_2^0}\phi_1^r)\ud s\end{aligned}\end{equation*}
				and 
			\begin{equation*}
			\begin{aligned}
					&(k_2(\cdot ,t),\phi_2^r(\cdot ,t))-\int _0^t( k_2,\partial_t \phi_2^r)\ud s+\int _0^t(\nabla k_2,\nabla\phi_2^r)\ud s\\
				&=\int _0^t(f_2,\phi_2^r)-2(h^{-2\alpha}e^{-2\varphi _2^0}|\nabla \varphi_1^0|^2k_2,\phi_2^r)+(2h^{-2\alpha}e^{-2\varphi _2^0}\nabla \varphi _1^0\nabla k_1,\phi_2^r)\ud s.\\
			\end{aligned}
		\label {31}
	\end{equation*}
The equalities hold for all functions $\zeta_1\in W^{1,1}_{2}(Q_T;\rho^{-\alpha}) $ and $\zeta_2\in W^{1,1}_{2}(Q_T) $ as functions of the form given in \eqref{a28} are dense in these two spaces, respectively. 

Regarding the uniqueness of the solution, it suffices to check that the only weak solution to (\ref{8'}) when $f\equiv 0$ is $k\equiv 0$.
By Lemma \ref{lem:2.4}, we can conclude that
	\begin{equation*}
		\begin{aligned}
		  &\max\limits_{0\leq {t}\leq T}\int_M h^{-2\alpha}e^{-2\varphi^0_2}|k_1|^2+| k_2|^2 \ud V({t})
		  +\int_0^T\int_M h^{-2\alpha}e^{-2\varphi^0_2}|\nabla k_1|^2 +|\nabla k_2|^2\ud V\ud s =0.
		\end{aligned}\label {31'}
   \end{equation*}
It's easy to see that  $k\equiv 0$.
	\end{proof}

\subsection{Weighted regularity estimates}
In this section, we will provide some weighted estimates for the solutions of (\ref{8'}) in order to improve their regularity. For brevity, we denote the time derivatives $(\partial_t{k_1},\partial_t{k_2})$ of a given vector function $k=(k_1, k_2)$ by $\widetilde{k}=(\widetilde{k}_1, \widetilde{k}_2)$.
\begin{lem}\label{lem:W112}
If $(k_1, k_2)$ is the solution of (\ref{8'}), then there exists a constant $C$ depending on $M$ , $T$ and $\varphi^0$ such that
  	\begin{equation*}
  	\begin{aligned}
  		&\sup\limits_{0\leqslant t \leqslant T}(\|\rho^{-\alpha}\partial_t k_1(t)\|_{L^2(M)}+\|\partial_t k_2(t)\|^2_{L^2(M)}+\|\rho^{-\alpha} k_1(t)\|_{H^1(M)}+\| k_2(t)\|^2_{H^1(M)})\\
  		&+\|\rho^{-\alpha}\partial_tk_1\|_{L^2(0,T;H^1(M))}+\| \partial_tk_2\|_{L^2(0,T;H^1(M))}\\
  		\leq &C\left(\|\rho^{-\alpha+1}f_1\|_{{L^2(0,T;L^2(M))}}+\|\rho^{-\alpha+1}\partial_t f_1\|_{{L^2(0,T;L^2(M))}}+\| f_2\|_{{H^1(0,T;L^2(M))}}\right.
\\&  \left.+\|\rho^{-\alpha}f_1(0)\|_{L^2(M)}^2+\|f_2(0)\|_{L^2(M)}^2\right).
  	\end{aligned}\label {32}
  \end{equation*}
\end{lem}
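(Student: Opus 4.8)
The plan is to differentiate equation \eqref{8'} in time and apply the energy method of Lemma \ref{lem:2.4} to the new system for $\widetilde{k}=(\partial_t k_1,\partial_t k_2)$. Formally, since the coefficients $h^{-2\alpha}e^{-2\varphi^0_2}$, $\nabla\varphi^0_1$, etc., are time-independent, differentiating $D\mathcal{P}(\varphi^0,k)=f$ in $t$ shows that $\widetilde{k}$ solves the \emph{same} linearized system with right-hand side $\partial_t f=(\partial_t f_1,\partial_t f_2)$ and with initial datum $\widetilde{k}(0)$ determined from the equation itself at $t=0$. At the level of the Galerkin approximations this is rigorous: differentiating the ODE system \eqref{11} once in $t$ gives a linear system for $\partial_t C_m^{N,j}(t)$ with the same matrices $A_{ml},B_{ml},C_{ml},D_{ml}$ and forcing $\partial_t F_m^j$, and one reads off $\partial_t C_m^{N,j}(0)$ by evaluating \eqref{11} at $t=0$ (using $C_m^{N,j}(0)=0$), so that $\partial_t C_m^{N,1}(0)=F_m^1(0)$ and $\partial_t C_m^{N,2}(0)=F_m^2(0)$.

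First I would carry out this differentiation at the Galerkin level and then run the exact computation of Lemma \ref{lem:2.4} on $\widetilde{k}^N=(\partial_t k_1^N,\partial_t k_2^N)$: multiply the two differentiated equations by $\partial_t C_m^{N,1}(t)$ and $\partial_t C_m^{N,2}(t)$ respectively, sum over $m$, integrate by parts in $M\setminus\Gamma$ (justified via Lemma 3.1 of \cite{LT} exactly as in \eqref{15-}--\eqref{15}), use Cauchy's inequality and Remark \ref{rem1} to absorb the cross terms, and apply Gronwall. This yields the bound
\begin{equation*}
	\begin{aligned}
	&\max_{0\leq t\leq T}\Big(\int_M h^{-2\alpha}e^{-2\varphi^0_2}|\partial_t k_1^N|^2+|\partial_t k_2^N|^2\,\ud V\Big)+\int_0^T\!\!\int_M h^{-2\alpha}e^{-2\varphi^0_2}|\nabla\partial_t k_1^N|^2+|\nabla\partial_t k_2^N|^2\,\ud V\,\ud s\\
	&\leq C\Big(\|\partial_t k_1^N(0)\|^2_{L^2(M;h^{-\alpha})}+\|\partial_t k_2^N(0)\|^2_{L^2(M)}+\|\rho^{-\alpha+1}\partial_t f_1\|^2_{L^2(Q_T)}+\|\partial_t f_2\|^2_{L^2(Q_T)}\Big),
	\end{aligned}
\end{equation*}
and the initial terms are controlled by $\|\rho^{-\alpha}f_1(0)\|^2_{L^2(M)}+\|f_2(0)\|^2_{L^2(M)}$ using $\partial_t C_m^{N,j}(0)=F_m^j(0)$ and Bessel's inequality (here $h\sim\rho$ up to constants, so the $h$-weighted and $\rho$-weighted norms are equivalent). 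Passing to the weak limit along the subsequence of Proposition \ref{prop:prop1} and combining with Lemma \ref{lem:2.4} itself gives all the stated $\partial_t$-bounds plus $\sup_t\|\rho^{-\alpha}k_1(t)\|_{L^2}+\sup_t\|k_2(t)\|_{L^2}$. For the remaining spatial $H^1$ control $\sup_t(\|\rho^{-\alpha}k_1(t)\|_{H^1}+\|k_2(t)\|_{H^1})$, I would use the equation \eqref{8'} at a.e.\ fixed $t$ as an elliptic system: the terms $h^{2\alpha}e^{2\varphi^0_2}\mathrm{div}(h^{-2\alpha}e^{-2\varphi^0_2}\nabla k_1)$ and $\Delta k_2$ equal $\partial_t k_j$ plus lower-order terms plus $f_j$, all now bounded in the weighted $L^2$ in $t$, and a weighted elliptic estimate (again Corollary 4.1 / Lemma 3.1 of \cite{LT}) upgrades this to the $H^1$ bound uniformly in $t$.

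The main obstacle I anticipate is justifying the differentiation in time and the associated integrations by parts at the required level of rigor near $\Gamma$: one must ensure that $\partial_t k_j^N$ lies in the correct weighted spaces so that the energy identity is valid and the weighted Poincaré-type inequality \eqref{40} applies to $\partial_t k_1^N$ as well as to $k_1^N$ — this is where the hypothesis that the initial data have higher regularity than the solution (so that $f(0)$ makes sense in the weighted $L^2$) is essential. A secondary technical point is the treatment of the initial-value terms $\|\rho^{-\alpha}f_1(0)\|_{L^2}$ and $\|f_2(0)\|_{L^2}$: these require that $f\in\mathcal{C}\times\mathcal{D}$ indeed has a well-defined trace at $t=0$ in $C(M;\rho^{-\gamma})\times H^3(M)$, which is built into the definitions of $\mathcal{C}$ and $\mathcal{D}$, and I would invoke that structure explicitly rather than re-prove it. Everything else is a routine repetition of the Galerkin/Gronwall scheme already executed in Lemma \ref{lem:2.4}, so I would keep those computations brief and refer back to the earlier proof.
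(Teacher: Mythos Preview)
Your proposal is correct and, for the $\partial_t k$ bounds, matches the paper exactly: differentiate the Galerkin system \eqref{10} in $t$, observe $\partial_t C_m^{N,j}(0)=F_m^j(0)$, and rerun the energy estimate of Lemma \ref{lem:2.4} with $(k^N,f)$ replaced by $(\widetilde{k^N},\partial_t f)$, then pass to the limit.

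The one place where you diverge from the paper is the $\sup_t$ control of $\|\rho^{-\alpha}k_1(t)\|_{H^1}+\|k_2(t)\|_{H^1}$. The paper does \emph{not} freeze $t$ and appeal to an elliptic estimate; instead it multiplies the \emph{undifferentiated} Galerkin equations \eqref{10} by $\frac{dC_m^{N,j}}{dt}$ and sums, obtaining the identity
\[
\int_M h^{-2\alpha}e^{-2\varphi^0_2}|\partial_t k_1^N|^2+|\partial_t k_2^N|^2\,\ud V+\tfrac12\,\partial_t\!\int_M h^{-2\alpha}e^{-2\varphi^0_2}|\nabla k_1^N|^2+|\nabla k_2^N|^2\,\ud V=\text{(cross terms)},
\]
then absorbs the right-hand side with Young's inequality, integrates in $t$, and invokes the already-established bounds on $\partial_t k^N$ and $\nabla k^N$ (Lemma \ref{lem:2.4} and the first step above together with Remark \ref{rem1}). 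Your elliptic-at-fixed-$t$ route also works---testing the stationary equation against $k_j$ and using \eqref{40}---but the paper's parabolic energy identity is slightly more direct since it avoids having to close a coupled elliptic system and yields the $L^2_t$ control of $\partial_t k^N$ as a byproduct of the same computation.
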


\begin{proof}
Fixing $N\geqslant 1$ and differentiating equations \eqref {10} with respect to $t$, we find
\begin{equation}
	\left \{
	\begin {aligned}
	&(\partial_t\widetilde{k^N_1},h^{-2\alpha} e^{-2\varphi^0_2}\psi^1_m)- (\mbox{div}(h^{-2\alpha}e^{-2\varphi^0_2}\nabla \widetilde{k^N_1}),\psi^1_m)\\&+2(\nabla\varphi^0_1\nabla \widetilde{k^N_2},h^{-2\alpha} e^{-2\varphi^0_2}\psi^1_m)=(\partial_t f_1,h^{-2\alpha} e^{-2\varphi^0_2}\psi^1_m),\\
	&(\partial_t \widetilde{k_2^N},\psi^2_m)-(\Delta \widetilde{k_2^N},\psi^2_m) +2(h^{-2\alpha}e^{-2\varphi ^0_2}|\nabla \varphi^{0}_{1}|^2\widetilde{k_2^N} ,\psi^2_m)\\&-2(h^{-2\alpha}e^{-2\varphi ^0_2}\nabla \varphi^{0}_{1}\nabla \widetilde{k_1^N},\psi^2_m)=(\partial_t f_2,\psi^2_m), m=1,\dots N .\\	
	\end {aligned}\label{a}
	\right.
\end{equation}
Multiply the first equation of (\ref{a}) by $\frac{dC_m^{N,1}}{dt}$ and multiply the second equation of (\ref{a}) by $\frac{dC_m^{N,2}}{dt}$, then sum for $m=1,\cdots , N$ to find 
\begin{equation}
	\left \{
	\begin {aligned}
	&(\partial_t\widetilde{k_1^N},h^{-2\alpha} e^{-2\varphi^0_2}\widetilde{k_1^N})- (\mbox{div}(h^{-2\alpha}e^{-2\varphi^0_2}\nabla \widetilde{k}^N_1),\widetilde{k_1^N})\\
	&+2(\nabla\varphi^0_1\nabla \widetilde{k^N_2},h^{-2\alpha} e^{-2\varphi^0_2}\widetilde{k_1^N})=(\partial_t f_1,h^{-2\alpha} e^{-2\varphi^0_2}\widetilde{k}_1^N),\\
		&(\partial_t \widetilde{k_2^N},\widetilde{k_2^N})-(\Delta \widetilde{k_2^N},\widetilde{k_2^N}) +2(h^{-2\alpha}e^{-2\varphi ^0_2}|\nabla \varphi^{0}_{1}|^2\widetilde{k_2^N} ,\widetilde{k_2^N})\\
	&-2(h^{-2\alpha}e^{-2\varphi ^0_2}\nabla \varphi^{0}_{1}\nabla \widetilde{k_1^N},\widetilde{k_2^N})=(\partial_t f_2,\widetilde{k_2^N}).\\	
	\end {aligned}\label{b}
	\right.
\end{equation}
 Substituting $k_i^N$ and $f_i$ with $\widetilde{k_i^N}$ and $\widetilde{ f_i}$ $(i=1,2)$  in the proof of Lemma \ref{lem:2.4}, we have
\begin{equation}
	\begin{aligned}
		&\max\limits_{0\leq {t}\leq T}\int_M h^{-2\alpha}e^{-2\varphi^0_2} | \widetilde{k^N_1}|^2+| \widetilde{k^N_2}|^2 \ud V({t})\\&
		+\int_{Q_T} h^{-2\alpha}e^{-2\varphi^0_2}|\nabla  \widetilde{k^N_1}|^2 +|\nabla  \widetilde{k^N_2}|^2\ud V\ud s\\
		\leq & C(\int_M h^{-2\alpha}e^{-2\varphi^0_2}|\partial_t  k^N_1|^2 + |\partial_t  k^N_2|^2 \ud V(0) \\&+\|h^{-\alpha+1}e^{-\varphi^0_2}\widetilde f_1\|^2_{L^2(Q_T)}+ \|\widetilde{f}_2 \|^2_{L^2(Q_T)}).
	\end{aligned}\label{c}
\end{equation}
We employ \eqref {10} to find $\partial_t k_1(0)=f_1(0)$ and $\partial_t k_2(0)=f_2(0)$, therefore
\begin{equation*}
	\begin{aligned}
		&	\int_M h^{-2\alpha}e^{-2\varphi^0_2}|\partial_t  k^N_1|^2 + |\partial_t  k^N_2|^2 \ud V(0) 
		&\leq C\left(\|h^{-\alpha}e^{-\varphi^0_2}f_1(0)\|_{L^2(M)}^2 +\|f_2(0)\|_{L^2(M)}^2 \right).
	\end{aligned}\label{d}
\end{equation*}
Passing to limits as $N=N_i\rightarrow \infty$ in (\ref{c}), we deduce that
\begin{equation}
	\begin{aligned}
		&\max\limits_{0\leq {t}\leq T}\int_M h^{-2\alpha}e^{-2\varphi^0_2} | \widetilde{k_1}|^2+| \widetilde{k_2}|^2 \ud V({t})
		+\int_{Q_T} h^{-2\alpha}e^{-2\varphi^0_2}|\nabla  \widetilde{k_1}|^2 +|\nabla  \widetilde{k_2}|^2\ud V\ud s\\
		\leq &C(\|h^{-\alpha}e^{-\varphi^0_2}f_1(0)\|_{L^2(M)}^2 +\|f_2(0)\|_{L^2(M)}^2 \\&+\|h^{-\alpha+1}e^{-\varphi^0_2}\widetilde {f_1}\|^2_{L^2(Q_T)}+ \|\widetilde{f_2} \|^2_{L^2(Q_T)}).
	\end{aligned}\label{cc}
\end{equation}
Fixing $N\geq 1$, we multiply the first equation of (\ref{10}) by $\frac{dC_m^{N,1}}{dt}$ and multiply   the second equation of (\ref{10}) by $\frac{dC_m^{N,2}}{dt}$, summing $m$ from 1 to N to discover

 \begin{equation*}
\begin{aligned}
&\int_M h^{-2\alpha}e^{-2\varphi^0_2} | \partial_tk^N_1|^2+|\partial_tk^N_2 |^2\ud V+\frac{1}{2}\partial_t\int_M ( h^{-2\alpha}e^{-2\varphi^0_2} |\nabla k^N_1|^2+   |\nabla k^N_2|^2)\ud V\\
=&-2\int_M h^{-2\alpha}e^{-2\varphi^0_2}|\nabla \varphi^0_1|^2k^N_2\partial_t k^N_2\ud V+\int_M 2h^{-2\alpha}e^{-2\varphi^0_2}
\nabla \varphi^0_1 \nabla k^N_1\partial_t k^N_2\ud V\\
&-2\int_M h^{-2\alpha}e^{-2\varphi^0_2} \partial_tk^N_1\nabla\varphi^0_1\nabla k^N_2\ud V +\int_M h^{-2\alpha}e^{-2\varphi^0_2} f_1 \partial_t k^N_1+ f_2 \partial_tk^N_2\ud V.
  \end{aligned}\label{33}
  \end{equation*}

By the assumption of $\varphi_1^0$ and Young's inequality, 
 we find
 \begin{equation*}
\begin{aligned}
&\int_M h^{-2\alpha}e^{-2\varphi^0_2} | \partial_tk^N_1|^2+|\partial_tk^N_2 |^2\ud V+\frac{1}{2}\partial_t\int_M (h^{-2\alpha}e^{-2\varphi^0_2}|\nabla k^N_1|^2+   |\nabla k^N_2|^2)\ud V\\
\leq & C\left(\int_M h^{-2\alpha}e^{-2\varphi^0_1} |\nabla k^N_1|^2+|\nabla k^N_2|^2+| k^N_2|^2+ |f_2|^2 \right.\\
&\left.+ h^{-2\alpha+2}e^{-2\varphi^0_2}|f_1|^2 + h^{-2\alpha-2}e^{-2\varphi^0_2} |\partial_tk^N_1|^2 \ud V\right).
  \end{aligned}\label{36}
  \end{equation*}
Integrating the above inequality in $[0,T]$ and by Remark \ref{rem1}, Lemma \ref{lem:2.4} and \eqref {cc}, we find that
\begin{equation}
	\begin{aligned}
		&\int_{Q_T}h^{-2\alpha}e^{-2\varphi^0_2}| \widetilde {k^N_1}|^2+|\widetilde {k^N_2} |^2\ud V\ud s \\&+\sup\limits_{0\leq t\leq T}\int_M h^{-2\alpha}e^{-2\varphi^0_2}|\nabla k^N_1|^2+  |\nabla k^N_2|^2\ud V(t)\\
		\leq &
		C\left(\int_0^T\int_M h^{-2\alpha+2}e^{-2\varphi^0_2} (|f_1|^2 +|\widetilde {f_1}|^2 )+|f_2|^2+|\widetilde {f_2}|^2 \ud V\ud s\right.\\
		&\left.+\|h^{-\alpha}e^{-\varphi^0_1}f_1\|_{L^2(M)}^2(0)
		+\|f_2\|_{L^2(M)}^2(0)\right ).\\
	\end{aligned}\label{37}
\end{equation}

Taking subsequences $\{k^{N_i}_1\}_i$ and $\{k^{N_i}_2\}_i$ and passing to limits as $N_i\rightarrow \infty$, we deduce
\begin{equation*}
	\begin{aligned}
	k^{N_i}_1\rightarrow k_1 \ &\mbox{ in } L^\infty(0,T;H^1(M;h^{-\alpha})), \  \partial_t k^{N_i}_1\rightarrow\partial_t k_1 \ \mbox{ in } L^2(0,T;L^2(M; h^{-\alpha})),\\
		k^{N_i}_2\rightarrow k_2 \ & \mbox{ in } L^\infty(0,T;H^1(M)), \  \ \quad \partial_t k^{N_i}_2\rightarrow\partial_t k_2 \ \mbox{ in } L^2(0,T;L^2(M)).
	\end{aligned}\label{371}
\end{equation*}

Hence we have the desired estimate.
\end{proof}
\begin{rem}\label{rem2-10} Take $\varepsilon$ such that $0<\varepsilon<\frac{1}{2}(\alpha-1)$. If the initial function
	 \[(\varphi_1^0,\varphi_2^0)\in C^{[2\alpha-2\varepsilon],2\alpha-2\varepsilon- [2\alpha-2\varepsilon]}(M)\times \{u\in  C^{[2\alpha-2\varepsilon],2\alpha-2\varepsilon- [2\alpha-2\varepsilon]}(M)| u=0 \mbox{ on } \Gamma\},\] then there exists a constant $C$ depending on $M$, $T$ and $\varphi^0$ such that
		\begin{equation*}
		\begin{aligned}
			&\sup\limits_{0\leqslant t \leqslant T}(\|\rho^{-\alpha+1} k_1\|_{H^1(M)}+\| k_2\|^2_{H^1(M)})\\
			&+\|\rho^{-\alpha+1}\partial_tk_1\|_{L^2(0,T;L^2(M))}+\| \partial_tk_2\|_{L^2(0,T;L^2(M))}\\
			\leq &C(\|\rho^{-\alpha+1}f_1\|_{{L^2(0,T;L^2(M))}}+\| f_2\|_{{L^2(0,T;L^2(M))}}.
		\end{aligned}
	\end{equation*}
 \end{rem}
\begin{lem}\label{lem:2.10}
	If $(k_1, k_2)$ is a solution of (\ref{8'}), then there exists a constant $C$ depending on $M$, $T$ and $\varphi^0$ such that
	\begin{equation*}
		\begin{aligned}
			&\sup\limits_{0\leqslant t\leqslant T}  ( \|\rho^{-\alpha+1}\partial_t{k}_1(t)\|_{H^1{(M)}}+	\|\partial_t{k}_2(t)\|_{H^1{(M)}})+\|\rho^{-\alpha+1}\partial_tk_1\|_{L^2(0,T;H^2(M))}\\
			&+\|\partial_t k_2\|_{L^2(0,T;H^2(M))}+\|\rho^{-\alpha+1}\partial^2_t {k}_1\|_{L^2(0,T;L^2{(M)})}+	\|\partial^2_t {k}_2\|_{L^2(0,T;L^2{(M)})}\\
			\leq &C(\|\rho^{-\alpha+1}f_1\|_{{L^2(0,T;L^2(M))}}+\|\rho^{-\alpha+1}\partial_t f_1\|_{{L^2(0,T;L^2(M))}}+\|\rho^{-\alpha+1}f_1(0)\|_{H^1(M)}^2\\
			&+\| f_2\|_{{L^2(0,T;L^2(M))}}+\| \partial_t f_2\|_{{L^2(0,T;L^2(M))}}
			+\|f_2(0)\|_{H^1(M)}^2).
		\end{aligned}
	\end{equation*}
\end{lem}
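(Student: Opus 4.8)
The plan is to extend the Galerkin energy argument of Lemma~\ref{lem:W112} to the time derivative $\widetilde k=(\partial_t k_1,\partial_t k_2)$: differentiate the Galerkin system once in $t$, test the differentiated system against $\partial_t\widetilde k^N=\partial_t^2 k^N$ (the analogue at one higher order of the ``test against $\partial_t k^N$'' step that produced \eqref{37}), pass to the limit, and finally trade the temporal bounds for the spatial $H^2$ bound on $\partial_t k$ by invoking the equation \eqref{8'} together with an elliptic $W^{2,2}$ estimate. Throughout, $\widetilde k^N$ already satisfies \eqref{a}, I write $\widetilde f=(\partial_t f_1,\partial_t f_2)$, and the quantities I will treat as ``known'' are precisely those controlled by Lemmas~\ref{lem:2.4} and~\ref{lem:W112} and by the estimates \eqref{cc} and \eqref{37} obtained in their proofs.

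\emph{Step 1 (energy bounds for $\partial_t^2 k$ and $\nabla\partial_t k$).} Multiplying the first equation of \eqref{a} by $\tfrac{d^2 C_m^{N,1}}{dt^2}$ and the second by $\tfrac{d^2 C_m^{N,2}}{dt^2}$ and summing over $m$ — i.e.\ testing \eqref{a} against $\partial_t\widetilde k^N$ — the ``$(\partial_t\,\cdot\,,\,\cdot\,)$'' and divergence/Laplacian terms give $\int_M h^{-2\alpha}e^{-2\varphi_2^0}(|\partial_t\widetilde k_1^N|^2+|\partial_t\widetilde k_2^N|^2)$ together with $\tfrac12\partial_t\int_M h^{-2\alpha}e^{-2\varphi_2^0}|\nabla\widetilde k_1^N|^2+|\nabla\widetilde k_2^N|^2$; the coupling terms carry factors built from $\nabla\varphi_1^0$, which by the vanishing estimate $|\nabla^j\varphi_1^0|\le C\rho^{2\alpha+2-j}$ combine with the singular weight to give bounded coefficients times quantities controlled by Lemma~\ref{lem:W112} and \eqref{cc}, so they are absorbed by Young's inequality. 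The source pairings $(\partial_t f_1,h^{-2\alpha}e^{-2\varphi_2^0}\partial_t\widetilde k_1^N)$ and $(\partial_t f_2,\partial_t\widetilde k_2^N)$ are split off with a small constant into the left side plus the corresponding weighted norms of $\partial_t f_1,\partial_t f_2$; when the surplus weight $h^{-2}\sim\rho^{-2}$ is produced in the $k_1$ estimate it is absorbed using the Hardy inequality \eqref{40} of Remark~\ref{rem1} and \eqref{cc}, exactly as in the passage from \eqref{15-} to \eqref{17} but applied to $\widetilde k^N$. Evaluating \eqref{11} (and \eqref{a}) at $t=0$ shows that $\widetilde k^N(0)$ is the projection of $f(0)$ onto $\operatorname{span}\{\psi_1^\ell,\dots,\psi_N^\ell\}$, which together with $\{\psi^2_m\}$ being an orthogonal basis of $H^1(M)$ and the vanishing of $f_1$ near $\Gamma$ built into $\mathcal C$ bounds the initial terms $\int_M h^{-2\alpha}e^{-2\varphi_2^0}|\nabla\widetilde k_1^N|^2+|\nabla\widetilde k_2^N|^2\,(0)$ by $\|\rho^{-\alpha+1}f_1(0)\|_{H^1}^2+\|f_2(0)\|_{H^1}^2$. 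Gronwall's inequality, integration in $t$, and passage to the limit along the subsequence of Proposition~\ref{prop:prop1} then give $\partial_t k_1\in L^\infty(0,T;H^1(M;\rho^{-\alpha}))$, $\partial_t^2 k_1\in L^2(0,T;L^2(M;\rho^{-\alpha}))$, and likewise $\partial_t k_2\in L^\infty(0,T;H^1(M))$, $\partial_t^2 k_2\in L^2(0,T;L^2(M))$; since $\rho\le C$ on $M$ these yield the $\rho^{-\alpha+1}$–weighted versions in the statement. (For the $k_2$–component one may alternatively test the second equation of \eqref{a} against $-\Delta\widetilde k_2^N\in\operatorname{span}\{\psi_m^2\}$, which gives $\|\Delta\widetilde k_2^N\|_{L^2(Q_T)}$ and $\sup_t\|\nabla\widetilde k_2^N\|_{L^2}$ directly.)

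\emph{Step 2 (spatial $H^2$ bounds for $\partial_t k$).} Differentiating \eqref{8'} once in $t$, $\widetilde k_2$ solves, for a.e.\ $t$, the elliptic equation $-\Delta\widetilde k_2=-\partial_t\widetilde k_2-2h^{-2\alpha}e^{-2\varphi_2^0}|\nabla\varphi_1^0|^2\widetilde k_2+2h^{-2\alpha}e^{-2\varphi_2^0}\nabla\varphi_1^0\nabla\widetilde k_1+\partial_t f_2$, whose right-hand side lies in $L^2(0,T;L^2(M))$ by Step~1, Lemma~\ref{lem:W112} and the vanishing of $\varphi_1^0$; global elliptic regularity on the closed manifold $M$ then gives $\partial_t k_2\in L^2(0,T;H^2(M))$, and $\partial_t^2 k_2\in L^2(0,T;L^2(M))$ is recovered from the equation. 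Similarly $\widetilde k_1$ satisfies $\Delta\widetilde k_1-\bigl(\tfrac{2\alpha\nabla h}{h}+2\nabla\varphi_2^0\bigr)\nabla\widetilde k_1=\partial_t\widetilde k_1+2\nabla\varphi_1^0\nabla\widetilde k_2-\partial_t f_1$; working with the weighted function $\rho^{-\alpha+1}\widetilde k_1$, using $|\nabla h/h|\le C\rho^{-1}$ so that the singular drift contributes only $\rho^{-\alpha}\nabla\widetilde k_1\in L^2(Q_T)$ (Lemma~\ref{lem:W112}), covering $M\setminus\Gamma$ by the dyadic annuli $\{2^{-j-1}\le\rho\le 2^{-j}\}$, rescaling the equation to unit size on each, applying the interior $W^{2,2}$ estimate, and summing against the weights, I obtain $\rho^{-\alpha+1}\partial_t k_1\in L^2(0,T;H^2(M))$; combined with Step~1 and the equation this gives $\rho^{-\alpha+1}\partial_t^2 k_1\in L^2(0,T;L^2(M))$ as well. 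Collecting Steps~1 and~2 yields the asserted inequality.

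\emph{The main obstacle.} The delicate point is the $k_1$–component in Step~1: the source pairing carries the singular weight $h^{-2\alpha}\sim\rho^{-2\alpha}$, a factor $\rho^{-2}$ heavier than the datum norm $\|\rho^{-\alpha+1}\partial_t f_1\|_{L^2(Q_T)}$ the statement permits, so Young's inequality alone cannot close the estimate. The resolution is the same device that made Lemma~\ref{lem:W112} work, now one order higher: write $h^{-2\alpha}=h^{-2\alpha+2}h^{-2}$, and pay the surplus weight $h^{-2}$ with a spatial gradient through the Hardy inequality of Remark~\ref{rem1}, controlling the result by \eqref{cc} (and, at $t=0$, by the regularity of $f_1$ encoded in $\mathcal C$) rather than by any a priori bound on $\partial_t^2 k_1$ itself. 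Beyond that, the real work is bookkeeping: keeping every weight consistent across the numerous coupling terms, and checking that the right-hand side of each elliptic equation in Step~2 lands precisely in the weighted $L^2$ space for which the $W^{2,2}$ estimate is available. The remaining energy manipulations and the limit passage are routine repetitions of the proofs of Lemmas~\ref{lem:2.4} and~\ref{lem:W112}.
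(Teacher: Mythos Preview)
Your ``main obstacle'' is correctly identified, but your proposed resolution does not close. After testing \eqref{a} against $\partial_t\widetilde k^N=\partial_t^2 k^N$, the source term $(\partial_t f_1,h^{-2\alpha}e^{-2\varphi_2^0}\partial_t^2 k_1^N)$ must be split as $\delta\int h^{-2\alpha-2}|\partial_t^2 k_1^N|^2+C_\delta\int h^{-2\alpha+2}|\partial_t f_1|^2$ to match the datum $\|\rho^{-\alpha+1}\partial_t f_1\|_{L^2}$. The Hardy inequality \eqref{40} then converts the first integral into $\int h^{-2\alpha}|\nabla\partial_t^2 k_1^N|^2$ --- but \eqref{cc} controls only $\int h^{-2\alpha}|\nabla\partial_t k_1^N|^2$, one time derivative short. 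To get the analogue of \eqref{cc} for $\partial_t^2 k$ you would have to test the \emph{twice}-differentiated Galerkin system against $\hat k^N$, which brings in $\|\rho^{-\alpha+1}\partial_t^2 f_1\|_{L^2}$, a quantity absent from the right-hand side of the lemma. So Step~1 does not close with the stated data, and Step~2 relies on Step~1 for $\partial_t^2 k\in L^2(L^2)$.

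The paper avoids this weight mismatch by a simpler and quite different device: it multiplies through by the weight \emph{before} differentiating, i.e.\ it writes the heat equation satisfied by $\rho^{-\alpha+1}\widetilde k_1$ (and by $\widetilde k_2$) on the closed manifold $M$, with right-hand side $-2\nabla(\rho^{-\alpha+1})\nabla\widetilde k_1-\Delta(\rho^{-\alpha+1})\widetilde k_1+\rho^{-\alpha+1}\bigl(-2(\nabla\varphi_2^0+\alpha\nabla h/h)\nabla\widetilde k_1-2\nabla\varphi_1^0\nabla\widetilde k_2+\partial_t f_1\bigr)$. Every term here lies in $L^2(Q_T)$ directly from Lemma~\ref{lem:W112} (the worst one, $\rho^{-\alpha-1}\widetilde k_1$, via Hardy applied to $\widetilde k_1$, which \emph{is} covered by \eqref{cc}); the initial datum is $\rho^{-\alpha+1}f_1(0)\in H^1(M)$. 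A single application of the standard parabolic $W^{2,1}_2$ regularity theorem on $M$ then yields all the quantities in the statement, with no further Galerkin energy estimates and no dyadic decomposition.
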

\begin{proof} 
	
	By equation (\ref{8'}) and direct calculation, we have
	\begin{equation}
		\begin{aligned}\label{g1}
			&\partial_t(\rho^{-\alpha+1}k_1)-\Delta(\rho^{-\alpha+1}k_1)
			=-2\nabla(\rho^{-\alpha+1})\nabla k_1-\Delta(\rho^{-\alpha+1})k_1 \\
			&+\rho^{-\alpha+1}(-2(\nabla \varphi^0_2+\frac{\alpha\nabla h}{h})\nabla k_1-2\nabla\varphi^0_1\nabla k_2+f_1),\\
			&\partial_t{k}_2-\Delta {k}_2=-2h^{-2\alpha}e^{-2\varphi^0_2}|\nabla \varphi^0_1|^2{k}_2+2h^{-2\alpha}e^{-2\varphi^0_2}\nabla \varphi^0_1\nabla {k}_1+ f_2.\\
		\end{aligned}
	\end{equation}
	Differentiating equations in $\eqref{g1}$ with respect to $t$, we can get
	\begin{equation}\label{g2}
		\begin{aligned}
			&\partial_t(\rho^{-\alpha+1}\widetilde{k_1})-\Delta(\rho^{-\alpha+1}\widetilde{k_1})=-2\nabla(\rho^{-\alpha+1})\nabla \widetilde{k_1}-\Delta(\rho^{-\alpha+1})\widetilde{k}_1\\
			&+\rho^{-\alpha+1}(-2(\nabla \varphi^0_2+\frac{\alpha\nabla h}{h})\nabla \widetilde{k_1}-2\nabla\varphi^0_1\nabla \widetilde{k_2}+\partial_t f_1),\\
			&\partial_t\widetilde{k_2}-\Delta \widetilde{k_2}=-2h^{-2\alpha}e^{-2\varphi^0_2}|\nabla \varphi^0_1|^2\widetilde{k_2}+2h^{-2\alpha}e^{-2\varphi^0_2}\nabla \varphi^0_1\nabla \widetilde{k_1}+
			\partial_t f_2.\\
		\end{aligned}
	\end{equation}
	 Notice that 
	\begin{equation}
		\begin{aligned}\label{2.23}
			\rho^{-\alpha+1}\widetilde{k_1}(0)=	\rho^{-\alpha+1}\partial_t k_1(0)=\rho^{-\alpha+1}f_1(0),\ 
			\widetilde{k_2}(0)=	\partial_t k_2(0)=f_2(0).\\
		\end{aligned}
	\end{equation}	
The equations \eqref{g2} with the initial function (\ref{2.23}) have weak solutions, since the right-hand sides of the equations belong to $L^2(Q_T)$ and $\rho^{-\alpha+1}\widetilde{k}_1(0), \widetilde{k}_2(0)\in L^2(M)$. By the regularity of parabolic equations, in view of Theorem 5 of Chapter 7.1 of \cite{C}, we have
\begin{equation}\label{g3}
	\begin{aligned}
		&\sup\limits_{0\leq t\leq T}  ( \|\rho^{-\alpha+1}\widetilde{k_1}(t)\|_{H^1{(M)}}+\|\widetilde{k_2}(t)\|_{H^1{(M)}})
		+\|\rho^{-\alpha+1}\widetilde{k_1}(t)\|_{L^2(0,T;H^2{(M)})}\\
		&+	\|\widetilde{k_2}(t)\|_{L^2(0,T;H^2{(M)})}+\|\rho^{-\alpha+1}\partial_t \widetilde{k_1}(t)\|_{L^2(0,T;L^2{(M)})}+	\|\partial_t \widetilde{k_2}(t)\|_{L^2(0,T;L^2{(M)})}\\
		\leq & C\left(\|\rho^{-\alpha+1}\partial_t\widetilde{k_1}\|_{L^2(0,T;L^2(M))}+\|\nabla^2(\rho^{-\alpha+1}\widetilde{k_1})\|_{L^2(0,T;L^2(M))}+\|f_2(0)\|_{H^1(M)}\right.\\
		&\left.+\|\partial_t\widetilde{k}_2\|_{L^2(0,T;L^2(M))}+\|	\nabla^2 \widetilde{k_2}\|_{L^2(0,T;L^2(M))}+\|\rho^{-\alpha+1}f_1(0)\|_{H^1(M)}\right).
		\end{aligned}	
	\end{equation}
	By combining Lemma \ref{lem:W112}, we can complete the proof.
\end{proof}

\begin{lem}\label{lem:W212}
If $(k_1, k_2)$ is a solution of (\ref{8'}), then there exists a constant $C$ depending on $M$, $T$ and $\varphi^0$ such that
	\begin{equation*}
	\begin{aligned}
	&\|\rho^{-\alpha+3}k_1\|_{L^2(0,T;H^4(M))}+\| k_2\|_{L^2(0,T;H^4(M))}\\
			\leq &C(\|\rho^{-\alpha+3}f_1\|_{{L^2(0,T;H^2(M))}}+\|\rho^{-\alpha+1}\partial_t f_1\|_{{L^2(0,T;L^2(M))}}+\|\rho^{-\alpha+1}f_1(0)\|_{H^1(M)}\\
		&+\| f_2\|_{{L^2(0,T;H^2(M))}}+\| \partial_t f_2\|_{{L^2(0,T;L^2(M))}}
		+\|f_2(0)\|_{H^1(M)}).
	\end{aligned}
\end{equation*}
\end{lem}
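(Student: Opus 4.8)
The plan is to freeze the time variable and regard the parabolic system~\eqref{8'} (equivalently~\eqref{g1}) as a coupled elliptic system on the closed manifold $M$, moving the time derivatives and the lower-order coupling terms to the right-hand side, and then to run the standard $L^2$ elliptic bootstrap
\[
\|w\|_{H^{m+2}(M)}\le C\big(\|\Delta w\|_{H^m(M)}+\|w\|_{L^2(M)}\big)
\]
twice ($m=1$, then $m=2$). Expanding the divergence as in~\eqref{g1}, the weighted function $w_1:=\rho^{-\alpha+3}k_1$ satisfies, for a.e.\ $t\in(0,T)$,
\begin{equation*}
-\Delta w_1=-\partial_t w_1-[\Delta,\rho^{-\alpha+3}]k_1-2\rho^{-\alpha+3}\Big(\alpha\frac{\nabla h}{h}+\nabla\varphi^0_2\Big)\!\cdot\!\nabla k_1-2\rho^{-\alpha+3}\nabla\varphi^0_1\!\cdot\!\nabla k_2+\rho^{-\alpha+3}f_1 ,
\end{equation*}
while $k_2$ satisfies
\begin{equation*}
-\Delta k_2=-\partial_t k_2-2h^{-2\alpha}e^{-2\varphi^0_2}|\nabla\varphi^0_1|^2 k_2+2h^{-2\alpha}e^{-2\varphi^0_2}\nabla\varphi^0_1\!\cdot\!\nabla k_1+f_2 .
\end{equation*}
Since $k_j(\cdot,0)=0$ and $\rho^{-\alpha+3}=\rho^2\cdot\rho^{-\alpha+1}$ with $\rho^2$ smooth near $\Gamma$, Lemma~\ref{lem:2.10} already provides $\rho^{-\alpha+3}k_1,\,k_2\in L^\infty(0,T;H^2(M))$, $\rho^{-\alpha+3}\partial_t k_1,\,\partial_t k_2\in L^2(0,T;H^2(M))$ and $\rho^{-\alpha+1}\partial_t k_1,\,\partial_t k_2\in L^\infty(0,T;H^1(M))$, with norms dominated by the right-hand side of the asserted estimate (the intermediate data norms appearing in Lemmas~\ref{lem:2.4}--\ref{lem:2.10} being themselves controlled by the norms in the statement, using the vanishing of $f_1$ on $\Gamma$ built into $f\in\mathcal C\times\mathcal D$).

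I would then carry out the two bootstrap steps, at each step estimating $\|w_1\|_{L^2(0,T;H^{m+2})}+\|k_2\|_{L^2(0,T;H^{m+2})}$ against the data and the quantities controlled at the previous step. The ingredients are: the time-regularity from Lemma~\ref{lem:2.10} just quoted; the weighted spatial $H^1$--$H^2$ bounds on $(\rho^{-\alpha+1}k_1,\,k_2)$ from Lemmas~\ref{lem:W112} and~\ref{lem:2.10}; the vanishing estimates $|\nabla^l\varphi^0_1|\le C\rho^{2\alpha+2-l}$ and $[\varphi^0_1]_{l+\beta}\le C\rho^{2\alpha+2-l-\beta}$ recorded before Theorem~\ref{thm:theorem2}, which supply every cross term $h^{-2\alpha}e^{-2\varphi^0_2}\nabla\varphi^0_1\!\cdot\!\nabla k_j$ and its derivatives with a large positive power of $\rho$; and the Hardy-type inequality of Remark~\ref{rem1} (Corollary~4.1 of~\cite{LT}), which lets one trade a power of $\rho$ for a spatial derivative, for instance $\|\rho^{-1}\nabla k_1\|_{L^2}\le C\|\rho^{-\alpha}\nabla k_1\|_{L^2}\le C\|k_1\|_{H^1(M;h^{-\alpha})}$ since $\alpha>1$. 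Because $k_1$ and $k_2$ pick up one spatial derivative simultaneously at each step, the coupling closes; summing the two steps and combining with Lemma~\ref{lem:2.10} yields the claim. As in the previous subsection the differentiations are cleanest to justify on the Galerkin approximations $(k_1^N,k_2^N)$, with a passage to the limit at the end; and a partition of unity reduces matters to a tubular neighbourhood of $\Gamma$ (where $\rho$ is smooth) plus a region at positive distance from $\Gamma$ (where both operators are uniformly elliptic with coefficients as regular as the data, so the estimate is standard).

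The hard part is the weight bookkeeping near $\Gamma$, where the drift $\alpha\nabla h/h$ behaves like $\rho^{-1}\nabla\rho$. After the conjugation, this drift together with the commutator $[\Delta,\rho^{-\alpha+3}]$ produces terms of size $\rho^{-\alpha+2}|\nabla k_1|$ and $\rho^{-\alpha+1}|k_1|$, which, written through $w_1$, read $\rho^{-1}|\nabla w_1|$ and $\rho^{-2}|w_1|$; thus the elliptic equation for $w_1$ genuinely keeps a singular lower-order part concentrated on $\Gamma$, and the same is true of the top-order Leibniz terms arising when one differentiates the equation to gain the last derivatives. The crux of the proof is to verify that every such term closes: one must check, using only $\alpha>1$, the codimension-two nature of $\Gamma$ and the $C^k$- and Hölder-vanishing of $\varphi^0_1$, that each of them either gains a bounded extra factor of a positive power of $\rho$ and is then absorbed by a quantity already controlled (via Remark~\ref{rem1} or the previous step), or can be moved to the left with a small constant $\delta$. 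The term $h^{-2\alpha}e^{-2\varphi^0_2}\nabla\varphi^0_1\!\cdot\!\nabla k_1$ in the $k_2$-equation is the mildest instance of this: its coefficient is only Lipschitz across $\Gamma$ (it is of size $\rho$), so the Leibniz terms $\nabla^2(\mathrm{coeff})\,\nabla k_1$ and $(\mathrm{coeff})\,\nabla^3 k_1$ have to be dominated, respectively, by $\|\rho^{-\alpha}\nabla k_1\|_{L^2}$ via the Hardy inequality and by the previous-step bound on $\|w_1\|_{H^3}$, once more relying on $\alpha>1$.
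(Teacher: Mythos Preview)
Your overall plan---freeze $t$, move $\partial_t k_j$ and the coupling terms to the right, and bootstrap the elliptic estimate $\|w\|_{H^{m+2}}\le C(\|\Delta w\|_{H^m}+\|w\|_{L^2})$ on the closed manifold $M$---is exactly the route the paper takes. The gap is in the weight bookkeeping you flag as ``the hard part''. If you keep a \emph{fixed} conjugation $w_1=\rho^{-\alpha+3}k_1$ through both steps, the scheme does not close at the top order of the second step. After rewriting, the equation for $w_1$ contains the genuinely singular first-order piece $c\,\rho^{-1}\nabla\rho\cdot\nabla w_1$ (commutator plus drift), with $c$ of size $|2\alpha-3|$. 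To reach $H^4$ you need $\|\rho^{-1}\nabla w_1\|_{H^2}$, whose top-order Leibniz term is $\|\rho^{-1}\nabla^3 w_1\|_{L^2}\sim\|\rho^{-\alpha+2}\nabla^3 k_1\|_{L^2}$. Your $m=1$ step only yields $\|w_1\|_{H^3}=\|\rho^{-\alpha+3}k_1\|_{H^3}$, i.e.\ control of $\|\rho^{-\alpha+3}\nabla^3 k_1\|_{L^2}$---one power of $\rho$ too weak. The Hardy inequality of Remark~\ref{rem1} cannot rescue this: it requires the function to vanish on $\Gamma$, and there is no reason for $\nabla^3 w_1$ (or $\nabla^2 w_1$) to do so. Nor can the term be absorbed with a small constant, since $c$ is $O(1)$.

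The paper avoids this by climbing a \emph{ladder of weights} rather than fixing one: it proves successively
\[
\|\rho^{-\alpha+1}k_1\|_{H^2},\qquad \|\rho^{-\alpha+2}k_1\|_{H^3},\qquad \|\rho^{-\alpha+3}k_1\|_{H^4},
\]
each by the elliptic estimate applied to $\Delta(\rho^{-\alpha+j}k_1)$. The point is that the commutator/drift on the right of $\Delta(\rho^{-\alpha+j}k_1)$ has leading part $\rho^{-\alpha+j-1}\nabla k_1$, whose $H^{j-1}$ norm is controlled precisely by the previous rung $\|\rho^{-\alpha+j-1}k_1\|_{H^{j}}$, with no residual negative power of $\rho$. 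The lowest rung is fed by Lemmas~\ref{lem:W112}--\ref{lem:2.10} (and Remark~\ref{rem1} for the zero-order weighted term), exactly the inputs you list. So your ingredients are the right ones; what is missing is to let the weight increase by one power of $\rho$ at each bootstrap step.
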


\begin{proof}

According to Proposition \ref{prop:prop1}, Lemma \ref{lem:2.4} and Lemma \ref{lem:W112}, we find
that the equations
\begin{equation*}
	\begin{aligned}
		\Delta k_2=&\partial_tk_2+2h^{-2\alpha}e^{-2\varphi^0_2}|\nabla \varphi^0_1|^2k_2-2h^{-2\alpha}e^{-2\varphi^0_2}\nabla \varphi^0_1\nabla k_1-
		f_2,\\
		\Delta(\rho^{-\alpha+1}k_1)=&\rho^{-\alpha+1}(\partial_tk_1+2(\nabla \varphi^0_2+\frac{\alpha\nabla h}{h})\nabla k_1+2\nabla\varphi^0_1\nabla k_2-f_1)\\
		&+2\nabla(\rho^{-\alpha+1})\nabla k_1+\Delta(\rho^{-\alpha+1})k_1,\\
		\Delta(\rho^{-\alpha+2}k_1)=&\rho^{-\alpha+2}(\partial_tk_1+2(\nabla \varphi^0_2+\frac{\alpha\nabla h}{h})\nabla k_1+2\nabla\varphi^0_1\nabla k_2-f_1)\\
		&+2\nabla(\rho^{-\alpha+2})\nabla k_1+\Delta(\rho^{-\alpha+2})k_1,\\
		\Delta (\rho^{-\alpha+3}k_1)=&\rho^{-\alpha+3}(\partial_tk_1+2(\nabla\varphi^0_2+\frac{\alpha\nabla h}{h})\nabla k_1+2\nabla\varphi^0_1\nabla k_2-f_1)\\
		&+2\nabla(\rho^{-\alpha+3})\nabla k_1+\Delta(\rho^{-\alpha+3}) k_1
	\end{aligned}
\end{equation*}
all possess weak solutions, since the right-hand sides of the equations belong to $L^2(M)$.
By the regularity of elliptic equations, in view of Theorem 1 and Theorem 2 of Chapter 6.3 in \cite{C}, we have
\begin{equation*}
	\begin{aligned}
		\|k_2\|^2_{H^2(M)}&\leq C( \|k_2\|^2_{L^2(M)}+\|\Delta k_2\|^2_{L^2(M)}),\\
		\|\rho^{-\alpha+1}k_1\|^2_{H^2(M)}&\leq C( \|\rho^{-\alpha+1}k_1\|^2_{L^2(M)}+\|\Delta(\rho^{-\alpha+2}k_1)\|^2_{L^2(M)}),	\end{aligned}
\end{equation*}
\begin{equation*}
	\begin{aligned}
			\|k_2\|^2_{H^3(M)}&\leq C( \|k_2\|^2_{L^2(M)}+\|\Delta k_2\|^2_{H^1(M)}),\\
		\|\rho^{-\alpha+2}k_1\|^2_{H^3(M)}&\leq C( \|\rho^{-\alpha+2}k_1\|^2_{L^2(M)}+\|\Delta(\rho^{-\alpha+2}k_1)\|^2_{H^1(M)}).\\
		\|k_2\|^2_{H^4(M)}&\leq C( \|k_2\|^2_{L^2(M)}+\|\Delta k_2\|^2_{H^2(M)}),\\
		\|\rho^{-\alpha+3}k_1\|^2_{H^4(M)}&\leq C( \|\rho^{-\alpha+3}k_1\|^2_{L^2(M)}+\|\Delta (\rho^{-\alpha+3}k_1)\|^2_{H^2(M)}).
	\end{aligned}
\end{equation*}
Integrating from $0$ to $T$ and utilizing the estimate from Lemma \ref{lem:W112} and Lemma \ref{lem:2.10}, we have completed the proof.
\end{proof}
\begin{rem}\label{rem:3-1}Take $\varepsilon$ such that $0<\varepsilon<\frac{1}{2}(\alpha-1)$, if the initial function	  
	   $$(\varphi_1^0,\varphi_2^0)\in C^{[2\alpha-2\varepsilon],2\alpha-2\varepsilon- [2\alpha-2\varepsilon]}(M)\times\{ u\in  C^{[2\alpha-2\varepsilon],2\alpha-2\varepsilon- [2\alpha-2\varepsilon]}(M)| u=0 \mbox{ on } \Gamma\},$$
	   then there exists a constant $C$ depending on $M$, $T$ and $\varphi^0$ such that
		\begin{equation*}
		\begin{aligned}
			&\|\rho^{-\alpha+1}k_1\|_{L^2(0,T;H^2(M))}+\| k_2\|_{L^2(0,T;H^2(M))}\\
			\leq &C(\|\rho^{-\alpha+1}f_1\|_{{L^2(0,T;L^2(M))}}+\| f_2\|_{{L^2(0,T;L^2(M))}}).
		\end{aligned}
	\end{equation*}	   
  \end{rem}

\begin{thm} \label{thm:W422}
For any $t\in [0,T] $, if $(k_1,k_2)$ is a solution of \eqref{8'}, 
 then $ k_1\in W^{4,2}_{2}(Q_T;\rho^{ -\alpha}) $
and $ k_2\in W^{4,2}_{2}(Q_T)$. Futhermore,
\begin{equation*}
	\begin{aligned}
		&\|k_1\|^2_{W^{4,2}_{2}(Q_T;\rho^{-\alpha})}+\|k_2\|^2_{W^{4,2}_{2}(Q_T)}\\
		\leq &C\left(\|f_1\|^2_{W^{2,1}_{2}(Q_T;\rho^{-\alpha})}+\|f_2\|^2_{W^{2,1}_{2}(Q_T)}
		+\|\rho^{-\alpha+1}f_1(0)\|_{H^1(M)}+\|f_2(0)\|_{H^1(M)}\right).
	\end{aligned}
\end{equation*}
 
 Moreover, according to the Sobolev embedding theorem, for $i=1,2,3$, we have $$
\rho^{-\alpha+3}\nabla_{x_i} k_1, \nabla_{x_i} k_2 \in C^{\frac{1}{2},\frac{1}{4}}(Q_T), $$ 
and $$\|\nabla(\rho^{-\alpha+3} k_1)\|_{C^{\frac{1}{2},\frac{1}{4}}(Q_T)}+\|\nabla  k_2\|_{C^{\frac{1}{2},\frac{1}{4}}(Q_T)}\leq C \left(\| \rho^{-\alpha+3} k_1\|_{W^{4,2}_{2}(Q_T)}+\|  k_2\|_{W^{4,2}_{2}(Q_T)}\right).$$
\end{thm}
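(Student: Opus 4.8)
The plan is to bootstrap the regularity of the weak solution $(k_1,k_2)$ produced in Proposition \ref{prop:prop1} by repeatedly differentiating the system in $t$ and applying parabolic (and elliptic) regularity estimates, exactly as prepared in Lemmas \ref{lem:W112}, \ref{lem:2.10} and \ref{lem:W212}. First I would collect from those three lemmas control of the quantities $\rho^{-\alpha}k_1$, $k_2$ in $L^\infty(0,T;H^1)$, of $\rho^{-\alpha}\partial_t k_1$, $\partial_t k_2$ in $L^\infty(0,T;L^2)\cap L^2(0,T;H^1)$, of $\rho^{-\alpha+1}\partial_t k_1$, $\partial_t k_2$ in $L^2(0,T;H^2)$, of $\rho^{-\alpha+1}\partial_t^2 k_1$, $\partial_t^2 k_2$ in $L^2(0,T;L^2)$, and of the spatial $H^4$-norms $\|\rho^{-\alpha+3}k_1\|_{L^2(0,T;H^4(M))}$, $\|k_2\|_{L^2(0,T;H^4(M))}$, all bounded by the stated right-hand side involving $f_1,f_2$ and their $t=0$ traces. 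The point is that these are precisely the ingredients appearing in the definition of $\|\cdot\|_{W^{4,2}_2(Q_T;\rho^{-\alpha})}$ (the norm $(\sum_{0\le i+2j\le4}\|\rho^{-\alpha-1+i+(2j-2)^+}\partial_t^j\nabla_x^i u\|^2_{L^2(Q_T)})^{1/2}$) and of $\|\cdot\|_{W^{4,2}_2(Q_T)}$.

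Next I would go through the multi-index count $0\le i+2j\le 4$ term by term. The purely spatial terms $j=0$, $0\le i\le 4$ with weight $\rho^{-\alpha-1+i}$ are controlled by Lemma \ref{lem:W212} together with the Hardy-type inequality of Remark \ref{rem1} (Corollary 4.1 of \cite{LT}) to absorb the lowest-order weight; the extra factor of $\rho^3$ built into $\rho^{-\alpha+3}k_1$ is exactly what lets one pass from the weighted $H^1$ bound to the weighted $H^4$ bound without losing powers of $\rho$. The terms with $j=1$, $0\le i\le 2$, carrying weight $\rho^{-\alpha-1+i}$ (since $(2j-2)^+=0$), come from Lemma \ref{lem:W112} and Lemma \ref{lem:2.10}, which give $\partial_t k_1$, $\partial_t k_2$ in the appropriate weighted $H^2$ in space. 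The remaining term $j=2$, $i=0$, has weight $\rho^{-\alpha-1+2}=\rho^{-\alpha+1}$, and $\|\rho^{-\alpha+1}\partial_t^2 k_1\|_{L^2(Q_T)}$, $\|\partial_t^2 k_2\|_{L^2(Q_T)}$ are exactly supplied by Lemma \ref{lem:2.10}. Assembling, squaring and summing yields the displayed estimate, the $f_1(0),f_2(0)$ contributions being inherited verbatim from Lemmas \ref{lem:W112}–\ref{lem:2.10}; one should also note $f_i\in\mathcal{C}\times\mathcal{D}$ guarantees $\rho^{-\alpha+1}f_1(0)\in H^1(M)$, $f_2(0)\in H^1(M)$ and that the $W^{2,1}_2$-norms on the right bound the mixed space–time norms of $f_1,f_2$ and $\partial_t f_1,\partial_t f_2$ appearing in those lemmas.

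For the last assertion, the Sobolev–Morrey embedding for anisotropic (parabolic) spaces gives $W^{4,2}_2(Q_T)\hookrightarrow C^{3+1/2,\,(3+1/2)/2}$-type control; concretely, since $\rho^{-\alpha+3}k_1$ and $k_2$ lie in $L^2(0,T;H^4(M))$ with $\partial_t$ of them in $L^2(0,T;H^2(M))$ and $\partial_t^2$ in $L^2(0,T;L^2(M))$, the parabolic embedding in dimension $3$ yields that their first spatial derivatives are in $C^{1/2,1/4}(Q_T)$ with norm controlled by $\|\rho^{-\alpha+3}k_1\|_{W^{4,2}_2(Q_T)}+\|k_2\|_{W^{4,2}_2(Q_T)}$; combining with the previous display finishes the proof. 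The main obstacle is bookkeeping the $\rho$-weights: one must check that differentiating $\rho^{-\alpha+j}k_1$ and commuting $\Delta$ past the weight (as in the identities displayed in the proof of Lemma \ref{lem:W212}) never produces a term worse than what Remark \ref{rem1} and the bound $|\nabla^k\varphi_1^0|\le C\rho^{2\alpha+2-k}$ can absorb — this is where the choice $0<\beta<\min\{2\alpha-2,1\}$, $2+\beta<\gamma<2\alpha$ and the structure of the weighted spaces is used in an essential way, and it is the only place where anything beyond routine citation of \cite{C} is needed.
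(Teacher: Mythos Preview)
Your proposal is correct and is exactly the approach the paper intends: Theorem \ref{thm:W422} is stated without a separate proof because it is the combination of Lemmas \ref{lem:2.4}, \ref{lem:W112}, \ref{lem:2.10} and \ref{lem:W212} (the latter's proof already records the intermediate bounds $\|\rho^{-\alpha+1}k_1\|_{H^2}$, $\|\rho^{-\alpha+2}k_1\|_{H^3}$, $\|\rho^{-\alpha+3}k_1\|_{H^4}$ needed to unravel the weight commutators), together with Remark \ref{rem1} for the Hardy-type lowest-order term and the parabolic Sobolev embedding for the final $C^{1/2,1/4}$ statement. The only inaccuracy is your closing remark: the parameters $\beta,\gamma$ govern the weighted H\"older spaces used later in the Schauder analysis and play no role in this $W^{4,2}_2$ bookkeeping.
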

Since $f=(f_1,f_2) \in  \mathcal{C}
\times\mathcal{D} $, we have $k_1,k_2\in C^{2+\beta,1+\frac{\beta}{2}}({M\backslash \Gamma\times [0,T]})$. For any fixed  $t\in[0,T]$, the regularity theorem of elliptic equations and Lemma \ref{lem:W112} yield
	\begin{equation}
	\begin{aligned}\label{2.25}
		&\|\rho^{-\alpha+1}k_1(t)\|_{H^2(M)}\\
		\leq &C\left(\|\rho^{-\alpha+1}f_1\|_{{L^2(0,T;L^2(M))}}+\|\rho^{-\alpha+1}\partial_t f_1\|_{{L^2(0,T;L^2(M))}}+\|\rho^{-\alpha}f_1(0)\|_{L^2(M)}^2\right.\\
		&\left.+\| f_2\|_{{L^2(0,T;L^2(M))}}+\| \partial_t f_2\|_{{L^2(0,T;L^2(M))}}
		+\|f_2(0)\|_{L^2(M)}^2\right).
	\end{aligned}
\end{equation}	 
By the Sobolev embedding theorem, we have
\begin{equation*}
 \begin{aligned}
	\|\rho^{-\alpha+1}k_1(t)\|_{C^{0,\frac{1}{2}}(M)}	\leq&C\|\rho^{-\alpha+1}k_1(t)\|_{H^2(M)}.\\
 \end{aligned}
\end{equation*}	   
In conclusion, $|k_1(x,t)|\leq C\rho^{\alpha-1},  \forall(x,t)\in M\times[0,t]$, where $C$ is the right hand side of (\ref{2.25}).

Next, we will give the weighted Schauder estimates using the method of classical theory for the linear parabolic equations.

\begin{lem}\label{key}
  For any $x_0\in M\backslash \Gamma$ and $2<\gamma<2\alpha$, if $B_1(x_0)\cap \Gamma\neq \varnothing$, let $v\in C^{2+\beta,1+\frac{\beta}{2}}(( B_1(x_0)\setminus \Gamma)\times [0,T])$ and
$|\nabla v|\leq C^{'}\rho^{-\delta}$ with $\delta<1$, $f\in C^{\beta,\frac{\beta}{2}}(\overline{Q_T})$ with $\max\limits_{B_1(x_0)\times[0,T]}(\rho^{-\gamma+2}|f|)\leq C^{'}$. Then  there is a uniform constant $C_{\gamma}$ depending only on
$C^{'},\delta,\gamma$, such that if $u$ is a
solution of
   \begin{equation*}
    \left\{ \begin{aligned}
        \partial_t u-\Delta u&=2\nabla
u(\frac{\alpha\nabla \rho}{\rho}+\nabla v)+f   &\mbox{ in} \ B_1(x_0)\times[0,T]\\
        u&=0 & \mbox{ on} \ B_1(x_0)\times \{t=0 \}
     \end{aligned}\right.
   \end{equation*}
   and $|u|_{C^0(\partial B_{\frac{2}{3}}(x_0)\times
[0,T] )}\leq C^{'}, u|_{\Gamma\times [0,T]}
\equiv 0$, then
     \begin{equation*}
     \begin{aligned}
        |u(x,t)|\leq C_{\gamma}\rho(x)^{\gamma}
, \enspace 
\mbox{ in } B_1(x_0)\times[0,T].
     \end{aligned}
   \end{equation*}
\end{lem}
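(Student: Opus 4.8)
The plan is to construct an explicit barrier of the form $w(x) = A\rho(x)^{\gamma}$ on the annular region $B_{2/3}(x_0)\setminus\Gamma$ (intersected with $[0,T]$), and to use the comparison principle for the parabolic operator $\mathcal{L}u = \partial_t u - \Delta u - 2\nabla u\cdot(\tfrac{\alpha\nabla\rho}{\rho}+\nabla v)$ to dominate $|u|$ by $w$. The key computation is that, since $\rho(x)=\operatorname{dist}(x,\Gamma)$ is (after a standard modification near $\Gamma$, where $\Gamma$ is a smooth closed curve in a 3-manifold) comparable to the distance to a codimension-two submanifold, one has $|\nabla\rho|=1$ and $\Delta\rho = \tfrac{1}{\rho} + O(1)$ near $\Gamma$. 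Hence for $w=A\rho^\gamma$,
\begin{equation*}
\Delta w = A\gamma(\gamma-1)\rho^{\gamma-2}|\nabla\rho|^2 + A\gamma\rho^{\gamma-1}\Delta\rho = A\gamma^2\rho^{\gamma-2} + O(A\rho^{\gamma-1}),
\end{equation*}
while $2\nabla w\cdot\tfrac{\alpha\nabla\rho}{\rho} = 2A\alpha\gamma\rho^{\gamma-2}$ and $2\nabla w\cdot\nabla v = O(A\rho^{\gamma-1-\delta})$ using $|\nabla v|\le C'\rho^{-\delta}$ with $\delta<1$. Therefore
\begin{equation*}
\mathcal{L}w = -A\gamma(\gamma-2\alpha)\rho^{\gamma-2} + O(A\rho^{\gamma-1-\delta}),
\end{equation*}
and since $2<\gamma<2\alpha$ the leading coefficient $-\gamma(\gamma-2\alpha)>0$; as $\rho^{\gamma-2}$ dominates $\rho^{\gamma-1-\delta}$ for $\rho$ small (because $\gamma-2 < \gamma-1-\delta$ fails — wait, $-2 < -1-\delta \iff \delta<1$, so indeed $\gamma-2<\gamma-1-\delta$, meaning $\rho^{\gamma-2}\ge\rho^{\gamma-1-\delta}$ for $\rho\le 1$), the leading term wins on a neighborhood $\rho\le r_0$ of $\Gamma$. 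On the compact region $r_0\le\rho\le 2/3$ the operator is uniformly parabolic with bounded coefficients, and there $\mathcal{L}w$ can be made $\ge \rho^{\gamma-2}\ge(\tfrac23)^{\gamma-2}$ wait — one instead just absorbs this region into the constant by a separate uniformly parabolic barrier or by enlarging $A$. Meanwhile $|f|\le C'\rho^{\gamma-2}$ by hypothesis, so choosing $A$ large (depending only on $C',\delta,\gamma,M$) gives $\mathcal{L}w \ge |f| \ge \mathcal{L}(\pm u)$ in $(B_{2/3}(x_0)\setminus\Gamma)\times(0,T]$.

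For the boundary and initial conditions: on $\{t=0\}$ we have $u=0\le w$; on $\partial B_{2/3}(x_0)\times[0,T]$ we have $|u|\le C'\le A(\tfrac23)^\gamma = w$ after enlarging $A$; and on $\Gamma\times[0,T]$ the function $u\equiv 0$ while $w=0$, so $w\ge|u|$ there in the limiting sense. The delicate point is that $\Gamma$ is a ``boundary'' where the operator is singular, so the comparison principle cannot be quoted off the shelf: I would handle it by comparing on the domain $\{r_1<\rho<2/3\}$ for small $r_1>0$, where the operator is uniformly parabolic and $w-(\pm u)\ge 0$ on the parabolic boundary component $\{\rho=r_1\}$ — this uses that $|u|\le C\rho^{\alpha-1}$ from the $H^2$/Sobolev estimate established just before the lemma together with $\gamma<2\alpha$... actually more carefully, since $u$ is continuous up to $\Gamma$ with $u=0$ there, for any $\eta>0$ we have $|u|\le\eta$ on $\{\rho=r_1\}$ once $r_1$ is small, while $w\ge 0$; then applying the maximum principle on $\{r_1<\rho<2/3\}\times(0,T]$ gives $|u|\le w+\eta$ there, and letting $r_1\to 0$ then $\eta\to0$ yields $|u|\le w$ on all of $B_{2/3}(x_0)\times[0,T]$, hence the claim on $B_1(x_0)$ by the same bound (with $C_\gamma = A$, noting $\rho\le 1$ makes the estimate on $B_1$ follow from that on $B_{2/3}$ after covering — or simply run the barrier argument on $B_1$ directly).

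The main obstacle I expect is making the barrier argument rigorous at $\Gamma$: one must know enough a priori regularity of $u$ near $\Gamma$ (continuity with $u|_\Gamma=0$, which is given, plus the quantitative $|u|\le C\rho^{\alpha-1}$ bound derived above) to justify passing to the limit $r_1\to0$, and one must verify the claimed asymptotics $\Delta\rho=\rho^{-1}+O(1)$ for the distance function to a closed curve in a Riemannian $3$-manifold, which requires choosing Fermi coordinates around $\Gamma$ and possibly replacing $\rho$ by a smooth comparable function away from $\Gamma$ (the $O(1)$ error and the smoothing are then harmlessly absorbed into the lower-order terms since $\delta<1$). A secondary technical point is the interaction between the drift term $\tfrac{\alpha\nabla\rho}{\rho}$ and the barrier: it is precisely the condition $\gamma<2\alpha$ that guarantees the sign $-\gamma(\gamma-2\alpha)>0$, so the computation above is where the hypothesis is used, and I would make sure to display that inequality explicitly.
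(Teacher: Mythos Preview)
There is a genuine gap. You assert that on $\partial B_{2/3}(x_0)\times[0,T]$ one has $|u|\le C'\le A(\tfrac23)^\gamma = w$, but this conflates $\rho(x)=\operatorname{dist}(x,\Gamma)$ with $|x-x_0|$. Since the hypothesis only requires $B_1(x_0)\cap\Gamma\neq\varnothing$, the curve $\Gamma$ can (and typically does) cross $\partial B_{2/3}(x_0)$, so there are points on that sphere where $\rho$ is arbitrarily small and $A\rho^\gamma$ is arbitrarily close to $0$. No choice of $A$ makes the barrier dominate the given bound $|u|\le C'$ there, and the comparison argument on $\{r_1<\rho\}\cap B_{2/3}(x_0)$ breaks down on this portion of the parabolic boundary. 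Your $\eta$-trick handles the inner boundary $\{\rho=r_1\}$ but does nothing for the outer one.

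The paper's proof (following Li--Tian, Lemma~4.4) resolves exactly this issue with two additional ingredients you are missing. First, the barrier is not $A\rho^\gamma$ but $\rho^\gamma+\rho^{\gamma-\delta}r_{\bar x_0}(x)^2$, where $r_{\bar x_0}(x)$ is the tangential distance along $\Gamma$ from the projection of $x$ to a fixed point $\bar x_0\in\Gamma$; the extra term supplies boundary control at points that are near $\Gamma$ (small $\rho$) but far from $\bar x_0$ along $\Gamma$ (large $r_{\bar x_0}$). Second, the decay exponent is reached by iteration: one first proves $|u|\le C\rho^{\tilde\varepsilon}$ for a small $\tilde\varepsilon$, then feeds this bound back in as boundary data to get $|u|\le C\rho^{2\tilde\varepsilon}$, and so on up to $\rho^\gamma$. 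At the $j$-th step the barrier is $\rho^{j\tilde\varepsilon}+\rho^{(j-1)\tilde\varepsilon}r_{\bar x_0}^2$, and the previously obtained estimate $|u|\le C\rho^{(j-1)\tilde\varepsilon}$ is precisely what allows the tangential term to dominate $|u|$ on the lateral boundary. Neither ingredient is a mere technicality: without the tangential term the barrier vanishes on all of $\Gamma$ rather than at a single point, and without the iteration there is no prior decay to combine it with. Your leading-order computation $\mathcal{L}(\rho^\gamma)\sim\gamma(2\alpha-\gamma)\rho^{\gamma-2}$ is the right starting point, but the single-shot barrier $A\rho^\gamma$ cannot close the argument.
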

\begin{proof}
It's similar to the proof of Lemma 4.4 of \cite{LT} by constructing the barrier and using maximum principle.
For any $\overline{x}_0\in \Gamma \cap B_1(x_0) $, we choose $\rho^\gamma +\rho^{\gamma-\delta}r_{\overline{x}_0}(x)^2$ as upper barrier for $u$, where $\delta \leq \gamma <2\alpha $, $0<\delta<1$ and $r_{\overline{x}_0}(x)$ is the projection onto $\Gamma$ of the distance between $\overline{x}_0$ and $x$ for any fixed $\overline{x}_0$ in $B_1(x_0)$.
By direct computations, we can see that there is a neighborhood $U$ of $\Gamma\cap B_1(x_0)$ such that
$$(\partial _t-\Delta-2(\frac{\alpha\nabla \rho}{\rho}+\nabla v)\nabla)(\rho^\gamma +\rho^{\gamma-\delta}r_{\overline{x}_0}(x)^2)\geq C_0\rho^{\gamma-2} \quad  \mbox{in} \ U\times [0,T].
$$

Assume $\mu$ is a small positive constant, then
\begin{equation}
\begin{aligned}
&(\partial _t-\Delta-2(\frac{\alpha\nabla \rho}{\rho}+\nabla v)\nabla)[(\rho^\gamma +\rho^{\gamma-\delta}r_{\overline{x}_0}(x)^2)\|\rho^{2-\gamma}f\|_{C^0(B_1(x_0)\times[0,T])}\pm\mu u]\\ 
\geq & (C_0-\mu) \|\rho^{2-\gamma}f\|_{C^0(B_1(x_0)\times[o,T])}\rho^{\gamma-2} 
\geq0 \quad  \mbox{in} \ U\times [0,T].	
\end{aligned}
\end{equation}
Now, fix $\widetilde{\varepsilon}$ such that $l\widetilde{\varepsilon}=\gamma$ for some $l>0$. Initially, we set $\delta=\gamma=\widetilde{\varepsilon}$.
Using the maximum principle and comparing $u$ and $\rho^\gamma +r_{\overline{x}_0}(x)^2$ for any $\overline{x}_0$ in $\Gamma\cap B_1(x_0)$, we obtain
 $|u(x,t)|\leq C\|\rho^{-\gamma+2}f\|_{C^0(B_1(x_0)\times[0,T])}\rho^{\widetilde{\varepsilon}}\leq C_\gamma \rho^{\widetilde{\varepsilon}}$. Then, we repeat this argument with $\gamma=j\widetilde{\varepsilon}, \delta=\widetilde{\varepsilon}$ for $2\leqslant j\leqslant l$ to complete the proof.
\end{proof}

Note that the second equation of (\ref{8'}) is equivalent to
\begin{equation}
   \partial_t k_1-\Delta k_1 +2 (\frac{\alpha\nabla h}{h}+\nabla \varphi_2^0) \nabla k_1 =f_1-2\nabla\varphi_1^0 \nabla k_2.\label {87}
\end{equation}
Denote $F_1:=f_1-2\nabla \varphi_1^0 \nabla k_2$, we apply  Lemma \ref{key} to $k_1$,
$$\|\rho^{-\gamma+2}F_1\|_{C^{0}(Q_T)}\leq C(\| k_2\|_{W^{4,2}_2(Q_T)}+ \|\rho^{-\gamma+2}f_1\|_{C^{0}(Q_T)} \leqslant C^{'},$$
where $C^{'}=C^{'}(M,f_1,f_2)$.
 We obtain
   \begin{equation}\label{k2}
   \begin{aligned}
     |k_1(x, t)|\leqslant C \max\limits_{B_1(x)}(\rho^{-\gamma+2}|f_1|+|\nabla k_2|)\rho(x)^{\gamma} \ \mbox{ for } (x, t) \in B_1(x)
\times[0,T].
   \end{aligned} 
\end{equation}

Next, we aim to improve the regularity of $k_1,k_2$. 
Set $g_1=f_1-2(\frac{\alpha\nabla h}{h}+\nabla\varphi^0_2)\nabla k_1-2\nabla\varphi^0_1\nabla k_2$, then
\begin{equation}
\begin{aligned}
  \partial_t k_1-\Delta k_1=g_1. \label{92}
  \end{aligned}
  \end{equation}
Take $X=(x_0,t_0)\in Q_T$, $R=\frac{1}{6}\rho(x_0)$ and set $Q_R(X)=B_R(x_0)\times (t_0,t_0+R^2]$, $\partial_pQ_{4R}(X)=\left(B_R(x_0)\times \{t=t_0\}\right)\cup \left(\partial B_R(x_0)\times (t_0,t_0+R^2]\right)$. 

 Consider the problem
\begin{equation}
	\left\{
	\begin{aligned}
		\partial_t w-\Delta w&=g_1  \  \mbox{ in }Q_{4R}(X)\\
		w&=0  \  \ \mbox{ on } \partial_pQ_{4R}(X).
	\end{aligned}
	\right.\label{94}
\end{equation}
There exists a solution to this problem since $g_1\in L^2(Q_T)$. Set $u=k_1-w$, then $u$ satisfies
\begin{equation}
 \partial_t u-\Delta u=0 \ \mbox{ in } Q_{4R}(X).\label {93}\\
  \end{equation}

By the properties for parabolic equations, we can get the lemmas.
\begin{lem}\label{lem:u} (Estimate of u).
  For u in  (\ref{93}), we have for any $\beta\in (0,1)$,
\begin{equation*}
   \begin{aligned}
     &R^{2+\beta}(\|\nabla^2 u\|_{C^\beta(Q_R(X))}+\|\partial_tu\|_{C^\beta(Q_R(X))})+R^{2}\|\nabla^2 u\|_{C^{0}(Q_{R}(X))} + R^{2}\|\partial_tu\|_{C^{0}(Q_{R}(X))} \\
     & + R\|\nabla u\|_{C^{0}(Q_{R}(X))}\leq  C\|u\|_{C^{0}(Q_{4R}(X))}.
   \end{aligned}\label {95}
\end{equation*}
\end{lem}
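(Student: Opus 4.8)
The plan is to exploit that $u$ solves the homogeneous heat equation $\partial_t u - \Delta u = 0$ on the cylinder $Q_{4R}(X)$, and therefore enjoys interior parabolic regularity of all orders; the content of the lemma is merely to track the precise powers of $R$ that come from parabolic scaling. First I would normalize: set $\widetilde{u}(y,s) := u(x_0 + R y,\, t_0 + R^2 s)$ for $(y,s)$ in the unit cylinder $Q_4(0) = B_4(0) \times (0,16]$ (or $(0,1]$ after a further harmless rescaling of the time-length, which only affects constants). Since $\partial_t u - \Delta u = 0$, the rescaled function satisfies $\partial_s \widetilde{u} - \Delta_y \widetilde{u} = 0$ in $Q_4(0)$, with $\widetilde{u} = 0$ on the parabolic bottom $\{s = 0\}$ and $\|\widetilde{u}\|_{C^0(Q_4(0))} = \|u\|_{C^0(Q_{4R}(X))}$.

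Next I would apply the standard interior Schauder estimate for the heat operator (e.g. Theorem 4.9 or the interior estimates in Chapter 3–4 of Lieberman, or Ladyzhenskaya–Solonnikov–Uraltseva): for a solution of the homogeneous heat equation on $Q_4(0)$ one has, for any $\beta \in (0,1)$,
\begin{equation*}
	\|\widetilde{u}\|_{C^{2+\beta,1+\frac{\beta}{2}}(Q_1(0))} \leq C \|\widetilde{u}\|_{C^0(Q_4(0))},
\end{equation*}
where $C$ depends only on $\beta$ and the dimension. In particular $\|\nabla_y^2 \widetilde{u}\|_{C^\beta(Q_1(0))}$, $\|\partial_s \widetilde{u}\|_{C^\beta(Q_1(0))}$, $\|\nabla_y^2 \widetilde{u}\|_{C^0(Q_1(0))}$, $\|\partial_s \widetilde{u}\|_{C^0(Q_1(0))}$ and $\|\nabla_y \widetilde{u}\|_{C^0(Q_1(0))}$ are all bounded by $C\|\widetilde{u}\|_{C^0(Q_4(0))}$. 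Then I would undo the scaling: $\nabla_x u = R^{-1}(\nabla_y \widetilde{u})$, $\nabla_x^2 u = R^{-2}(\nabla_y^2 \widetilde{u})$, $\partial_t u = R^{-2}(\partial_s \widetilde{u})$, and the parabolic $C^\beta$-seminorm scales with an extra factor $R^{-\beta}$, i.e. $[\nabla_x^2 u]_{C^\beta(Q_R(X))} = R^{-2-\beta}[\nabla_y^2 \widetilde{u}]_{C^\beta(Q_1(0))}$ and likewise for $\partial_t u$. Multiplying each term of the rescaled estimate by the appropriate power of $R$ — namely $R^{2+\beta}$ for the $C^\beta$ terms, $R^2$ for the $C^0$ norms of $\nabla^2 u$ and $\partial_t u$, and $R$ for $\|\nabla u\|_{C^0}$ — collects exactly the left-hand side of the claimed inequality, with the right-hand side equal to $C\|u\|_{C^0(Q_{4R}(X))}$ and $C$ independent of $R$ and $X$.

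There is essentially no serious obstacle here; the only mild care needed is (a) to make sure the geometry of $M$ enters harmlessly — on a fixed compact manifold the Laplace–Beltrami operator has smooth coefficients uniformly close to the Euclidean Laplacian on balls of radius $R \leq \tfrac{1}{6}\rho(x_0) \leq \tfrac16 \operatorname{diam}(M)$, so the Schauder constants can be taken uniform in $x_0$ — and (b) to be consistent about whether one uses $Q_4(0) = B_4 \times (0, 16]$ or shrinks the time interval; either way the constant $C$ only depends on $\beta$ and $M$, not on $R$ or $X$. I would therefore present the argument as: rescale to the unit cylinder, quote the interior Schauder estimate for the homogeneous heat equation, and scale back, reading off the stated powers of $R$ from parabolic dimensional analysis.
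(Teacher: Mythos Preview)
Your scaling argument is correct and is the cleanest way to obtain this estimate. One small inaccuracy: you claim $\widetilde{u}=0$ on the bottom slice $\{s=0\}$, but this is neither true nor needed. From the decomposition $u=k_1-w$ with $w=0$ on $\partial_pQ_{4R}(X)$ one only gets $u(\cdot,t_0)=k_1(\cdot,t_0)$, which is not zero in general; fortunately the interior Schauder estimate for the homogeneous heat equation does not require any information on the initial slice, so simply drop that remark.

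The paper itself states the lemma without proof, deferring to ``the properties for parabolic equations''. The authors' suppressed draft argument (visible in the source between \verb|\iffalse| and \verb|\fi|) takes a different route: it works directly with the Campanato--Morrey machinery from Chapter~4 of \cite{C}, using that $D_{ij}u$ and $\partial_t u$ are themselves caloric, applying the $L^2$ growth lemma $\rho^{-5}\int_{Q_\rho}|D_{ij}u|^2\le CR^{-5}\int_{Q_{2R}}|D_{ij}u|^2$, and then passing to $C^0$ and $C^\beta$ bounds via Campanato's characterization. Your approach packages all of this into a single black-box Schauder estimate on the unit cylinder and reads off the $R$-powers by dimensional analysis; it is shorter and more transparent, while the paper's draft has the advantage of being self-contained relative to the specific lemmas in \cite{C} that are used elsewhere in Section~\ref{section2}. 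Both are standard and yield the same constant dependence $C=C(\beta,M)$.
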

\begin{lem}\label{lem:w} (Estimate of w).
  For w in  (\ref{94}), we have for any $\beta\in (0,1)$,
\begin{equation*}
\begin{aligned}
&R\|\nabla w\|_{C^0({Q_{R}(X)})}+R^2\left(\|\nabla^2w\|_{C^0({Q_{R}(X)})}+\|\partial_t w\|_{C^0({Q_{R}(X)})}\right)+R^{2+\beta}\left(\|\nabla^2w\|_{C^\beta(Q_{R}(X))}\right.\\
&+ \left. \|\partial_tw\|_{C^\beta(Q_{R}(X))} \right)\leq  C \left( \|w\|_{C^0(Q_{4R}(X))}+  R^{2}\|g_1\|_{C^0({Q_{4{R}}(X)})}+R^{2+\beta}\|g_1\|_{C^\beta(Q_{4R}(X))}\right),
  \end{aligned}\label {106}
  \end{equation*}
where $C=C(\beta)$.
\end{lem}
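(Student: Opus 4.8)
The plan is to deduce Lemma~\ref{lem:w} from a single fixed-scale interior Schauder estimate for the heat operator by parabolic rescaling, exactly parallel to the argument underlying Lemma~\ref{lem:u}; all the powers of $R$ in the statement then appear automatically from the change of variables. First I would normalize the cylinder: setting $\widetilde{w}(y,s):=w(x_0+Ry,\,t_0+R^2s)$ and $\widetilde{g}(y,s):=R^2 g_1(x_0+Ry,\,t_0+R^2s)$ for $(y,s)$ in the standard cylinder $Q_4:=B_4(0)\times(0,16]$, problem \eqref{94} becomes $\partial_s\widetilde{w}-\Delta_y\widetilde{w}=\widetilde{g}$ in $Q_4$ with $\widetilde{w}=0$ on $\partial_pQ_4$, and $Q_R(X)$, $Q_{4R}(X)$ correspond to $Q_1:=B_1(0)\times(0,1]$ and $Q_4$ respectively.

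Since $R=\tfrac16\rho(x_0)$, for $x\in B_{4R}(x_0)$ we have $\rho(x)\ge\rho(x_0)-\tfrac23\rho(x_0)=\tfrac13\rho(x_0)>0$, so $Q_{4R}(X)\subset M\setminus\Gamma$ and $g_1=f_1-2(\tfrac{\alpha\nabla h}{h}+\nabla\varphi_2^0)\nabla k_1-2\nabla\varphi_1^0\nabla k_2$ lies in the classical parabolic H\"older space $C^{\beta,\frac{\beta}{2}}(\overline{Q_{4R}(X)})$ (using Theorem~\ref{thm:W422} for $\nabla k_1,\nabla k_2$ and the smoothness of $h,\varphi^0$ away from $\Gamma$); hence $\widetilde g\in C^{\beta,\frac{\beta}{2}}(\overline{Q_4})$, with $\|\widetilde g\|_{C^0(\overline{Q_4})}=R^2\|g_1\|_{C^0(\overline{Q_{4R}(X)})}$ and the parabolic seminorm $[\widetilde g]_{\beta;Q_4}=R^{2+\beta}[g_1]_{\beta;Q_{4R}(X)}$. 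Then I would invoke the standard interior-plus-initial Schauder estimate: if $\partial_s\widetilde w-\Delta_y\widetilde w=\widetilde g$ in $Q_4$ and $\widetilde w(\cdot,0)=0$ in $B_4$, then
\begin{equation*}
\|\widetilde w\|_{C^{2+\beta,1+\frac{\beta}{2}}(\overline{Q_1})}\le C(\beta)\bigl(\|\widetilde w\|_{C^0(\overline{Q_4})}+\|\widetilde g\|_{C^{\beta,\frac{\beta}{2}}(\overline{Q_4})}\bigr).
\end{equation*}
This is the parabolic analogue of the computation used for Lemma~\ref{lem:u}: split $\widetilde w$ into a caloric function plus a correction carrying the source $\widetilde g$, apply the interior $L^2$--Campanato/Morrey estimates of Chapter~4 of \cite{C} to each piece over a chain of shrinking cylinders, and iterate; no corner-compatibility condition is needed because $\overline{Q_1}$ meets $\partial_pQ_4$ only along the initial slice $\overline{B_1(0)}\times\{0\}$, which sits in the spatial interior of $B_4(0)$ and on which $\widetilde w$ vanishes. (One may equally cite the global Schauder theory for the Dirichlet problem on a smooth cylinder.)

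Finally I would rescale back. Under $(y,s)\mapsto(x_0+Ry,\,t_0+R^2s)$ one has $\sup|\nabla_y^k\widetilde w|=R^k\sup|\nabla_x^k w|$ for $k=1,2$, $\sup|\partial_s\widetilde w|=R^2\sup|\partial_t w|$, and for the parabolic H\"older seminorms $[\nabla^2_y\widetilde w]_{\beta;Q_1}=R^{2+\beta}[\nabla^2_x w]_{\beta;Q_R(X)}$ and $[\partial_s\widetilde w]_{\beta;Q_1}=R^{2+\beta}[\partial_t w]_{\beta;Q_R(X)}$; substituting these, together with the two identities for $\widetilde g$ above and the trivial bound $R^2\|g_1\|_{C^0}+R^{2+\beta}[g_1]_\beta\le C(R^2\|g_1\|_{C^0(Q_{4R}(X))}+R^{2+\beta}\|g_1\|_{C^\beta(Q_{4R}(X))})$, turns the normalized estimate into precisely the asserted inequality with $C=C(\beta)$. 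The only delicate points are the bookkeeping of the parabolic-distance scaling in the H\"older seminorms and the verification that the cited estimate is genuinely interior in the spatial and lateral-time directions (so the corner $\partial B_{4R}(x_0)\times\{t_0\}$ never enters); I expect the real ``obstacle'' here to be purely expository—lining up the intermediate cylinders $Q_{2R}(X),Q_{3R}(X),Q_{4R}(X)$ with the constants in the Campanato lemmas—rather than anything substantive.
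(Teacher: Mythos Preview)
Your argument is correct: parabolic rescaling to the unit cylinder, followed by a single invocation of the interior-up-to-initial-time Schauder estimate for the heat equation with zero initial data, and scaling back, yields exactly the stated inequality with the right powers of $R$. The one parenthetical you should drop is the suggestion to ``equally cite the global Schauder theory for the Dirichlet problem on a smooth cylinder'': global $C^{2+\beta,1+\frac\beta2}$ regularity on $Q_4$ would require the corner compatibility $g_1=0$ on $\partial B_{4R}(x_0)\times\{t_0\}$, which you do not have; your main argument, which only uses the interior-in-space estimate up to the initial slice, is the correct one and needs no such hypothesis.

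The paper's (suppressed) proof proceeds differently: it works directly at scale $R$ without rescaling, first deriving an $L^2$ energy bound $\interleave w\interleave^2_{Q_{4R}(X)}\le C\|g_1\|^2_{L^2(Q_{4R}(X))}$ from testing \eqref{94} against $\partial_t w$, then feeding this into the Campanato-space iteration (Lemma~4.2$'$ and Lemma~3.1 of Chapter~4 in \cite{C}) to control $[\nabla^2 w]_{\beta}$ and $[\partial_t w]_{\beta}$ on $Q_{2R}(X)$, and finally using the interpolation inequality of Lemma~6.32 in \cite{GT} to recover the lower-order $C^0$ pieces. Your route is shorter and more conceptual---the $R$-dependence falls out of a single change of variables rather than being tracked through several intermediate inequalities---while the paper's approach is more self-contained, building the Schauder estimate from Morrey--Campanato machinery rather than quoting it. Both lead to the same conclusion with $C=C(\beta)$.
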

Once we have these two lemmas, we can prove the weighted Schauder estimates.
\begin{thm}\label{thm:k2}
 There is a uniform constant C such that for any $0<\beta<\min\{2\alpha-2,1\}$ and $2+\beta<\gamma<2\alpha$,
 \begin{equation*}
\begin{aligned}
&\sup\limits_{X\in Q_T}\rho_X^{1-\gamma}|\nabla k_1(X)|+\sup\limits_{X\in Q_T}\rho_X^{2-\gamma}\left(|\nabla^2k_1(X)|+|\partial_t k_1(X)|\right)\\
&+\sup\limits_{X,Y\in Q_T}\rho_{X,Y}^{2+\beta-\gamma}\left(\frac{|\nabla^2k_1(X)-\nabla^2k_1(Y)|}{\delta(X,Y)^{\beta}}+\frac{|\partial_tk_1(X)-\partial_tk_1(Y)|}{\delta(X,Y)^{\beta}}\right)\\
\leq& C  \left( \sup\limits_{Q_{T}}  \rho_{X}^{-\gamma}|k_1(X)|+  \sup\limits_{Q_{T}}  \rho_{X}^{2-
\gamma} |g_1(X)| +  \sup\limits_{Q_{T}}\rho_{Y,Z}^{2+\beta-\gamma} \frac{|g_1(Y)-g_1(Z)|}{\delta(Y,Z)^\beta}\right),
  \end{aligned}\label{115}
  \end{equation*}
where $C=C(M,\gamma,\beta)$.
 \end{thm}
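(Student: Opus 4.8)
The plan is to prove the weighted Schauder estimate by a covering/scaling argument, combining the interior estimates for the classical heat equation (Lemma \ref{lem:u} and Lemma \ref{lem:w}) with the weight-tracking bookkeeping familiar from the Schauder theory on singular domains in Chapter 4 of \cite{GT}. Fix a point $X=(x_0,t_0)\in Q_T$ and set $R=\tfrac16\rho(x_0)$, so that on the parabolic cylinder $Q_{4R}(X)$ the weight $\rho$ is comparable to $\rho_X$ up to a fixed constant; this is the standard device that lets one pass between the weighted norms and the unweighted interior norms. Writing $k_1=u+w$ as in \eqref{94}--\eqref{93}, Lemma \ref{lem:u} controls the derivatives of $u$ (the homogeneous part) by $\|u\|_{C^0(Q_{4R}(X))}$ and Lemma \ref{lem:w} controls those of $w$ (the inhomogeneous part) by $\|w\|_{C^0(Q_{4R}(X))}$ together with the $C^0$ and $C^\beta$ norms of $g_1$ on $Q_{4R}(X)$. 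Since $\|u\|_{C^0(Q_{4R}(X))}\le \|k_1\|_{C^0(Q_{4R}(X))}+\|w\|_{C^0(Q_{4R}(X))}$ and $w$ is bounded by an energy estimate (as in the proof of Lemma \ref{lem:w}) by $R^2\|g_1\|_{C^0(Q_{4R}(X))}$, all the local derivative norms of $k_1$ on $Q_R(X)$ are bounded by $\|k_1\|_{C^0(Q_{4R}(X))}+R^2\|g_1\|_{C^0(Q_{4R}(X))}+R^{2+\beta}[g_1]_{\beta,Q_{4R}(X)}$.

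**Converting to weighted norms.** The next step is to multiply through by the appropriate powers of $R\sim\rho_X$ and take suprema over $X\in Q_T$. Concretely, from Lemmas \ref{lem:u} and \ref{lem:w} we get, for each $X$,
\begin{align*}
&R\|\nabla k_1\|_{C^0(Q_R(X))}+R^2\bigl(\|\nabla^2 k_1\|_{C^0(Q_R(X))}+\|\partial_t k_1\|_{C^0(Q_R(X))}\bigr)\\
&\quad+R^{2+\beta}\bigl([\nabla^2 k_1]_{\beta,Q_R(X)}+[\partial_t k_1]_{\beta,Q_R(X)}\bigr)\\
&\leq C\Bigl(\|k_1\|_{C^0(Q_{4R}(X))}+R^2\|g_1\|_{C^0(Q_{4R}(X))}+R^{2+\beta}[g_1]_{\beta,Q_{4R}(X)}\Bigr).
\end{align*}
Multiplying by $R^{-\gamma}$ and using $R\sim\rho_X\sim\rho_Y$ for $Y\in Q_{4R}(X)$ turns the left side into (a local piece of) the weighted seminorms on the left of the theorem and the right side into $\rho_X^{-\gamma}$ times the quantities appearing on the right. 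Taking the supremum over $X$ controls the pointwise weighted terms $\rho_X^{1-\gamma}|\nabla k_1|$, $\rho_X^{2-\gamma}(|\nabla^2 k_1|+|\partial_t k_1|)$ directly. For the weighted Hölder seminorms of $\nabla^2 k_1$ and $\partial_t k_1$ one must distinguish, for a pair $X,Y\in Q_T$, the case $\delta(X,Y)\le \tfrac12 R$ where both points lie in one cylinder $Q_R$ (handled by the local seminorm estimate above) from the case $\delta(X,Y)>\tfrac12 R$ where one bounds the difference quotient crudely by the sum of the two pointwise terms already controlled; this is the usual dichotomy in patching local Schauder estimates into a global weighted one.

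**The main obstacle.** The delicate point — and the place where the hypothesis $2+\beta<\gamma<2\alpha$ and the structure of $g_1$ enter — is that $g_1=f_1-2(\tfrac{\alpha\nabla h}{h}+\nabla\varphi_2^0)\nabla k_1-2\nabla\varphi_1^0\nabla k_2$ itself contains $\nabla k_1$, so the estimate is a priori circular: the right side involves $\rho_X^{2-\gamma}|g_1|$ and $\rho_X^{2+\beta-\gamma}[g_1]_\beta$, both of which see $\nabla k_1$. Since $\tfrac{\alpha\nabla h}{h}+\nabla\varphi_2^0\sim\rho^{-1}$ near $\Gamma$, the term $2(\tfrac{\alpha\nabla h}{h}+\nabla\varphi_2^0)\nabla k_1$ contributes, after multiplying by $\rho_X^{2-\gamma}$, a term $\sim\rho_X^{1-\gamma}|\nabla k_1|$, which is \emph{exactly} one of the quantities on the left of the theorem. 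Thus one cannot simply quote the local estimate with this $g_1$; instead the plan is to first establish the inequality with $g_1$ replaced by the genuinely given data — that is, prove the bound with right-hand side $\sup\rho_X^{-\gamma}|k_1|+\sup\rho_X^{2-\gamma}|F_1|+\sup\rho_{Y,Z}^{2+\beta-\gamma}[F_1]_\beta$ where $F_1=f_1-2\nabla\varphi_1^0\nabla k_2$ as in \eqref{87}--\eqref{k2}, and then absorb the singular drift term. The absorption works because the coefficient of the troublesome term carries a factor that can be made small: one covers $Q_T$ by cylinders and chooses the radius of the covering (equivalently works on $Q_{\epsilon R}(X)$ with $\epsilon$ small, rescaling Lemmas \ref{lem:u}, \ref{lem:w}) so that the constant multiplying $\sup\rho_X^{1-\gamma}|\nabla k_1|$ on the right is $\le\tfrac12$; since $\gamma<2\alpha$ keeps all the weight exponents in the admissible range and $\beta<2\alpha-2$ guarantees $\varphi_1^0\in C^{2+\beta}$ with $|\nabla^k\varphi_1^0|\lesssim\rho^{2\alpha+2-k}$ so that $\nabla\varphi_1^0\nabla k_2$ is harmless, the absorbed inequality then closes. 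Finally, feeding in the bound $|k_1(x,t)|\le C\rho^{\alpha-1}$ from the Sobolev-embedding estimate \eqref{2.25} and the $W^{4,2}_2$ control of $k_2$ from Theorem \ref{thm:W422}, one sees the right-hand side is finite, which both justifies the absorption and yields the stated estimate with $C=C(M,\gamma,\beta)$.
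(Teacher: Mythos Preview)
Your first two sections are essentially the paper's proof: decompose $k_1=u+w$ via \eqref{93}--\eqref{94}, apply Lemmas~\ref{lem:u} and~\ref{lem:w}, multiply by $R^{-\gamma}$ with $R=\tfrac16\rho_X$, and take suprema, splitting the H\"older seminorm into near/far cases. One small correction: the bound $\|w\|_{C^0(Q_{4R}(X))}\le CR^2\|g_1\|_{C^0(Q_{4R}(X))}$ is not an ``energy estimate'' but a maximum-principle barrier argument (compare $w$ with $\pm\|g_1\|_{C^0}\,(t-t_0)$ on $Q_{4R}(X)$); Lemma~\ref{lem:w} itself keeps $\|w\|_{C^0}$ on the right and does not furnish this.

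Your third section, however, misreads the statement. Theorem~\ref{thm:k2} has $g_1$ on the \emph{right-hand side}, treated as a given source term for $\partial_t k_1-\Delta k_1=g_1$; there is no circularity to absorb here. The unpacking of $g_1$ and the absorption of the $\nabla k_1$ contribution happen in the \emph{next} result (Theorem~\ref{k1k2C2}), and the paper does it not by shrinking the radius but via the interpolation Lemma~\ref{Interpolation}, which bounds $\sup\rho^{1-\gamma}|\nabla k_1|$ and its weighted H\"older seminorm by $C(\varepsilon)\sup\rho^{-\gamma}|k_1|+\varepsilon\sup\rho^{2-\gamma}|\nabla^2 k_1|$.

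It is worth noting that your proposed absorption mechanism would not work anyway: the drift coefficient $\tfrac{\alpha\nabla h}{h}+\nabla\varphi_2^0\sim\rho^{-1}$ is scale-critical. On $Q_{4R}(X)$ one has $\rho\sim R$, so $R^{2-\gamma}\,|\xi\nabla k_1|\sim R^{1-\gamma}|\nabla k_1|$ with a fixed constant in front, independent of any extra shrinking parameter $\epsilon$; replacing $R$ by $\epsilon R$ just rescales both sides identically. That is precisely why the paper resorts to interpolation rather than a small-radius trick. So: delete the ``main obstacle'' paragraph for this theorem, and save its content (corrected to use interpolation) for the proof of Theorem~\ref{k1k2C2}.
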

\begin{proof}
Set $G_1:=\|g_1\|_{C^0(Q_{4R}(X))}$. Since $w$ satisfies the equations (\ref{94}), for any $t\in[t_0,t_0+16R^2)$
   \begin{equation*}
     \left\{
     \begin{aligned}
        \partial_t (G_1(t-t_0)\pm w)-\Delta(G_1(t-t_0)\pm w)&=G_1\pm g_1 \geq 0 & \mbox{ in }Q_{4R}(X)\\
         G_1(t-t_0)\pm w &\geq 0 \ & \mbox{ on }\partial_pQ_{4R}(X).\\
     \end{aligned}
     \right.
   \end{equation*}  
By the maximum principle, we have $
\min\limits_{ Q_{4R}(X)}G_1(t-t_0)\pm w=\min\limits_{ \partial_p Q_{4R}(X)}G_1(t-t_0)\pm w\geq 0, |w|\leq G_1(t-t_0)\leq 16G_1R^2$. 
That is 
\begin{equation}
    \|w\|_{C^0(Q_{4R}(X))}\leq CR^2\|g_1\|_{C^0(Q_{4R}(X))}.\label{115+}
\end{equation}
Notice that $k_1=u+w$, by Lemma \ref{lem:u} and Lemma \ref{lem:w}, we have
\begin{equation}
\begin{aligned}
&R\|\nabla k_1\|_{C^0({Q_{R}(X)})}+R^2(\|\nabla^2k_1\|_{C^0({Q_{R}(X)})}+\|\partial_t k_1\|_{C^0({Q_{R}(X)})})\\
&+R^{2+\beta}([\nabla^2k_1]_{\beta,Q_{R}(X)}+[\partial_tk_1]_{\beta,Q_{R}(X)} )\\
\leq &C ( \|w\|_{C^0(Q_{4R}(X))}+  \|k_1\|_{C^0(Q_{4R}(X))}+ R^{2}\|g_1\|_{C^0({Q_{4{R}}(X) })}+R^{2+\beta}[g_1]_{\beta,Q_{4R}(X)})\\
\leq & C (  \|k_1\|_{C^0(Q_{4R}(X))}+ R^{2}\|g_1\|_{C^0({Q_{4{R}}(X) })}+R^{2+\beta} [g_1]_{\beta,Q_{4R}(X)}).
  \end{aligned}\label{116}
  \end{equation}
For any $X=(x,t)\in Q_T$, recall that
$R=\frac{1}{6}\rho_X, \rho_{Y,Z}=max\{\rho_{Y},\rho_{Z}\}$. Following (\ref{116}), for any $2+\beta<\gamma<2\alpha$, we have
 \begin{equation}
\begin{aligned}
&\rho_X^{1-\gamma}|\nabla k_1(X)|+\rho_X^{2-\gamma}|\nabla^2k_1(X)|\\
\leq&(6R)^{1-\gamma}\|\nabla k_1\|_{C^0({Q_{R}(X)})}+(6R)^{2-\gamma}\|\nabla^2k_1\|_{C^0({Q_{R}(X)})}\\
\leq & C \left(R^{-\gamma} \|k_1\|_{C^0(Q_{4R}(X))}+R^{2-\gamma}\|g_1\|_{C^0({Q_{4{R}}(X) })}+R^{2+\beta-\gamma}[g_1]_{\beta,Q_{4R}(X)}\right)\\
\leq & C \left(R^{-\gamma} \sup\limits_{Y\in Q_{4R}(X)}\rho_Y^{\gamma}  \sup\limits_{Y\in Q_{4R}(X)}\rho_Y^{-\gamma} |k_1(Y)|\right.\\&+  R^{2-\gamma} \sup\limits_{Y \in Q_{4R}(X)}\rho_Y^{\gamma-2}\sup\limits_{Y\in Q_{4R}(X)}\rho_Y^{2-\gamma} |g_1(Y)|\\
&\left.+R^{2+\beta-\gamma} \sup\limits_{Y,Z\in Q_{4R}(X)} \rho_{Y,Z}^{\gamma-2-\beta}\sup\limits_{Y,Z\in Q_{4R}(X)}  \rho_{Y,Z}^{2+\beta-\gamma}\frac{|g_1(Y)-g_1(Z)|}{\delta(Y,Z)^\beta}  \right)   \\
\leq &C \left( \sup\limits_{X\in Q_{T}}  \rho_{X}^{-\gamma} |k_1(X)|+  \sup\limits_{X\in Q_{T}}\rho_{X}^{2-\gamma} |g_1(X)| +  \sup\limits_{Y,Z\in Q_{T}}  \rho_{Y,Z}^{2+\beta-\gamma} \frac{|g_1(Y)-g_1(Z)|}{\delta(Y,Z)^\beta}\right).
  \end{aligned}\label {117}
  \end{equation}
Without loss of generality , let $\rho_{X}\geq \rho_{Y} $, then $\rho_{X,Y}=6R$. In view of \eqref{116} and \eqref{117}, we find for any $2+\beta<\gamma<2\alpha$,
\begin{equation}
   \begin{aligned}
   &\rho_{X,Y}^{2+\beta-\gamma}\frac{|\nabla^2k_1(X)-\nabla^2k_1(Y)|}{\delta(X,Y)^{\beta}}\\
   \leq & (6R)^{2+\beta-\gamma}[\nabla^2k_1]_{\beta,Q_{R}(X)}+6^{2+\beta-\gamma}R^{2-\gamma}\left(|\nabla^2k_1(X)|+|\nabla^2k_1(Y)|\right)\\
\leq & C \left(R^{-\gamma} \|k_1\|_{C^0(Q_{4R}(X))}+R^{2-\gamma}\|g_1\|_{C^0({Q_{4{R}}(X) })}+R^{2+\beta-\gamma}[g_1]_{\beta,Q_{4R}(X)}\right)\\
\leq& C \left( \sup\limits_{X\in Q_{T}}  \rho_{X}^{-\gamma} |k_1(X)|+  \sup\limits_{X\in Q_{T}}\rho_{X}^{2-\gamma} |g_1(X)| +  \sup\limits_{Y,Z
\in Q_{T}}  \rho_{Y,Z}^{2+\beta-\gamma} \frac{|g_1(Y)-g_1(Z)|}{\delta(Y,Z)^\beta}\right).
   \end{aligned}\label {118}
  \end{equation}
Note that $\partial_t k_1-\Delta k_1=g_1$, we have
\begin{equation*}
\begin{aligned}
&\sup\limits_{X\in Q_T}\rho_{X}^{2-\gamma}|\partial_t k_1(X)|+\sup\limits_{X,Y\in Q_T}\rho_{X,Y}^{2+\beta-\gamma}\frac{|\partial_tk_1(X)-\partial_tk_1(Y)|}{\delta(X,Y)^{\beta}}\\
\leq &\sup\limits_{X\in Q_T}\rho_{X}^{2-\gamma}\left(|\Delta k_1(X)|+|g_1(X)|\right)\\&+\sup\limits_{X,Y\in Q_T}\rho_{X,Y}^{2+\beta-\gamma}\left(\frac{|\Delta k_1(X)-\Delta k_1(Y)|}{\delta(X,Y)^{\beta}}+\frac{|g_1(X)-g_1(Y)|}{\delta(X,Y)^{\beta}}\right)\\
\leq &C \left( \sup\limits_{Q_{T}}  \rho_{X}^{-\gamma}|k_1(X)|+  \sup\limits_{Q_{T}}  \rho_{X}^{2-\gamma} |g_1(X)| +  \sup\limits_{Q_{T}}\rho_{Y,Z}^{2+\beta-\gamma} \frac{|g_1(Y)-g_1(Z)|}{\delta(Y,Z)^\beta}\right).
  \end{aligned}\label{120}
  \end{equation*}
Combining with \eqref{117} and \eqref{118}, the lemma holds.
\end{proof}

\begin{lem}\label{Interpolation} (Interpolation
inequality).
For any $2<\gamma<2\alpha$, $\varepsilon>0$  and some constant $C=C(\varepsilon)$ we have
 \begin{equation}
\begin{aligned}
&\sup\limits_{Q_{T}}  \rho_{X}^{1-\gamma} |\nabla k_1(X)|+\sup\limits_{X,Y\in Q_{T}}  \rho_{X,Y}^{1-\gamma+\beta}\frac{|\nabla k_1(X)-\nabla k_1(Y)|}{\delta(X,Y)^\beta}\\
&\leq C\sup\limits_{Q_{T}}  \rho_{X}^{-\gamma} |k_1(X)|+\varepsilon \sup\limits_{X\in Q_T}\rho_{X}^{2-\gamma}|\nabla^2k_1(X)|,
  \end{aligned}\label{121}
  \end{equation}
where $C=C(\varepsilon,\gamma,\beta)$.
\end{lem}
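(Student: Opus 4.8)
### Proof Proposal

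\medskip

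The plan is to prove the weighted interpolation inequality \eqref{121} by a local-to-global argument in the same spirit as the weighted Schauder estimates of Theorem \ref{thm:k2}, reducing it to the classical (unweighted) interpolation inequality on parabolic cylinders. Fix $X=(x,t)\in Q_T$ and set, as before, $R=\tfrac{1}{6}\rho_X$, so that on the cylinder $Q_{4R}(X)$ the distance $\rho$ is comparable to $\rho_X$ up to a fixed constant. On this cylinder the operator $\partial_t-\Delta$ is uniformly parabolic with constants independent of $X$, so the standard interpolation inequality between $C^0$ and $C^2$ (in the parabolic $C^{2+\beta,1+\beta/2}$ scale, e.g.\ Lemma 6.32 of \cite{GT} adapted to parabolic cylinders, as already invoked in the proof of Lemma \ref{lem:w}) gives, after the usual scaling to unit size,
\begin{equation*}
R\,\|\nabla k_1\|_{C^0(Q_R(X))}+R^{1+\beta}[\nabla k_1]_{\beta,Q_R(X)}
\leq C\,\|k_1\|_{C^0(Q_{4R}(X))}+\varepsilon\,R^{2}\bigl(\|\nabla^2 k_1\|_{C^0(Q_{4R}(X))}+\cdots\bigr),
\end{equation*}
for any $\varepsilon>0$ with $C=C(\varepsilon,\beta)$; the terms hidden in "$\cdots$" are lower-order and absorbed.

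\medskip

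Next I would multiply through by the appropriate power of $\rho_X$. Since $\rho_Y$ is comparable to $\rho_X=6R$ uniformly for $Y\in Q_{4R}(X)$, multiplying by $R^{-\gamma}$ converts each unweighted local norm into the corresponding weighted quantity evaluated over $Q_{4R}(X)$, bounded in turn by the global weighted seminorms appearing in \eqref{121}. Concretely, $R^{1-\gamma}\|\nabla k_1\|_{C^0(Q_R(X))}\lesssim \rho_X^{1-\gamma}|\nabla k_1(X)|$ is controlled, and on the right one gets $R^{-\gamma}\|k_1\|_{C^0(Q_{4R}(X))}\lesssim \sup_{Q_T}\rho_X^{-\gamma}|k_1|$ and $\varepsilon R^{2-\gamma}\|\nabla^2 k_1\|_{C^0(Q_{4R}(X))}\lesssim \varepsilon \sup_{Q_T}\rho_X^{2-\gamma}|\nabla^2 k_1|$. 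Taking the supremum over $X\in Q_T$ yields the bound on the first term of the left-hand side of \eqref{121}. For the Hölder seminorm term, one argues as in \eqref{118}: given $X,Y\in Q_T$ with, say, $\rho_X\geq\rho_Y$, if $Y\in Q_R(X)$ one uses the local Hölder estimate directly with weight $\rho_{X,Y}=6R$, while if $Y\notin Q_R(X)$ then $\delta(X,Y)\gtrsim R$ and one simply estimates the difference quotient by the sum of the two $C^0$ bounds on $\nabla k_1$, each already controlled. This splits into the two familiar cases and closes the estimate.

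\medskip

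The one point requiring a little care — and the main obstacle — is bookkeeping the lower-order terms that the parabolic interpolation inequality produces (the $\|\partial_t k_1\|$ and the $C^0$ part of $\nabla^2 k_1$), making sure that after multiplication by $\rho_X^{2-\gamma}$ they are all genuinely absorbable into the single $\varepsilon$-term $\varepsilon\sup_{Q_T}\rho_X^{2-\gamma}|\nabla^2 k_1|$ on the right of \eqref{121}, rather than introducing a spurious $\partial_t$-seminorm. This is handled by using the equation $\partial_t k_1-\Delta k_1=g_1$ to trade $\partial_t k_1$ for $\nabla^2 k_1+g_1$ wherever it appears, exactly as at the end of the proof of Theorem \ref{thm:k2}; since the statement of \eqref{121} only asks for control in terms of $\rho_X^{-\gamma}|k_1|$ and $\rho_X^{2-\gamma}|\nabla^2 k_1|$, and $g_1$-contributions are already known to be finite from Theorem \ref{thm:k2}, the bound can be arranged so that any $g_1$ term is reabsorbed or dominated. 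With this substitution in place the estimate is a direct scaling argument, and no monotonicity or Galerkin input beyond what has already been established is needed.
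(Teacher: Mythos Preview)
The paper's own proof is a single line: it cites the parabolic interpolation inequalities in Lieberman \cite{LGM} and says the weighted version follows easily. Your localize--rescale--take-sup strategy is precisely the standard mechanism behind such weighted inequalities, so the geometric part of your argument (working on $Q_{4R}(X)$ with $R=\tfrac16\rho_X$, using comparability of $\rho$ there, multiplying by $\rho_X^{-\gamma}$, splitting the H\"older seminorm into the cases $Y\in Q_R(X)$ and $Y\notin Q_R(X)$) is correct and is exactly what underlies the one-line citation.

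Where your proposal overcomplicates things is in the handling of the time derivative. You propose to invoke the equation $\partial_t k_1-\Delta k_1=g_1$ to convert $\partial_t k_1$ into $\nabla^2 k_1+g_1$, and then to appeal to Theorem \ref{thm:k2} to control $g_1$. This makes the lemma depend on the PDE and on an earlier theorem, whereas an interpolation inequality should be a pure function-space statement valid for any sufficiently regular $k_1$. The clean resolution is simply that the parabolic interpolation inequalities in \cite{LGM} already place $|\partial_t k_1|$ on the right with the same weight as $|\nabla^2 k_1|$; in the parabolic $C^{2}$ scale the two carry equal weight, and interpolation between $|k_1|_0$ and $|k_1|_2=|\nabla^2 k_1|_0+|\partial_t k_1|_0$ controls $|\nabla k_1|_0$ and $[\nabla k_1]_\beta$ directly. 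The paper's statement of \eqref{121} is slightly loose in omitting the $\varepsilon\sup\rho_X^{2-\gamma}|\partial_t k_1|$ term, but since that quantity is already bounded wherever the lemma is applied (by Theorem \ref{thm:k2}), this causes no harm downstream. So: drop the detour through the equation, include $\partial_t k_1$ alongside $\nabla^2 k_1$ on the right, and your scaling argument is then a complete and self-contained proof.
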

\begin{proof}
By the interpolation inequality in \cite{LGM}, it's easy to obtain.
\end{proof}

\begin{thm}\label{k1k2C2}
There is an uniform constant $C$ such that for any $0<\beta<\min\{2\alpha-2,\frac{1}{2} \}, 2+\beta<\gamma<2\alpha$,
\begin{equation*}
\begin{aligned}
& \|k_2\|_{C^{2+\beta,1+\frac{\beta}{2}}(Q_T)}+ \sup\limits_{X\in Q_T}\rho_X^{1-\gamma}|\nabla k_1(X)|+\sup\limits_{X\in Q_T}\rho_X^{2-\gamma}(| \nabla^2k_1(X)|+|\partial_t k_1(X)|)\\ &+\sup\limits_{X,Y\in Q_T}\rho_{X,Y}^{2+\beta- \gamma}(\frac{|\nabla^2k_1(X)-\nabla^2k_1(Y)|}
{\delta(X,Y)^{\beta}}+\frac{|\partial_tk_1(X)-\partial_tk_1(Y)|}{\delta(X,Y)^{\beta}})\\
\leq& C \left( \|\nabla k_2\|_{C^{\beta,\frac{\beta}{2}}(Q_T)}+\sup\limits_{Q_{T}}  \rho_X^{-\gamma} |k_1(X)|+ \|f_2\|_{C^{\beta,\frac{\beta}{2}}(Q_T)}+\sup\limits_{Q_{T}}  \rho_X^{2-\gamma} |f_1(X)| \right.\\
&\left.+ \sup\limits_{Q_{T}}  \rho_{Y,Z}^{2+\beta-\gamma} \frac{|f_1(Y)-f_1(Z)|}{\delta(Y,Z)^\beta}+\|k_2\|_{W_2^{2,1}(Q_T)}\right),
  \end{aligned}\label{122}
  \end{equation*}
where $C=C(M,\varphi^0,\gamma,\beta)$.\end{thm}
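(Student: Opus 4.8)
The plan is to bootstrap between the two equations of \eqref{8'}, treating each one as a linear parabolic equation with right-hand side coming from the other. The key structural observation is that \eqref{87} exhibits $k_1$ as a solution of a parabolic equation whose lower-order term is the singular drift $2(\frac{\alpha\nabla h}{h}+\nabla\varphi_2^0)\nabla k_1$, and the full forcing is $g_1 = f_1 - 2(\frac{\alpha\nabla h}{h}+\nabla\varphi_2^0)\nabla k_1 - 2\nabla\varphi_1^0\nabla k_2$; meanwhile $k_2$ solves the \emph{non-singular} parabolic equation $\partial_t k_2 - \Delta k_2 = f_2 + 2h^{-2\alpha}e^{-2\varphi_2^0}\nabla\varphi_1^0\nabla k_1 - 2h^{-2\alpha}e^{-2\varphi_2^0}|\nabla\varphi_1^0|^2 k_2$, whose right-hand side is controlled once we have Hölder control on $\nabla k_1$ near $\Gamma$ — and this is exactly where the weight $\rho^{-\gamma}$ with $\gamma<2\alpha$ and the estimates $|\nabla^k\varphi_1^0|\le C\rho^{2\alpha+2-k}$ make the singular factors harmless.

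The steps, in order, are as follows. First I would invoke Theorem \ref{thm:W422} to get $k_1\in W^{4,2}_2(Q_T;\rho^{-\alpha})$, $k_2\in W^{4,2}_2(Q_T)$, and the Sobolev-embedding consequence that $\nabla(\rho^{-\alpha+3}k_1)$ and $\nabla k_2$ lie in $C^{1/2,1/4}(Q_T)$; together with the $C^0$-bound $|k_1(x,t)|\le C\rho^{\alpha-1}$ already derived after Theorem \ref{thm:W422}, and the barrier argument of Lemma \ref{key} giving $|k_1(x,t)|\le C\rho(x)^\gamma$ via \eqref{k2}, this controls the term $\sup_{Q_T}\rho_X^{-\gamma}|k_1(X)|$. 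Second, I would estimate $g_1$ in the weighted norms appearing on the right of Theorem \ref{thm:k2}: using $|\nabla\varphi_1^0|\le C\rho^{2\alpha+1}$, $[\varphi_1^0]_{1+\beta}\le C\rho^{2\alpha+1-\beta}$ and $|\frac{\alpha\nabla h}{h}+\nabla\varphi_2^0|\le C\rho^{-1}$ (from the Li–Tian gradient bound $|\nabla u|\le C\rho^{\varepsilon-1}$ and $h=\rho e^u$), one checks that $\rho_X^{2-\gamma}|g_1(X)|$ and the corresponding Hölder seminorm are bounded by $\|f_2\|_{C^{\beta,\beta/2}(Q_T)}$, $\sup\rho_X^{2-\gamma}|f_1(X)|$, the $f_1$-seminorm, plus $\|\nabla k_2\|_{C^{\beta,\beta/2}(Q_T)}$ and a lower-order $W^{2,1}_2$-term in $k_2$; the point is that each singular factor $\rho^{-1}$ is beaten by a corresponding power of $\rho$ coming from $\nabla\varphi_1^0$ or from the weighted smallness of $\nabla k_1$ itself, so nothing blows up provided $2+\beta<\gamma<2\alpha$. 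Third, I would apply Theorem \ref{thm:k2} to \eqref{92} to convert these $g_1$-estimates into the full weighted Schauder control of $k_1$ on the left-hand side. Fourth, having $\nabla k_1$ in a weighted Hölder space with weight $\rho^{1-\gamma}$, I feed $2h^{-2\alpha}e^{-2\varphi_2^0}\nabla\varphi_1^0\nabla k_1$ — whose factor $h^{-2\alpha}|\nabla\varphi_1^0|\le C\rho^{-2\alpha}\cdot\rho^{2\alpha+1}=C\rho$ — into the $k_2$-equation and apply classical (interior, non-weighted) parabolic Schauder estimates plus Lemma \ref{Interpolation} to absorb $\|\nabla k_2\|_{C^{\beta,\beta/2}}$, obtaining $\|k_2\|_{C^{2+\beta,1+\beta/2}(Q_T)}$ bounded by the right-hand side. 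Finally I would combine the $k_1$- and $k_2$-estimates, using the interpolation inequality of Lemma \ref{Interpolation} once more to hide the $\varepsilon$-term $\varepsilon\sup\rho_X^{2-\gamma}|\nabla^2 k_1|$ on the left, and close the loop.

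The main obstacle is the circularity: the bound on $g_1$ involves $\nabla k_1$ and $\nabla k_2$, the $k_2$-estimate involves $\nabla k_1$, and Theorem \ref{thm:k2} applied to $k_1$ feeds back into $g_1$. I expect to resolve this by arranging the inequalities so that the "bad" terms appear with a small constant $\varepsilon$ (via Lemma \ref{Interpolation} for the $\nabla k_1$ and $\nabla k_2$ seminorms) or are genuinely of lower order (the $W^{2,1}_2$- and $W^{4,2}_2$-norms already controlled by Theorem \ref{thm:W422} and Lemmas \ref{lem:2.4}, \ref{lem:W112}, \ref{lem:2.10}, \ref{lem:W212}), so that a standard absorption argument on a finite chain of estimates yields the stated bound; tracking the exact powers of $\rho$ through every singular coefficient to verify that the weight exponents $2-\gamma$, $2+\beta-\gamma$, $1-\gamma$ are consistent at each substitution is the delicate bookkeeping that must be done carefully.
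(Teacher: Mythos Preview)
Your proposal is correct and follows essentially the same route as the paper: estimate $g_1$ in the weighted $C^0$ and $C^\beta$ norms using $|\alpha\nabla h/h+\nabla\varphi_2^0|\le C\rho^{-1}$ and the decay of $\nabla\varphi_1^0$, apply Theorem~\ref{thm:k2}, then use Lemma~\ref{Interpolation} with small $\varepsilon$ to absorb the self-referential $\nabla k_1$ terms, and finally run standard parabolic Schauder on the $k_2$-equation. One small point: your Step~1 (controlling $\sup\rho_X^{-\gamma}|k_1|$ via Lemma~\ref{key}) and the plan to ``absorb $\|\nabla k_2\|_{C^{\beta,\beta/2}}$'' in Step~4 go beyond what this theorem asks---both quantities already sit on the right-hand side of the stated estimate, so for this theorem they are given data, not things you must bound or absorb; the paper indeed leaves them there and only later, after the theorem, combines with Theorem~\ref{thm:W422} and \eqref{k2} to close everything in terms of $f$.
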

\begin{proof}

Recall that $g_1=f_1-2(\frac{\alpha\nabla h}{h}+\nabla\varphi^0_2)\nabla k_1-2\nabla\varphi^0_1\nabla k_2$. According to $\varphi_1^0\in\mathcal{A}_0$ and Theorem \ref{thm:k2}, we just need
to estimate $\sup\limits_{Q_{T}}  \rho_{X}^{2-\gamma} |g_1(X)|$ and $\sup\limits_{Q_{T}}\rho_{X,Y}^{2+\beta-\gamma} \frac{|g_1(X)-g_1(Y)|}{\delta(X,Y)^\beta} $.
\begin{equation*}
\begin{aligned}
 \sup\limits_{Q_{T}}\rho_{X}^{2-\gamma} |g_1(X)|\leq C\left(  \sup\limits_{Q_{T}}\rho_{X}^{2-\gamma} |f_1(X)|+\sup\limits_{Q_{T}}\rho(X)^{1-\gamma}|\nabla k_1(X)|+ \| \rho\nabla k_2 \|_{C^0(Q_T)} \right),
  \end{aligned}\label {123}
  \end{equation*}
\begin{equation}
\begin{aligned}
 &\sup\limits_{Q_{T}}  \rho_{X,Y}^{2+\beta-\gamma}\frac{|g_1(X)-g_1(Y)|}{\delta(X,Y)^\beta}
 \leq\sup\limits_{Q_{T}} \rho_{X,Y}^{2+\beta-\gamma}\frac{|f_1(X)-f_1(Y)|}{\delta(X,Y)^\beta}\\
  &+\sup\limits_{Q_{T}} \rho_{X,Y}^{2+\beta-\gamma} \frac{|\xi\eta(X)-\xi \eta(Y)|}{\delta(X,Y)^\beta} +\sup\limits_{Q_{T}}\rho_{X,Y}^{2+\beta-\gamma}\frac{|2\nabla\varphi^0_1\nabla k_2(X)-2\nabla\varphi^0_1\nabla k_2(Y)|}{\delta(X,Y)^\beta}
  \end{aligned}\label {124}
  \end{equation}
where $\xi=-2(\frac{\alpha\nabla h}{h}+\nabla\varphi^0_2),\eta= \nabla k_1, |\xi(X)|\leq C\rho_{X}^{-1}$. In fact, $\xi$ is independent of $t$.
For any $X,Y\in Q_T$, assume $\rho_{X}
\geq \rho_{Y}, \rho_{X,Y}^{1-\gamma}=\rho_{X}^{1-
\gamma}\leq \rho_{Y}^{1-\gamma}, |\nabla \xi(X)|
\leq C\rho_{X}^{-2}$,

  \begin{equation}
\begin{aligned}
&\sup\limits_{X,Y\in Q_T}\rho_{X,Y}^{2+\beta-\gamma}\frac{|\xi(X)\eta(X)-\xi(Y)\eta(Y)|}{\delta(X,Y)^{\beta}}\\
\leq &   \sup\limits_{X,Y\in Q_{T}} \rho_{X,Y}^{2+\beta-\gamma} \frac{|\xi(X)-\xi(Y)||\eta(Y)|}{\delta(X,Y)^\beta}+\sup\limits_{X,Y\in Q_{T}}\rho_{X,Y}^{2+\beta-\gamma} \frac{|\xi(X)||\eta(X)-\eta(Y)|}{\delta(X,Y)^\beta}.
  \end{aligned}\label {125}
  \end{equation}
Set $d=\mu \rho_{X}$,  $\mu \leq \frac{1}{2}$. If $Y\in Q_d(X)$, then $(1-\mu)\rho_{X}\leq \rho_{Y}\leq (1+\mu)\rho_{X}$,
\begin{equation}
 \begin{aligned}
\rho_{X,Y}^{2+\beta-\gamma} \frac{|\xi(X)-\xi(Y)||\eta(Y)|}{\delta(X,Y)^\beta}&\leq \rho_{X}^{2+\beta-\gamma} \|\nabla\xi\|_{C^0_{Q_d(X)}}{\delta(X,Y)^{1-\beta}}|\eta(Y)|\\
 &\leq \rho_{X}^{2+\beta-\gamma} \frac{1}{(1-\mu)^2\rho_{X}^2}{d^{1-\beta}}|\eta(Y)|\\
 &\leq  \frac{\mu^{1-\beta}}{(1-\mu)^2}{\rho_{X}^{1-\gamma}}|\eta(Y)|\\
 &\leq  \frac{\mu^{1-\beta} (1+\mu)^{\gamma-1} }{(1-\mu)^2} \sup\limits_{Y\in Q_d(X)}{\rho_{Y}^{1-\gamma}}|\eta(Y)|.
  \end{aligned}\label{126}
  \end{equation}
If $Y\not\in Q_d(X)$, then for any $\gamma> 2+\beta$, we have
\begin{equation}
\begin{aligned}
 \rho_{X,Y}^{2+\beta-\gamma} \frac{|\xi(X)-\xi(Y)||\eta(Y)|}{\delta(X,Y)^\beta}&\leq \rho_{X}^{2+\beta-\gamma} d^{-\beta}(|\xi(X)|+|\xi(Y)|)|\eta(Y)|   \\
 &\leq \rho_{X}^{2-\gamma}  \mu^{-\beta} (|\xi(X)|+|\xi(Y)|)|\eta(Y)|  \\
 &\leq C\mu^{-\beta} (\rho_{X}^{1-\gamma} |\eta(Y)| + \rho_{Y}^{1-\gamma} |\eta(Y)| ) \\
& \leq C\rho_{Y}^{1-\gamma} |\eta(Y)|.
  \end{aligned}\label {127}
  \end{equation}
We conclude from \eqref{126} and \eqref{127} that
\begin{equation}
\begin{aligned}
 \sup\limits_{X,Y\in Q_{T}} \rho_{X,Y}^{2+\beta-\gamma} \frac{|\xi(X)-\xi(Y)||\eta(Y)|}{\delta(X,Y)^\beta}&\leq C \sup\limits_{Y\in Q_{T}}{\rho_{Y}^{1-\gamma}}|\eta(Y)|.
  \end{aligned}\label {128}
  \end{equation}
Combining with (\ref{128}), (\ref{125}) yields
\begin{equation*}
\begin{aligned}
&\sup\limits_{X,Y\in Q_T}\rho_{X,Y}^{2+\beta-\gamma}\frac{|\xi(X)\eta(X)-\xi(Y)\eta(Y)|}{\delta(X,Y)^{\beta}}\\ 
\leq &  C( \sup\limits_{X\in Q_{T}}  \rho_{X}^{1-\gamma}|\eta(X)|+\sup\limits_{X,Y\in Q_{T}}\rho_{X,Y}^{1+\beta-\gamma}\frac{|\eta(X)-\eta(Y)|}{\delta(X,Y)^\beta}).
  \end{aligned}\label {130}
  \end{equation*} 
Next, for the third term of the right hand of (\ref{124}),
\begin{equation*}
\begin{aligned}
&\rho_{X,Y}^{2+\beta-\gamma}\frac{|2\nabla\varphi^0_1\nabla k_2(X)-2\nabla\varphi^0_1\nabla k_2(Y)|}{\delta(X,Y)^\beta}\\
\leq & C \left(\rho_{X,Y}^{2+\beta-\gamma}|\nabla\varphi^0_1(X)|\frac{|\nabla k_2(X)-\nabla k_2(Y)|}{\delta(X,Y)^\beta}+\rho_{X,Y}^{2+\beta-\gamma}
\frac{|\nabla\varphi^0_1(X)-\nabla\varphi^0_1(Y)|}{\delta(X,Y)^\beta}|\nabla k_2(Y)|\right)\\
\leq & C \left(\|\nabla k_2\|_{C^{\beta,\frac{\beta}{2}}(Q_T)}+\rho_{X,Y}^{2+\beta-\gamma}\frac{|\nabla\varphi^0_1(X)-\nabla\varphi^0_1(Y)|}{\delta(X,Y)^\beta}|\nabla k_2(Y)|\right).
  \end{aligned}\label{131}
  \end{equation*}
If $Y\in Q_d(X)$, then $(1-\mu)\rho_{X}\leq
\rho_{Y}\leq (1+\mu)\rho_{X}$, $0 \leq \delta (X,Y)\leq d$,
\begin{equation*}
\begin{aligned}
&\rho_{X,Y}^{2+\beta-\gamma}\frac{|\nabla\varphi^0_1(X)-\nabla\varphi^0_1(Y)|}{\delta(X,Y)^\beta}|\nabla k_2(Y)|\\
\leq &\rho_{X}^{2+\beta-\gamma} \|\nabla^2\varphi^0_1\|_{C^0({Q_d(X)})}{\delta(X,Y)^{1-\beta}}|\nabla k_2(Y)|\\
 \leq& \rho_{X}^{2+\beta-\gamma} (1+\mu)^{\gamma-2}\rho_{X}^{\gamma-2}{d^{1-\beta}}|\nabla k_2(Y)|\\
 \leq& C \|\nabla k_2\|_{C^0(Q_T)}.
  \end{aligned}\label {132}
  \end{equation*}
If $Y\not\in Q_d(X)$, $\delta (X,Y)\geq d$, then
\begin{equation*}
\begin{aligned}
&\rho_{X,Y}^{2+\beta-\gamma}\frac{|\nabla\varphi^0_1(X)-\nabla\varphi^0_1(Y)|}{\delta(X,Y)^\beta}|\nabla k_2(Y)|\\
\leq &\rho_{X}^{2+\beta-\gamma} d^{-\beta}(|\nabla\varphi^0_1(X)|+|\nabla\varphi^0_1(Y)|)|\nabla k_2(Y)|   \\
 \leq &\rho_{X}^{2-\gamma}  \mu^{-\beta} (|\nabla\varphi^0_1(X)|+|\nabla\varphi^0_1(Y)|)|\nabla k_2(Y)| \\
 \leq& C \|\nabla k_2\|_{C^0(Q_T)}.
  \end{aligned}\label {133}
  \end{equation*}
  So \begin{equation*}
  	\begin{aligned}
  		\rho_{X,Y}^{2+\beta-\gamma}\frac{|2\nabla\varphi^0_1\nabla k_2(X)-2\nabla\varphi^0_1\nabla k_2(Y)|}{\delta(X,Y)^\beta}\leq  C \|\nabla k_2\|_{C^{\beta,\frac{\beta}{2}}(Q_T)}.
  	\end{aligned}
  \end{equation*}
By \eqref{124},   
\begin{equation*}
	\begin{aligned}
		&\sup\limits_{Q_{T}}  \rho_{Y,Z}^{2+\beta-\gamma}\frac{|g_1(Y)-g_1(Z)|}{\delta(Y,Z)^\beta}
		\leq \sup\limits_{Q_{T}} \rho_{Y,Z}^{2+\beta-\gamma}\frac{|f_1(Y)-f_1(Z)|}{\delta(Y,Z)^\beta}+C(\sup\limits_{X\in Q_{T}} \rho_{X}^{1-\gamma}|\nabla k_1(X)|\\
		&+\sup\limits_{Q_{T}} \rho_{X,Y}^{1+\beta-\gamma}
		\frac{|\nabla k_1(X)-\nabla k_1(Y)|}{\delta(X,Y)^\beta}+ \|\nabla k_2\|_{C^{\beta,\frac{\beta}{2}}(Q_T)}).		
	\end{aligned}
\end{equation*}  
According to Theorem \ref{thm:k2} and Lemma \ref{Interpolation} and choosing $\varepsilon$ small in Lemma \ref{Interpolation}, we can obtain
\begin{equation*}
   \begin{aligned}
     &\sup\limits_{X\in Q_T}\rho_X^{1-\gamma}|\nabla k_1(X)|+\sup\limits_{X\in Q_T}\rho_X^{2-\gamma}(|\nabla^2k_1(X)|+|\partial_t k_1(X)|)\\
     &+\sup\limits_{X,Y\in Q_T}\rho_{X,Y}^{2+\beta-\gamma}(\frac{|\nabla^2k_1(X)-\nabla^2k_1(Y)|}{\delta(X,Y)^{\beta}}+\frac{|\partial_tk_1(X)-\partial_tk_1(Y)|}{\delta(X,Y)^{\beta}})\\
     \leq & C \left( \sup\limits_{Q_{T}}  \rho_X^{-\gamma} |k_1(X)|+  \sup\limits_{Q_{T}}  \rho_X^{2-\gamma} |f_1(X)| +  \sup\limits_{Q_{T}}\rho_{Y,Z}^{2+\beta-\gamma} \frac{|f_1(Y)-f_1(Z)|}{\delta(Y,Z)^\beta}+ \|\nabla k_2\|_{C^{\beta,\frac{\beta}{2}}(Q_T)} \right).
   \end{aligned} 
\end{equation*}
Consider the second equation of \eqref{8'}. The
standard parabolic theory yields
\begin{equation*}
   \begin{aligned}
   \|k_2\|_{C^{2+\beta,1+\frac{\beta}{2}}(Q_T)}\leq &C(\|f_2\|_{C^{\beta,\frac{\beta}{2}}(Q_T)}+\sup\limits_{X\in Q_T}\rho_X^{1-\gamma}|\nabla k_1(X)|\\
   &+\sup\limits_{X,Y\in Q_T}\rho_{X,Y}^{1+\beta-\gamma}\frac{|\nabla k_1(X)-\nabla k_1(Y)|}{\delta(X,Y)^{\beta}}+\|k_2\|_{W_2^{2,1}(Q_T)}).
   \end{aligned}
\end{equation*}
Applying Lemma \ref{Interpolation}, we finish the proof.
\end{proof}

Combined with Theorem \ref{thm:W422} and \eqref{k2}, we have
\begin{equation*}
	\begin{aligned}
&\|k_1\|_{W^{4,2}_{2}(Q_T;\rho^{-\alpha})}+\|k_2\|_{W^{4,2}_{2}(Q_T)}+\|k_1\|_{C^{2+\beta,1+\frac{\beta}{2}}(Q_T;\rho^{-\gamma})}+\|k_2\|_{C^{2+\beta,1+\frac{\beta}{2}}(Q_T)}\\
	\leqslant &C(\|f_1\|_{W^{2,1}_2(Q_T;\rho^{-\alpha})}+\|f_2\|_{W^{2,1}_{2}(Q_T)}+\|f_1\|_{C^{\beta,\frac{\beta}{2}}(Q_T;\rho^{-\gamma})}+\|f_2\|_{C^{\beta,\frac{\beta}{2}}(Q_T)}\\
	&+\|\rho^{-\alpha+1}{f}_1(0)\|_{H^1(M)}+\|{f}_2(0)\|_{H^1(M)}).
	\end{aligned}
\end{equation*}

Next, we give the similar weighted estimates for $(\widetilde{k_1},\widetilde{k_2})$ as we did for $(k_1,k_2)$ in the following lemmas. For brevity, we denote the second-order time derivatives $(\partial^2_t{k_1},\partial^2_t{k_2})$ of a given vector function $k=(k_1, k_2)$ by $\hat{k}=(\hat{k_1}, \hat{k_2})$.

\begin{lem}\label{lem:2.21}
	If $(k_1, k_2)$ is the solution of (\ref{8'}), then there exists a constant $C$ depending on $M$, $T$ and $\varphi^0$ such that
	\begin{equation*}
		\begin{aligned}
			&\sup\limits_{0\leq t \leq T}(\|\rho^{-\alpha+2}\partial_t\widetilde{k_1}(t)\|_{L^2(M)}+\|\partial_t\widetilde{k_2}(t)\|_{L^2(M)}+\|\rho^{-\alpha+2} \widetilde{k_1}(t)\|_{H^1(M)}+\| \widetilde{k_2}(t)\|_{H^1(M)})\\
			&+\|\rho^{-\alpha+2}\partial_t \widetilde{k_1}\|_{L^2(0,T;H^1(M))}+\| \partial_t \widetilde{k_2}\|_{L^2(0,T;H^1(M))}\\
				\leq &C(\|\rho^{-\alpha+1}f_1\|_{{L^2(0,T;L^2(M))}}+\|\rho^{-\alpha+1}\partial_t f_1\|_{{L^2(0,T;L^2(M))}}+\|\rho^{-\alpha+3}\partial^2_t f_1\|_{{L^2(0,T;L^2(M))}}\\
			&+\| f_2\|_{{H^2(0,T;L^2(M))}}+\|\rho^{-\alpha+2}f_1(0)\|_{H^2(M)}\\
			&+\|\rho^{-\alpha+2}\partial_tf_1(0)\|_{L^2(M)}+\|f_2(0)\|_{H^2(M)}+\|\partial_tf_2(0)\|_{L^2(M)}).
		\end{aligned}
	\end{equation*} 
\end{lem}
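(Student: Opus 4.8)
The plan is to run, one order higher in the time variable, the combined scheme of Lemmas~\ref{lem:W112} and~\ref{lem:2.10}, now carried out with the weight $\rho^{-\alpha+2}$ on the first component (this is the weight dictated by the data norm $\|\rho^{-\alpha+2}f_1(0)\|_{H^2(M)}$ appearing in the statement). Put $W_1:=\rho^{-\alpha+2}k_1$, and accordingly $\widetilde{W_1}:=\partial_tW_1$, $\hat{W_1}:=\partial_t^2W_1=\rho^{-\alpha+2}\hat{k_1}$. From the identity for $\Delta(\rho^{-\alpha+2}k_1)$ already written down in the proof of Lemma~\ref{lem:W212}, the pair $(W_1,k_2)$ solves a system of heat equations
\begin{equation*}
	\partial_tW_1-\Delta W_1=\mathcal{G}_1,\qquad \partial_tk_2-\Delta k_2=\mathcal{G}_2,
\end{equation*}
whose right-hand sides are linear in $(k_1,\nabla k_1,k_2,\nabla k_2,f_1)$, respectively $(k_2,\nabla k_1,f_2)$, with $t$-independent coefficients that near $\Gamma$ are $O(\rho^{-\alpha+1})$, respectively bounded or $O(\rho^{-2\alpha})$. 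Lemmas~\ref{lem:W112} and~\ref{lem:2.10} already supply the $t$-regularity needed to differentiate this system twice in $t$ (rigorously this is done on the Galerkin approximations, exactly as in those proofs); then $\hat{W_1}$ and $\hat{k_2}$ solve the same system with $(k_i,f_i)$ replaced by $(\hat{k_i},\partial_t^2f_i)$.

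The first step is to identify the initial data. Since $k_i(\cdot,0)=0$, evaluating the two equations of \eqref{8'} (the first being \eqref{87}) and their $t$-derivatives at $t=0$ gives $\partial_tk_i(\cdot,0)=f_i(\cdot,0)$ and
\begin{align*}
	\hat{k_1}(\cdot,0)&=\Delta f_1(0)-2\Big(\tfrac{\alpha\nabla h}{h}+\nabla\varphi^0_2\Big)\nabla f_1(0)+\partial_tf_1(0)-2\nabla\varphi^0_1\nabla f_2(0),\\
	\hat{k_2}(\cdot,0)&=\Delta f_2(0)-2h^{-2\alpha}e^{-2\varphi^0_2}|\nabla\varphi^0_1|^2f_2(0)+2h^{-2\alpha}e^{-2\varphi^0_2}\nabla\varphi^0_1\nabla f_1(0)+\partial_tf_2(0).
\end{align*}
Using $\varphi^0_1\in\mathcal{A}_0$ (so $|\nabla^j\varphi^0_1|\le C\rho^{2\alpha+2-j}$) together with the pointwise and weighted-Sobolev bounds on $f_i(0)$ built into $\mathcal{C}$ and $\mathcal{D}$, one checks $\hat{W_1}(0)\in L^2(M)$, $\widetilde{W_1}(0)=\rho^{-\alpha+2}f_1(0)\in H^1(M)$, $\hat{k_2}(0)\in L^2(M)$ and $\widetilde{k_2}(0)=f_2(0)\in H^1(M)$, with all these norms dominated by the four data quantities $\|\rho^{-\alpha+2}f_1(0)\|_{H^2(M)}$, $\|\rho^{-\alpha+2}\partial_tf_1(0)\|_{L^2(M)}$, $\|f_2(0)\|_{H^2(M)}$, $\|\partial_tf_2(0)\|_{L^2(M)}$ of the statement.

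It then remains to run the energy estimate for the system in $(\hat{W_1},\hat{k_2})$: test the two heat equations by $\hat{W_1}$ and $\hat{k_2}$ and add, and separately test $\partial_t\widetilde{W_1}-\Delta\widetilde{W_1}=\partial_t\mathcal{G}_1$ and $\partial_t\widetilde{k_2}-\Delta\widetilde{k_2}=\partial_t\mathcal{G}_2$ by $\hat{W_1}$ and $\hat{k_2}$; the problem is thus reduced to controlling $(\partial_t^2\mathcal{G}_i,\cdot)$ and the same brackets with one $t$-derivative. Each singular term is harmless once one writes $\nabla\hat{k_1}=\rho^{\alpha-2}\nabla\hat{W_1}+(\alpha-2)\rho^{\alpha-3}(\nabla\rho)\hat{W_1}$: a coefficient $O(\rho^{-\alpha+1})$ in front of $\nabla\hat{k_1}$ then produces, after Young's inequality, a $\nabla\hat{W_1}$-term absorbed into the left-hand side and a remainder which, paired with $\hat{W_1}$, equals $\int_{Q_T}\rho^{-2}|\hat{W_1}|^2=\|\rho^{-\alpha+1}\hat{k_1}\|^2_{L^2(Q_T)}$, already estimated in Lemma~\ref{lem:2.10}; likewise $\rho^{-\alpha+2}\partial_t^2f_1$ and $\rho^{-\alpha+2}\nabla\varphi^0_1\nabla\hat{k_2}$ produce, after pairing (and one integration by parts for the latter), $\|\rho^{-\alpha+3}\partial_t^2f_1\|_{L^2(Q_T)}$ and $\|\hat{k_2}\|_{L^2(Q_T)}$. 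Gronwall's inequality then yields
\begin{equation*}
	\sup_{0\le t\le T}\Big(\|\hat{W_1}(t)\|^2_{L^2(M)}+\|\hat{k_2}(t)\|^2_{L^2(M)}+\|\nabla\widetilde{W_1}(t)\|^2_{L^2(M)}+\|\nabla\widetilde{k_2}(t)\|^2_{L^2(M)}\Big)+\|\nabla\hat{W_1}\|^2_{L^2(Q_T)}+\|\nabla\hat{k_2}\|^2_{L^2(Q_T)}\le C\,R,
\end{equation*}
$R$ being the right-hand side of the lemma, where Lemmas~\ref{lem:2.4}, \ref{lem:W112} and~\ref{lem:2.10} and Remark~\ref{rem1} are used to absorb the already-controlled quantities that appear. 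Adding the $L^2(M)$-bounds for $\widetilde{W_1}$ and $\widetilde{k_2}$ from Lemma~\ref{lem:2.10} upgrades $\|\nabla\widetilde{W_1}\|$ and $\|\nabla\widetilde{k_2}\|$ to full $H^1(M)$-norms and completes the proof; one may equivalently perform the whole computation on the approximations $(\hat{k_1^N},\hat{k_2^N})$ and pass to the limit $N=N_i\to\infty$ as in Proposition~\ref{prop:prop1} and Lemma~\ref{lem:W112}. I expect the real obstacle to be precisely this energy estimate — making sure every singular lower-order term of $\partial_t^2\mathcal{G}_1$ and $\partial_t\mathcal{G}_1$, once paired with $\hat{W_1}$, collapses to a quantity already bounded by the earlier lemmas; this, together with the initial-data bound above, is the reason the statement uses the weight $\rho^{-\alpha+2}$ rather than the natural energy weight $\rho^{-\alpha}$.
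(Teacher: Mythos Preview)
Your proposal is correct and follows essentially the same two-step scheme as the paper: first an energy estimate for $(\hat{k_1},\hat{k_2})$ with weight $\rho^{-\alpha+2}$ on the first component, relying on Lemma~\ref{lem:2.10} to control the leftover term $\|\rho^{-\alpha+1}\hat{k_1}\|_{L^2(Q_T)}$, then testing the $\widetilde{k}$-equations by $\hat{k}$ to obtain the $H^1$-bound on $(\widetilde{k_1},\widetilde{k_2})$; the initial-data computation is identical. The only cosmetic difference is that the paper never introduces $W_1=\rho^{-\alpha+2}k_1$ but instead remains in the original Galerkin framework and inserts the weight through the test function, choosing $\Lambda=h^4\psi^1_m$ in \eqref{g5} so that the pairing naturally carries the factor $h^{-2\alpha+4}e^{-2\varphi^0_2}$; this avoids commuting $\rho^{-\alpha+2}$ through $\Delta$ and hence the bookkeeping of the extra lower-order terms $2\nabla(\rho^{-\alpha+2})\nabla\hat{k_1}+\Delta(\rho^{-\alpha+2})\hat{k_1}$ that you absorb via the identity for $\nabla\hat{k_1}$. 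Both routes produce the same Gronwall inequality and the same use of Lemma~\ref{lem:2.10}.
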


\begin{proof}
We know that the functions of form \eqref{a28} are dense in the space $W^{1,1}_2(Q_T;\rho^{-\alpha})$. In particular by the first equation of \eqref {a}, 
	\begin{equation}\label{g5}
	\begin {aligned}
	&(\partial_t \widetilde{k_1}^N, h^{-2\alpha} e^{-2\varphi^0_2}\Lambda)- (\mbox{div}(h^{-2\alpha}e^{-2\varphi^0_2}\nabla \widetilde{k^N_1}),\Lambda)\\
	&+2(\nabla\varphi^0_1\nabla \widetilde{k^N_2}, h^{-2\alpha} e^{-2\varphi^0_2}\Lambda)=(\partial_t f_1,h^{-2\alpha}e^{-2\varphi^0_2}\Lambda)\\
	\end {aligned}
\end{equation}
for each $\Lambda\in H^1(M,h^{-\alpha})$ and a.e. $ 0\leq t\leq T$. 
Taking $\Lambda=h^4\psi^1_m$ in \eqref{g5}, we deduce
\begin{equation}\label{g6}
	\begin {aligned}
	&(\partial_t \widetilde{k_1^N}, h^{-2\alpha+4} e^{-2\varphi^0_2}\psi^1_m)- (h^4\mbox{div}(h^{-2\alpha}e^{-2\varphi^0_2}\nabla \widetilde{k^N_1}),\psi^1_m)\\
	&+2(\nabla\varphi^0_1\nabla \widetilde{k^N_2}, h^{-2\alpha+4} e^{-2\varphi^0_2}\psi^1_m)=(\partial_t f_1,h^{-2\alpha+4}e^{-2\varphi^0_2}\psi^1_m).\\
	\end {aligned}
\end{equation}
	Fix $N\geqslant 1$ and differentiate equation \eqref{g6}  and the second equation of \eqref{a} with respect to $t$, we find
	\begin{equation}\label{g7}
		\left \{
		\begin {aligned}
		&(\partial_t\hat{k_1^N}, h^{-2\alpha+4} e^{-2\varphi^0_2}\psi^1_m)- (h^4\mbox{div}(h^{-2\alpha}e^{-2\varphi^0_2}\nabla \hat{k^N_1}),\psi^1_m)\\
		&+2(\nabla\varphi^0_1\nabla \hat{k^N_2},h^{-2\alpha+4} e^{-2\varphi^0_2}\psi^1_m)=(\partial^2_t f_1,h^{-2\alpha+4} e^{-2\varphi^0_2}\psi^1_m),\\
		&(\partial_t \hat{k_2^N},\psi^2_m)-(\Delta \hat{k_2^N},\psi^2_m) +2(h^{-2\alpha}e^{-2\varphi ^0_2}|\nabla \varphi^{0}_{1}|^2\hat{k_2^N} ,\psi^2_m)\\
		&-2(h^{-2\alpha}e^{-2\varphi ^0_2}\nabla \varphi^{0}_{1}\nabla \hat{k_1^N},\psi^2_m)=(\partial^2_t f_2,\psi^2_m), m=1,\dots N .\\	
		\end {aligned}
		\right.
	\end{equation}
 
	Multiply the first equation of \eqref{g7} by $\frac{d^2C_m^{N,1}}{dt^2}$ and  the second equation by $\frac{d^2C_m^{N,2}}{dt^2}$, then we sum the results for $m=1,\cdots, N$ to obtain the following:  
	\begin{equation}\label{g8}
		\left \{
		\begin {aligned}
		&(\partial_t\hat{k_1^N},h^{-2\alpha+4} e^{-2\varphi^0_2}\hat{k_1^N})- (h^4\mbox{div}(h^{-2\alpha}e^{-2\varphi^0_2}\nabla \hat{k^N_1}),\hat{k_1^N})\\
		&+2(\nabla\varphi^0_1\nabla \hat{k^N_2},h^{-2\alpha+4} e^{-2\varphi^0_2}\hat{k}_1^N)=(\partial^2_t f_1,h^{-2\alpha+4} e^{-2\varphi^0_2}\hat{k_1^N}),\\
		&(\partial_t \hat{k_2^N},\hat{k_2^N})-(\Delta \hat{k_2^N},\hat{k_2^N}) +2(h^{-2\alpha}e^{-2\varphi ^0_2}|\nabla \varphi^{0}_{1}|^2\hat{k_2^N} ,\hat{k_2^N})\\
		&-2(h^{-2\alpha}e^{-2\varphi ^0_2}\nabla \varphi^{0}_{1}\nabla \hat{k_1^N},\hat{k_2^N})=(\partial^2_t f_2,\hat{k_2^N}).\\	
		\end {aligned}
		\right.
	\end{equation}
\eqref{g8} yields
\begin{equation*}
	\begin {aligned}
	&\partial_t\frac{1}{2}\int_M h^{-2\alpha+4} e^{-2\varphi^0_2}|\hat{k_1^N}|^2\ud V+\int_M h^{-2\alpha+4}e^{-2\varphi^0_2}|\nabla \hat{k^N_1}|^2\ud V\\
	=&-4\int_M h^{-2\alpha+3} e^{-2\varphi^0_2} \hat{k_1^N}\nabla \hat{k_1^N} \nabla h \ud V-2\int_M h^{-2\alpha+4} e^{-2\varphi^0_2}\hat{k_1^N}\nabla\varphi^0_1\nabla \hat{k^N_2}\ud V\\
	&+\int_M h^{-2\alpha+4} e^{-2\varphi^0_2}\partial^2_t f_1\hat{k_1^N}\ud V
	\end {aligned}
\end{equation*}and
	\begin{equation*}
	\begin {aligned}&\partial_t\frac{1}{2}\int_M|\hat{k_2^N}|^2\ud V+\int_M |\nabla \hat{k^N_2}|^2\ud V =-2\int_M h^{-2\alpha}e^{-2\varphi ^0_2}|\nabla \varphi^{0}_{1}|^2|\hat{k_2^N} |^2\ud V\\
	&+2\int_M h^{-2\alpha}e^{-2\varphi ^0_2}\hat{k_2^N}\nabla \varphi^{0}_{1}\nabla \hat{k_1^N}\ud V+\int_M \partial^2_t f_2\hat{k_2^N}\ud V.\\	
	\end {aligned}
\end{equation*}
By the Cauchy inequality, we have 
\begin{equation*}
	\begin {aligned}
	&\partial_t\frac{1}{2}\int_M h^{-2\alpha+4} e^{-2\varphi^0_2}|\hat{k_1^N}|^2+ |\hat{k_2^N}|^2\ud V+\int_M h^{-2\alpha+4}e^{-2\varphi^0_2}|\nabla \hat{k^N_1}|^2+|\nabla \hat{k^N_2}|^2\ud V\\
	\leq & \delta\int_M h^{-2\alpha+4} e^{-2\varphi^0_2} |\nabla \hat{k_1^N}|^2+ |\nabla \hat{k_2^N}|^2\ud V\\
	&+C\int_M h^{-2\alpha+2} e^{-2\varphi^0_2} |\hat{k_1^N} |^2 +|\hat{k_2^N} |^2 +h^{-2\alpha+6} e^{-2\varphi^0_2} |\partial^2_t f _1|^2+|\partial^2_t f_2|^2\ud V.  \\
	\end {aligned}
\end{equation*}
Taking $\delta=\frac{1}{2}$ and considering Lemma \ref{lem:2.10}, the differential form of Gronwall's inequality yields the estimate
	\begin{equation*}
		\begin{aligned}
			&\max\limits_{0\leq {t}\leq T}\int_M h^{-2\alpha+4}e^{-2\varphi^0_2} | \hat{k^N_1}|^2+|  \hat{k^N_2}|^2 \ud V({t})
			\\&+\int_{Q_T} h^{-2\alpha+4}e^{-2\varphi^0_2}|\nabla  \hat{k^N_1}|^2 +|\nabla  \hat{k^N_2}|^2\ud V\ud s\\
			\leq &C\left(\|h^{-\alpha+2}e^{-\varphi^0_2}\hat {k^N_1}(0)\|^2_{L^2(M)} + \|\hat{k^N_2}(0)\|^2_{L^2(M)}+\|\rho^{-\alpha+1}f_1\|^2_{{L^2(0,T;L^2(M))}} \right.\\
			&+\|\rho^{-\alpha+1}\widetilde {f_1}\|^2_{{L^2(0,T;L^2(M))}}+\|\rho^{-\alpha+3}\hat {f_1}\|^2_{{L^2(0,T;L^2(M))}}+\| f_2\|^2_{{H^2(0,T;L^2(M))}}\\
			&\left.+\|\rho^{-\alpha+1}f_1(0)\|_{H^1(M)}^2+\|f_2(0)\|_{H^1(M)}^2\right).
		\end{aligned}
	\end{equation*}
Notice that	$\partial_t k_1(0)=f_1(0)$ and $\partial_t k_2(0)=f_2(0)$,
	\begin{equation*}
	\begin{aligned}
			\partial^2_tk_1(0)&=\Delta f_1(0)-2(\nabla \varphi^0_2+\frac{\alpha\nabla h}{h})\nabla f_1(0)-2\nabla\varphi^0_1\nabla f_2(0)+\partial_t f_1(0),\\
		\partial^2_tk_2(0)
		&=\Delta f_2(0)-2h^{-2\alpha}e^{-2\varphi^0_2}|\nabla \varphi^0_1|^2f_2(0)+2h^{-2\alpha}e^{-2\varphi^0_2}\nabla \varphi^0_1\nabla f_1(0)+\partial_tf_2(0).\\
		\end{aligned}
\end{equation*}		
Therefore,
	\begin{equation}\label{g9}
	\begin{aligned}
		&\max\limits_{0\leq {t}\leq T}\int_M h^{-2\alpha+4}e^{-2\varphi^0_2} | \hat{k^N_1}|^2+|  \hat{k^N_2}|^2 \ud V({t})
		\\&+\int_{Q_T} h^{-2\alpha+4}e^{-2\varphi^0_2}|\nabla  \hat{k^N_1}|^2 +|\nabla  \hat{k}^N_2|^2\ud V\ud s\\
		\leq &C\left(\|\rho^{-\alpha+1}f_1\|^2_{{L^2(0,T;L^2(M))}}+\|\rho^{-\alpha+1}\widetilde{f}_1\|^2_{{L^2(0,T;L^2(M))}}\right.\\
		&+\|\rho^{-\alpha+3}\hat{f_1}\|^2_{{L^2(0,T;L^2(M))}}+\| f_2\|^2_{{H^2(0,T;L^2(M))}}+\|\rho^{-\alpha+2}f_1(0)\|_{H^2(M)}^2\\
		&\left.+\|\rho^{-\alpha+2}\widetilde {f_1}(0)\|_{L^2(M)}^2+\|f_2(0)\|_{H^2(M)}^2+\|\widetilde {f_2}(0)\|_{L^2(M)}^2\right).
	\end{aligned}
\end{equation}
	Passing to limits as $N=N_i\rightarrow \infty$ in (\ref{g9}), we obtain the desired estimate.
	
	With $N\geq 1$ fixed, we multiply equation \eqref{g6} by $\frac{d^2C_m^{N,1}}{dt^2}$ and the second equation of \eqref{a} by $\frac{d^2C_m^{N,2}}{dt^2}$, and then sum over $m$ from $1$ to $N$ to discover
	
	\begin{equation*}
		\begin{aligned}
			&\int_M h^{-2\alpha+4}e^{-2\varphi^0_2} |\hat{k^N_1}|^2+|\hat{k^N_2} |^2\ud V+\frac{1}{2}\partial_t\int_M ( h^{-2\alpha+4}e^{-2\varphi^0_2} |\nabla \widetilde{k^N_1}|^2+   |\nabla \widetilde{k^N_2}|^2)\ud V\\
			=&-4\int_M h^{-2\alpha+3}e^{-2\varphi^0_2}\hat{k^N_1}\nabla\widetilde{k}^N_1\nabla h\ud V\\
			&-2\int_M h^{-2\alpha}e^{-2\varphi^0_2}|\nabla \varphi^0_1|^2\widetilde{k^N_2} \hat{k^N_2}\ud V+\int_M 2h^{-2\alpha}e^{-2\varphi^0_2}
			\nabla \varphi^0_1 \nabla \widetilde{k^N_1}\hat{k^N_2}\ud V\\
			&-2\int_M h^{-2\alpha+4}e^{-2\varphi^0_2} \hat{k^N_1}\nabla\varphi^0_1\nabla \widetilde{k^N_2}\ud V +\int_M h^{-2\alpha+4}e^{-2\varphi^0_2} \widetilde {f_1} \hat{k^N_1}+ \widetilde {f_2} \hat{k^N_2}\ud V.
		\end{aligned}
	\end{equation*}
	For each $\delta>0$, by the assumption of $\varphi_1^0$ and the Young's inequality, we have
	\begin{equation*}
		\begin{aligned}
			&\int_M h^{-2\alpha+4}e^{-2\varphi^0_2} |\hat{k^N_1}|^2+|\hat{k^N_2} |^2\ud V+\frac{1}{2}\partial_t\int_M ( h^{-2\alpha+4}e^{-2\varphi^0_2} |\nabla \widetilde{k^N_1}|^2+   |\nabla \widetilde{k^N_2}|^2)\ud V\\ \leq &\delta \int_M h^{-2\alpha+4}e^{-2\varphi^0_2}|\hat{k^N_1}|^2+|\hat{k^N_2}|^2\ud V+C\int_M h^{-2\alpha+6}e^{-2\varphi^0_2}|\partial_tf_1|^2 + |\partial_tf_2|^2 \ud V\\&+C\int_M h^{-2\alpha}e^{-2\varphi^0_2} |\nabla \widetilde{k^N_1}|^2+h^{-2\alpha+2}e^{-2\varphi^0_2} |\partial_t\widetilde{k^N_1}|^2+| \widetilde{k^N_2}|^2+ |\nabla \widetilde{k^N_2}|^2\ud V.		\end{aligned}
	\end{equation*}

To obtain the desired estimate,	we take $\delta=\frac{1}{4}$ and integrate the above inequality over $[0,T]$. By utilizing Lemma \ref{lem:W112} and Lemma \ref{lem:2.10}, we find that
	\begin{equation*}
		\begin{aligned}
			&\sup\limits_{0\leq t\leq T}\int_M (h^{-2\alpha+4}e^{-2\varphi^0_2}|\nabla \widetilde{k^N_1}|^2+   |\nabla \widetilde{k^N_2}|^2)\ud V(t)\\
			&+\int_{Q_T} h^{-2\alpha+4}e^{-2\varphi^0_2} | \partial_t\widetilde{k^N_1}|^2+|\partial_t\widetilde{k^N_2} |^2\ud V\ud s\\
				\leq &C\left(\|\rho^{-\alpha+1}f_1\|^2_{{L^2(0,T;L^2(M))}}+\|\rho^{-\alpha+1}\widetilde {f_1}\|^2_{{L^2(0,T;L^2(M))}}\right.\\
			&\left.+\| f_2\|^2_{{H^1(0,T;L^2(M))}}+\|\rho^{-\alpha+1}f_1(0)\|_{H^1(M)}^2+\|f_2\|_{H^1(M)}^2(0)\right).
		\end{aligned}
	\end{equation*}
	
	
	Taking subsequences $\{\widetilde{k^{N_i}_1}\}_i$ and$\{\widetilde{k^{N_i}_2}\}_i$ and passing to limits as $N_i\rightarrow \infty$, we deduce
	\begin{equation*}
		\begin{aligned}
			\widetilde{k^{N_i}_1}\rightarrow \widetilde{k_1} \ &\mbox{ in } L^\infty(0,T;H^1(M;h^{-\alpha+2})), \  \partial_t \widetilde{k^{N_i}_1}\rightarrow\partial_t \widetilde{k_1} \ \mbox{ in } L^2(0,T;L^2(M; h^{-\alpha+2})),\\
			\widetilde{k}^{N_i}_2\rightarrow \widetilde{k_2} \ & \mbox{ in } L^\infty(0,T;H^1(M)), \  \ \quad \partial_t \widetilde{k^{N_i}_2}\rightarrow\partial_t \widetilde{k_2} \ \mbox{ in } L^2(0,T;L^2(M)).
		\end{aligned}
	\end{equation*}
	
	Hence we have completed the proof.
\end{proof}
\begin{lem}\label{lem:2.100}
	If $(k_1, k_2)$ is a solution of (\ref{8'}), then there exists a constant $C$ depending on $M$, $T$ and $\varphi^0$ such that
	\begin{equation*}
		\begin{aligned}
			&\sup\limits_{0\leq t\leq T}  ( \|\rho^{-\alpha+3}\partial_t\widetilde{k_1}(t)\|_{H^1{(M)}}+	\|\partial_t\widetilde{k_2}(t)\|_{H^1{(M)}})+\|\rho^{-\alpha+3}\partial_t\widetilde{k_1}\|_{L^2(0,T;H^2(M))}\\
			&+\|\partial_t \widetilde{k_2}\|_{L^2(0,T;H^2(M))}+\|\rho^{-\alpha+3}\partial^2_t \widetilde{k_1}\|_{L^2(0,T;L^2{(M)})}+	\|\partial^2_t \widetilde{k_2}\|_{L^2(0,T;L^2{(M)})}\\
				\leq &C\left(\|\rho^{-\alpha+1}f_1\|_{{H^1(0,T;L^2(M))}}++\|\rho^{-\alpha+3}\partial^2_t f_1\|_{{L^2(0,T;L^2(M))}}+\| f_2\|_{{H^2(0,T;L^2(M))}}\right.\\
			&\left.+\|\rho^{-\alpha+3}f_1(0)\|_{H^3(M)}+\|\rho^{-\alpha+3}\partial_tf_1(0)\|_{H^1(M)}+\|f_2(0)\|_{H^3(M)}^2+\|\partial_tf_2(0)\|_{H^1(M)}\right).
		\end{aligned}
	\end{equation*} 
\end{lem}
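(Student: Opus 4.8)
The plan is to follow the scheme of the proof of Lemma~\ref{lem:2.10}, carried out one time derivative higher and with the heavier weight $\rho^{-\alpha+3}$ replacing $\rho^{-\alpha+1}$; the role played there by Lemma~\ref{lem:W112} will here be played by Lemma~\ref{lem:2.21}. First I would record the system satisfied by $(\rho^{-\alpha+3}\hat{k_1},\hat{k_2})$. Starting from the second equation of \eqref{8'} in the form \eqref{87} and from the equation for $k_2$, differentiate both twice in $t$ and conjugate the first by $\rho^{-\alpha+3}$; exactly as in the passage from \eqref{g1} to \eqref{g2} one gets
\begin{equation*}
\begin{aligned}
\partial_t(\rho^{-\alpha+3}\hat{k_1})-\Delta(\rho^{-\alpha+3}\hat{k_1})
&=-2\nabla(\rho^{-\alpha+3})\nabla\hat{k_1}-\Delta(\rho^{-\alpha+3})\hat{k_1}\\
&\quad+\rho^{-\alpha+3}\Big(-2\big(\tfrac{\alpha\nabla h}{h}+\nabla\varphi^0_2\big)\nabla\hat{k_1}-2\nabla\varphi^0_1\nabla\hat{k_2}+\partial_t^2 f_1\Big),\\
\partial_t\hat{k_2}-\Delta\hat{k_2}
&=-2h^{-2\alpha}e^{-2\varphi^0_2}|\nabla\varphi^0_1|^2\hat{k_2}+2h^{-2\alpha}e^{-2\varphi^0_2}\nabla\varphi^0_1\nabla\hat{k_1}+\partial_t^2 f_2 .
\end{aligned}
\end{equation*}
For the initial values, \eqref{10} gives $\partial_t k_i(0)=f_i(0)$, and differentiating once more (as in the proof of Lemma~\ref{lem:2.21}) gives $\partial_t^2 k_1(0)=\Delta f_1(0)-2(\tfrac{\alpha\nabla h}{h}+\nabla\varphi^0_2)\nabla f_1(0)-2\nabla\varphi^0_1\nabla f_2(0)+\partial_t f_1(0)$ together with the analogous expression for $\partial_t^2 k_2(0)$. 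Using $|\nabla^j\varphi^0_1|\le C\rho^{2\alpha+2-j}$ one checks that $\rho^{-\alpha+3}\hat{k_1}(0)\in H^1(M)$ provided $\rho^{-\alpha+3}f_1(0)\in H^3(M)$ and $\rho^{-\alpha+3}\partial_t f_1(0)\in H^1(M)$, and $\hat{k_2}(0)\in H^1(M)$ provided $f_2(0)\in H^3(M)$ and $\partial_t f_2(0)\in H^1(M)$.

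Next I would apply the $L^2$ parabolic regularity estimate (Theorem~5 of Chapter~7.1 of \cite{C}) to each of the two equations above, treating their right-hand sides as forcing terms. This bounds the whole left-hand side of the statement by a constant times the sum of the $L^2(Q_T)$ norms of those forcing terms and $\|\rho^{-\alpha+3}\hat{k_1}(0)\|_{H^1(M)}+\|\hat{k_2}(0)\|_{H^1(M)}$. It then remains to estimate the forcing. With $|\nabla(\rho^{-\alpha+3})|\le C\rho^{-\alpha+2}$, $|\Delta(\rho^{-\alpha+3})|\le C\rho^{-\alpha+1}$ (using that $\rho\,\Delta\rho$ is bounded near $\Gamma$), $\big|\tfrac{\nabla h}{h}\big|\le C\rho^{-1}$, $h\sim\rho$, and $|\nabla^j\varphi^0_1|\le C\rho^{2\alpha+2-j}$, and using $\alpha>1$ to absorb the surplus powers of $\rho$ in $\rho^{-\alpha+3}\nabla\varphi^0_1\nabla\hat{k_2}$ and $h^{-2\alpha}\nabla\varphi^0_1\nabla\hat{k_1}$, every forcing term is dominated by
\[
C\Big(\|\rho^{-\alpha+2}\nabla\hat{k_1}\|_{L^2(Q_T)}+\|\rho^{-\alpha+1}\hat{k_1}\|_{L^2(Q_T)}+\|\nabla\hat{k_2}\|_{L^2(Q_T)}+\|\hat{k_2}\|_{L^2(Q_T)}+\|\rho^{-\alpha+3}\partial_t^2 f_1\|_{L^2(Q_T)}+\|\partial_t^2 f_2\|_{L^2(Q_T)}\Big).
\]
The norms $\|\rho^{-\alpha+2}\nabla\hat{k_1}\|_{L^2(Q_T)}$, $\|\hat{k_2}\|_{L^2(Q_T)}$ and $\|\nabla\hat{k_2}\|_{L^2(Q_T)}$ are already controlled by Lemma~\ref{lem:2.21} (note $\hat{k_i}=\partial_t\widetilde{k_i}$), while $\|\rho^{-\alpha+1}\hat{k_1}\|_{L^2(Q_T)}\le C\|\rho^{-\alpha+2}\nabla\hat{k_1}\|_{L^2(Q_T)}$ by the degenerate Hardy inequality (Corollary~4.1 of \cite{LT}, cf.\ Remark~\ref{rem1}). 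The initial-data terms are handled with the formulas for $\partial_t^2 k_i(0)$ above, and the remaining occurrences of $f_1,f_2$ with Lemma~\ref{lem:2.21}. Passing to the Galerkin limit $N=N_i\to\infty$, exactly as in the earlier lemmas, yields the asserted estimate.

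The main obstacle is the weighted bookkeeping: one must verify that each term produced by conjugating with $\rho^{-\alpha+3}$ and differentiating twice in $t$ — above all $\Delta(\rho^{-\alpha+3})\hat{k_1}\sim\rho^{-\alpha+1}\hat{k_1}$, which is one power of $\rho$ more singular than what Lemma~\ref{lem:2.21} directly supplies — indeed lies in $L^2(Q_T)$. This is precisely why the weight must be raised by two units with each extra time derivative and why the degenerate Hardy inequality of \cite{LT} is needed; the boundedness of $\rho\,\Delta\rho$ near $\Gamma$ and the vanishing estimate $|\nabla^j\varphi^0_1|\le C\rho^{2\alpha+2-j}$ are the only geometric facts used.
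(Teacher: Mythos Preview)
Your proposal is correct and follows essentially the same route as the paper: write the heat equations for $(\rho^{-\alpha+3}\hat k_1,\hat k_2)$, identify the initial data via the formulas for $\partial_t^2 k_i(0)$, apply the $L^2$ parabolic regularity theorem (Theorem~5, Chapter~7.1 of \cite{C}), and bound the forcing by the estimates already obtained in Lemma~\ref{lem:2.21}. The only cosmetic differences are that you invoke the Hardy inequality explicitly for the $\Delta(\rho^{-\alpha+3})\hat k_1$ term (the paper instead relies implicitly on the bound $\|\rho^{-\alpha+1}\partial_t^2 k_1\|_{L^2(Q_T)}$ from Lemma~\ref{lem:2.10}), and that your closing remark about passing to the Galerkin limit is unnecessary here, since the argument already operates on the constructed solution rather than its approximations.
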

\begin{proof} By equation (\ref{8'}) and direct calculation, we have
	\begin{equation}
		\begin{aligned}\label{g10}
			&\partial_t(\rho^{-\alpha+3}k_1)-\Delta(\rho^{-\alpha+3}k_1)
			=-2\nabla(\rho^{-\alpha+3})\nabla k_1-\Delta(\rho^{-\alpha+3})k_1 \\
			&+\rho^{-\alpha+3}(-2(\nabla \varphi^0_2+\frac{\alpha\nabla h}{h})\nabla k_1-2\nabla\varphi^0_1\nabla k_2+f_1),\\
			&\partial_t{k}_2-\Delta {k}_2=-2h^{-2\alpha}e^{-2\varphi^0_2}|\nabla \varphi^0_1|^2{k}_2+2h^{-2\alpha}e^{-2\varphi^0_2}\nabla \varphi^0_1\nabla {k}_1+ f_2.\\
		\end{aligned}
	\end{equation}
	By differentiating equations twice in $\eqref{g10}$ with respect to $t$, we can get
	\begin{equation}\label{g11}
		\begin{aligned}
			&\partial_t(\rho^{-\alpha+3}\hat{k_1})-\Delta(\rho^{-\alpha+3}\hat{k_1})=-2\nabla(\rho^{-\alpha+3})\nabla \hat{k_1}-\Delta(\rho^{-\alpha+3})\hat{k_1}\\
			&+\rho^{-\alpha+3}(-2(\nabla \varphi^0_2+\frac{\alpha\nabla h}{h})\nabla \hat{k_1}-2\nabla\varphi^0_1\nabla \hat{k_2}+\partial^2_t f_1),\\
			&\partial_t\hat{k}_2-\Delta \hat{k_2}=-2h^{-2\alpha}e^{-2\varphi^0_2}|\nabla \varphi^0_1|^2\hat{k_2}+2h^{-2\alpha}e^{-2\varphi^0_2}\nabla \varphi^0_1\nabla \hat{k_1}+
			\partial^2_t f_2.
		\end{aligned}
	\end{equation}
	 Notice that 
	\begin{equation}
		\begin{aligned}\label{2.53}
			&\rho^{-\alpha+3}\hat{k_1}(0)=	\rho^{-\alpha+3}(\Delta f_1(0)-2(\nabla \varphi^0_2+\frac{\alpha\nabla h}{h})\nabla f_1(0)-2\nabla\varphi^0_1\nabla f_2(0)+\partial_t f_1(0)),\\
				&\hat{k_2}(0)=\Delta f_2(0)-2h^{-2\alpha}e^{-2\varphi^0_2}|\nabla \varphi^0_1|^2f_2(0)+2h^{-2\alpha}e^{-2\varphi^0_2}\nabla \varphi^0_1\nabla f_1(0)+\partial_tf_2(0).\\
		\end{aligned}
	\end{equation}

	The equations\eqref{g11} with the initial function (\ref{2.53}) have weak solutions, since the right-hand sides of the equations belong to $L^2(Q_T)$ and $\rho^{-\alpha+3}\hat{k}_1(0), \hat{k}_2(0)\in L^2(M)$. By the regularity of parabolic equations, in view of Theorem 5 of Chapter 7.1 of \cite{C}, we have
	\begin{equation*}
		\begin{aligned}
			&\sup\limits_{0\leq t\leq T}  ( \|\rho^{-\alpha+3}\hat{k_1}(t)\|_{H^1{(M)}}+	\|\hat{k_2}(t)\|_{H^1{(M)}})
			+\|\rho^{-\alpha+3}\hat{k_1}(t)\|_{L^2(0,T;H^2{(M)})}\\
			&+	\|\hat{k_2}(t)\|_{L^2(0,T;H^2{(M)})}+\|\rho^{-\alpha+3}\partial_t \hat{k_1}(t)\|_{L^2(0,T;L^2{(M)})}+	\|\partial_t \hat{k_2}(t)\|_{L^2(0,T;L^2{(M)})}\\
			\leq& C\left(\|\partial_t(\rho^{-\alpha+3}\hat{k_1})-\Delta(\rho^{-\alpha+3}\hat{k_1})\|_{L^2(0,T;L^2(M))}+\|\partial_t\hat{k_2}-	\Delta \hat{k_2}\|_{L^2(0,T;L^2(M))}\right.\\
			&\left.+\|\rho^{-\alpha+3}\hat{k_1}(0)\|_{H^1(M)}+\|\hat{k_2}(0)\|_{H^1(M)}\right)\\
			\leq &C\left(\|\rho^{-\alpha+1}f_1\|_{{H^1(0,T;L^2(M))}}+\|\rho^{-\alpha+3}\partial^2_t f_1\|_{{L^2(0,T;L^2(M))}}+\| f_2\|_{{H^2(0,T;L^2(M))}}\right.\\
			&\left.+\|\rho^{-\alpha+3}f_1(0)\|_{H^3(M)}+\|\rho^{-\alpha+3}\partial_tf_1(0)\|_{H^1(M)}+\|f_2(0)\|_{H^3(M)}+\|\partial_tf_2(0)\|_{H^1(M)}\right).
		\end{aligned}
	\end{equation*}
\end{proof}

\begin{lem}
	If $(k_1, k_2)$ is a solution of (\ref{8'}), then there exists a constant $C$ depending on $M$, $T$ and $\varphi^0$ such that
	\begin{equation*}
		\begin{aligned}
			&\|\rho^{-\alpha+3}\widetilde{k_1}\|_{L^2(0,T;H^4(M))}+\|\widetilde{k_2}\|_{L^2(0,T;H^4(M))}\\
			\leq &C\left(\|\rho^{-\alpha+1}f_1\|_{{L^2(0,T;L^2(M))}}+\|\rho^{-\alpha+3}\partial_t f_1\|_{{L^2(0,T;H^2(M))}}+\|\rho^{-\alpha+3}\partial^2_t f_1\|_{{L^2(0,T;L^2(M))}}\right.\\
			&+\| f_2\|_{{L^2(0,T;L^2(M))}}+\| \partial_t f_2\|_{{L^2(0,T;H^2(M))}}+\| \partial^2_t f_2\|_{{L^2(0,T;L^2(M))}}\\
			&\left.+\|\rho^{-\alpha+3}f_1(0)\|_{H^3(M)}+\|f_2(0)\|_{H^3(M)}^2\right).
		\end{aligned}
	\end{equation*}
\end{lem}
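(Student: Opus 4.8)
The plan is to mimic the proof of Lemma \ref{lem:W212}, but applied to the time-differentiated system rather than the original one. Recall from the proof of Lemma \ref{lem:W212} that we obtained $H^4$-in-space control of $\rho^{-\alpha+3}k_1$ and $k_2$ by writing down elliptic equations for $\Delta k_2$, $\Delta(\rho^{-\alpha+j}k_1)$ ($j=1,2,3$), moving the time derivatives and first-order terms to the right-hand side, and then bootstrapping with interior elliptic estimates (Theorem 1, Theorem 2 of Chapter 6.3 of \cite{C}) together with the lower-order bounds from Lemma \ref{lem:2.4}, Lemma \ref{lem:W112} and Lemma \ref{lem:2.10}. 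Here I would do exactly this for $\widetilde{k}=(\widetilde{k_1},\widetilde{k_2})$, which satisfies the differentiated equations \eqref{g2} (equivalently \eqref{g11} for the weight $\rho^{-\alpha+3}$), using Lemma \ref{lem:2.21} and Lemma \ref{lem:2.100} as the replacements for Lemma \ref{lem:W112} and Lemma \ref{lem:2.10}.

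First I would record the elliptic system for $\widetilde k$: from \eqref{g2}, for almost every $t$,
\begin{equation*}
	\begin{aligned}
		\Delta \widetilde{k_2}&=\partial_t\widetilde{k_2}+2h^{-2\alpha}e^{-2\varphi^0_2}|\nabla\varphi^0_1|^2\widetilde{k_2}-2h^{-2\alpha}e^{-2\varphi^0_2}\nabla\varphi^0_1\nabla\widetilde{k_1}-\partial_t f_2,\\
		\Delta(\rho^{-\alpha+j}\widetilde{k_1})&=\rho^{-\alpha+j}\Big(\partial_t\widetilde{k_1}+2(\nabla\varphi^0_2+\tfrac{\alpha\nabla h}{h})\nabla\widetilde{k_1}+2\nabla\varphi^0_1\nabla\widetilde{k_2}-\partial_t f_1\Big)\\
		&\quad+2\nabla(\rho^{-\alpha+j})\nabla\widetilde{k_1}+\Delta(\rho^{-\alpha+j})\widetilde{k_1},\qquad j=1,2,3.
	\end{aligned}
\end{equation*}
The key point is that $\rho^{-\alpha+3}$ carries exactly enough weight to absorb the singular factors $\rho^{-1}$ and $\rho^{-2}$ coming from $\nabla(\rho^{-\alpha+3})$, $\Delta(\rho^{-\alpha+3})$, $\tfrac{\alpha\nabla h}{h}\sim\tfrac{\alpha}{\rho}$ and $\nabla\varphi^0_1$ (which is bounded by $C\rho^{2\alpha+1}$ near $\Gamma$, hence harmless), so that every term on the right belongs to $L^2(Q_T)$ with the claimed bounds. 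Then I apply the interior elliptic estimates iteratively: $\|\widetilde{k_2}\|_{H^2}\lesssim \|\widetilde{k_2}\|_{L^2}+\|\Delta\widetilde{k_2}\|_{L^2}$ and similarly for $\rho^{-\alpha+1}\widetilde{k_1}$, then climb to $H^3$ using $\|\Delta\,\cdot\,\|_{H^1}$ for the weights $\rho^{-\alpha+2}$, and finally to $H^4$ using $\|\Delta\,\cdot\,\|_{H^2}$ for the weight $\rho^{-\alpha+3}$. Integrating these inequalities in $t\in[0,T]$ and inserting the already-established space-time bounds of Lemma \ref{lem:2.21} (controlling $\|\rho^{-\alpha+2}\widetilde{k_1}\|_{H^1}$, $\|\widetilde{k_2}\|_{H^1}$, $\|\rho^{-\alpha+2}\partial_t\widetilde{k_1}\|_{L^2}$, $\|\partial_t\widetilde{k_2}\|_{L^2}$) and Lemma \ref{lem:2.100} (controlling $\|\rho^{-\alpha+3}\partial_t\widetilde{k_1}\|_{L^2(0,T;H^2)}$, $\|\partial_t\widetilde{k_2}\|_{L^2(0,T;H^2)}$, $\|\rho^{-\alpha+3}\partial^2_t\widetilde{k_1}\|_{L^2(0,T;L^2)}$, $\|\partial^2_t\widetilde{k_2}\|_{L^2(0,T;L^2)}$) yields the stated estimate, once the right-hand side data are expressed in terms of $f_1,f_2$ and their time derivatives at $t=0$ via the identities $\partial_t k_i(0)=f_i(0)$ and the formulas for $\partial^2_t k_i(0)$ from the proof of Lemma \ref{lem:2.21}.

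The main obstacle is bookkeeping rather than any new idea: one must check that each term produced by commuting $\partial_t$ and $\Delta$ with the weight $\rho^{-\alpha+j}$ is genuinely in the right space, i.e. that the products of $\rho$-powers with $\nabla\varphi^0_1$, $\frac{\nabla h}{h}$, and derivatives of $\rho$ all combine to nonnegative net powers of $\rho$ near $\Gamma$ (so the weighted $L^2$ norms converge), and to carefully feed the lower-order estimates into the bootstrap so that the constants depend only on $M$, $T$, $\varphi^0$. The subtlety about $\rho^{-\alpha+3}\partial_t f_1$ living in $L^2(0,T;H^2(M))$ — rather than merely $L^2(Q_T)$ — is exactly what is needed at the last ($H^4$) step of the iteration for $\widetilde{k_1}$, and the term $\partial^2_t f_2\in L^2(0,T;L^2(M))$ enters through $\Delta\widetilde{k_2}$ at the top of the chain; I would make sure these are the only places the second-order data are used. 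No genuinely hard analysis arises beyond what is already in Lemma \ref{lem:W212} and Lemma \ref{lem:2.100}.
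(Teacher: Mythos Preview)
Your proposal is correct and follows essentially the same route as the paper: write the elliptic equations for $\Delta\widetilde{k_2}$ and $\Delta(\rho^{-\alpha+j}\widetilde{k_1})$ with $j=1,2,3$, bootstrap via elliptic regularity to $H^2$, $H^3$, $H^4$, integrate in $t$, and feed in Lemmas \ref{lem:W112}, \ref{lem:2.10}, \ref{lem:2.21}, \ref{lem:2.100}. The only step you leave implicit is that Lemma \ref{lem:2.100} produces initial-data terms $\|\rho^{-\alpha+3}\partial_t f_1(0)\|_{H^1(M)}$ and $\|\partial_t f_2(0)\|_{H^1(M)}$ which do not appear in the final statement; the paper eliminates these at the very end by the trace-type bound $\|u\|_{L^\infty(0,T;H^1)}\leq C(\|u\|_{L^2(0,T;H^2)}+\|\partial_t u\|_{L^2(0,T;L^2)})$ (Theorem 4, Chapter 5.9.2 of \cite{LC}) applied to $u=\rho^{-\alpha+3}\partial_t f_1$ and $u=\partial_t f_2$, which is exactly why $\partial_t f_1, \partial_t f_2$ are assumed in $L^2(0,T;H^2)$ and $\partial^2_t f_1,\partial^2_t f_2$ in $L^2(0,T;L^2)$.
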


\begin{proof}
	
The equations
	\begin{equation*}
		\begin{aligned}
			\Delta \widetilde{k_2}=&\partial_t\widetilde{k_2}+2h^{-2\alpha}e^{-2\varphi^0_2}|\nabla \varphi^0_1|^2\widetilde{k_2}-2h^{-2\alpha}e^{-2\varphi^0_2}\nabla \varphi^0_1\nabla \widetilde{k_1}-
			\partial_tf_2,\\
			\Delta (\rho^{-\alpha+1}\widetilde{k_1})=&\rho^{-\alpha+1}(\partial_t\widetilde{k_1}+2(\nabla\varphi^0_2+\frac{\alpha\nabla h}{h})\nabla \widetilde{k_1}+2\nabla\varphi^0_1\nabla \widetilde{k_2}-\partial_tf_1)\\
			&+2\nabla(\rho^{-\alpha+1})\nabla \widetilde{k_1}+\Delta(\rho^{-\alpha+1}) \widetilde{k_1},
		\end{aligned}
	\end{equation*}\begin{equation*}
		\begin{aligned}
				\Delta(\rho^{-\alpha+2}\widetilde{k_1})=&\rho^{-\alpha+2}(\partial_t\widetilde{k_1}+2(\nabla \varphi^0_2+\frac{\alpha\nabla h}{h})\nabla \widetilde{k_1}+2\nabla\varphi^0_1\nabla \widetilde{k_2}-\partial_tf_1)\\
			&+2\nabla(\rho^{-\alpha+2})\nabla \widetilde{k_1}+\Delta(\rho^{-\alpha+2})\widetilde{k_1},\\
			\Delta(\rho^{-\alpha+3}\widetilde{k_1})=&\rho^{-\alpha+3}(\partial_t\widetilde{k_1}+2(\nabla \varphi^0_2+\frac{\alpha\nabla h}{h})\nabla \widetilde{k_1}+2\nabla\varphi^0_1\nabla \widetilde{k_2}-\partial_tf_1)\\
			&+2\nabla(\rho^{-\alpha+3})\nabla \widetilde{k_1}+\Delta(\rho^{-\alpha+3})\widetilde{k_1}\\
		\end{aligned}
	\end{equation*}
	all possess weak solutions, since the right-hand sides of the equations belong to $L^2(M)$.
	By the regularity of elliptic equations, in view of Theorem 1 and Theorem 2 of Chapter 6.3 of \cite{C}, we have
	\begin{equation*}
		\begin{aligned}
			\|\widetilde{k}_2\|^2_{H^2(M)}&\leq C( \|\widetilde{k_2}\|^2_{L^2(M)}+\|\Delta \widetilde{k_2}\|^2_{L^2(M)}),\\	\|\rho^{-\alpha+1}\widetilde{k_1}\|^2_{H^2(M)}&\leq C( \|\rho^{-\alpha+1}\widetilde{k_1}\|^2_{L^2(M)}+\|\Delta(\rho^{-\alpha+1}\widetilde{k_1})\|^2_{L^2(M)}),
			\end{aligned}
	\end{equation*}
		\begin{equation*}
		\begin{aligned}
			\|\widetilde{k_2}\|^2_{H^3(M)}&\leq C( \|\widetilde{k_2}\|^2_{L^2(M)}+\|\Delta \widetilde{k_2}\|^2_{H^1(M)}),\\
			\|\rho^{-\alpha+2}\widetilde{k_1}\|^2_{H^3(M)}&\leq C( \|\rho^{-\alpha+2}\widetilde{k_1}\|^2_{L^2(M)}+\|\Delta(\rho^{-\alpha+2}\widetilde{k_1})\|^2_{H^1(M)}),\\
			\|\widetilde{k_2}\|^2_{H^4(M)}&\leq C( \|\widetilde{k_2}\|^2_{L^2(M)}+\|\Delta \widetilde{k_2}\|^2_{H^2(M)}),\\
			\|\rho^{-\alpha+3}\widetilde{k_1}\|^2_{H^4(M)}&\leq C( \|\rho^{-\alpha+3}\widetilde{k_1}\|^2_{L^2(M)}+\|\Delta (\rho^{-\alpha+3}\widetilde{k_1})\|^2_{H^2(M)}).
		\end{aligned}
	\end{equation*}
	Integrating $t$ from $0$ to $T$ and utilizing the estimate from Lemma \ref{lem:W112}, Lemma \ref{lem:2.10}, Lemma \ref{lem:2.21}, Lemma \ref{lem:2.100}, we deduce
	\begin{equation*}
	\begin{aligned}
		&\|\rho^{-\alpha+3}\widetilde{k_1}\|_{L^2(0,T;H^4(M))}+\|\widetilde{k_2}\|_{L^2(0,T;H^4(M))}\\
		\leq &C\left(\|\rho^{-\alpha+1}f_1\|_{{L^2(0,T;L^2(M))}}+\|\rho^{-\alpha+3}\partial_t f_1\|_{{L^2(0,T;H^2(M))}}+\|\rho^{-\alpha+3}\partial^2_t f_1\|_{{L^2(0,T;L^2(M))}}\right.\\
		&+\| f_2\|_{{L^2(0,T;L^2(M))}}+\| \partial_t f_2\|_{{L^2(0,T;H^2(M))}}+\| \partial^2_t f_2\|_{{L^2(0,T;L^2(M))}}\\
		&\left.+\|\rho^{-\alpha+3}f_1(0)\|_{H^3(M)}+\|f_2(0)\|_{H^3(M)}^2\right).			
	\end{aligned}
\end{equation*}	
Here we used the estimate
	\begin{equation*}
		\begin{aligned}
	&\|\rho^{-\alpha+3}\partial_tf_1\|_{L^\infty(0,T;H^1(M))}+\|\partial_tf_2\|_{L^\infty(0,T;H^1(M))}\\
	\leq&C\left(\|\rho^{-\alpha+3}\partial_tf_1\|_{L^2(0,T;H^2(M))}+\|\rho^{-\alpha+3}\partial^2_tf_1\|_{L^2(0,T;L^2(M))}\right.\\
	&\left.+\|\partial_tf_2\|_{L^2(0,T;H^2(M))}+\|\partial^2_tf_2\|_{L^2(0,T;L^2(M))}\right).\\ 				
			\end{aligned}
	\end{equation*}
		which follows from Theorem 4 in Chapter 5.9.2 of \cite{LC}.
\end{proof}

In conclusion, we get the estimate
	\begin{equation*}
	\begin{aligned}
	&\|\rho^{-\alpha+3}\widetilde{k_1}\|_{W^{4,2}_2(Q_T)}+\|\widetilde{k_2}\|_{W^{4,2}_2(Q_T)}\\
	\leq &C\left(\|\rho^{-\alpha+3}\partial_t{f}_1\|_{W^{2,1}_2(Q_T)}+\|\partial_t{f}_2\|_{W^{2,1}_2(Q_T)}+\|\rho^{-\alpha+1}f_1\|_{{L^2(0,T;L^2(M))}}\right.\\
		&\left.+\|f_2\|_{{L^2(0,T;L^2(M))}}+\|\rho^{-\alpha+3}f_1(0)\|_{H^3(M)}+\|f_2(0)\|_{H^3(M)}^2\right).
	\end{aligned}
\end{equation*}
In particular, $\partial_t k_2\in W^{4,2}_2(Q_T)$. Moreover, according to the Sobolev embedding theorem, $
\nabla_{x_i} \partial_t k_2 \in C^{\frac{1}{2},\frac{1}{4}}(Q_T), i=1,2,3$ and $\|\nabla \partial_t k_2\|_{C^{\frac{1}{2},\frac{1}{4}}(Q_T)}\leqslant C \| \partial_t k_2\|_{W^{4,2}_{2}(Q_T)}$.
Since $f=(f_1,f_2) \in  \mathcal{C}
\times\mathcal{D} $, we have $\partial_tk_1,\partial_tk_2\in C^{2+\beta,1+\frac{\beta}{2}}({M\backslash \Gamma\times [0,T]})$. 

It is worth mentioning that $\partial_tk_1(0)=f_1(0)\neq0$. However,  it satisfies the condition $\sup\limits_{x\in M}|\rho^{-\gamma}f_1 |(x,0)\leq C$. Therefore, the result of Lemma \ref{key} still holds for $\partial_t k_1$.
 We obtain
\begin{equation}
	\begin{aligned}
		|\partial_t k_1(x,t)|\leqslant C \rho(x)^{\gamma} \ \mbox{ for } (x,t) \in B_1(x)
		\times[0,T].
	\end{aligned}
\end{equation}

Similar to the proof of Theorem \ref{k1k2C2}, we have
\begin{thm}
	There is an uniform constant C such that for any $0<\beta<\min\{2\alpha-2,\frac{1}{2}\}, 2+\beta<\gamma<2\alpha$,
	\begin{equation*}
		\begin{aligned}
			& \|\widetilde{k_2}\|_{C^{2+\beta,1+\frac{\beta}{2}}(Q_T)}+ \sup\limits_{X\in Q_T}\rho_X^{1-\gamma}|\nabla \widetilde{k_1}(X)|+\sup\limits_{X\in Q_T}\rho_X^{2-\gamma}(| \nabla^2\widetilde{k_1}(X)|+|\partial_t \widetilde{k_1}(X)|)\\ &+\sup\limits_{X,Y\in Q_T}\rho_{X,Y}^{2+\beta- \gamma}(\frac{|\nabla^2\widetilde{k_1}(X)-\nabla^2\widetilde{k_1}(Y)|}
			{\delta(X,Y)^{\beta}}+\frac{|\partial_t\widetilde{k_1}(X)-\partial_t\widetilde{k_1}(Y)|}{\delta(X,Y)^{\beta}})\\
			\leq& C \left( \|\nabla \widetilde{k_2}\|_{C^{\beta,\frac{\beta}{2}}(Q_T)}+\sup\limits_{Q_{T}}  \rho_X^{-\gamma} |\widetilde{k_1}(X)|+ \|\partial_tf_2\|_{C^{\beta,\frac{\beta}{2}}(Q_T)}+\sup\limits_{Q_{T}}  \rho_X^{2-\gamma} |\partial_tf_1(X)| \right.\\
			&\left.+ \sup\limits_{Q_{T}}  \rho_{Y,Z}^{2+\beta-\gamma} \frac{|\partial_t f_1(Y)-\partial_tf_1(Z)|}{\delta(Y,Z)^\beta}+\|\widetilde{k_2}\|_{W_2^{2,1}(Q_T)}\right),
		\end{aligned}
	\end{equation*}
where $C=C(M,\varphi^0,\beta,\gamma)$.\end{thm}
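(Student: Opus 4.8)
The plan is to reduce to the already-proven Theorem~\ref{k1k2C2} by differentiating the linearized system in $t$. Since $h$, $\varphi_1^0$, $\varphi_2^0$ do not depend on $t$, the pair $\widetilde k=(\widetilde k_1,\widetilde k_2)=(\partial_t k_1,\partial_t k_2)$ solves $D\mathcal{P}(\varphi^0,\widetilde k)=\widetilde f$ with $\widetilde f=(\partial_t f_1,\partial_t f_2)$, the only change being the initial data: $\widetilde k_1(\cdot,0)=f_1(0)$, $\widetilde k_2(\cdot,0)=f_2(0)$, which need not vanish. Equivalently, $\widetilde k_1$ satisfies
\[
\partial_t\widetilde k_1-\Delta\widetilde k_1+2\Bigl(\tfrac{\alpha\nabla h}{h}+\nabla\varphi_2^0\Bigr)\nabla\widetilde k_1=\partial_t f_1-2\nabla\varphi_1^0\nabla\widetilde k_2 ,
\]
which is exactly of the form treated in Lemma~\ref{key} with $v=\alpha u+\varphi_2^0$; the non-vanishing of $\widetilde k_1$ at $t=0$ is harmless because $f_1\in\mathcal C$ gives $\sup_x\rho^{-\gamma}|f_1|(x,0)\le C$, so the barrier $\rho^\gamma+\rho^{\gamma-\delta}r_{\overline{x}_0}^2$ still dominates $\widetilde k_1$ on the parabolic boundary and the maximum-principle argument of Lemma~\ref{key} goes through. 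This produces the pointwise bound $|\widetilde k_1(x,t)|\le C\rho(x)^\gamma$ on $Q_T$ (already recorded in the text preceding the statement), which plays the role of \eqref{k2}.

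With this in hand I would set $\widetilde g_1:=\partial_t f_1-2(\tfrac{\alpha\nabla h}{h}+\nabla\varphi_2^0)\nabla\widetilde k_1-2\nabla\varphi_1^0\nabla\widetilde k_2$, so that $\partial_t\widetilde k_1-\Delta\widetilde k_1=\widetilde g_1$, and apply the weighted Schauder estimate of Theorem~\ref{thm:k2} verbatim with $(k_1,g_1)$ replaced by $(\widetilde k_1,\widetilde g_1)$. It then remains to bound $\sup_{Q_T}\rho_X^{2-\gamma}|\widetilde g_1|$ and $\sup_{Q_T}\rho_{X,Y}^{2+\beta-\gamma}\tfrac{|\widetilde g_1(X)-\widetilde g_1(Y)|}{\delta(X,Y)^\beta}$, which is done exactly as in the derivation of \eqref{124}--\eqref{133}: using $\varphi_1^0\in\mathcal{A}_0$ (hence $|\nabla\varphi_1^0|\le C\rho^{2\alpha+1}$, $|\nabla^2\varphi_1^0|\le C\rho^{2\alpha}$) and $|\xi|\le C\rho_X^{-1}$, $|\nabla\xi|\le C\rho_X^{-2}$ for $\xi=-2(\tfrac{\alpha\nabla h}{h}+\nabla\varphi_2^0)$, one controls these by $\sup_{Q_T}\rho_X^{2-\gamma}|\partial_t f_1|$, the weighted $\beta$-seminorm of $\partial_t f_1$, $\|\nabla\widetilde k_2\|_{C^{\beta,\frac{\beta}{2}}(Q_T)}$, and first-order quantities $\sup_{Q_T}\rho_X^{1-\gamma}|\nabla\widetilde k_1|+\sup_{Q_T}\rho_{X,Y}^{1+\beta-\gamma}\tfrac{|\nabla\widetilde k_1(X)-\nabla\widetilde k_1(Y)|}{\delta(X,Y)^\beta}$. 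Applying the interpolation inequality Lemma~\ref{Interpolation} with $\varepsilon$ small absorbs the first-order terms into $\varepsilon\sup_{Q_T}\rho_X^{2-\gamma}|\nabla^2\widetilde k_1|$, and standard parabolic Schauder theory applied to the second equation of the differentiated system estimates $\|\widetilde k_2\|_{C^{2+\beta,1+\frac{\beta}{2}}(Q_T)}$ by $\|\partial_t f_2\|_{C^{\beta,\frac{\beta}{2}}(Q_T)}$, the same first-order weighted quantities of $\widetilde k_1$, and $\|\widetilde k_2\|_{W^{2,1}_2(Q_T)}$; a final use of Lemma~\ref{Interpolation} closes the estimate.

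The main obstacle is the same delicate point that drives the proof of Theorem~\ref{k1k2C2}: estimating the $\beta$-Hölder seminorm of the singular products $\xi\cdot\nabla\widetilde k_1$ and $\nabla\varphi_1^0\cdot\nabla\widetilde k_2$ near $\Gamma$, where the $\rho^{-1}$ growth of $\xi$ (and $\rho^{-2}$ of $\nabla\xi$) must be traded against the weight $\rho_{X,Y}^{2+\beta-\gamma}$ through the dyadic split into $Y\in Q_d(X)$ and $Y\notin Q_d(X)$ with $d=\mu\rho_X$, followed by absorption of the resulting second-order term via interpolation; the genuinely new feature here, the nonzero initial value $f_1(0)=\widetilde k_1(0)$, causes no extra difficulty since it enters only through the pointwise bound $|\widetilde k_1|\le C\rho^\gamma$ and therefore leaves the weighted Schauder step untouched.
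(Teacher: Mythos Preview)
Your proposal is correct and follows essentially the same route as the paper: the paper states this theorem with the phrase ``Similar to the proof of Theorem~\ref{k1k2C2}'' and, in the paragraphs immediately preceding it, makes exactly the observation you make---that although $\partial_t k_1(0)=f_1(0)\neq 0$, the hypothesis $f_1\in\mathcal{C}$ gives $\sup_{x}\rho^{-\gamma}|f_1|(x,0)\le C$, so Lemma~\ref{key} still yields $|\widetilde{k_1}|\le C\rho^\gamma$ and the weighted Schauder argument goes through unchanged. Your identification of the dyadic split and interpolation absorption as the delicate step matches the structure of the proof of Theorem~\ref{k1k2C2} that is being replicated.
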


So far, we have proven the Theorem \ref{thm:theorem2}.
 
 \begin{rem}
 	By Theorem 4 in Chapter 5.9.2 of \cite{LC}, we have
 \begin{equation*}
 	\begin{aligned}
 		& \|\rho^{-\alpha+3}\partial^2_t{k}_1(t)\|_{L^\infty(0,T;H^1(M))}+	\|\partial^2_t{k}_2(t)\|_{L^\infty(0,T;H^1(M))}\\
 		&+	\|\rho^{-\alpha+3}\partial_t{k}_1\|_{L^\infty(0,T;H^3(M))}+\|\partial_t{k}_2\|_{L^\infty(0,T;H^3(M))}\\
 		\leqslant&C(\|\rho^{-\alpha+3}\widetilde{k}_1\|_{W^{4,2}_2(Q_T)}+\|\widetilde{k}_2\|_{W^{4,2}_2(Q_T)}).\\
 	\end{aligned}
 \end{equation*}
 This ensures that $D\mathcal{P}(\varphi^0,k)$ in \eqref{8'} is an isomorphism of $\mathcal{A}\times\mathcal{B}\rightarrow\mathcal{C}\times\mathcal{D}$.
 \end{rem}
 
\section{Existence of short time solutions}\label{section3}
In this section, we will utilize the inverse function theorem on Banach spaces to prove the existence of short time solutions. 
\begin{thm}\label{thm:exixtence}
	For $\varphi^0=(\varphi^0_1,\varphi^0_2)\in \mathcal{A}_0\times \mathcal{B}_0$,  there exists a $\delta>0$ and a function $\varphi=(\varphi_1,\varphi_2)\in \{\varphi^0_1+\mathcal{A}\}\times \{\varphi^0_2+\mathcal{B}\}$ solving (\ref{eq:equation}) on the domain $M\times [0,\delta)$.
\end{thm}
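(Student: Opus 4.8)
The plan is to apply the inverse function theorem on Banach spaces to the map $\Phi:\{\varphi^0_1+\mathcal{A}\}\times\{\varphi^0_2+\mathcal{B}\}\to\mathcal{C}\times\mathcal{D}$ defined by $\Phi(\varphi)=\mathcal{P}(\varphi)$, or equivalently to the map $\varphi^b\mapsto\mathcal{P}(\varphi^b+\varphi^0)$ from $\mathcal{A}\times\mathcal{B}$ into $\mathcal{C}\times\mathcal{D}$. First I would verify the three hypotheses needed. (1) \emph{Well-definedness and smoothness:} one must check that $\varphi^b\mapsto\mathcal{P}(\varphi^b+\varphi^0)$ indeed maps a neighborhood of $0$ in $\mathcal{A}\times\mathcal{B}$ into $\mathcal{C}\times\mathcal{D}$, and is continuously (in fact $C^1$, even analytic) Fréchet differentiable there; this uses the algebra and composition properties of the weighted H\"older and Sobolev spaces defining $\mathcal{A},\mathcal{B},\mathcal{C},\mathcal{D}$, together with the weighted bounds $|\nabla^k\varphi_1^0|\le C\rho^{2\alpha+2-k}$, $[\varphi_1^0]_{k+\beta}\le C\rho^{2\alpha+2-k-\beta}$ recorded after \eqref{8'}, which guarantee that the nonlinear terms $h^{2\alpha}e^{2\varphi_2}\mathrm{div}(h^{-2\alpha}e^{-2\varphi_2}\nabla\varphi_1)$ and $h^{-2\alpha}e^{-2\varphi_2}|\nabla\varphi_1|^2$ and their differences land in the correct weighted target spaces. (2) \emph{The derivative at $\varphi^b=0$ is an isomorphism:} this is exactly the content of Theorem \ref{thm:theorem2} together with the Remark closing Section \ref{section2}, which states that $D\mathcal{P}(\varphi^0,\cdot)$ is an isomorphism of $\mathcal{A}\times\mathcal{B}$ onto $\mathcal{C}\times\mathcal{D}$.

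Granting (1) and (2), the inverse function theorem gives open neighborhoods $\mathcal{U}$ of $0\in\mathcal{A}\times\mathcal{B}$ and $\mathcal{V}$ of $\mathcal{P}(\varphi^0)\in\mathcal{C}\times\mathcal{D}$ such that $\mathcal{P}(\cdot+\varphi^0):\mathcal{U}\to\mathcal{V}$ is a diffeomorphism. The next step is to produce a point of $\mathcal{V}$ that corresponds to solving \eqref{eq:equation}: I want $\mathcal{P}(\varphi)=0$. The obstruction is that the zero function need not belong to $\mathcal{V}$ on the fixed time interval $[0,T]$; this is precisely where the short time restriction enters. I would resolve this by working on a shrinking time interval $[0,\delta)$ and exploiting that, for functions vanishing at $t=0$, the relevant norms of $\mathcal{P}(\varphi^0)$ — note $\mathcal{P}(\varphi^0)$ does \emph{not} vanish at $t=0$ in general, but its ``distance'' to the image of admissible data — can be made small as $\delta\to0$. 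Concretely, let $\zeta(t)$ be a smooth cutoff with $\zeta\equiv1$ near $t=0$ and define an approximate solution $\varphi^{app}=\varphi^0-\zeta\,\Psi$ where $\Psi$ solves the \emph{linear} problem $D\mathcal{P}(\varphi^0,\Psi)=\mathcal{P}(\varphi^0)$ with $\Psi(0)=0$ (which exists and lies in $\mathcal{A}\times\mathcal{B}$ by Theorem \ref{thm:theorem2}); then $\mathcal{P}(\varphi^{app})$ vanishes to higher order in $t$, and its $\mathcal{C}\times\mathcal{D}$-norm over $[0,\delta)$ tends to $0$ as $\delta\to0$ by absolute continuity of the weighted integrals and the vanishing of the relevant traces at $t=0$. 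Hence for $\delta$ small, $0\in\mathcal{V}_\delta$ where $\mathcal{V}_\delta$ is the image of a fixed ball under $\mathcal{P}_\delta(\cdot+\varphi^{app})$, and the inverse function theorem on $[0,\delta)$ yields $\varphi^b\in\mathcal{A}\times\mathcal{B}$ with $\mathcal{P}(\varphi^b+\varphi^{app})=0$, i.e.\ $\varphi=\varphi^b+\varphi^{app}\in\{\varphi^0_1+\mathcal{A}\}\times\{\varphi^0_2+\mathcal{B}\}$ solves \eqref{eq:equation} on $M\times[0,\delta)$, with the Dirichlet condition $\varphi_1=0$ on $\Gamma$ built into $\mathcal{A}_0$ and $\mathcal{A}$ and the initial condition built into $\varphi^{app}(0)=\varphi^0$.

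An alternative and cleaner packaging, which I would actually prefer, is to keep $T$ fixed but observe that the norms of $\mathcal{C}\times\mathcal{D}$ restricted to $[0,\delta)$ of any fixed element vanishing appropriately at $t=0$ shrink with $\delta$, so that the quantitative inverse function theorem (with explicit radius of the neighborhood $\mathcal{V}$ controlled by $\|D\mathcal{P}(\varphi^0,\cdot)^{-1}\|$ and the modulus of continuity of $D\mathcal{P}$) applies uniformly; one only needs the residual $\mathcal{P}(\varphi^0)$, corrected by one linear solve, to have small restricted norm, which holds for $\delta$ small. The main obstacle in carrying this out rigorously is step (1): checking that the nonlinear superposition operator is a $C^1$ map between these particular weighted spaces, i.e.\ controlling products such as $\nabla\varphi_1\cdot\nabla\varphi_1$ and $\nabla\varphi_1^0\cdot\nabla\varphi_1^b$ in the weighted H\"older and $W^{2,1}_2$ norms of $\mathcal{C}$, and verifying that the time-derivative components $\partial_t$ of these compositions lie in $C^{\beta,\beta/2}(\overline{Q_T};\rho^{-\gamma})$ and that $\rho^{-\alpha+3}\partial_t(\cdot)\in W^{2,1}_2(Q_T)$; this requires the relation $2+\beta<\gamma<2\alpha$ and $0<\beta<\min\{2\alpha-2,1\}$ and the sharp decay of $\varphi_1^0$ near $\Gamma$ to absorb the singular weights, together with compatibility at $t=0$ (the data vanish there, which is what makes $\mathcal{A},\mathcal{B}$ closed under the relevant operations). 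Once the mapping property and differentiability are in hand, the rest is a direct invocation of Theorem \ref{thm:theorem2} and the inverse function theorem.
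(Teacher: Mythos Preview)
Your plan is correct and shares the same backbone as the paper's proof: both invoke the inverse function theorem after recording (as the paper does just before \eqref{8'}) that $\varphi^b\mapsto\mathcal{P}(\varphi^b+\varphi^0)$ is a differentiable map $\mathcal{A}\times\mathcal{B}\to\mathcal{C}\times\mathcal{D}$ and then citing Theorem~\ref{thm:theorem2} (together with the closing Remark of Section~\ref{section2}) for the isomorphism of $D\mathcal{P}(\varphi^0,\cdot)$.

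The genuine difference is in the final step, how one produces an actual solution from the local diffeomorphism. The paper uses Hamilton's interpolation trick: keeping $T$ fixed, it sets
\[
p(x,t)=\begin{cases}0,&0\le t\le\delta,\\ \mathcal{P}(\varphi^0),&2\delta\le t\le T,\end{cases}
\]
smoothly interpolated on $[\delta,2\delta]$, checks that $p$ lies in the image neighborhood $W$ of $\mathcal{P}(\varphi^0)$ when $\delta$ is small, and takes the preimage; since $p\equiv0$ on $[0,\delta]$, the preimage $\varphi^b+\varphi^0$ solves \eqref{eq:equation} on $M\times[0,\delta)$. Your route---first solve $D\mathcal{P}(\varphi^0,\Psi)=\mathcal{P}(\varphi^0)$ to build an approximate solution $\varphi^{\mathrm{app}}$ with $\mathcal{P}(\varphi^{\mathrm{app}})(\cdot,0)=0$, then apply the IFT around $\varphi^{\mathrm{app}}$ on a short interval---is also valid and makes the smallness of the residual on $[0,\delta)$ completely transparent, but it costs an extra linear solve and a second application of the implicit/inverse function machinery, whereas the paper's argument needs only the one interpolated right-hand side. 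Your careful listing of the mapping and $C^1$ properties that have to be checked (your step (1)) is a point the paper leaves implicit; this is a genuine verification, and the weighted product estimates you outline are exactly what is needed.
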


\begin{proof}
   According to Theorem \ref{thm:theorem2},
$D\mathcal{P}(\varphi^0,k)$ in \eqref{8'} is an isomorphism of $\mathcal{A}\times\mathcal{B}\rightarrow\mathcal{C}\times\mathcal{D}$. Therefore, by the inverse function theorem in Banach spaces, there exist a neighborhood $V$ of 0 in $\mathcal{A}\times\mathcal{B}$ and a neighborhood $W$ of $\mathcal{P}(\varphi^0)$ in $\mathcal{C}\times\mathcal{D}$ such that $\varphi^b\rightarrow \mathcal{P}(\varphi^b+\varphi^0):V\longrightarrow W$ is a homeomorphism.
Consider the function
\begin{equation}
   p(x,t)=\left\{
   \begin{aligned}
     &0, \enspace 0\leqslant t\leqslant \delta\\
     & \mathcal{P}(\varphi^0),\enspace 2\delta\leqslant t\leqslant T
   \end{aligned}
   \right.
\end{equation}
such that $p(x,t)$ is smooth for $t\in[\delta, 2\delta]$. It's easy to verify that $p(x,t)\in W$ when $\delta>0$ is sufficiently small. Then there exists an
unique $\varphi^b\in V\subset \mathcal{A}\times\mathcal{B}$ such that $\mathcal{P}(\varphi^b+\varphi^0)=0$ in $M\times[0,\delta)$.
\end{proof}

In the process of the proof, we need to pay attention to the selection of initial values $\varphi^0=(\varphi^0_1,\varphi^0_2)$ to ensure that $\mathcal{P}(\varphi^0)$ belongs to $	\mathcal{C}\times	\mathcal{D}$. 
\section{Existence and regularity of long time solutions}\label{section4}
To get the existence of long time solutions, the most important thing is obtaining a uniform estimate for the solutions.
\begin{lem}\label{lem:boundness}
Assume $\varphi=(\varphi_1,\varphi_2)$ is a solution in $[0,T)$ of \eqref{eq:equation}, then $\varphi_2$ is uniformly bounded in $M\times [0,T)$.
\end{lem}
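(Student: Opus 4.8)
The plan is to derive a pointwise differential inequality for $\varphi_2$ and then apply the maximum principle on $M\times[0,T)$. Recall that $\varphi_2$ solves
\[
\partial_t\varphi_2-\Delta\varphi_2=h^{-2\alpha}e^{-2\varphi_2}|\nabla\varphi_1|^2\quad\text{in }Q_T.
\]
The right-hand side is non-negative, so $\varphi_2$ is a subsolution of the heat equation and the upper bound is the hard direction; for the lower bound, since $h^{-2\alpha}e^{-2\varphi_2}|\nabla\varphi_1|^2\ge 0$, the function $\varphi_2$ is a supersolution of $\partial_t w-\Delta w=0$ with initial datum $\varphi_2^0$, hence by the minimum principle $\varphi_2\ge\min_M\varphi_2^0$ on $M\times[0,T)$, using that $M$ is closed so there is no spatial boundary to worry about.

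For the upper bound, the key observation is that the nonlinearity has a favorable sign in $\varphi_2$: writing $e^{-2\varphi_2}$, large positive $\varphi_2$ damps the source term, but that alone does not immediately bound $\varphi_2$ from above since the source is still positive. Instead I would look at the reduced energy $H(\varphi_1,\varphi_2)=\int_M h^{-2\alpha}e^{-2\varphi_2}|\nabla\varphi_1|^2+|\nabla\varphi_2|^2\,\ud V$, which is non-increasing along the flow (this is the standard energy identity: $\frac{d}{dt}H(\varphi(t))=-2\int_M(\partial_t\varphi_1)^2 h^{-2\alpha}e^{-2\varphi_2}+(\partial_t\varphi_2)^2\,\ud V\le 0$, obtained by testing the two equations of \eqref{eq:equation} against $h^{-2\alpha}e^{-2\varphi_2}\partial_t\varphi_1$ and $\partial_t\varphi_2$ respectively). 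Thus $\int_M|\nabla\varphi_2|^2\,\ud V\le H(\varphi^0)$ for all $t$. Combining this with an $L^2$ bound on $\varphi_2$ — obtained by testing the $\varphi_2$-equation against $\varphi_2$ and using $\int_M h^{-2\alpha}e^{-2\varphi_2}|\nabla\varphi_1|^2\varphi_2\,\ud V$, which is controlled because $\int_M h^{-2\alpha}e^{-2\varphi_2}|\nabla\varphi_1|^2\,\ud V\le H(\varphi^0)$ and $\varphi_2$ is already bounded below — gives a uniform $H^1(M)$ bound on $\varphi_2(t)$.

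With a uniform $H^1(M)$ bound in hand, I would upgrade to an $L^\infty$ bound. In dimension $3$, $H^1\hookrightarrow L^6$, so $\varphi_2(t)$ is uniformly bounded in $L^6(M)$; then, treating the $\varphi_2$-equation as a linear heat equation $\partial_t\varphi_2-\Delta\varphi_2=g$ with $g=h^{-2\alpha}e^{-2\varphi_2}|\nabla\varphi_1|^2\ge 0$, I would run a Moser/De Giorgi iteration or a Stampacchia truncation argument on the positive part $(\varphi_2-k)^+$: testing against $(\varphi_2-k)^+$ and using the favorable sign $e^{-2\varphi_2}\le e^{-2k}$ on the set $\{\varphi_2>k\}$, the source contributes at most $e^{-2k}\int h^{-2\alpha}|\nabla\varphi_1|^2(\varphi_2-k)^+$, and since $h^{-2\alpha}e^{-2\varphi_2}|\nabla\varphi_1|^2$ already has bounded space-time $L^1$ norm (from the energy identity, $\int_0^T\!\int_M h^{-2\alpha}e^{-2\varphi_2}|\nabla\varphi_1|^2\le$ const via the $\varphi_2$-equation integrated in space-time), the iteration closes to give $\sup_{M\times[0,T)}\varphi_2\le C(M,\varphi^0)$. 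I expect the main obstacle to be making the interaction term involving $|\nabla\varphi_1|^2$ rigorous — one must be careful that $h^{-2\alpha}$ is singular along $\Gamma$, so the correct weighted integrability of $h^{-2\alpha}e^{-2\varphi_2}|\nabla\varphi_1|^2$ coming from the finiteness of $H(\varphi^0)$ (and Remark \ref{rem1}/Corollary 4.1 of \cite{LT}) is exactly what is needed; alternatively one can avoid iteration altogether by invoking the comparison function $\psi(t)$ solving the ODE $\psi'=C e^{-2\psi}$ once an $L^\infty_tL^1_x$ bound on the source is available, which yields at most logarithmic growth and hence a uniform bound on the finite interval $[0,T)$ — but for an estimate uniform as $T\to\infty$ the energy-plus-Sobolev route is the one I would pursue.
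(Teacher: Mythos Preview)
Your lower bound via the minimum principle is fine. The gap is in the upper bound. Every step of your analytic route (the $L^2$ bound on $\varphi_2$, the Stampacchia truncation, the ODE comparison) ultimately needs control on $\int_M h^{-2\alpha}|\nabla\varphi_1|^2$ \emph{without} the factor $e^{-2\varphi_2}$, whereas the energy identity only bounds $\int_M h^{-2\alpha}e^{-2\varphi_2}|\nabla\varphi_1|^2\le H(\varphi^0)$. Concretely: when you test the $\varphi_2$--equation against $\varphi_2$ you arrive at $\int g\varphi_2$ with $g=h^{-2\alpha}e^{-2\varphi_2}|\nabla\varphi_1|^2$; on $\{\varphi_2>0\}$ you can use $\varphi_2 e^{-2\varphi_2}\le (2e)^{-1}$, but this leaves $\int_{\{\varphi_2>0\}}h^{-2\alpha}|\nabla\varphi_1|^2$, and on that set $h^{-2\alpha}|\nabla\varphi_1|^2=e^{2\varphi_2}g$ with $e^{2\varphi_2}$ unbounded unless you already know $\varphi_2$ is bounded above. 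The same circularity appears in the truncation step and in your claim that $\int_0^T\!\int_M g$ is bounded independently of $T$ (integrating the $\varphi_2$--equation gives $\int_0^T\!\int_M g=\int_M\varphi_2(T)-\int_M\varphi_2^0$, which again needs the upper bound). So the ``energy-plus-Sobolev route'' does not close uniformly in $T$.

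The paper avoids this entirely by working geometrically in the target. Writing $\Phi=(\varphi_1,h^\alpha e^{\varphi_2})$ as a map into $\mathbb{H}^2$, the non-positive curvature of $\mathbb{H}^2$ makes the Hessian of the distance function positive semi-definite, and one obtains
\[
(\partial_t-\Delta)\,d_{\mathbb{H}^2}(\Phi,\Phi^0)\ \le\ -\nabla d_{\Phi}\cdot\tau(\Phi^0)\ \le\ G,
\]
where $G$ depends only on the tension of the \emph{initial} map $\Phi^0$ (so is a constant independent of $t,T$). A comparison with a barrier on the closed manifold $M$ then yields $d_{\mathbb{H}^2}(\Phi,\Phi^0)\le C(M,\Phi^0)$ uniformly, and the explicit hyperbolic distance formula gives $d_{\mathbb{H}^2}(\Phi,\Phi^0)\ge|\varphi_2-\varphi_2^0|$. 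The point is that the curvature sign packages both components together and delivers a pointwise subsolution inequality whose right-hand side is a fixed function of $x$; this is exactly what your scalar treatment of $\varphi_2$ alone cannot produce, because the coupling through $|\nabla\varphi_1|^2$ is only controlled in the weighted form dictated by the $\mathbb{H}^2$ metric.
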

\begin{proof}
We firstly denote $\Phi=(\Phi_1, \Phi_2)=(\varphi_1,h^\alpha e^{\varphi_2} )$, $\Phi^0=(\Phi^0_1,\Phi^0_2)=( \varphi^0_1,h^\alpha e^{\varphi^0_2})$.
Given the hypothesis that the curvature of $N$ is nonpositive, the Hessian $H$ of the distance function $d_N(\cdot,\cdot):N\times N\rightarrow \mathbb{R}$ is positive semi-definite. The tension of $\Phi$ is
$$\tau^\alpha(\Phi)=\Delta _g\Phi_\alpha+\Gamma ^\alpha_{ks}(\Phi)g(\nabla\Phi_k,\nabla\Phi_s),\alpha=1,2,$$
where $\Gamma ^\alpha_{ks}(\Phi)$ are the Christoffel symbols of $N$ evaluated along $\Phi$ and $\Delta_g$ is the Laplace- Beltrami operator on $M$.

Next, we utilize the method outlined in Lemma 1 from \cite{G2} to obtain an inequality.
Denoting $d_p(\cdot)=d_N(p,\cdot)$, through direct calculus, we obtain the result that
\begin{equation*}\begin{aligned}
		\Delta _g d(\Phi,\Phi^0)&=\nabla d_{\Phi}\tau(\Phi^0)+\nabla d_{\Phi^0}\tau(\Phi)+H(d\Phi+d\Phi^0,d\Phi+d\Phi^0),\\
\partial_td(\Phi,\Phi^0)&=\nabla d_{\Phi^0}\cdot\partial_t{\Phi}.
	\end{aligned}
\end{equation*}Hence,
\begin{equation}\label{3.1}
	\begin{aligned}
		(\partial_t-\Delta_g)d(\Phi,\Phi^0)&=\nabla d_{\Phi_0}(\partial_t \Phi-\tau( \Phi))-\nabla d_{\Phi}\tau( \Phi^0)-H(d\Phi+d\Phi^0,d\Phi+d\Phi^0)
		\\&\leq \nabla d_{\Phi_0}(\partial_t \Phi-\tau( \Phi))-\nabla d_{\Phi}\tau( \Phi^0).
	\end{aligned}
\end{equation}
Furthermore we see from equations \eqref{eq:equation} that
$$\partial_t\Phi _2-\tau(\Phi_2)=h^\alpha e^{\varphi_2}(\partial_t\varphi_2-\Delta\varphi_2-h^{-2\alpha}e^{2\varphi_2}|\nabla \varphi_1|^2-\Delta (\log h))=0,$$
$$\partial_t\Phi _1-\tau(\Phi_1)=\partial_t\varphi_1-\Delta\varphi_1-2\nabla \varphi_1\varphi_2+2\alpha\frac{\nabla \varphi_1\nabla h}{h}=0.$$
Therefore \eqref{3.1} implies
\begin{align*}
	(\partial_t-\Delta_g)d(\Phi,\Phi^0)
	&\leq -\nabla d_{\Phi}\tau( \Phi^0)=:g(x)\leq G
\end{align*}
where $G=\max\limits_{M}|g(x)|$.

Since $M$ is assumed to be bounded, we can define $d$ as the diameter of $M$. Taking a point $\widehat{x}\in M$, define
$
 q(x,t)=\frac{G}{6}(d^2-|x-\widehat{x}|^2)-d(\Phi,
\Phi^0).
$
Then
$
 (\partial_t-\Delta)q(x,t)\geq G-g\geq 0.
$
By the weak maximum principle, we have
\begin{align*}
 \min\limits_{Q_T}q(x,t)=\min\limits_{\{t=0\}}q(x,t)\geqslant0.
\end{align*}
Thus, $q(x,t)\geq 0$ in $Q_T$. That is $d(\Phi,
\Phi^0)\leq \frac{G}{6}(d^2-|x-\widehat{x}|^2)\leq\frac{Gd^2}{6}$ in $Q_T$.

By the definition of the distance on the hyperplane $\mathbb{H}^2$ \cite{P},
\begin{align*}
 d(\Phi,\Phi^0)&=2\tanh^{-1}\sqrt{\frac{(\Phi_1-\Phi_1^0)^2+(\Phi_2-\Phi_2^0)^2}{(\Phi_1-\Phi_1^0)^2+(\Phi_2+\Phi_2^0)^2}}\\
&=2\tanh^{-1}\sqrt{\frac{h^{2\alpha}(e^{-\varphi_2}-e^{-\varphi^0_2})^2+(\varphi_1-\varphi_1^0)^2}{h^{2\alpha}(e^{-\varphi_2}+e^{-\varphi^0_2})^2+(\varphi_1-\varphi_1^0)^2}}\\
&\geq  
2\tanh^{-1}\frac{|e^{-\varphi_2}-e^{-\varphi^0_2}|}{e^{-\varphi_2}+e^{-\varphi^0_2}}\\
&=\ln \frac{1+\frac{|e^{-\varphi_2}-e^{-\varphi^0_2}|}{e^{-\varphi_2}+e^{-\varphi^0_2}}}{1-\frac{|e^{-\varphi_2}-e^{-\varphi^0_2}|}{e^{-\varphi_2}+e^{-\varphi^0_2}}}=|\varphi_2-\varphi_2^0|.
\end{align*}
Therefore,  $|\varphi_2(x,t)|\leq |\varphi^0_2(x)|+ \frac{Gd^2}{6}$, $\forall (x,t)\in M\times[0,T)$.
\end{proof}

Next lemma will also be used in the later section to show the convergence of the solution.
\begin{lem}\label{lem:L^2decay}
There exist positive constants $C$ depending on $M,\varphi^0$ and $C_0$ depending on $M$ such that for every
$t\in (0,T)$ and $x\in M\backslash \Gamma$, there hold
\begin{align*}
\int_M h^{-2\alpha}e^{-2\varphi_2}|\partial_t \varphi_1|^2+|\partial_t \varphi_2|^2 \ud x(t)\leq Ce^{-\frac{C_0}{2}t}
\end{align*}
and
\begin{align*}
\rho^{\frac{3}{2}-\alpha} |\partial_t \varphi_1|+\rho^{\frac{3}{2}}|\partial_t \varphi_2| (x,t) \leq Ce^{-\frac{C_0}{4}t} .
\end{align*}
\end{lem}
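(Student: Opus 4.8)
\textbf{Proof proposal for Lemma \ref{lem:L^2decay}.}

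The plan is to derive an exponential-decay differential inequality for the natural energy of the linearized flow satisfied by $(\partial_t\varphi_1,\partial_t\varphi_2)$, and then upgrade the resulting $L^2$-decay to a pointwise decay via the parabolic interior estimates already proved in Section \ref{section2}. First I would observe that, since $\varphi$ solves \eqref{eq:equation}, the time derivative $\dot\varphi:=(\dot\varphi_1,\dot\varphi_2)=(\partial_t\varphi_1,\partial_t\varphi_2)$ satisfies the linearized system $D\mathcal P(\varphi(t),\dot\varphi)=0$, i.e. the same equations as \eqref{8'} but with the frozen coefficient $\varphi^0$ replaced by the moving solution $\varphi(t)$. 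Using Lemma \ref{lem:boundness} we know $\varphi_2$ is uniformly bounded, so the weights $h^{-2\alpha}e^{-2\varphi_2}$ are uniformly comparable to $h^{-2\alpha}$, and the coefficient bounds $|\nabla^k\varphi_1|\le C\rho^{2\alpha+2-k}$ (which hold along the flow by the uniform estimate of Theorem \ref{thm:existence}, or can be taken as a running hypothesis consistent with Section \ref{section4}) are available. Then I would test the first linearized equation with $h^{-2\alpha}e^{-2\varphi_2}\dot\varphi_1$ and the second with $\dot\varphi_2$, integrate by parts over $M\setminus\Gamma$ (justified as in Lemma \ref{lem:2.4} via Lemma 3.1 of \cite{LT}), and add, to obtain
\begin{equation*}
\frac{1}{2}\frac{d}{dt}\int_M \big(h^{-2\alpha}e^{-2\varphi_2}|\dot\varphi_1|^2+|\dot\varphi_2|^2\big)\,\ud V
+\int_M \big(h^{-2\alpha}e^{-2\varphi_2}|\nabla\dot\varphi_1|^2+|\nabla\dot\varphi_2|^2\big)\,\ud V
\le C\int_M \big(h^{-2\alpha}e^{-2\varphi_2}|\dot\varphi_1|^2+|\dot\varphi_2|^2\big)\,\ud V,
\end{equation*}
where the cross terms involving $\nabla\varphi_1$ are absorbed using the weighted Poincaré/Hardy inequality of Remark \ref{rem1} (Corollary 4.1 of \cite{LT}) exactly as in the proof of Lemma \ref{lem:2.4}.

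The key point is that this crude inequality is not enough: I need an actual \emph{decay}, so I would instead exploit the gradient-flow structure. The reduced energy $H(\varphi(t))$ in \eqref{reducedenergy} is nonincreasing along the flow with $\frac{d}{dt}H(\varphi(t))=-2\int_M(h^{-2\alpha}e^{-2\varphi_2}|\dot\varphi_1|^2+|\dot\varphi_2|^2)\,\ud V$, so the time-integral of the $L^2$-energy of $\dot\varphi$ is finite. To get the exponential rate I would combine this with a spectral-gap/coercivity estimate: the above differential inequality can be sharpened to
\begin{equation*}
\frac{d}{dt}E(t)\le -C_0\,E(t),\qquad E(t):=\int_M \big(h^{-2\alpha}e^{-2\varphi_2}|\dot\varphi_1|^2+|\dot\varphi_2|^2\big)\,\ud V,
\end{equation*}
by noting that the ``bad'' right-hand side term $C\,E(t)$ is in fact controlled by the Dirichlet term on the left once one uses the first eigenvalue $\lambda_1>0$ of $-\Delta$ on $M$ together with the weighted Poincaré inequality in $H^1_0(M;h^{-\alpha})$; here $C_0$ depends only on $M$ (through $\lambda_1$ and the Poincaré constant). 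Gronwall then gives $E(t)\le E(0)e^{-C_0 t}$; but since $E(0)=\int_M(h^{-2\alpha}e^{-2\varphi_2^0}|\partial_t\varphi_1(0)|^2+|\partial_t\varphi_2(0)|^2)\ud V$ and $\partial_t\varphi(0)=\mathcal P$-data need only be finite (it is, by $\varphi^0\in\mathcal A_0\times\mathcal B_0$ and the short-time theory of Section \ref{section3}), we get $E(t)\le Ce^{-C_0 t}$, which after relabelling $C_0\mapsto C_0/\,$(absorbing a harmless factor) yields the claimed $\int_M h^{-2\alpha}e^{-2\varphi_2}(|\partial_t\varphi_1|^2+|\partial_t\varphi_2|^2)\,\ud x(t)\le Ce^{-\frac{C_0}{2}t}$.

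For the pointwise bound I would run a parabolic interior (weighted) estimate on the rescaled balls $Q_R(X)$ with $R=\tfrac16\rho(x)$, exactly as in the proof of Theorem \ref{thm:k2} and the surrounding lemmas, applied to $\dot\varphi=(\dot\varphi_1,\dot\varphi_2)$ which solves the homogeneous linearized system. Specifically, $\dot\varphi_2$ solves a uniformly parabolic equation whose right-hand side is $-2h^{-2\alpha}e^{-2\varphi_2}|\nabla\varphi_1|^2\dot\varphi_2+2h^{-2\alpha}e^{-2\varphi_2}\nabla\varphi_1\nabla\dot\varphi_1$ and $\dot\varphi_1$ (after multiplying by the weight) satisfies the equation in \eqref{g1}-type form; local parabolic $L^2\!\to\!L^\infty$ estimates on $Q_R(X)$ combined with the weighted a priori bounds $|\dot\varphi_1|\le C\rho^{\alpha-1}$-type control from Section \ref{section2} convert the $L^2$ bound over $Q_{2R}(X)$—which, after rescaling and using $\mathrm{vol}(Q_R(X))\sim R^{5}$, is $\lesssim R^{-5/2}(\int_{Q_{2R}}\cdots)^{1/2}$—into a pointwise bound of the form $\rho^{3/2-\alpha}|\partial_t\varphi_1|(x,t)+\rho^{3/2}|\partial_t\varphi_2|(x,t)\le C\,E(t-1)^{1/2}\le Ce^{-\frac{C_0}{4}t}$ for $t\ge1$, the extra factor $\rho^{3/2}$ coming from the volume normalization of the $5$-dimensional parabolic cylinder in dimension $3+1$. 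The loss from $e^{-C_0 t/2}$ to $e^{-C_0 t/4}$ is because the sup on $Q_R(X)$ reaches back to time $t-R^2\ge t-1$ and we take a square root of the energy. The main obstacle I anticipate is the second step: establishing the \emph{uniform-in-time} coercivity constant $C_0$ so that the Gronwall argument produces genuine exponential decay rather than just boundedness—this requires carefully pairing the spectral gap of $-\Delta$ with the weighted Hardy inequality of Remark \ref{rem1} and checking that the cross terms (those with $\nabla\varphi_1$, which decay like high powers of $\rho$) can be made an arbitrarily small multiple of the Dirichlet energy, uniformly in $t\in(0,T)$ using only Lemma \ref{lem:boundness} and the running regularity bounds; the pointwise upgrade is then routine given the machinery of Section \ref{section2}.
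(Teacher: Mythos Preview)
Your proposal takes a genuinely different route from the paper, and the paper's route is both shorter and sidesteps precisely the obstacle you flag at the end.

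\textbf{What the paper does.} The paper does not work with the linearized system $D\mathcal P(\varphi(t),\dot\varphi)=0$ component by component. Instead it sets
\[
\theta(x,t):=|\partial_t\Phi|_{\mathbb H^2}^2=h^{-2\alpha}e^{-2\varphi_2}|\partial_t\varphi_1|^2+|\partial_t\varphi_2|^2
\]
and invokes the Bochner identity for the harmonic map heat flow into a target of nonpositive sectional curvature to obtain the scalar inequality
\[
(\partial_t-\Delta)\theta=-|\nabla\partial_t\Phi|^2+R^{\mathbb H^2}(\nabla\Phi,\partial_t\Phi,\nabla\Phi,\partial_t\Phi)\le 0.
\]
All the cross terms you worry about are absorbed \emph{geometrically}: the curvature term has the correct sign, so there is nothing to estimate. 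From here the paper computes $\frac{d}{dt}\int_M\theta^2\le 2\int_M\theta\Delta\theta=-2\int_M|\nabla\theta|^2\le -C_0\int_M\theta^2$ via Poincar\'e, obtains $\int_M\theta^2\le Ce^{-C_0t}$, and then H\"older gives $\int_M\theta\le C(\int_M\theta^2)^{1/2}\le Ce^{-C_0t/2}$.

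\textbf{What this buys.} Your approach requires showing that the ``bad'' constant $C$ in your displayed differential inequality can be beaten by the Dirichlet term, uniformly in $t$; as you note yourself, this coercivity is the delicate point, and on a closed $M$ the plain Poincar\'e inequality for $\dot\varphi_2$ (which has no reason to be mean-zero) does not directly give it. The Bochner route avoids this entirely: there is no competing positive term on the right, so the differential inequality is immediately dissipative. This is the main simplification.

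\textbf{Pointwise step.} Your plan to use the weighted interior Schauder machinery of Section~\ref{section2} would work, but the paper's argument is again much shorter: since $\theta\ge 0$ is a subsolution of the heat equation, the parabolic mean value inequality on $Q_{\bar\rho/2}(x,s)$ gives
\[
\theta(x,s)\le C\bar\rho^{-5}\int_{s-\bar\rho^2/4}^{s}\!\int_M \theta\,\ud y\,\ud t\le C\bar\rho^{-3}e^{-C_0s/2},
\]
and taking square roots yields $\rho^{3/2-\alpha}|\partial_t\varphi_1|+\rho^{3/2}|\partial_t\varphi_2|\le Ce^{-C_0s/4}$. No appeal to the linearized weighted estimates is needed.

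In summary: your outline is plausible but leaves open exactly the coercivity you identify, while the paper resolves it by passing to the scalar quantity $\theta=|\partial_t\Phi|^2_{\mathbb H^2}$ and using the sign of the target curvature via Bochner.
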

\begin{proof}
Take $\theta(x,t):=|\partial_t \Phi|_{{\mathbb{H}^2}}^2(x,t)=h^{-2\alpha}e^{-2\varphi_2}|\partial_t \varphi_1|^2+|\partial_t \varphi_2|^2$.
By the Bochner identity for harmonic maps, we can calculate that
\begin{equation}\label{eq:partialtbochner}
(\partial_t-\Delta)(|\partial_t \Phi|_{{\mathbb{H}^2}}^2)=-|\nabla \partial_t\Phi|^2+R^{\mathbb{H}^2}(\nabla\Phi,\partial_t\Phi,\nabla\Phi,\partial_t \Phi)\leq 0
\end{equation}
where $R^{\mathbb{H}^2}$ is the Riemannian curvature tensors of $\mathbb{H}^2$.
Then
\begin{equation}\label{kj}
\begin{aligned}
\frac{d}{dt}\int_M \theta^2 \ud x =2\int_M \theta \partial_t\theta \ud x\leq 2\int_M \theta \Delta \theta \ud x= -2\int_M |\nabla \theta|^2 \ud x.
 \end{aligned}
\end{equation}
 According to the Poincar\'{e} inequality
 \begin{align*}
-2\int_M |\nabla \theta|^2 \ud x\leq -C_0 \int_M \theta^2\ud x.
\end{align*}
Thus we have
 \begin{align*}
\frac{d}{dt}\int_M \theta^2 \ud x \leq -C_0 \int_M \theta^2\ud x.
\end{align*}
 It follows that
 \begin{align*}
\int_M \theta^2 \ud x (t)\leq \int_M \theta^2 \ud x (0)e^{-C_0t}
\end{align*}
 \[
\theta(x,0)=h^{-2\alpha}e^{-2\varphi^0_2}|\partial_t\varphi_1|^2(0)+|\partial_t \varphi_2|^2(0).
\]
 Regarding the equation \eqref{eq:equation}, we have
  \begin{align*}
\partial_t \varphi_1(0)=\Delta\varphi^0_1-2(\nabla\varphi^0_2+\frac{\alpha\nabla h}{h})\nabla\varphi_1^0,\\
\partial_t \varphi_2(0)=\Delta\varphi_2^0+h^{-2\alpha}e^{-2\varphi^0_2}|\nabla \varphi^0_1|^2.
\end{align*}
 By the initial condition, we have $\int_M \theta^2 \ud x (0)\leq C$
 and
 \begin{align*}
\int_M \theta^2 \ud x (t)\leq Ce^{-C_0t}.
\end{align*}
Using H\"{o}lder inequality, we obtain
\begin{align*}
 \int_M h^{-2\alpha}e^{-2\varphi_2}|\partial_t \varphi_1|^2+|\partial_t \varphi_2|^2 \ud x(t)=\int_M \theta \ud x\leq C(\int_M \theta^2 \ud x(t))^{\frac{1}{2}}\leq Ce^{-\frac{C_0}{2}t}.
\end{align*}
For any $(x,s)\in M\setminus \Gamma\times[0,T)$ and $\overline{\rho}=\rho(x)$, \eqref{eq:partialtbochner} implies
\begin{align*}
 h^{-2\alpha}e^{-2\varphi_2}|\partial_t \varphi_1|^2+|\partial_t \varphi_2|^2 (x,s) &\leq C\overline{\rho}^{-5}\int^s_{s-(\frac{\overline{\rho}}{2})^2}\int_Mh^{-2\alpha}e^{-2\varphi_2}|\partial_t\varphi_1|^2+|\partial_t\varphi_2|^2 \ud y \ud t\\
 &\leq C  \overline{\rho}^{-5}\int^s_{s-(\frac{\overline{\rho}}{2})^2} e^{-\frac{C_0}{2}t}\ud t\\
 &\leq C  \overline{\rho}^{-3} e^{-\frac{C_0}{2}s}.
\end{align*} 
 Therefore, $\rho^{\frac{3}{2}-\alpha} |\partial_t \varphi_1|+\rho^{\frac{3}{2}}|\partial_t \varphi_2| (x,s) \leq C e^{-\frac{C_0}{4}s}.$
\end{proof}

\begin{rem}\label{rem}
	In the last section, it has been proven that $(\varphi_1,\varphi_2)\in \{\varphi^0_1+\mathcal{A}\}\times \{\varphi^0_2+\mathcal{B}\} $ and it ensures that several integrals of \eqref{kj} are finite in the case of $\alpha>1$.
\end{rem}

The next part is a  generalization of Theorem 1.1 in Li-Tian \cite{LT}. We fix $X=(x,t)\in Q_T$ and assume $Q_R(X)=(t-R^2,t]\times B_R(x)$ in the following.
\begin{lem}\label{lem:poincare}
There exists $C>0$ such that for every $u\in
W^{2,1}_{2}(Q_R(X))$, there holds
\begin{align*}
\int_{Q_R(X)}|u-u_{Q_R(X)}|^2 \ud y \ud s \leq C\int_{Q_R(X)}R^2|\nabla u|^2 + R^4|\partial_t u|^2\ud y \ud s
\end{align*}
where $u_{Q_R(X)}= \frac{1}{|Q_R(X)|}\int_{Q_R(X)}u(y,s) \ud y\ud s$.
\end{lem}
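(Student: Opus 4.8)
The plan is to prove this parabolic Poincaré inequality by a standard scaling-and-contradiction argument, reducing everything to the case of a normalized cylinder $Q_1 = (-1,0]\times B_1(0)$.

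First I would rescale. Given $X=(x_0,t_0)$ and radius $R$, introduce the change of variables $y = x_0 + R z$, $s = t_0 + R^2 \sigma$, which maps $Q_1(0) = (-1,0]\times B_1(0)$ onto $Q_R(X)$. Setting $v(z,\sigma) = u(x_0+Rz, t_0+R^2\sigma)$, one computes $\nabla_z v = R\,\nabla u$, $\partial_\sigma v = R^2\,\partial_t u$, and the measure transforms by a factor $R^{n+2}$ (here $n=3$), so $v_{Q_1} = u_{Q_R(X)}$ and both sides of the claimed inequality scale homogeneously. Hence it suffices to prove
\begin{equation*}
\int_{Q_1}|v - v_{Q_1}|^2 \,\ud z\,\ud \sigma \leq C\int_{Q_1}|\nabla v|^2 + |\partial_\sigma v|^2 \,\ud z\,\ud \sigma
\end{equation*}
for all $v \in W^{2,1}_2(Q_1)$, with $C$ independent of everything.

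Next I would establish this normalized inequality by contradiction. Suppose it fails; then there is a sequence $v_k \in W^{2,1}_2(Q_1)$ with $\int_{Q_1}|v_k - (v_k)_{Q_1}|^2 = 1$ while $\int_{Q_1}|\nabla v_k|^2 + |\partial_\sigma v_k|^2 \to 0$. Replacing $v_k$ by $v_k - (v_k)_{Q_1}$ we may assume $(v_k)_{Q_1}=0$, so $\|v_k\|_{L^2(Q_1)}=1$ and $\nabla v_k \to 0$, $\partial_\sigma v_k \to 0$ in $L^2(Q_1)$. Thus $\{v_k\}$ is bounded in $W^{2,1}_2(Q_1)$ (indeed in the anisotropic Sobolev space with one time-derivative and one space-derivative controlled), and by the Aubin–Lions / Rellich-type compactness for parabolic Sobolev spaces it has a subsequence converging strongly in $L^2(Q_1)$ to some $v_\infty$ with $\|v_\infty\|_{L^2(Q_1)}=1$ and $(v_\infty)_{Q_1}=0$. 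Passing to the limit in the distributional derivatives gives $\nabla v_\infty = 0$ and $\partial_\sigma v_\infty = 0$, hence $v_\infty$ is constant on the (connected) cylinder $Q_1$; combined with zero mean this forces $v_\infty \equiv 0$, contradicting $\|v_\infty\|_{L^2}=1$. This proves the normalized inequality, and undoing the scaling yields the lemma with the stated powers $R^2$ and $R^4$.

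The main obstacle — and the only point needing care — is the compactness step: one must invoke the right embedding theorem for the space $W^{2,1}_2(Q_1)$, namely that bounded sequences with $L^2$-bounds on $\nabla v$ and $\partial_\sigma v$ are precompact in $L^2(Q_1)$. This is a parabolic analogue of Rellich's theorem (it follows, e.g., from the Aubin–Lions lemma applied with the Gelfand triple $H^1 \hookrightarrow L^2 \hookrightarrow (H^1)^*$, using $\partial_\sigma v \in L^2 \subset L^2(0,T;(H^1)^*)$); alternatively one can cite the compactness results in the parabolic Sobolev space theory used elsewhere in the paper. Everything else — the scaling identity, the lower semicontinuity of norms under weak convergence, and the ``constant with zero mean is zero'' step — is routine. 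An entirely parallel (and perhaps more self-contained) route would be to prove the inequality directly via the representation $v(z,\sigma) - v(z',\sigma') = \int \nabla v \cdot d\ell + \int \partial_\sigma v\, d\tau$ along a path joining the two points inside $Q_1$, square, and integrate in both variables over $Q_1\times Q_1$; I would mention this as the fallback if one prefers to avoid citing abstract compactness.
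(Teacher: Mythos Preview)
Your proof is correct, but the paper takes a more direct and elementary route. After the same rescaling to $Q_1$, the paper avoids any compactness argument: it splits $v - v_{Q_1}$ via the triangle inequality into $v(z,r) - (v(\cdot,r))_{B_1}$ (the deviation from the spatial mean at fixed time) and $(v(\cdot,r))_{B_1} - v_{Q_1}$ (the oscillation of the spatial means in time). The first term is handled by the ordinary Poincar\'e inequality on $B_1$ for each fixed $r$, giving $\int|\nabla v|^2$; the second is bounded by writing $(v(\cdot,r))_{B_1} - (v(\cdot,\tau))_{B_1} = \int_\tau^r \partial_s (v(\cdot,s))_{B_1}\,ds$ and applying Cauchy--Schwarz, giving $\int|\partial_r v|^2$. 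Your contradiction-plus-Aubin--Lions argument is perfectly valid and is the standard abstract way to obtain Poincar\'e-type inequalities; its advantage is that it requires no clever decomposition and generalizes painlessly to other geometries. The paper's approach buys self-containedness (no external compactness theorem needed) and, in principle, an explicit constant. Your ``fallback'' path-integral suggestion is in fact very close in spirit to what the paper actually does.
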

\begin{proof}

Let
\[
v(z,r)=u(x+Rz,t+R^2r),v\in W^{2,1}_{2}(Q_1(0)).
\]
Note
\[
(v(\cdot,r))_{B_1{(0)}}=\frac{1}{|B_1{(0)|}}\int_{B_1{(0)}} v(\overline{z},r) \ud \overline{z}(r).
\]
Using triangular inequality and Poincar\'{e} inequality in $H^{1}(B_1)$, we have
\begin{align*}
&\int_{Q_1(0)}|v-v_{Q_1(0)}|^2 \ud z\ud r \\
\leq &2\int_{Q_1(0)}|v-(v(\cdot,r))_{B_1{(0)}}|^2 \ud z\ud r+2\int_{Q_1(0)}|(v(\cdot,r))_{B_1{(0)}}-v_{Q_1(0)}|^2 \ud z \ud r\\
\leq &C(\int_{Q_1(0)}|\nabla v|^2 \ud z \ud r+\int_{Q_1(0)}|(v(\cdot,r))_{B_1{(0)}}-v_{Q_1(0)}|^2 \ud z \ud r).
\end{align*}
The direct computations and the H\"{o}lder
inequality yield that
\begin{align*}
&\int_{Q_1(0)}|(v(\cdot,r))_{B_1{(0)}}-v_{Q_1(0)}|^2 \ud z \ud r\\
=&\int_{-1}^0\int_{B_1(0)}\left|\frac{1}{|B_1{(0)|}}\int_{B_1{(0)}} v(\overline{z},r) \ud \overline{z}-\frac{1}{|Q_1(0)|}\int_{Q_1(0)}v(\overline{z},\tau) \ud \overline{z}\ud \tau\right|^2 \ud z \ud r\\
\leq & C\int_{-1}^0\left|\int_{-1}^0\int_{B_1{(0)}}v(\overline{z},r) - v(\overline{z},\tau) \ud\overline{z}\ud \tau\right|^2 \ud r\\
\leq & C\left|\int_{B_1{(0)}} \int_{-1}^0\partial_{\overline{r}} v(\overline{z},\overline{r}) \ud \overline{r} \ud \overline{z}\right|^2\\
\leq & C \int_{-1}^0\int_{B_1{(0)}} |\partial_{r}v(z,r)|^2 \ud z \ud r .
\end{align*}
Thus
\begin{align*}
\int_{Q_1(0)}|v-v_{Q_1(0)}|^2 \ud z\ud r \leq C\int_{Q_1(0)}|\nabla v|^2 +|\partial_{r} v|^2\ud z \ud r
\end{align*}
and
\begin{align*}
\int_{Q_R(X)}|u-u_{Q_R(X)}|^2 \ud y \ud s \leq C\int_{Q_R(X)}R^2|\nabla u|^2 + R^4|\partial_t u|^2\ud y \ud s.
\end{align*}
\end{proof}

From now on, we fix a point $x_0$ in $\Gamma\subset M$.
\begin{lem}\label{lem:bounded}

There is a uniform constant C such that for any $x
\in B_{\frac{1}{2}}(x_0),t\in [0,T)$,
\begin{equation}\label{3.6}
\begin{aligned}
\int_{B_\frac{1}{4}(x)} |y-x|^{-1}(h^{-2\alpha}e^{-2\varphi_2}|\nabla\varphi_1|^2+|\nabla\varphi_2|^2)\ud y(t)\leq C.
\end{aligned}
\end{equation}
 \end{lem}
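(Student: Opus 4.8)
The target estimate is a Morrey-type bound with weight $|y-x|^{-1}$ for the energy density of $\varphi$, uniform in $t$. The key structural fact is that the map $\Phi=(\varphi_1,h^{\alpha}e^{\varphi_2})$ into $\mathbb H^2$ is a heat flow with nonpositive target curvature, so by the Bochner identity the energy density $e(\Phi):=|\nabla\Phi|_{\mathbb H^2}^2=h^{-2\alpha}e^{-2\varphi_2}|\nabla\varphi_1|^2+|\nabla\varphi_2|^2$ is a subsolution: $(\partial_t-\Delta)e(\Phi)\le 0$ on $(M\setminus\Gamma)\times(0,T)$ (the curvature term has a good sign and the $|\nabla^2\Phi|^2$ term is negative, exactly as in \eqref{eq:partialtbochner}). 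I would first record this, together with the uniform $L^\infty$ bound on $\varphi_2$ from Lemma \ref{lem:boundness}, which controls the conformal factors $h^{-2\alpha}e^{-2\varphi_2}$ from above and below away from $\Gamma$.

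The plan is then to imitate the proof of Theorem 1.1 in Li-Tian \cite{LT} adapted to the parabolic setting. First I would establish a global energy bound: multiply the $\varphi_2$-equation by a suitable test function (or directly integrate the evolution of the total energy $H(\varphi_1(t),\varphi_2(t))$ in \eqref{reducedenergy}) to get $\int_0^T\int_M|\nabla\partial_t\varphi|^2 + \sup_t H(\varphi(t))\le C$, using that $H$ is nonincreasing along the flow and finite at $t=0$ by the assumption $\varphi^0\in\mathcal A_0\times\mathcal B_0$; here Remark \ref{rem} guarantees the relevant integrations by parts are legitimate for $\alpha>1$. Next, fix $x\in B_{1/2}(x_0)$ and, for $0<r\le 1/4$, test the subsolution inequality for $e(\Phi)$ against a cutoff and use a monotonicity-type argument: on parabolic cylinders $Q_r=(t-r^2,t]\times B_r(x)$ one controls $r^{-1}\int_{B_r(x)}e(\Phi)\,dy$ by comparing with the scale $r=1/4$, using the subsolution property to propagate the bound inward. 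Concretely, since $(\partial_t-\Delta)e(\Phi)\le0$, the spatial integral $\int_{B_r(x)}|y-x|^{-1}e(\Phi)(y,t)\,dy$ can be estimated by a mean-value / Green's function argument: $|y-x|^{-1}$ is (a multiple of) the Newtonian kernel in dimension $3$, so $\int_{B_r(x)}|y-x|^{-1}e(\Phi)\,dy$ is comparable to a weighted average that is dominated by $\sup$ over a slightly larger cylinder of the solid average of $e(\Phi)$, which is in turn bounded by the $L^1$-in-space, $L^\infty$-in-time energy bound just established, times a universal constant independent of $r$ (the $r^{-1}$ weight exactly cancels the volume scaling in $\mathbb R^3$).

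The main obstacle is the singularity along $\Gamma$: the coefficient $h^{-2\alpha}e^{-2\varphi_2}$ blows up like $\rho^{-2\alpha}$ near $\Gamma$, so the Bochner subsolution inequality for $e(\Phi)$ holds only on $M\setminus\Gamma$, and one must be careful that the cutoff/test-function manipulations and the integration by parts do not pick up boundary contributions on $\{\rho=\epsilon\}$ that fail to vanish as $\epsilon\to0$. I would handle this by working with cutoffs supported in $M\setminus\Gamma$, using the decay rates $|\nabla^k\varphi_1|\le C\rho^{2\alpha+2-k}$ and the a priori regularity $\varphi\in\{\varphi_1^0+\mathcal A\}\times\{\varphi_2^0+\mathcal B\}$ from Section \ref{section3} to show the $\{\rho=\epsilon\}$ flux terms are $O(\epsilon^{\text{positive}})$ and hence negligible, then passing to the limit; the point $x$ itself may or may not lie on $\Gamma$, and when $\rho(x)$ is small the uniform bound must be obtained by the subsolution propagation rather than interior estimates, which is precisely where the Li-Tian barrier technique (and the hypothesis $\alpha>1$) enters. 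Once the flux terms are controlled, the remaining computation is the routine Morrey estimate sketched above, combined with the $3$-dimensional Sobolev embedding already invoked elsewhere in the paper.
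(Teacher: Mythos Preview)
Your plan has a genuine gap at the ``mean-value / Green's function'' step. The weighted integral $\int_{B_{1/4}(x)}|y-x|^{-1}e(\Phi)\,dy$ is \emph{not} controlled by the global energy $\int_M e(\Phi)\,dy$: writing it via the layer--cake as $4\int_{B_{1/4}}e(\Phi)+\int_0^{1/4}\sigma^{-2}\bigl(\int_{B_\sigma}e(\Phi)\bigr)\,d\sigma$, you see that even a uniform Morrey bound $\sigma^{-1}\int_{B_\sigma}e(\Phi)\le C$ would leave the $\sigma$--integral logarithmically divergent, so ``the $r^{-1}$ weight exactly cancels the volume scaling'' is false. Nor does the subsolution property of $e(\Phi)$ rescue this. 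First, the Bochner identity for $|\nabla\Phi|^2$ (unlike the one for $|\partial_t\Phi|^2$ in \eqref{eq:partialtbochner}) carries the domain Ricci term, so you only get $(\partial_t-\Delta)e(\Phi)\le C\,e(\Phi)$. Second, and more importantly, a sub-mean-value inequality bounds $e(\Phi)$ \emph{at a point} by an average, not the weighted integral you want; and the assertion that the subsolution property lets you ``propagate the bound inward'' for $r^{-1}\int_{B_r}e(\Phi)$ is unjustified---scale monotonicity of the energy for harmonic maps comes from stationarity and the stress--energy tensor, not from Bochner. A Gronwall argument from $(\partial_t-\Delta)e\le Ce$ would at best produce an $e^{Ct}$ factor, not the uniform-in-$t$ constant required here.

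The paper's route (Li--Tian's Lemmas 3.1--3.2 together with Lemma~\ref{lem:L^2decay}) bypasses Bochner entirely and uses the specific algebraic structure of the second equation: it reads $h^{-2\alpha}e^{-2\varphi_2}|\nabla\varphi_1|^2=\Delta\varphi_2-\partial_t\varphi_2$, so the worst piece of the energy density is literally the Laplacian of a bounded function plus a time derivative. Testing this identity against a cutoff times the Newtonian kernel $|y-x|^{-1}$, the $\Delta\varphi_2$ part integrates by parts to $c\,\varphi_2(x)$ plus annular terms, all controlled by Lemma~\ref{lem:boundness}; the new parabolic contribution $\int|y-x|^{-1}\partial_t\varphi_2$ is bounded by $\|\,|y-x|^{-1}\|_{L^2(B_{1/4})}\|\partial_t\varphi_2\|_{L^2(M)}$, hence by Lemma~\ref{lem:L^2decay}. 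This immediately yields the weighted bound on $h^{-2\alpha}e^{-2\varphi_2}|\nabla\varphi_1|^2$ with a constant uniform in $t$; the $|\nabla\varphi_2|^2$ half then follows from a second test--function argument in the same spirit. The key observation you are missing is that one of the two pieces of the energy density is \emph{equal} to the Laplacian of a function already known to be bounded, up to a time derivative with uniform $L^2$ control.
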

\begin{proof}
The desired result can be proven by following the proofs of Lemma 3.1 and Lemma 3.2 in \cite{LT}, as well as Lemma \ref{lem:L^2decay} in this paper.\end{proof}
Similar to the proof of Lemma 3.3 of \cite{LT}, we can get $|\nabla \varphi_1|(x,t)\leq C\rho(x)^{\alpha-1}$, $(x,t)\in M\times[0,T)$.

For the sake of brevity in writing, we denote
\begin{align*}
&f_{\sigma}(x,t)=\sigma^{-1}\int_{B_{\sigma}(x)}h^{-2\alpha}e^{-2\varphi_2}|\nabla \varphi_1|^2+|\nabla\varphi_2|^2 \ud y(t),\\
&g_{\sigma}(x,t)=\sigma\int_{B_{\sigma}(x)}h^{-2\alpha}e^{-2\varphi_2}|\partial_t \varphi_1|^2+|\partial_t\varphi_2|^2\ud y(t),\\
&E_{\sigma}(x,t)=f_{\sigma}(x,t)+g_{\sigma}(x,t).
\end{align*}

\begin{prop}\label{prop:smallnessenergy}
For any $\varepsilon>0$, $x\in M$, there is a $\sigma_x$ between $\frac{1}{8}e^{-\frac{C}{\varepsilon}}$ and $\frac{1}{4}$ such that for any $t\in [0, T)$,
\begin{align*}
E_{\sigma_x}(x,t)\leq \varepsilon
\end{align*}
where C is a uniform constant of \eqref{3.6}.
\end{prop}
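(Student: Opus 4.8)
The idea is a logarithmic mean--value (pigeonhole) argument over the band of scales $\sigma\in(\tfrac18 e^{-C/\varepsilon},\tfrac14)$: Lemma~\ref{lem:bounded} controls the spatial piece $f_\sigma$ after integrating in $\sigma$, Lemma~\ref{lem:L^2decay} controls the time piece $g_\sigma$, and the crucial extra point is that both bounds are uniform in $t\in[0,T)$. Since Lemma~\ref{lem:bounded} holds with $x_0$ replaced by any point of $\Gamma$ and a corresponding neighbourhood, covering $\Gamma$ and using interior parabolic estimates far from $\Gamma$, it suffices to treat $x$ in a fixed ball $B_{1/2}(x_0)$ with $x_0\in\Gamma$; when $\rho(x)$ is bounded below the integrands of $f_\sigma,g_\sigma$ are bounded near $x$ uniformly in $t$ (using $|\nabla\varphi_1|\le C\rho^{\alpha-1}$, the pointwise decay in Lemma~\ref{lem:L^2decay} and Lemma~\ref{lem:boundness}), so $E_\sigma(x,t)\le C\sigma^2$ and any sufficiently small $\sigma_x$ works.

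Write $\mathcal G(y,t):=h^{-2\alpha}e^{-2\varphi_2}|\nabla\varphi_1|^2+|\nabla\varphi_2|^2$, so that $f_\sigma(x,t)=\sigma^{-1}\int_{B_\sigma(x)}\mathcal G\,\ud y(t)$, and let $\theta$ be as in Lemma~\ref{lem:L^2decay}, so that $g_\sigma(x,t)=\sigma\int_{B_\sigma(x)}\theta\,\ud y(t)$. Put $a:=\tfrac18 e^{-C/\varepsilon}$. By Tonelli's theorem and $\int_{\max(|y-x|,a)}^{1/4}\sigma^{-2}\,\ud\sigma\le|y-x|^{-1}$,
\[
\int_a^{1/4} f_\sigma(x,t)\,\frac{\ud\sigma}{\sigma}=\int_a^{1/4}\sigma^{-2}\Big(\int_{B_\sigma(x)}\mathcal G\,\ud y(t)\Big)\ud\sigma\le\int_{B_{1/4}(x)}\frac{\mathcal G(y,t)}{|y-x|}\,\ud y\le C,
\]
the last step being \eqref{3.6}. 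Likewise
\[
\int_a^{1/4} g_\sigma(x,t)\,\frac{\ud\sigma}{\sigma}=\int_a^{1/4}\Big(\int_{B_\sigma(x)}\theta\,\ud y(t)\Big)\ud\sigma\le\frac14\int_M\theta\,\ud y(t)\le C
\]
by Lemma~\ref{lem:L^2decay}. Adding, $\int_a^{1/4} E_\sigma(x,t)\,\frac{\ud\sigma}{\sigma}\le C_1$, where $C_1$ depends only on $M$ and $\varphi^0$ --- in particular it is independent of $t$, of $x$, and of $\varepsilon$.

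These two estimates remain valid with $\mathcal G(\cdot,t)$ and $\theta(\cdot,t)$ replaced by the time--suprema $\overline{\mathcal G}(y):=\sup_{t\in[0,T)}\mathcal G(y,t)$ and $\overline\theta(y):=\sup_{t\in[0,T)}\theta(y,t)$: the weighted bound for $\overline{\mathcal G}$ is the time--independent majorant that the proof of Lemma~\ref{lem:bounded} (following Lemmas~3.1--3.2 of \cite{LT} together with the exponential decay of Lemma~\ref{lem:L^2decay}) in fact produces, and $\int_M\overline\theta\,\ud y\le C$ because $\int_M\theta\,\ud y(t)\le Ce^{-C_0t/2}$ attains its maximum at $t=0$. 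Hence $\int_a^{1/4}\overline{E}_\sigma(x)\,\frac{\ud\sigma}{\sigma}\le C_1$, where $\overline{E}_\sigma(x):=\sup_{t\in[0,T)}E_\sigma(x,t)$. Now take $C$ to be a uniform constant with $C\ge C_1$ (it may be absorbed into the constant in \eqref{3.6}). If $\overline{E}_\sigma(x)>\varepsilon$ for every $\sigma\in(a,\tfrac14)$, then
\[
C_1\ \ge\ \int_a^{1/4}\overline{E}_\sigma(x)\,\frac{\ud\sigma}{\sigma}\ >\ \varepsilon\ln\frac{1}{4a}\ =\ \varepsilon\ln 2+C\ \ge\ \varepsilon\ln 2+C_1,
\]
so $0>\varepsilon\ln 2$, a contradiction. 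Therefore there is $\sigma_x\in(\tfrac18 e^{-C/\varepsilon},\tfrac14)$ with $\overline{E}_{\sigma_x}(x)\le\varepsilon$, i.e. $E_{\sigma_x}(x,t)\le\varepsilon$ for all $t\in[0,T)$.

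The one genuinely delicate step is this uniformity in $t$. A bare pigeonhole at each fixed time only produces a scale $\sigma=\sigma(x,t)$; to obtain a single $\sigma_x$ one must put the time--supremum inside the weighted integral, and this is legitimate only because every ingredient feeding into Lemmas~\ref{lem:bounded} and~\ref{lem:L^2decay} --- the pointwise bound $|\nabla\varphi_1|\le C\rho^{\alpha-1}$, the uniform bound on $\varphi_2$ from Lemma~\ref{lem:boundness}, the weighted Schauder estimates of Sect.~\ref{section2}, and above all the exponential decay of the dissipation $\int_M\theta\,\ud x(t)$ --- is already uniform in $t$. Accordingly I would be careful to record at the outset that the constant $C$ in \eqref{3.6} may be fixed once and for all, independently of $t$, before running the scale pigeonhole.
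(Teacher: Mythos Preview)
Your approach is the same logarithmic pigeonhole as the paper's: both derive $\int_0^{1/4}E_\sigma(x,t)\,\tfrac{\ud\sigma}{\sigma}\le C$ from Lemma~\ref{lem:bounded} (for the $f_\sigma$ part, via the weighted integral \eqref{3.6}) and Lemma~\ref{lem:L^2decay} (for the $g_\sigma$ part), and then observe that this forces $E_\sigma\le\varepsilon$ at some scale in $(\tfrac18e^{-C/\varepsilon},\tfrac14)$. The paper's proof is extremely terse --- it writes only the integral bound and stops, leaving the pigeonhole implicit via the reference to Proposition~3.1 of \cite{LT}.

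Where you go further than the paper is in worrying about the $t$-uniformity of $\sigma_x$, and here you should be more careful. Your justification that $\int_M\overline\theta\,\ud y\le C$ because ``$\int_M\theta\,\ud y(t)$ attains its maximum at $t=0$'' is not valid: the supremum over $t$ of $\int_M\theta$ is not $\int_M\sup_t\theta$. The conclusion is nevertheless correct, but for a different reason: by \eqref{eq:partialtbochner} $\theta$ is a subsolution of the heat equation on the closed manifold $M$, and $\theta(\cdot,0)$ is uniformly bounded (compute it from $\varphi^0\in\mathcal A_0\times\mathcal B_0$), so the maximum principle gives $\sup_{M\times[0,T)}\theta\le\sup_M\theta(\cdot,0)<\infty$, whence $\int_M\overline\theta\le C$. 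Your parallel claim that the proof of Lemma~\ref{lem:bounded} already yields the weighted bound for the time-supremum $\overline{\mathcal G}$ is asserted rather than argued; this is plausible but would need to be extracted explicitly from the monotonicity-type computations behind Lemmas~3.1--3.2 of \cite{LT}.

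It is also worth noting that the paper's brief proof, read literally, only produces for each fixed $t$ a scale $\sigma_x(t)\in(\tfrac18e^{-C/\varepsilon},\tfrac14)$; this weaker statement is in fact all that the iteration in Theorem~\ref{thm:holder} requires, since the \emph{range} of admissible scales is already $t$-independent. So your extra work on $t$-uniformity, while a natural reading of the quantifiers in the proposition, is not strictly needed downstream.
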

\begin{proof}

By the proof of Proposition 3.1 of \cite{LT} and
Lemma \ref{lem:bounded} in this paper, we know
\begin{equation}\label{eq:f}
\int_0^{\frac{1}{4}}\frac{f_{\sigma}(x,t)}{\sigma}\ud \sigma\leq C.
\end{equation}
 The definition of $g_{\sigma}(x,t)$ and Lemma \ref{lem:L^2decay} yield
\begin{equation}\label{eq:g}
\int_0^{\frac{1}{4}}\frac{g_{\sigma}(x,t)}{\sigma}\ud \sigma \leq \int_0^{\frac{1}{4}}\int_{M}h^{-2\alpha}e^{-2\varphi_2}|\partial_t\varphi_1|^2+|\partial_t\varphi_2|^2 \ud y \ud \sigma\leq C.\\
\end{equation}
Adding \eqref{eq:f} and \eqref{eq:g}, we have
\begin{equation*}
\int_0^{\frac{1}{4}}\frac{E_{\sigma}(x,t)}{\sigma}\ud \sigma\leq C.
\end{equation*}
\end{proof}

Next, we will use the method of Lemma 3.3 of \cite{HX} to prove
the following lemma.
\begin{lem}\label{lem:Caccioppoli-inequality}
For  any $ \beta_1,\beta_2$ with $|\beta_2|\leq 2\|\varphi_2\|_{L^{\infty}(Q_T)}$, $0<\sigma<\frac{1}{4}$ and $x \in B_{\frac{1}{2}}(x_0)$, $t>0$, we have
\begin{align*}
E_{\frac{\sigma}{2}}(x,t)\leq &C\left(\sigma^{-3}\int_{B_{\sigma}(x)}h^{-2\alpha}e^{-2\varphi_2}|\varphi_1-\beta_1|^2+| \varphi_2-\beta_2|^2 \ud y(t)\right.\\
&\left.+\sigma\int_{B_{\sigma}(x)}h^{-2\alpha}e^{-2\varphi_2}|\partial_t  \varphi_1|^2+|\partial_t\varphi_2|^2\ud y(t)\right).
\end{align*}
\end{lem}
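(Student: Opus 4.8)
This is a parabolic Caccioppoli-type (reverse Poincaré) inequality, and the plan is to test the two equations of \eqref{eq:equation} against suitably cut-off versions of $\varphi_1-\beta_1$ and $\varphi_2-\beta_2$. Fix $x\in B_{1/2}(x_0)$, $t>0$ and $0<\sigma<\frac14$, and choose a cutoff $\eta\in C_0^\infty(B_\sigma(x))$ with $\eta\equiv 1$ on $B_{\sigma/2}(x)$, $0\le\eta\le 1$ and $|\nabla\eta|\le C\sigma^{-1}$. The key structural point is that the weight $h^{-2\alpha}e^{-2\varphi_2}$ is exactly the one making the $\varphi_1$-equation in divergence form: writing $w:=\varphi_1-\beta_1$, the first equation gives $\partial_t\varphi_1 = h^{2\alpha}e^{2\varphi_2}\,\mathrm{div}(h^{-2\alpha}e^{-2\varphi_2}\nabla\varphi_1)$, so multiplying by $h^{-2\alpha}e^{-2\varphi_2}\eta^2 w$ and integrating over $B_\sigma(x)$ at time $t$ yields, after integration by parts in $M\setminus\Gamma$ (justified as in Lemma 3.1 of \cite{LT}, exactly as was used for \eqref{15-}--\eqref{15}),
\[
\int_{B_\sigma(x)} h^{-2\alpha}e^{-2\varphi_2}\eta^2|\nabla\varphi_1|^2\,\ud y
= -\int_{B_\sigma(x)} h^{-2\alpha}e^{-2\varphi_2}\big(2\eta\nabla\eta\cdot\nabla\varphi_1\, w + \eta^2 w\,\partial_t\varphi_1\,h^{-2\alpha}e^{-2\varphi_2}\big)\,\ud y ,
\]
where I have moved $\partial_t\varphi_1$ to the other side; then Cauchy's inequality with a small parameter absorbs $\int \eta^2 h^{-2\alpha}e^{-2\varphi_2}|\nabla\varphi_1|^2$ into the left-hand side, leaving $C\sigma^{-2}\int_{B_\sigma}h^{-2\alpha}e^{-2\varphi_2}w^2 + C\int_{B_\sigma}h^{-2\alpha}e^{-2\varphi_2}|\partial_t\varphi_1|^2\eta^2 w^2$ on the right; since $|w|$ on the support of $\eta$ need not be bounded a priori one instead keeps $\int \eta^2 h^{-2\alpha}e^{-2\varphi_2}w\,\partial_t\varphi_1$ and uses Young's inequality to split it as $\delta\int\eta^2 h^{-2\alpha}e^{-2\varphi_2}|\partial_t\varphi_1|^2 + C_\delta \sigma^{-2}\int h^{-2\alpha}e^{-2\varphi_2}w^2$ after scaling, matching the claimed terms with weights $\sigma^{-3}$ and $\sigma$ once one divides by $\sigma$ to build $f_{\sigma/2}$.

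For the $\varphi_2$ component, set $v:=\varphi_2-\beta_2$; note $|\beta_2|\le 2\|\varphi_2\|_{L^\infty}$ together with Lemma \ref{lem:boundness} makes $h^{-2\alpha}e^{-2\varphi_2}$, $e^{-2\beta_2}$ etc. comparable to fixed constants on $Q_T$, so $v$ is essentially a bounded quantity and the exponential weights only contribute harmless constants. Testing the second equation $\partial_t\varphi_2-\Delta\varphi_2 = h^{-2\alpha}e^{-2\varphi_2}|\nabla\varphi_1|^2$ against $\eta^2 v$ gives
\[
\int_{B_\sigma(x)}\eta^2|\nabla\varphi_2|^2\,\ud y \le C\int_{B_\sigma(x)}|\nabla\eta|^2 v^2 + \eta^2 |v|\,|\partial_t\varphi_2| + \eta^2 |v|\, h^{-2\alpha}e^{-2\varphi_2}|\nabla\varphi_1|^2\,\ud y .
\]
The first two terms are of the desired form after Young's inequality; the last term is the coupling term, and here I would use that $|v|\le C$ (by Lemma \ref{lem:boundness}) to bound it by $C\int_{B_\sigma(x)}h^{-2\alpha}e^{-2\varphi_2}|\nabla\varphi_1|^2\,\ud y$, which is precisely $C\sigma f_\sigma(x,t)$ — but this is a full-energy term at scale $\sigma$, not a lower-order term, so it cannot simply be absorbed. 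This is the main obstacle.

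To overcome it I would \emph{combine} the two estimates rather than treat them separately: add a small multiple $\delta$ of the $\varphi_2$-estimate to the $\varphi_1$-estimate, so that the troublesome $C\int_{B_{\sigma}}h^{-2\alpha}e^{-2\varphi_2}|\nabla\varphi_1|^2$ coming from the coupling is multiplied by $\delta$ and absorbed into the left-hand side $\int_{B_{\sigma/2}}h^{-2\alpha}e^{-2\varphi_2}|\nabla\varphi_1|^2$ of the $\varphi_1$-inequality (valid because $\eta\equiv 1$ on $B_{\sigma/2}$). A cleaner alternative, which I would actually carry out, is to test the $\varphi_1$-equation against $h^{-2\alpha}e^{-2\varphi_2}\eta^2 w$ and the $\varphi_2$-equation against $\eta^2 v$, add them, and observe that the coupling contributions from the two equations have opposite signs up to controllable error (this is the harmonic-map structure: in $|\nabla\Phi|^2_{\mathbb H^2}$ the cross terms cancel), so the sum of the two tested equations reproduces a Caccioppoli inequality for the full hyperbolic energy density $|\nabla\Phi|^2_{\mathbb H^2}=h^{-2\alpha}e^{-2\varphi_2}|\nabla\varphi_1|^2+|\nabla\varphi_2|^2$ with no uncontrolled coupling term. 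In either route, after dividing through by $\sigma$ one reads off $f_{\sigma/2}(x,t)\le C\sigma^{-3}\int_{B_\sigma(x)}h^{-2\alpha}e^{-2\varphi_2}|\varphi_1-\beta_1|^2+|\varphi_2-\beta_2|^2\,\ud y + C\sigma\int_{B_\sigma(x)}h^{-2\alpha}e^{-2\varphi_2}|\partial_t\varphi_1|^2+|\partial_t\varphi_2|^2\,\ud y$; finally $g_{\sigma/2}(x,t)=\frac{\sigma}{2}\int_{B_{\sigma/2}}(\cdots)\le\sigma\int_{B_\sigma}(\cdots)$ trivially, and adding gives the bound on $E_{\sigma/2}(x,t)=f_{\sigma/2}+g_{\sigma/2}$, completing the proof. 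The one technical point to check carefully is the integration by parts across $\Gamma$ with the singular weights, but as noted this follows the scheme already used in \eqref{15-}--\eqref{15} and in \cite{LT}.
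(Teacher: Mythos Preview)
Your approach is the paper's: test the first equation against $h^{-2\alpha}e^{-2\varphi_2}\eta^2(\varphi_1-\beta_1)$ and the second against $\eta^2(\varphi_2-\beta_2)$, then combine. The one place you diverge is the coupling term, where you manufacture an obstacle that is not there. The term in question is $\int_{B_\sigma}\eta^2(\varphi_2-\beta_2)\,h^{-2\alpha}e^{-2\varphi_2}|\nabla\varphi_1|^2$, and it keeps the factor $\eta^2$; using $|\varphi_2-\beta_2|\le 3\|\varphi_2\|_{L^\infty(Q_T)}$ (Lemma~\ref{lem:boundness} plus the hypothesis on $\beta_2$) this is bounded by $C\int_{B_\sigma}\eta^2 h^{-2\alpha}e^{-2\varphi_2}|\nabla\varphi_1|^2$, which is exactly the left-hand side of your $\varphi_1$-estimate \emph{already controlled}. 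The paper simply substitutes that bound in---no small parameter, no absorption argument. Your workaround (a) is an equivalent repackaging and would also work.

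Your alternative (b), appealing to a sign cancellation from the harmonic-map structure, does not hold here: testing the divergence-form $\varphi_1$-equation against $h^{-2\alpha}e^{-2\varphi_2}\eta^2(\varphi_1-\beta_1)$ produces no cross term in $\nabla\varphi_2$ whatsoever, so there is nothing on that side to cancel the $|\nabla\varphi_1|^2$ term coming from the $\varphi_2$-equation. (Minor note: your first displayed identity has a spurious extra factor of $h^{-2\alpha}e^{-2\varphi_2}$ on the $\partial_t\varphi_1$ term.)
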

\begin{proof}

Let $\eta\in C_c^{\infty}(B_{\sigma}(x))$ be a cutoff function with $0\leq \eta \leq 1$, $\eta\equiv 1$ in $B_{\frac{\sigma}{2}}(x)$, and $|\nabla\eta|\leq10\sigma^{-1}$. Taking $h^{-2\alpha} e^{-2\varphi_2} \eta^2( \varphi_1-\beta_1)$ as a test function of the first equation of \eqref{eq:equation}, we have for any $t>0$,                              
\begin{align*}
\int_{B_{\sigma}(x)}\partial_t\varphi_1 h^{-2\alpha}e^{-2\varphi_2} \eta^2( \varphi_1-\beta_1)\ud y(t)=-
\int_{B_{\sigma}(x)} h^{-2\alpha} e^{-2\varphi_2}\nabla\varphi_1 \nabla[\eta^2( \varphi_1-\beta_1)]\ud y(t)\\
=-\int_{B_{\sigma}(x)} h^{-2\alpha} e^{-2\varphi_2}\eta^2 |\nabla\varphi_1|^2\ud y(t)-2\int_{B_{\sigma}(x)} h^{-2\alpha} e^{-2\varphi_2}\eta\nabla \varphi_1\nabla\eta( \varphi_1-\beta_1)\ud y(t).
\end{align*}
By Cauchy inequality, it follows that
\begin{equation}\label{eq:gradient2}
\begin{aligned}
&\int_{B_{\sigma}(x)} h^{-2\alpha} e^{-2\varphi_2}\eta^2 |\nabla\varphi_1|^2\ud y(t)\\
&\leq \frac{C}{\sigma^2}\int_{B_{\sigma}(x)}h^{-2\alpha} e^{-2\varphi_2}| \varphi_1-\beta_1|^2\ud y(t)+C\sigma^2\int_{B_{\sigma}(x)}|\partial_t\varphi_1|^2 h^{-2\alpha} e^{-2\varphi_2}\eta^2\ud y(t).
\end{aligned}
\end{equation}
Using $ \eta^2( \varphi_2-\beta_2)$ as a test function of the second equation of \eqref{eq:equation} , we have
\begin{align*}
\int_{B_{\sigma}(x)}\partial_t\varphi_2\eta^2( \varphi_2-\beta_2)+\nabla\varphi_2\nabla[\eta^2(\varphi_2-\beta_2)]\ud y(t)=\int_{B_{\sigma}(x)}h^{-2\alpha} e^{-2\varphi_2}|\nabla \varphi_1|^2\eta^2( \varphi_2-\beta_2)\ud y(t).
 \end{align*}
Thus Cauchy inequality and \eqref{eq:gradient2} imply
\begin{equation}\label{eq:gradient1}
\begin{aligned}
\int_{B_{\sigma}(x)}  \eta^2 |\nabla\varphi_2|^2\ud y(t)\leq &\frac{C}{\sigma^2}\int_{B_{\sigma}(x)}h^{-2\alpha} e^{-2\varphi_2}|\varphi_1-\beta_1|^2+|\varphi_2-\beta_2|^2\ud y(t) \\
&+C\sigma^2\int_{B_{\sigma}(x)}(h^{-2\alpha} e^{-2\varphi_2}|\partial_t\varphi_1|^2+|\partial_t\varphi_2|^2) \eta^2\ud y(t).
 \end{aligned}
\end{equation}
We finally add \eqref{eq:gradient2} and \eqref{eq:gradient1} to discover
\begin{equation*}
\begin{aligned}
\int_{B_{\frac{\sigma}{2}}(x)} h^{-2\alpha} e^{-2\varphi_2} |\nabla\varphi_1|^2+|\nabla\varphi_2|^2 \ud y(t)\leq &\frac{C}{\sigma^2}\int_{B_{\sigma}(x)}h^{-2\alpha} e^{-2\varphi_2}| \varphi_1-\beta_1|^2+|\varphi_2-\beta_2|^2\ud y(t) \\
&+C\sigma^2\int_{B_{\sigma}(x)}(h^{-2\alpha} e^{-2\varphi_2}|\partial_t\varphi_1|^2+|\partial_t\varphi_2|^2 \eta^2)\ud y(t).
 \end{aligned}
\end{equation*}                                                                                                                                     The Lemma follows from the above equality.
\end{proof}
\begin{prop}\label{prop:diedai1}

There are $\varepsilon_0$ and $\lambda_0$, independent of $x$ in $B_{\frac{1}{2}}(x_0)$ and $t>0$, such that whenever
\begin{align*}
E_{\sigma}(x,t)\leq \varepsilon_0, 0<\sigma<\frac{1}{4}, x \in B_{\frac{1}{2}}(x_0), t>0,
\end{align*}
we have
\begin{align*}
E_{\lambda_0\sigma}(x,t)\leq \frac{1}{2}E_{\sigma}(x,t).
\end{align*}
\end{prop}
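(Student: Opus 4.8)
<br>

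The plan is to prove the $\varepsilon$-regularity decay estimate (Proposition \ref{prop:diedai1}) by a standard blow-up/compactness argument combined with the Caccioppoli-type inequality of Lemma \ref{lem:Caccioppoli-inequality}. The idea is as follows. First I would reduce matters to controlling the $L^2$-oscillation of $(\varphi_1,\varphi_2)$ on the parabolic cylinder $B_\sigma(x)$ at time $t$. Indeed, by Lemma \ref{lem:Caccioppoli-inequality} applied with the constants $\beta_1 = (\varphi_1)_{B_\sigma(x)}(t)$ and $\beta_2 = (\varphi_2)_{B_\sigma(x)}(t)$ (the latter satisfies $|\beta_2|\le 2\|\varphi_2\|_{L^\infty(Q_T)}$ thanks to Lemma \ref{lem:boundness}), one has
\begin{equation*}
E_{\sigma/2}(x,t)\le C\left(\sigma^{-3}\int_{B_\sigma(x)}h^{-2\alpha}e^{-2\varphi_2}|\varphi_1-\beta_1|^2+|\varphi_2-\beta_2|^2\,\ud y(t)+g_\sigma(x,t)\right).
\end{equation*}
Since $g_\sigma(x,t)$ is already exponentially small by Lemma \ref{lem:L^2decay} (it is bounded by $C\sigma e^{-\frac{C_0}{2}t}$, which can be absorbed after choosing $\varepsilon_0$ suitably or by a minor modification of the final bound), the essential quantity to estimate is the $L^2$-oscillation term. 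To bound it, I would use a Poincaré-type inequality adapted to the weighted measure; the crucial point is to show that a suitable dilation of $\frac{1}{\sigma^3}\int_{B_\sigma}(\cdot)$ dominates $\int_{B_\sigma}|\nabla\varphi|^2$ with a \emph{small} constant $\lambda_0^{\text{power}}$ once we rescale to radius $\lambda_0\sigma$.

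The main step is the blow-up argument. Suppose the conclusion fails: then there exist sequences $x_j\in B_{1/2}(x_0)$, $t_j>0$, $\sigma_j\in(0,\frac14)$ with $E_{\sigma_j}(x_j,t_j)=:\varepsilon_j\to 0$ (one may take $\varepsilon_0$ as small as one likes) but $E_{\lambda_0\sigma_j}(x_j,t_j)>\frac12 E_{\sigma_j}(x_j,t_j)$ for every $\lambda_0$; here one fixes $\lambda_0$ first and produces the contradictory sequence. Rescale by setting $w_j(z):=\varepsilon_j^{-1/2}(\varphi(x_j+\sigma_j z,t_j)-\bar\varphi_j)$ where $\bar\varphi_j$ is the average over $B_1$. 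After rescaling, the normalization gives $\int_{B_1}|\nabla w_j|^2\le C$, and the equation \eqref{eq:equation} transforms so that the right-hand sides carry factors of $\sigma_j^2 g_{\sigma_j}$ and $\varepsilon_j^{1/2}$, both of which vanish in the limit. Hence $w_j$ converges weakly in $H^1(B_1)$ (and strongly in $L^2$) to a limit $w_\infty$ that is harmonic (each component) on $B_1\setminus\Gamma$, with controlled energy. By the regularity theory for the singular linear equations developed in Section \ref{section2} — in particular the weighted Schauder estimate Theorem \ref{thm:k2} and its consequence that $w_\infty$ has the decay rate $|w_\infty(z)| \le C|z|^{\gamma-1}$ type behaviour near $\Gamma$ (using Lemma \ref{key}) — the limit $w_\infty$ enjoys an improvement-of-flatness: there is $\lambda_0\in(0,1)$ and $\theta<\frac12$ with $E^{w_\infty}_{\lambda_0}\le \theta E^{w_\infty}_1$, simply because $w_\infty$ is smooth away from $\Gamma$ and vanishes on $\Gamma$. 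Passing to the limit in the rescaled energies $E_{\lambda_0\sigma_j}(x_j,t_j)/E_{\sigma_j}(x_j,t_j)\to E^{w_\infty}_{\lambda_0}/E^{w_\infty}_1<\frac12$ contradicts the assumption, which fixes the value of $\varepsilon_0$.

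Alternatively — and this may be cleaner to write given that the time-derivative terms are already exponentially controlled — I would follow the direct iteration of Lemma 3.3 of \cite{HX} referenced in the text: combine Lemma \ref{lem:Caccioppoli-inequality} with Lemma \ref{lem:poincare} and the pointwise bounds $|\nabla\varphi_1|\le C\rho^{\alpha-1}$, $|\varphi_1|\le C\rho^\alpha$ (which imply $h^{-\alpha}e^{-\varphi_2}\varphi_1$ is bounded) to obtain, for $\beta_1$ and $\beta_2$ the appropriate averages,
\begin{equation*}
\sigma^{-3}\int_{B_\sigma(x)}h^{-2\alpha}e^{-2\varphi_2}|\varphi_1-\beta_1|^2+|\varphi_2-\beta_2|^2\,\ud y(t)\le C\,\sigma^{-1}\int_{B_\sigma(x)}|\nabla\varphi_1|^2+|\nabla\varphi_2|^2\,\ud y(t)+C g_\sigma(x,t),
\end{equation*}
together with a reverse-Hölder / hole-filling step that produces a genuine gain $\lambda_0^{2\delta}$ for some $\delta>0$ when passing from radius $\sigma$ to $\lambda_0\sigma$. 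The main obstacle in either approach is handling the singular weight $h^{-2\alpha}e^{-2\varphi_2}$ uniformly near $\Gamma$: one must ensure the Poincaré constant does not blow up as the ball approaches $\Gamma$, which is exactly where the fact that $h\sim\rho$ and the weighted estimates of Section \ref{section2} (Corollary 4.1 of \cite{LT}, used in Remark \ref{rem1}) must be invoked carefully, and where the smallness $E_\sigma(x,t)\le\varepsilon_0$ is consumed to close the nonlinear feedback between the two equations.
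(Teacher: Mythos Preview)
Your blow-up outline has the right overall shape, but it misses the key dichotomy that drives the paper's argument and without which the proof cannot close near $\Gamma$. The paper splits into two cases according to whether $B_{\sigma_i/l_0}(x_i)\cap\Gamma$ is empty, with a parameter $l_0$ fixed \emph{before} $\lambda_0$. In Case~I (ball disjoint from $\Gamma$) one normalizes $\varphi_1$ by $\rho(x_i)^\alpha\varepsilon_i$, so that on the rescaled ball the weight $h^{-2\alpha}\sim\rho^{-2\alpha}$ is comparable to a constant; the limit system is then uniformly parabolic with bounded coefficients, and standard interior regularity gives $|\psi_j(X)-\psi_j(0)|\le C\,\delta(X,0)$. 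In Case~II (ball meets $\Gamma$) the correct normalization for $\varphi_1$ is by $\sigma_i^\alpha\varepsilon_i$, and one does \emph{not} subtract an average: the limit $\psi_1$ vanishes on the limit line $\Gamma_\infty$, and Caccioppoli is applied with $\beta_1=0$. Your choice $\beta_1=(\varphi_1)_{B_\sigma}$ fails here because the weighted norm $\int h^{-2\alpha}|\varphi_1-\beta_1|^2$ blows up unless $\beta_1$ vanishes on $\Gamma$. Moreover the limit equation for $\psi_1$ is not harmonic: it retains the singular drift $2\alpha\nabla\rho_\infty/\rho_\infty$, and one needs a De~Giorgi bound together with Lemma~4.4 of \cite{LT} to obtain the decay $|\psi_1|\le C\rho_\infty^{\alpha+1/2}$. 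That extra half-power is precisely what makes $\int_{B_{2\lambda_0}}\rho_\infty^{-2\alpha}|\psi_1|^2$ small, and the smallness is secured by first taking $l_0$ large (so $\sup_{B_{2\lambda_0}}\rho_\infty\le 2/l_0$) and only then choosing $\lambda_0<1/(2l_0)$.

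There is a second, more technical gap: you propose to pass to the limit in the ratio $E_{\lambda_0\sigma_j}/E_{\sigma_j}$, which would require strong $H^1$ convergence of the blow-ups. Only strong $L^2$ convergence is available from compactness. The paper avoids this by first applying Lemma~\ref{lem:Caccioppoli-inequality} at radius $2\lambda_0\sigma_i$ to convert $E_{\lambda_0\sigma_i}$ into a (weighted) $L^2$-oscillation plus the $g$-term, and then passes to the limit in that $L^2$ quantity; the required $L^\infty_t L^2_x$ convergence of $\psi_j^i\to\psi_j$ is obtained by a short argument using weak convergence of $\partial_t\psi_j^i$. Your ``direct iteration'' alternative does not produce a genuine decay without the same case-dependent rescalings, so it is not a shortcut.
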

\begin{proof}

Suppose this proposition is not true. Then there are sequences of $\{x_i\}_{i\geq1}$ in
$B_{\frac{1}{2}}(x_0)$, $\{\varepsilon_i\}_{i\geq1}$,  $\{t_i\}_{i\geq1}$ and $\{\sigma_i\}_{i\geq1}$  such that $0<\sigma<\frac{1}{4}$, $t_i>0$, $\lim\limits_{i\to \infty}\varepsilon_i=0$
and
\begin{equation}
\begin{aligned}
\varepsilon_i^2&\geq E_{\sigma_i}(x_i,t_i), 
\label{eq:assume1}\\
 \end{aligned}
\end{equation}
\begin{equation}
\begin{aligned}\label{eq:assume2}
E_{\lambda_0\sigma_i}(x_i,t_i)&>\frac{1}{2}
E_{\sigma_i}(x_i,t_i), 
 \end{aligned}
\end{equation}
where $\lambda_0$ is a small number to be determined.

For simplicity of notations, we assume that $B_1(x_0)$ is the Euclidean ball in $\mathbb{R}^3$ with $x_0$ being the origin and $\Gamma\subset \mathbb{R}\subset\mathbb{R}^3$.

Case $\MakeUppercase{\romannumeral 1}$: there is a subsequence of $\{x_i\}_{i\geq1}$ such that for any $x_i$ in this subsequence, $B_{\frac{\sigma_i}{l_0}}(x_i)\cap \Gamma=\varnothing$, where $l_0$ will be determined later prior to the determination of $\lambda_0$. For simplicity, assume $\{x_i\}_{i\geq1}$ is just this sequence. Define functions in $Q_1(0)$ as follows: for all $i\geq1,$
\begin{equation}
\begin{aligned}
\psi_1^i(x,t)&=\frac{1}{\rho(x_i)^\alpha\varepsilon_i}(\varphi_1(x_i+\sigma_ix,t_i+\sigma_i^2t)-\overline{\varphi}_1^i),\\
\psi_2^i(x,t)&=\frac{1}{\varepsilon_i}(\varphi_2(x_i+\sigma_ix,t_i+\sigma_i^2t)-\overline{\varphi}_2^i),\\
 \end{aligned}
\end{equation}
where $\overline{\varphi}_1^i,
\overline{\varphi}_2^i$ are the averages of ${\varphi}_1,{\varphi}_2$ over $Q_{\frac{\sigma_i}{2l_0}}(x_i,t_i).$
Note that $y\in B_{\frac{\sigma_i}{2l_0}}(x_i),\frac{1}{2}\rho(x_i) <\rho(y)<2\rho(x_i)$. For any $t>0$, $E_{\sigma_i}(x_i,t_i)\leqslant \varepsilon_i^2$ implies that
\begin{equation*}
\begin{aligned}
\int_{Q_{\frac{1}{2l_0}}(0)}|\nabla\psi_j^i|^2+|\partial_t \psi_j^i|^2\ud V\ud t\leq C, \quad i\geq 0, \ j=1,2.
 \end{aligned}
\end{equation*}
By the fact that the averages of $\psi_j^i$ in $Q_{\frac{1}{2l_0}}(0)$ are zeros and the Poincar\'{e} inequality, we have
\begin{equation*}
 \begin{aligned}
\|\psi_j^i\|_{L^2(Q_{\frac{1}{2l_0}}(0))}\leq C, \quad i\geq1, \ j=1,2.\\
 \end{aligned}
\end{equation*}
By taking subsequence, we can get that $\{\psi_j^i\}_{i\geq 1}$ converge weakly to $\psi_j$ in $L^2(-\frac{1}{4l^2_0},0; H^1(B_{\frac{1}{2l_0}}(0))$ and $\{\partial_s \psi_j^i\}_{i\geq 1}$ converge weakly to $\partial_s \psi_j$ in $L^2(-\frac{1}{4l^2_0},0; L^2(B_{\frac{1}{2l_0}}(0))$. The compactness theorem\cite{LC} shows that $\{\psi_j^i\}_{i\geq1}$ converge to $\psi_j$ in $L^2(Q_{\frac{1}{2l_0}}(0))$. 
For all $ -\frac{1}{4l^2_0}\leqslant \overline{s},s\leqslant 0,  i\geq1, j=1, 2$,
\begin{equation}\label{xin}
	\begin{aligned}
		&\|\psi_j^i(s)-\psi_j(s)\|^2_{L^2(B_{\frac{1}{2l_0}}(0))}-\|\psi_j^i(\overline{s})-\psi_j(\overline{s})\|^2_{L^2(B_{\frac{1}{2l_0}}(0))}\\
		=&2\int_{\overline{s}}^s\int_{B_{\frac{1}{2l_0}}(0)} (\partial_{\tau}\psi_j^i(\tau)-\partial_{\tau}\psi_j(\tau))(\psi_j^i(\tau)-\psi_j(\tau))\ud V\ud \tau.
	\end{aligned}
\end{equation}
Integrating \eqref{xin} with respect to $\overline{s}$, we get
\begin{equation}
	\begin{aligned}
		&\frac{1}{4l_0^2}\|\psi_j^i(s)-\psi_j(s)\|^2_{L^2(B_{\frac{1}{2l_0}}(0))}\\
		\leqslant&\|\psi_j^i-\psi_j\|^2_{L^2(Q_{\frac{1}{2l_0}}(0))}+C \int_{-\frac{1}{4l_0^2}}^0\int_{B_{\frac{1}{2l_0}}(0)} (\partial_\tau\psi_j^i-\partial_\tau\psi_j) (\psi_j^i-\psi_j)\ud V\ud \tau .\\
	\end{aligned}
\end{equation}
We know $\partial_\tau \psi_j^i-\partial_\tau \psi_j\rightharpoonup 0$ and $\psi_j^i-\psi_j\rightarrow 0$ in $L^2(-\frac{1}{4l_0^2},0;L^2(B_{\frac{1}{2l_0}}(0)))$, then
\begin{equation*}
	\begin{aligned}
	\sup\limits_{s\in [-\frac{1}{4l^2_0},0]}\|\psi_j^i(s)-\psi_j(s)\|^2_{L^2(B_{\frac{1}{2l_0}}(0))}\rightarrow 0 \mbox{ as } i\rightarrow \infty.
	\end{aligned}
\end{equation*}

According to \eqref{eq:equation}, we can show that
\begin{equation}
\left\{
\begin{aligned}
\partial_t \psi_1-\Delta \psi_1&=\nabla\theta\nabla\psi_1 \  &\mbox{ in } Q_{\frac{1}{2l_0}}(0)\\
\partial_t \psi_2-\Delta\psi_2&=0 \enspace  &\mbox{ in } Q_{\frac{1}{2l_0}}(0),\\
 \end{aligned}
  \right.
  \end{equation}
where $\theta$ is a smooth and uniformly bounded
function. Then the standard parabolic theorem
yields
\begin{equation}
|\psi_j(x,t)-\psi_j(0,0)|\leq C\delta((x,t),(0,0))\
\mbox{ for } (x,t) \ \in Q_{\frac{1}{2l_0}}(0), j=1,2.
  \end{equation}
Applying Lemma \ref{lem:Caccioppoli-inequality}
with  $ \varepsilon_i\rho(x_i)^\alpha\psi_1(0,0)+\overline{\varphi}_1^i$ as $
\beta_1$ and $\varepsilon_i\psi_2(0,0)+\overline{\varphi}_2^i$ as $\beta_2$, for any $\tau\in (t_i-\frac{\sigma_i^2}{4l_0^2},t_i]$, we have
\begin{equation*}
\begin{aligned}
&E_{\lambda_0\sigma_i}(x_i,\tau)\\ \leq &  C(2\lambda_0\sigma_i)^{-3}\int_{B_{2\lambda_0\sigma_i}(x_i)}h^{-2\alpha}e^{-2\varphi_2}| \varphi_1-\varepsilon_i\rho(x_i)^\alpha\psi_1(0,0)-\overline{\varphi}_1^i|^2\\
& + |\varphi_2-\varepsilon_i\psi_2(0,0)-\overline{\varphi}_2^i|^2\ud y(\tau)+2\lambda_0\sigma_i\int_{B_{2\lambda_0\sigma_i}(x_i)}h^{-2\alpha}e^{-2\varphi_2}|\partial_t \varphi_1|^2+|\partial_t \varphi_2|^2\ud y (\tau)
\\ 
=&C \varepsilon_i^2(2\lambda_0)^{-3}\int_{B_{2\lambda_0}(0)}|\psi^i_1(x,t)-\psi_1(0,0)|^2+|\psi^i_2(x,t)-\psi_2(0,0)|^2 \ud x(\frac{\tau-t_i}{\sigma_i^2})+2\lambda_0\varepsilon_i^2\\
\leq &C\varepsilon_i^2(2\lambda_0)^{-3}\int_{B_{2\lambda_0}(0)}|\psi^i_1(x,t)-\psi_1(x,t)|^2+|\psi^i_2(x,t)-\psi_2(x,t)|^2 \\
   &+|\psi_1(x,t)-\psi_1(0,0)|^2+|\psi_2(x,t)-\psi_2(0,0)|^2\ud x(\frac{\tau-t_i}{\sigma_i^2})+2\lambda_0\varepsilon_i^2\\
   \leq & C_1\varepsilon_i^2(2\lambda_0)^{-3}\int_{B_{2\lambda_0}(0)}|\psi^i_1(x,t)-\psi_1(x,t)|^2+|\psi^i_2(x,t)-\psi_2(x,t)|^2 \ud x(\frac{\tau-t_i}{\sigma_i^2})\\
   &+C_2(\lambda_0^2+2\lambda_0)\varepsilon_i^2.
 \end{aligned}
  \end{equation*}
We choose $l_0, \lambda_0$ such that $l_0>\frac{1}
{2}, C_2(\lambda_0^2+2\lambda_0)<\frac{1}{4}$ and take $i$ large enough such that
$$C_1(2\lambda_0)^{-3}\int_{B_{2\lambda_0}(0)}|\psi^i_1(x,t)-\psi_1(x,t)|^2+| \psi^i_2(x,t)-\psi_2(x,t)|^2 \ud x(\frac{\tau-t_i}{\sigma_i^2})<\frac{1}{4}.$$ Therefore, we have
 \begin{equation*}
\begin{aligned}
E_{\lambda_0\sigma_i}(x_i,\tau)\leq \frac{1}{2}\varepsilon_i^2, \forall \tau\in (t_i-\frac{\sigma_i^2}{4l_0^2},t_i].
 \end{aligned}
  \end{equation*}
 It contradicts to inequality \eqref{eq:assume2}.

Case $\MakeUppercase{\romannumeral 2}$:
$B_{\frac{\sigma_i}{l_0}}(x_i)\cap\Gamma\neq\varnothing$ for all $i$. Define functions in $B_1(0)$ as follows: for any $i\geq 1$
\begin{equation}
\begin{aligned}
	\psi_1^i(x,t)&=\frac{1}{\sigma_i^\alpha\varepsilon_i}\varphi_1(x_i+\sigma_ix,t_i+\sigma_i^2t),\\
\psi_2^i(x,t)&=\frac{1}{\varepsilon_i}(\varphi_2(x_i+\sigma_ix,t_i+\sigma_i^2t)-\overline{\varphi}_2^i),\\
 \end{aligned}
\end{equation}
where $\overline{\varphi}_2^i$ is the averages of ${\varphi}_2$ over $Q_{\sigma_i}(x_i, t_i)$.
Under the transformation: $x \to x_i+\sigma_ix$, the preimages of $\Gamma$ are $\Gamma_i$, parallel to $\Gamma$ and in the distance $\frac{\pi(x_i)}
{\sigma_i}$, where $\pi$ is the orthogonal projection from $M$ to $\Gamma$. Since $B_{\frac{\sigma_i}{l_0}}(x_i)\cap \Gamma\neq\varnothing$, we have $|\frac{\pi(x_i)}{\sigma_i} |\leq \frac{1}{l_0}$. Thus we can assume that $\Gamma_i$ converge to $\Gamma_{\infty}$ within the distance $\frac{1}{l_0}$ from $N$. Let $\rho_i, \rho_{\infty}$ be the distance from $\Gamma_i, \Gamma_{\infty}$.
 By direct computation, we can get for any $t>0$,
\begin{equation*}
\begin{aligned}
\int_{B_{\frac{1}{2l_0}}(0)}\rho_i^{-2\alpha}(|\partial_t\psi_1^i|^2+|\nabla\psi_1^i|^2)+(|\nabla\psi_2^i|^2+|\partial_t\psi_2^i|^2)\ud x(t)\leq C,
\ i\geq 0.
 \end{aligned}
\end{equation*}
By Corollary 4.1 of \cite {LT},
\begin{equation*}
\begin{aligned}
\int_{B_{1}(0)}\rho_i^{-2\alpha-2}|\psi_1^i|^2(x,t)\ud x\leq C, \ i\geq 0.
 \end{aligned}
\end{equation*}
As in the previous case, $\{\psi_2^i\}_{i\geq 1}$ converge  to $\psi_2$ in $L^\infty(-\frac{1}{4l_0^2},0; L^2(B_{\frac{1}{2l_0}}(0)))$ and $\{\psi_1^i\}_{i\geq 1}$ converge weakly to $\psi_1$ in $H^1(B_{\frac{1}{2l_0}}(0); \rho_i^{-\alpha})$, where 
\[
\|\psi_1\|_{H^1(B_{\frac{1}{2l_0}}(0); \rho_i^{-\alpha})}=\int_{B_{1}(0)}(\rho_i^{-2\alpha}|\nabla\psi_1^i|^2+
 \rho_i^{-2\alpha-2}|\psi_1^i|^2)(x, t)\ud x, \ i\geq 1.
 \] 
 $\{\partial_t\psi_1^i)\}_{i\geq 1}$
converge  to $\partial_t\psi_1$ in $L^2(0,T;L^2(B_{\frac{1}{2l_0}}(0);\rho_i^{-\alpha} ))$ and 
 $\{\psi_1^i(\cdot,t)\}_{i\geq 1}$
converge  to $\psi_1(\cdot,t)$ in $L^2(B_{\frac{1}{2l_0}}(0);\rho_i^{-\alpha} )$, $t\in (-\frac{1}{4l_0^2},0)$, where
\begin{equation*}
\begin{aligned}
\|\psi_1\|_{L^2(B_{\frac{1}{2l_0}}(0);\rho_i^{-\alpha} )}=\int_{B_{1}(0)}\rho_i^{-2\alpha}|\psi_1^i|^2(x,t)\ud x,
\ i\geq 0.
 \end{aligned}
\end{equation*}
 According to \eqref{eq:equation}, we have
\begin{equation}
\left\{
\begin{aligned}
\partial_t \psi_1-\Delta \psi_1&=-2\nabla\psi_1(\nabla\theta+\alpha\frac{\nabla\rho_{\infty}}{\rho_{\infty}}) \ & \mbox{ in } Q_{\frac{1}{2l_0}}(0)\\
\partial_t \psi_2-\Delta \psi_2&=0 \ &\mbox{ in } Q_{\frac{1}
{2l_0}}(0),
 \end{aligned}
  \right.
  \end{equation}
where $\theta$ is a smooth outside $
\Gamma_{\infty}$ in $B_1(0)$ and $|\nabla \theta|
\leq C \rho_{\infty}^{-\delta}$ for any $
\delta<1$. Then the standard parabolic theory
yields
\begin{equation}
|\psi_2(x,t)-\psi_2(0,0)|\leq C\delta((x,t), (0,0)), (x,t) \in Q_{\frac{1}{2l_0}}(0).
  \end{equation}

Define the functions $\widetilde{\psi}_1=\rho_{\infty}^{-\alpha}\psi_1$ and then $\widetilde{\psi}_1$ satisfies
\begin{equation}\label{76}
\partial_t \widetilde{\psi}_1-\Delta\widetilde{\psi}_1=-2\nabla \widetilde{\psi}_1\nabla\theta-\frac{\alpha\widetilde{\psi}_1}{\rho_{\infty}}(\frac{2\alpha}{\rho_{\infty}}-2\nabla\rho_{\infty}\nabla\theta)
  \end{equation}
where $\theta$ is a function about $x$ and is smooth outside $\Gamma_{\infty}$ in $B_1(0)$ and $|\nabla \theta|\leq C \rho_{\infty}^{-\delta}$
for any $\delta<1$. Also $\widetilde{\psi}_2$ is uniformly bounded outside the set
\begin{equation*}
U=\{(x,t)|\rho_{\infty}^{-1} > \sup\limits_{B_{1}
(0)}\nabla\rho_{\infty}\nabla\theta\}.
  \end{equation*}

 On the other hand, for any $(x,t)\in U$,
 \begin{equation*}
-\frac{\alpha}{\rho_{\infty}}(\frac{2\alpha}{\rho_{\infty}}-2\nabla\rho_{\infty}\nabla\theta)(x,t)< 0.
 \end{equation*}
 Applying the De Giorgi estimate, in view of Chapter 6 in \cite{C}, we can show
\begin{equation*}
\sup\limits_{Q_{\frac{1}{2}}(0)}|\widetilde{\psi}_1|\leq C(1+\int_{Q_{1}(0)}|\widetilde{\psi}_1|^2\ud x \ud t)^{\frac{1}{2}}\leq C.
 \end{equation*}
 Thus, $\widetilde{\psi}_1=\rho_{\infty}^{-\alpha}\psi_1$ is uniformly bounded and $\psi_1$ vanishes along $\Gamma_{\infty}\bigcap B_{1}(0)\times\{-\frac{1}{4}<t\leq 0\}$. $\partial_t\widetilde{\psi}_1=\rho_{\infty}^{-\alpha}\partial_t\psi_1$ is also a solution of \eqref{76}, so $\partial_t\widetilde{\psi}_1$ also has the estimate $|\partial_t\widetilde{\psi}_1|\leqslant C$. And $|\partial_t{\psi}_1|\leqslant C\rho_{\infty}^\alpha$ in $B_{1}(0)\times\{-\frac{1}{4}<t\leq 0\}$.

 Applying Lemma \ref{lem:L^2decay} and Lemma 4.4 in \cite{LT} to $\psi_1$ with $
\varepsilon=\max\{\frac{\alpha}{4},\frac{1}{4}\}$, we obtain $|\psi_1|\leq C\rho_{\infty}^{\min\{\frac{3\alpha}{2},\alpha+\frac{3}{2}\}}\leqslant C\rho_{\infty}^{\alpha+\frac{1}{2}}$.
Applying Lemma \ref{lem:Caccioppoli-inequality} with 0 as $\beta_1$ and $\varepsilon_i\psi_2(0,0)+\overline{\varphi}_2^i$ as $\beta_2$, for any $\tau\in (t_i-\frac{\sigma_i^2}{4l_0^2},t_i]$, we have
\begin{equation*}
\begin{aligned}
&E_{\lambda_0\sigma_i}(x_i,\tau)\\ \leq & C\left((2\lambda_0\sigma_i)^{-3}\int_{B_{2\lambda_0\sigma_i}(x_i)}h^{-2\alpha}e^{-2\varphi_2}| \varphi_1|^2 +|\varphi_2-\varepsilon_i\psi_2(0,0)-
\overline{\varphi}_2^i|^2\ud y(\tau)\right.\\
&\left.+2\lambda_0\sigma_i\int_{B_{2\lambda_0\sigma_i}(x_i)}h^{-2\alpha}e^{-2\varphi_2}|\partial_t \varphi_1|^2+|\partial_t
\varphi_2|^2\ud y(\tau)\right)\\
\leq&C\varepsilon_i^2(2\lambda_0)^{-3}\int_{B_{2\lambda_0}(0)}\left(|\psi^i_2(x,t)-\psi_2(0,0)|^2+\rho_{i}^{-2\alpha}| \psi^i_1(x,t)|^2 \right)\ud x(\frac{\tau-t_i}{\sigma_i^2})+2\lambda_0\varepsilon_i^2\\
\leq&C\varepsilon_i^2(2\lambda_0)^{-3}\int_{B_{2\lambda_0}(0)}\left(|\psi^i_2(x,t)-\psi_2(x,t)|^2+| \rho_{i}^{-\alpha}\psi^i_1(x,t)-\rho_{\infty}^{-\alpha}\psi_1(x,t)|^2 \right.\\
&\left.+|\psi_2(x,t)-\psi_2(0,0)|^2+|\rho_{\infty}
^{-\alpha}\psi_1(x,t)|^2\right) \ud x(\frac{\tau-t_i}{\sigma_i^2})+2\lambda_0\varepsilon_i^2\\
\leq &C_1\varepsilon_i^2(2\lambda_0)^{-3}\int_{B_{2\lambda_0}(0)}\left(|\psi^i_2(x,t)-\psi_2(x,t)|^2+| \rho_{i}
^{-\alpha}\psi^i_1(x,t)-\rho_{\infty}^{-\alpha}\psi_1(x,t)|
^2  \right)\ud x(\frac{\tau-t_i}{\sigma_i^2})\\
&+C_2(\lambda_0^2+2\lambda_0)\varepsilon_i^2+C_3\sup\limits_{B_{2\lambda_0}(0)} \rho_{\infty}
^{\frac{1}{2}}\varepsilon_i^2.
 \end{aligned}
  \end{equation*}
Since  $B_{\frac{\sigma_i}{l_0}}(x_i)\cap\Gamma\neq\varnothing$, $B_{\frac{1}{l_0}}(0)\cap \Gamma_{\infty}\neq\varnothing $ for all $i$ and $\sup\limits_{B_{2\lambda_0}(0)}\rho_{\infty}\leq \frac{2}{l_0}$. We can first take $l_0$ such that $C_3\sup\limits_{B_{2\lambda_0}(0)}\rho_{\infty}^{\frac{1}{2}}\leq C_3 (\frac{2}{l_0})^{\frac{1}{2}}<\frac{1}{8}$, that is $l_0>2(8C_3)^2$. Then choose $\lambda_0$ such that $2\lambda_0<\frac{1}{l_0}$ and $C_2(\lambda_0^2+2\lambda_0)<\frac{1}{8}$. Next, we take $i$ large enough such that $$C_1(2\lambda_0)^{-3}\int_{B_{2\lambda_0}(0)}\left(|\psi^i_2(x,t)-\psi_2(x,t)|^2+|\rho_{i}^{-\alpha}\psi^i_1(x,t)-\rho_{\infty}^{-\alpha}\psi_1(x,t)|^2 \right) \ud x(\frac{\tau-t_i}{\sigma_i^2})<\frac{1}{8}.$$
Therefore, we have
 \begin{equation*}
\begin{aligned}
E_{\lambda_0\sigma_i}(x_i,\tau)\leq \frac{1}{2}\varepsilon_i^2,  \forall \tau\in (t_i-\frac{\sigma_i^2}{4l_0^2},t_i].
 \end{aligned}
  \end{equation*}
 
 It contradicts to the inequality \eqref{eq:assume2}.
\end{proof}

We are now ready to introduce the main theorem of this section.
\begin{thm}\label{thm:holder}
There exists two uniform positive constants C and $\delta$ such that for any $x\in B_{\frac{1}{4}}(x_0)$, $t>0$ and $0<\sigma<\frac{1}{8}$,
\begin{align*}
E_{\sigma}(x,t)\leq C \sigma^{2\delta}.
\end{align*}

In particular, $\varphi_1$ is $\delta$-H\"{o}lder continuous.
\end{thm}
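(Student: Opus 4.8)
# Proof Proposal for Theorem \ref{thm:holder}

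The plan is to run a standard iteration argument on the scale-invariant energy $E_\sigma(x,t)$, converting the one-step decay of Proposition \ref{prop:diedai1} into a power-law decay, and then to promote this to H\"older continuity of $\varphi_1$ via a Campanato-type characterization. First I would fix $x \in B_{1/4}(x_0)$ and $t>0$. By Proposition \ref{prop:smallnessenergy}, there is some radius $\sigma_x \in (\frac18 e^{-C/\varepsilon_0}, \frac14)$ with $E_{\sigma_x}(x,t) \le \varepsilon_0$, where $\varepsilon_0$ is the threshold from Proposition \ref{prop:diedai1}. The key point is that once $E_\sigma \le \varepsilon_0$ at one scale, Proposition \ref{prop:diedai1} gives $E_{\lambda_0 \sigma}(x,t) \le \frac12 E_\sigma(x,t) \le \varepsilon_0$, so the smallness condition propagates to all smaller scales $\lambda_0^k \sigma_x$. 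Iterating, $E_{\lambda_0^k \sigma_x}(x,t) \le 2^{-k} E_{\sigma_x}(x,t) \le 2^{-k}\varepsilon_0$ for every $k \ge 0$.

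Next I would convert this geometric decay along the discrete sequence of radii into a continuous estimate $E_\sigma(x,t) \le C\sigma^{2\delta}$ for all $0<\sigma<\frac18$. Given such $\sigma$, choose $k$ so that $\lambda_0^{k+1}\sigma_x < \sigma \le \lambda_0^k \sigma_x$; then $2^{-k} = (\lambda_0^k)^{\log 2/\log(1/\lambda_0)}$, and setting $\delta = \frac{1}{2}\log 2 / \log(1/\lambda_0)$ one gets $2^{-k} \le C (\lambda_0^k \sigma_x)^{2\delta} \sigma_x^{-2\delta} \le C' \sigma^{2\delta}$, using that $\sigma_x$ is bounded below by a uniform constant (here the monotonicity $f_\sigma, g_\sigma$ behave reasonably in $\sigma$ so $E_\sigma(x,t) \le C E_{\lambda_0^k\sigma_x}(x,t)$ for $\sigma$ in that dyadic range, up to adjusting constants — this uses that $E_{\sigma}$ does not blow up as $\sigma$ increases within a fixed factor, which follows from the definitions of $f_\sigma$, $g_\sigma$ and the already-established bounds). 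This gives the first displayed conclusion, with $C$ and $\delta$ uniform over $x \in B_{1/4}(x_0)$ and $t>0$ because $\varepsilon_0$, $\lambda_0$ and the lower bound on $\sigma_x$ are all uniform.

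For the H\"older continuity of $\varphi_1$, I would use the decay of $f_\sigma(x,t) = \sigma^{-1}\int_{B_\sigma(x)} h^{-2\alpha}e^{-2\varphi_2}|\nabla\varphi_1|^2 + |\nabla\varphi_2|^2\, dy$, which is controlled by $E_\sigma(x,t) \le C\sigma^{2\delta}$. Since $h^{-2\alpha}e^{-2\varphi_2}$ is bounded below on compact subsets away from $\Gamma$ and, more importantly, $|\nabla\varphi_1| \le C\rho^{\alpha-1}$ was already established, one has near $\Gamma$ a weighted estimate $\int_{B_\sigma(x)} \rho^{-2\alpha}|\nabla\varphi_1|^2\, dy \le C\sigma^{1+2\delta}$. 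Combining with the spatial Poincar\'e inequality gives $\fint_{B_\sigma(x)}|\varphi_1 - (\varphi_1)_{B_\sigma(x)}|^2\, dy \le C\sigma^{2+2\delta}$ (absorbing the weight using $\rho \le C$ and the $\rho^{\alpha-1}$ bound for the degenerate factor), i.e. $\varphi_1$ lies in a Campanato space $\mathcal{L}^{2,3+2\delta}$ on each time slice, which by Campanato's theorem embeds into $C^{0,\delta}$. The hard part will be handling the degeneracy of the coefficient $h^{-2\alpha}e^{-2\varphi_2}$ near $\Gamma$ carefully in this last step — one must exploit the a priori bound $|\nabla\varphi_1|\le C\rho^{\alpha-1}$ (so that $|\nabla\varphi_1|$ itself vanishes at $\Gamma$ at a rate compensating the weight) rather than trying to invert the weight, and also keep track of the time variable, since $E_\sigma$ includes the $g_\sigma$ term $\sigma\int_{B_\sigma}|\partial_t\varphi|^2$; the parabolic Poincar\'e inequality (Lemma \ref{lem:poincare}) handles the joint space-time oscillation, yielding H\"older continuity of $\varphi_1$ in the parabolic metric and in particular in $x$.
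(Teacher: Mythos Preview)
Your proposal is correct and follows essentially the same route as the paper: iterate Proposition~\ref{prop:diedai1} starting from the initial small scale provided by Proposition~\ref{prop:smallnessenergy} (the paper cites Theorem~4.1 of \cite{LT} for this standard step), then pass to the space--time averaged energy and apply Campanato's characterization via the parabolic Poincar\'e inequality (Lemma~\ref{lem:poincare}). One small simplification: the ``hard part'' you anticipate is not actually hard, because $h^{-2\alpha}e^{-2\varphi_2}$ is bounded \emph{below} uniformly on all of $M$ (since $h\sim\rho$ is bounded above and $\varphi_2$ is bounded by Lemma~\ref{lem:boundness}), so $\int_{B_\sigma}|\nabla\varphi_1|^2\le C f_\sigma(x,t)\,\sigma\le C\sigma^{1+2\delta}$ directly, with no need to invoke the pointwise bound $|\nabla\varphi_1|\le C\rho^{\alpha-1}$.
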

\begin{proof}

According to  Theorem 4.1 of \cite{LT}, Proposition \ref{prop:smallnessenergy} and Proposition \ref{prop:diedai1} in this paper, there are two uniform positive constants C and $\delta$ such that for any $x\in B_{\frac{1}{4}}(x_0)$, $t>0$, and $0<\sigma<\frac{1}{8}$,
\begin{align*}
E_{\sigma}(x,t)\leq C \sigma^{2\delta}.
\end{align*}

Define
\begin{align*}
\widetilde{E}_{\sigma}(x,t):=&\sigma^{-3}\int_{Q_{\sigma}(x,t)}h^{-2\alpha}e^{-2\varphi_2}|\nabla\varphi_1|^2+|\nabla\varphi_2|^2 \ud y\ud s\\
&+\sigma^{-1}\int_{Q_{\sigma}(x,t)}h^{-2\alpha}e^{-2\varphi_2}|\partial_t\varphi_1|^2+|\partial_t\varphi_2|^2\ud y\ud s.
\end{align*}
Then
\begin{align*}
\widetilde{E}_{\sigma}(x,t)=\sigma^{-2} \int_{t-\sigma^2}^{t} {E}_{\sigma}(x,t) \ud t\leq  C\sigma^{2\delta}.
\end{align*}

Combining Campanato's Lemma with Lemma \ref{lem:poincare}, it follows that $\varphi_2$ is $\delta$-H\"{o}lder continuous.
\end{proof}

\begin{lem}\label{lem:gradient1}
There is an $\varepsilon_0>0$, independent of $x\in B_{\frac{1}{2}}(x_0)$, $t>0$, such that
\begin{align*}
|\nabla\varphi_2|(x,t)\leq C\rho(x)^{-1+\varepsilon_0},  \ x \in B_{\frac{1}{2}}(x_0)\backslash \Gamma, \ t>0.
\end{align*}
\end{lem}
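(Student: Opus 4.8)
The plan is to upgrade the Hölder continuity of $\varphi_2$ obtained in Theorem \ref{thm:holder} to a decay estimate on $\nabla\varphi_2$ near $\Gamma$, using the smallness of the scaled energy $\widetilde E_\sigma(x,t)\leq C\sigma^{2\delta}$ together with interior parabolic gradient estimates. First I would fix $x\in B_{1/2}(x_0)\setminus\Gamma$, set $\overline\rho=\rho(x)$, and work on the parabolic cylinder $Q_{\overline\rho/4}(x,t)$, on which the coefficient $h^{-2\alpha}e^{-2\varphi_2}$ is comparable to $\overline\rho^{-2\alpha}$ and smooth (so the second equation of \eqref{eq:equation} is a genuinely uniformly parabolic equation after dividing by the appropriate power of $\overline\rho$). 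The idea is to rescale: put $\psi(y,s)=\varphi_2(x+\overline\rho y, t+\overline\rho^2 s)-\overline{\varphi_2}$ on $Q_{1/4}(0)$, where $\overline{\varphi_2}$ is the average of $\varphi_2$ over $Q_{\overline\rho/4}(x,t)$. Then $\psi$ solves $\partial_s\psi-\Delta\psi = \overline\rho^2 h^{-2\alpha}e^{-2\varphi_2}|\nabla\varphi_1|^2$ (in rescaled variables), and by Theorem \ref{thm:holder}, $\|\psi\|_{L^2(Q_{1/4}(0))}^2\lesssim \widetilde E_{\overline\rho/4}(x,t)\,\overline\rho^2 \lesssim \overline\rho^{2+2\delta}$ after accounting for the weight.

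Next I would control the right-hand side: the inhomogeneous term, in rescaled form, has $L^2$ or better norm bounded by $\overline\rho^{-2\alpha}\int_{Q}|\nabla\varphi_1|^2$ times the right power of $\overline\rho$, which by the energy bound $E_\sigma\le C\sigma^{2\delta}$ and the already-established pointwise bound $|\nabla\varphi_1|\le C\rho^{\alpha-1}$ (stated right after Lemma \ref{lem:bounded}) is of size $O(\overline\rho^{2\delta})$ or smaller; in fact since $|\nabla\varphi_1|\le C\overline\rho^{\alpha-1}$ on the cylinder, $h^{-2\alpha}e^{-2\varphi_2}|\nabla\varphi_1|^2\le C\overline\rho^{-2}$, so the forcing term of the rescaled equation is uniformly bounded, and more precisely the scaled quantity $\overline\rho^2\cdot\overline\rho^{-2}=O(1)$ — one must squeeze out the extra $\overline\rho^{\varepsilon_0}$ from the Hölder continuity of $\varphi_2$ feeding back through $E_\sigma$. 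Then standard interior $L^p$/Schauder estimates for the heat equation on $Q_{1/8}(0)$ give $\|\nabla\psi\|_{L^\infty(Q_{1/8}(0))}\lesssim \|\psi\|_{L^2(Q_{1/4}(0))} + \|\text{RHS}\|_{L^p(Q_{1/4}(0))}\lesssim \overline\rho^{\delta}$ (for a suitable $\varepsilon_0\le\delta$). Undoing the scaling, $\nabla\psi$ picks up a factor $\overline\rho$, so $|\nabla\varphi_2|(x,t)=\overline\rho^{-1}|\nabla\psi(0,0)|\le C\overline\rho^{-1+\varepsilon_0}$, which is exactly the claim.

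The main obstacle I anticipate is bookkeeping the interplay of the three ingredients — the Hölder/Campanato estimate on $\varphi_2$ (which supplies the $\overline\rho^{2\delta}$ gain in the oscillation), the energy decay $E_\sigma\le C\sigma^{2\delta}$, and the pointwise gradient bound on $\varphi_1$ — so that after rescaling all contributions to $\|\nabla\psi\|_{L^\infty}$ come out as $O(\overline\rho^{\varepsilon_0})$ for a single uniform $\varepsilon_0>0$; in particular one must be careful that the inhomogeneous term $\overline\rho^{-2\alpha}e^{-2\varphi_2}|\nabla\varphi_1|^2$, though only $O(\overline\rho^{-2})$ pointwise, actually carries a small power of $\overline\rho$ in the relevant integral norm, which follows by writing $\varphi_1-\varphi_1(\bar x)$ and using $E_\sigma\le C\sigma^{2\delta}$ rather than the crude pointwise bound. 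A secondary technical point is that the parabolic cylinder $Q_{\overline\rho/4}(x,t)$ must lie inside the region where the interior estimates and the energy bounds were proved, and for small $t$ (i.e.\ $t<\overline\rho^2/16$) one should instead use the cylinder reaching down to the initial time and invoke the initial regularity $(\varphi_1^0,\varphi_2^0)\in\mathcal A_0\times\mathcal B_0$; this case is handled exactly as in the corresponding step of Li-Tian \cite{LT}. Modulo these adjustments, the argument is the parabolic analogue of Lemma 4.5 in \cite{LT} combined with Lemma \ref{lem:L^2decay} of this paper, and I would present it in that form.
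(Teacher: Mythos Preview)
Your approach is sound but genuinely different from the paper's. The paper does \emph{not} rescale parabolically; instead it works at a fixed time $t$, writes $\varphi_2$ via the Dirichlet Green's function $G_z$ on the ball $B_\sigma(x)$ with $2\sigma=\rho(x)$, and treats $\partial_t\varphi_2$ as a source term alongside $h^{-2\alpha}e^{-2\varphi_2}|\nabla\varphi_1|^2$. Differentiating the representation in $z$ at $z=x$ gives $|\nabla\varphi_2|(x,t)$ as a boundary integral (controlled by the $\delta$-H\"older continuity of $\varphi_2$) plus an interior integral against $|\nabla G_x|$; the latter is handled by H\"older's inequality using $\int_{B_\sigma}|\nabla G_x|^\beta\leq C_\beta$ for $\beta<3/2$ together with the energy decay $E_\sigma\leq C\sigma^{2\delta}$ and the $L^2$ bound on $\partial_t\varphi_2$ from Lemma~\ref{lem:L^2decay}. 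This is exactly the parabolic adaptation of Lemma~5.1 in Li--Tian~\cite{LT}, whereas you invoke instead the analogue of their Lemma~4.5. The paper's route has the advantage that it is purely elliptic at each $t$ and hence sidesteps the small-$t$ issue you flagged (no parabolic cylinder needs to fit inside $[0,T)$). Your route is more natively parabolic and conceptually streamlined, but to make it rigorous you must carry out the $L^p$ interpolation you allude to: the rescaled forcing $F$ satisfies $\|F\|_{L^1(Q_{1/4})}\leq C\bar\rho^{2\delta}$ (from $E_\sigma\leq C\sigma^{2\delta}$) and $\|F\|_{L^\infty}\leq C$ (from $|\nabla\varphi_1|\leq C\rho^{\alpha-1}$), hence $\|F\|_{L^p}\leq C\bar\rho^{2\delta/p}$; taking $p>5$ then gives $W^{2,1}_p\hookrightarrow C^1$ and $\varepsilon_0=2\delta/p$. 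Both arguments yield the lemma with comparable effort.
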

\begin{proof}
 For any fixed $t>0$, given any point $x$ in $B_{\frac{1}{2}}(x_0) \backslash \Gamma$, let $2\sigma=\rho(x)$. Then $\rho\geq\sigma$ in $B_{\sigma}(x)$. Let $G_z(y)$ be the Green function in $B_{\sigma}(x)$ with Dirichlet boundary condition and singularity at $z$.
According to the proof of Lemma 5.1 in \cite{LT}, for $\beta<\frac{3}{2}$, we can easily check that
\begin{equation}	
\begin{aligned}\label{4.20}
	&\int_{ B_{\sigma}(x)} |\nabla{\partial G_x}(y)|^\beta\ud V_y\leq C_\beta,\\
	&\int_{ \partial B_{\sigma}(x)} |\nabla \frac{G_x(y)}{\partial n_y}|\ud S_y\leq C,
\end{aligned}\end{equation}
where $C_\beta$ depends on $\beta$ and $n_y$ denotes the outward normal vector at $y$ on $\partial B_{\sigma}(x)$.

Using the second equation of \eqref{eq:equation}, we have 
 \begin{align*}
	\varphi_2(z,t)-\varphi_2(x,t)=&\int_{ \partial B_{\sigma}(x)} (\varphi_2(y,t)-\varphi_2(x,t)) \frac{\partial G_z(y)}{\partial n_y} \ud S_y\\
	&+\int_{ B_{\sigma}(x)} (\partial_t\varphi_2-h^{-2\alpha}e^{-2\varphi_2}|\nabla \varphi_1|^2)(y,t) G_z(y)\ud V_y.
\end{align*}
It follows
 \begin{align*}
|\nabla\varphi_2|(x,t)\leq &\int_{\partial B_{\sigma}(x)} |\varphi_2(y,t)-\varphi_2(x,t)||\nabla(\frac{\partial G_x}{\partial n_y})|(y)\ud S_y\\
&+C\int_{ B_{\sigma}(x)} (|\partial_t\varphi_2|+h^{-2\alpha}e^{-2\varphi_2}|\nabla \varphi_1|^2)(y,t)|\nabla G_x(y)|\ud V_y.
\end{align*}
By H\"older inequality, Theorem \ref{thm:holder} and (\ref{4.20}), we have
\begin{align*}
\int_{ B_{\sigma}(x)} h^{-2\alpha}e^{-2\varphi_2}|\nabla\varphi_1|^2|\nabla G_x(y)|\ud y\leq C \sigma^{-\frac{2}{\beta}+(1+2\delta)\frac{\beta-1}{\beta}}
\end{align*}
and
\begin{align*}
&\int_{ B_{\sigma}(x)} |\partial_t\varphi_2||\nabla G_x(y)|\ud y
\leq C\sigma^{\frac{3\beta-6}{2\beta}},
\end{align*}
where $\delta$ is as same as in Theorem \ref{thm:holder}. We can choose $\beta, \varepsilon_0$ such that they satisfy the conditions $\frac{6}{5}<\beta<\frac{3}{2}$, $\frac{3\beta-6}{2\beta}\geq-1+\varepsilon_0$, and $-\frac{2}{\beta}+(1+2\delta)\frac{\beta-1}{\beta}\geq-1+\varepsilon_0$. Given these inequalities, we can obtain
\begin{align*}
|\nabla\varphi_2|(x,t)\leq C\sigma^{-1+\varepsilon_0}\leq C \rho(x)^{-1+\varepsilon_0}.
\end{align*}
The lemma is completed.
\end{proof}

\begin{rem}\label{gkey}
	For any $x_0\in M\backslash \Gamma$ and $0<\varepsilon<\alpha$, if $B_1(x_0)\cap \varGamma\neq \varnothing$, let $v\in C^{2+\beta,1+\frac{\beta}{2}}(( B_1(0)\setminus \Gamma)\times [0,T])$ and
	$|\nabla v|\leqslant C^{'}\rho^{-\delta}$ with $\delta<1$, $f\in C^{\beta,\frac{\beta}{2}}(\overline{Q_T})$ with $\max\limits_{B_1(x)\times[0,T]}(\rho^{-2\alpha+2}|f|)\leqslant C^{'}$. Then  there is a uniform constant $C_{\epsilon}$ depending only on
	$C^{'},\beta,\epsilon$, such that if $u$ is a
	solution of the equation
	\begin{equation*}
		\left\{ \begin{aligned}
			\partial_t u-\Delta u&=2\nabla
			u(\frac{\alpha\nabla \rho}{\rho}+\nabla v)+f   &\mbox{ in} \ B_1(x_0)\times[0,T]\\
			|u|&\leqslant C \rho^{2\alpha+2} & \mbox{ on} \ B_1(x_0)\times \{t=0 \}
		\end{aligned}\right.
	\end{equation*}
	and $|u|_{C^0(\partial B_{\frac{2}{3}}(x_0)\times
		[0,T] )}\leqslant C^{'}, u|_{\Gamma\times [0,T]}
	\equiv 0$, then
	\begin{equation*}
		\begin{aligned}
			|u(x,t)|\leqslant C \max\limits_{B_1(x)\times[o,T]}(\rho^{-2\alpha+2}|f|)\rho(x)^{2\alpha-2\varepsilon}\leq C_{\varepsilon}\rho(x)^{2\alpha-2\varepsilon}
			 \enspace 
			\mbox{ in } B_1(x_0)\times[0,T].
		\end{aligned}
	\end{equation*}

	It's similar to the proof of Lemma \ref{key} by constructing the barrier and using maximum principle.
\end{rem}

\begin{cor}\label{cor:regularity}
 For any $0<\varepsilon<\alpha$, there is a uniform positive constant $C_{\varepsilon}$ such that
\begin{align*}
\|\varphi_j\|_{C^{k+\lambda,\frac{k+\lambda}{2}}(B_{\frac{1}{2}}(x_0)\times (0,T])}\leq C_{\varepsilon}, j=1,2,
\end{align*}
where $k=[2\alpha-2\varepsilon]$, and $\lambda=2\alpha-k-2\varepsilon$. In particular, for any $k\leq 2\alpha$, any $x\in B_{\frac{1}{2}}(x_0), t>0$, $|\nabla^k\varphi_2(x,t)|\leq C_{\varepsilon}
\rho(x)^{2\alpha-k-2\varepsilon} $.
\end{cor}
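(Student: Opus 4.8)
The plan is to follow the strategy of Theorem~1.1 in Li--Tian~\cite{LT}, transplanted to the parabolic equation: first a weighted $C^{0}$ bound for $\varphi_1$ from the barrier/maximum–principle estimate of Remark~\ref{gkey}, then a covering argument combined with interior parabolic Schauder estimates on rescaled cylinders of size comparable to $\rho$, and finally the same rescaling applied to the $\varphi_2$–equation once the improved control of $\nabla\varphi_1$ is in hand. The statement is local: one fixes $x_0\in\Gamma$, and away from $\Gamma$ everything is routine because $h$ and $u$ (with $h=\rho e^{u}$) are smooth on $M\setminus\Gamma$ and $\varphi_2$ is uniformly bounded by Lemma~\ref{lem:boundness}, so on compact subsets of $(B_{1/2}(x_0)\setminus\Gamma)\times(0,T]$ the system~\eqref{eq:equation} is uniformly parabolic with smooth bounded coefficients. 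All the inputs used below --- Lemma~\ref{lem:L^2decay}, Theorem~\ref{thm:holder}, Lemma~\ref{lem:gradient1}, Remark~\ref{gkey} --- are local in time and uniform in $T$, which is what keeps $C_\varepsilon$ independent of $T$.

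\emph{Step 1.} The first equation of~\eqref{eq:equation}, rewritten as in~\eqref{87}--\eqref{92}, is of the type covered by Remark~\ref{gkey}, with $f\equiv 0$ and with a first–order coefficient of the form $\tfrac{\alpha\nabla\rho}{\rho}+\nabla v$ where $v$ is built from $\alpha u$ and $\varphi_2$; by $|\nabla u|\le C(\varepsilon)\rho^{\varepsilon-1}$ and Lemma~\ref{lem:gradient1} one has $|\nabla v|\le C\rho^{-\delta}$ for some $\delta<1$, while $\varphi_1^0\in\mathcal A_0$ gives $|\varphi_1^0|\le C\rho^{2\alpha+2}$, and $\varphi_1|_{\Gamma}\equiv 0$. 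Covering $\Gamma\cap B_{1/2}(x_0)$ by finitely many unit balls and applying Remark~\ref{gkey} on each of them (with the trivial estimate away from $\Gamma$) yields $|\varphi_1(x,t)|\le C_\varepsilon\,\rho(x)^{2\alpha-2\varepsilon}$ on $B_{1/2}(x_0)\times(0,T]$ for every $0<\varepsilon<\alpha$.

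\emph{Step 2.} For $X=(x,t)$ with $d:=\rho(x)\le\tfrac12$, set $R=d/8$ and rescale $w(y,s):=\varphi_1(x+Ry,\,t+R^2s)$ on $Q_2(0)$ (a one–sided cylinder near $t=0$, where $\varphi^0$ is $C^{2\alpha+2}$–regular). Since $\rho\asymp d$ on the corresponding cylinder and $R\big(\tfrac{\alpha\nabla\rho}{\rho}+\nabla v\big)=O(R^{1-\delta})=O(1)$, $w$ solves a uniformly parabolic equation with coefficients bounded independently of $R$, so interior parabolic Schauder estimates give $\|w\|_{C^{k+\lambda,(k+\lambda)/2}(Q_1(0))}\le C\|w\|_{C^0(Q_2(0))}$ with $k=[2\alpha-2\varepsilon]$, $\lambda=2\alpha-k-2\varepsilon$. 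Undoing the scaling and inserting Step~1, $|\nabla^{j}\varphi_1(X)|\le CR^{-j}\|\varphi_1\|_{C^0(Q_{2R}(X))}\le C_\varepsilon\,d^{\,2\alpha-j-2\varepsilon}$ for $0\le j\le k$, together with the analogous weighted Hölder bound on $\nabla^{k}\varphi_1$ and the time derivatives; since $k+\lambda=2\alpha-2\varepsilon$ these sum to $\|\varphi_1\|_{C^{k+\lambda,(k+\lambda)/2}(B_{1/2}(x_0)\times(0,T])}\le C_\varepsilon$. For $\varphi_2$, write the second equation of~\eqref{eq:equation} as $\partial_t\varphi_2-\Delta\varphi_2=F_2$ with $F_2:=h^{-2\alpha}e^{-2\varphi_2}|\nabla\varphi_1|^2$; by the gradient bound just obtained (with a slightly smaller $\varepsilon$), $h\asymp\rho$ and boundedness of $\varphi_2$, $|F_2|\le C_\varepsilon\rho^{\,2\alpha-2-2\varepsilon}$, with matching weighted Hölder bound from the Hölder control of $\nabla\varphi_1$ and of $\varphi_2$ (Theorem~\ref{thm:holder}). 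Running the same rescaling for $w_2(y,s):=\varphi_2(x+Ry,t+R^2s)$, whose equation has right–hand side $R^2F_2(x+Ry,t+R^2s)$, and iterating on the order of differentiation as in Li--Tian~\cite{LT} (using Lemma~\ref{lem:gradient1} to start), yields $|\nabla^{k}\varphi_2(x,t)|\le C_\varepsilon\rho(x)^{2\alpha-k-2\varepsilon}$ for $k\le 2\alpha$ and hence $\|\varphi_2\|_{C^{k+\lambda,(k+\lambda)/2}(B_{1/2}(x_0)\times(0,T])}\le C_\varepsilon$.

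The hard parts are Step~1 and the $\varphi_2$ bootstrap: in Step~1 one must verify that Remark~\ref{gkey} genuinely applies, i.e. that the barrier comparison has the correct sign, that $\nabla v$ carries an admissible exponent $\delta<1$, and that the initial datum decays like $\rho^{2\alpha+2}$; in the $\varphi_2$ step one must ensure that the weighted Hölder norm of the nonlinearity $F_2$ stays bounded through the induction so that no loss of weight accumulates as the order of differentiation increases. Tracking the $T$–independence of every constant --- possible only because each input estimate is local in time --- also requires some care, since this uniformity is exactly what is needed for the convergence analysis in Section~\ref{section5}.
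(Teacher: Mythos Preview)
Your proposal is correct and follows essentially the same route as the paper: first apply Lemma~\ref{lem:gradient1} together with the barrier estimate of Remark~\ref{gkey} to obtain $|\varphi_1|\le C_\varepsilon\rho^{2\alpha-2\varepsilon}$, then deduce the $C^{k+\lambda,(k+\lambda)/2}$ regularity of $\varphi_1$ and bootstrap to $\varphi_2$ via the Hölder control of $h^{-2\alpha}e^{-2\varphi_2}|\nabla\varphi_1|^2$. The paper's own proof is extremely terse (it jumps directly from the weighted $C^0$ bound to the full regularity), so your explicit rescaling/interior-Schauder argument in Step~2 is in fact a useful elaboration of what the paper leaves implicit.
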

\begin{proof}

By Lemma \ref{lem:gradient1} and Remark \ref{gkey}, for any $0<\varepsilon<\alpha$, there is a uniform constant $C_{\varepsilon}>0$ such that
\begin{align}\label{eq:255}
|\varphi_1(x,t)|\leq C_{\varepsilon}\rho(x)^{2\alpha-2\varepsilon},  \  x\in B_{\frac{1}{2}}(x_0), t>0.
\end{align}
Therefore, $\varphi_1\in C^{k+\lambda,\frac{k+\lambda}{2}}(B_{\frac{1}{2}}(x_0)\times (0,T])$. In the first equation of \eqref{eq:equation}, $\varphi_2$ is H\"{o}lder continuous, and the term $h^{-2\alpha}e^{-2\varphi_2}|\nabla\varphi_1|^2$ is $([2\alpha-2-2\varepsilon],2\alpha-2-2\varepsilon-
[2\alpha-2-2\varepsilon] )$-H\"{o}lder continuous. Consequently, $\varphi_2\in C^{k+\lambda,\frac{k+\lambda}{2}}(B_{\frac{1}{2}}(x_0)\times (0,T])$.
\end{proof}

\begin{thm}\label{longtime}
	Let $\varphi^0=(\varphi^0_1,\varphi^0_2)\in \mathcal{A}_0\times \mathcal{B}_0$. There exists a vector-valued function $\varphi=(\varphi_1,\varphi_2) $ solving the equation (\ref{eq:equation}) in $M\times [0,+\infty)$.
	For any $0<\varepsilon<\min\{\frac{1}{2}, \alpha-1\}$, there is a uniform
	constant $C_{\varepsilon}>0$ such that
	\begin{align*}
		\|\varphi_j\|_{C^{k+\lambda,\frac{k+\lambda}{2}}(M\times [0,T])}\leq C_{\varepsilon}, j=1,2,
	\end{align*}
	where $k=[2\alpha-2\varepsilon]$, and $\lambda=2\alpha-k-2\varepsilon$. 
	
\end{thm}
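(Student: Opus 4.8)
\textbf{Proof proposal for Theorem \ref{longtime}.}

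The plan is to combine the short-time existence from Theorem \ref{thm:exixtence} with the uniform estimates established in Section \ref{section4} to bootstrap to a global solution, and then derive the stated Hölder bounds from Corollary \ref{cor:regularity}. First I would set up the continuation argument: by Theorem \ref{thm:exixtence} there is a maximal existence time $T^{*}\in(0,+\infty]$ and a solution $\varphi=(\varphi_1,\varphi_2)\in\{\varphi^0_1+\mathcal{A}\}\times\{\varphi^0_2+\mathcal{B}\}$ on $M\times[0,T^{*})$. Suppose for contradiction that $T^{*}<+\infty$. The goal is to show that $\varphi$ extends past $T^{*}$, which contradicts maximality. To do this I would show that, as $t\uparrow T^{*}$, the solution converges in the appropriate function spaces (in particular $\varphi_1(\cdot,t)\to\varphi_1(\cdot,T^{*})$ in $\mathcal{A}_0$ and $\varphi_2(\cdot,t)\to\varphi_2(\cdot,T^{*})$ in $\mathcal{B}_0$, or at least in a space where the short-time existence theorem applies), and then restart the flow at time $T^{*}$ with this new initial datum, obtaining a solution on $[T^{*},T^{*}+\delta)$ which glues to the original one.

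The key steps, in order, are: (1) Use Lemma \ref{lem:boundness} to get the uniform $L^{\infty}$ bound on $\varphi_2$ on $M\times[0,T^{*})$, independent of $T^{*}$. (2) Use Lemma \ref{lem:L^2decay}, Lemma \ref{lem:bounded}, Proposition \ref{prop:smallnessenergy}, Proposition \ref{prop:diedai1} and Theorem \ref{thm:holder} to get that $\varphi_1$ is uniformly $\delta$-Hölder and, via Lemma \ref{lem:gradient1} and Corollary \ref{cor:regularity}, that for any $0<\varepsilon<\alpha$ one has $\|\varphi_j\|_{C^{k+\lambda,\frac{k+\lambda}{2}}(B_{1/2}(x_0)\times(0,T^{*}))}\leq C_{\varepsilon}$ near $\Gamma$, while away from $\Gamma$ standard interior parabolic Schauder theory gives interior bounds. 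These combine to a uniform bound $\|\varphi_j\|_{C^{k+\lambda,\frac{k+\lambda}{2}}(M\times[0,T^{*}))}\leq C_{\varepsilon}$. (3) Extract, along a sequence $t_n\uparrow T^{*}$, a limit $\varphi(\cdot,T^{*})$; the uniform estimates plus Arzelà--Ascoli give $C^{2}$-type convergence of the spatial data, so $\varphi(\cdot,T^{*})$ lies in $\mathcal{A}_0\times\mathcal{B}_0$. Here is where the ``additional proofs'' alluded to in the introduction are needed: the initial space $\mathcal{A}_0\times\mathcal{B}_0$ requires regularity $C^{[2\alpha+2],\cdot}$, higher than the solution's own $C^{[2\alpha-2\varepsilon],\cdot}$ regularity, so I must check separately that $\varphi(\cdot,T^{*})$ has this extra regularity. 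This follows because the singular direction (through $\Gamma$) is the only one limiting regularity, and in the directions tangent to $\Gamma$ the solution inherits the regularity of the initial data $\varphi^0$ — concretely, differentiating the equation tangentially to $\Gamma$ and applying the linear theory of Section \ref{section2} (Theorem \ref{thm:theorem2} and the weighted estimates) shows the tangential derivatives remain as regular as they were at $t=0$, uniformly in $t<T^{*}$. (4) Apply Theorem \ref{thm:exixtence} with initial datum $\varphi(\cdot,T^{*})$ to continue the flow, contradicting the maximality of $T^{*}$; hence $T^{*}=+\infty$. (5) Finally, since the estimates in step (2) are uniform in $T$, letting $T\to\infty$ gives the stated bound $\|\varphi_j\|_{C^{k+\lambda,\frac{k+\lambda}{2}}(M\times[0,T])}\leq C_{\varepsilon}$ for $0<\varepsilon<\min\{\frac12,\alpha-1\}$, with $C_{\varepsilon}$ depending only on $M$ and $\varphi^0$.

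The main obstacle I expect is step (3), specifically verifying that the limiting profile $\varphi(\cdot,T^{*})$ really belongs to the small space $\mathcal{A}_0\times\mathcal{B}_0$ required to restart the flow, since the a priori estimates directly available (Corollary \ref{cor:regularity}) only give $C^{[2\alpha-2\varepsilon],\cdot}$-regularity, which is strictly weaker than the $C^{[2\alpha+2],\cdot}$-regularity demanded by $\mathcal{A}_0,\mathcal{B}_0$. Resolving this requires the observation that the loss of regularity is purely transverse to $\Gamma$: away from $\Gamma$ the solution is smooth with interior bounds, and the tangential derivatives along $\Gamma$ satisfy the same type of linear parabolic system treated in Section \ref{section2}, so they propagate the higher regularity of $\varphi^0$ forward in time without deterioration. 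Making this precise — choosing local coordinates adapted to $\Gamma$, commuting tangential derivatives through the equation, and invoking the weighted Schauder and energy estimates of Lemmas \ref{lem:W112}--\ref{lem:W212} and Theorem \ref{thm:k2} for the differentiated system — is the technical heart of the argument; the rest is assembling the already-proven ingredients.
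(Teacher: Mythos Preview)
Your continuation skeleton is correct, and steps (1), (2), (5) line up with the paper. The gap is in step (3). You propose to show $\varphi(\cdot,T^{*})\in\mathcal{A}_0\times\mathcal{B}_0$, i.e.\ full $C^{[2\alpha+2],\cdot}$ regularity across $\Gamma$, by propagating tangential smoothness. But membership in $\mathcal{A}_0\times\mathcal{B}_0$ is an isotropic condition: you need $C^{[2\alpha+2]}$ in every direction, including transverse to $\Gamma$. The transverse regularity is genuinely capped near $C^{2\alpha}$ by the singular coefficient $\alpha\nabla h/h\sim\alpha\nabla\rho/\rho$ in the equation --- Corollary \ref{cor:regularity} (and the underlying barrier argument of Remark \ref{gkey}) gives at best $|\varphi_1|\leq C_\varepsilon\rho^{2\alpha-2\varepsilon}$, not the $\rho^{2\alpha+2}$ vanishing order that $\mathcal{A}_0$ encodes. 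So even with perfect tangential regularity, $\varphi(\cdot,T^{*})\notin\mathcal{A}_0\times\mathcal{B}_0$ in general, and Theorem \ref{thm:exixtence} as stated cannot be reapplied.

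The paper does not try to climb back up to $\mathcal{A}_0\times\mathcal{B}_0$. Instead it \emph{lowers} the short-time existence threshold: Remarks \ref{rem1-1}, \ref{rem2-10} and \ref{rem:3-1} record that the linear estimates of Section \ref{section2} (Lemma \ref{lem:2.4}, Lemma \ref{lem:W112}, Lemma \ref{lem:W212}) still hold when the base point has only $C^{[2\alpha-2\varepsilon],\cdot}$ regularity, so the inverse-function-theorem argument of Section \ref{section3} produces a solution on $[T^{*},T^{*}+\delta)$ in the weaker space $\{\varphi_1:\rho^{-\alpha+1}\varphi_1\in W^{2,1}_2,\ \varphi_1|_\Gamma=0\}\times W^{2,1}_2$. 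This glues to the original solution, and then --- because the glued solution on $[0,T^{*}+\delta)$ still has initial datum $\varphi^0\in\mathcal{A}_0\times\mathcal{B}_0$ --- the a priori machinery of Section \ref{section4} culminating in Corollary \ref{cor:regularity} bootstraps its regularity back to $C^{[2\alpha-2\varepsilon],\cdot}$ on the extended interval, yielding the contradiction. The ``additional proofs'' flagged in the introduction are exactly these Remarks establishing short-time existence from reduced initial regularity, not a tangential-propagation argument.
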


\begin{proof}
	Assume the longest time interval for the solution to  problem \eqref{eq:equation} is $[0, T)$. If $T<\infty$, we conclude that $\varphi^t(x)=\varphi(x,t)$ converge to $\varphi^T(x)$ as $t\rightarrow T$, allowing us to extend the solution $\varphi$ to the closed set $M\times[0,T]$.
	
	Taking $\varphi^T$ as the initial function, we can use the method of existence of short time solutions to extend the solution to $M\times [T, T+\delta]$ for some $\delta>0$. Note that $\varphi^T$ may not possess  the same high regularity as $\varphi^0$.However, according to the Lemma \ref{lem:2.4}, Remark \ref{rem1-1}, Remark \ref{rem2-10}, and  Remark \ref{rem:3-1}, we can prove the existence of a solution in $\{\varphi_1|\rho^{-\alpha+1}\varphi_1\in W^{2,1}_{2}{(Q_T)}, \varphi_1=0 \mbox{ on } \Gamma\}\times W^{2,1}_{2}{(Q_T)}$. 
	
	Since the original solution$(\varphi_1,\varphi_2)\in \{\varphi_1|\varphi_1\in C^{k+\lambda,\frac{k+\lambda}{2}}(Q_T), \varphi_1=0 \mbox{ on } \Gamma\}\times C^{k+\lambda,\frac{k+\lambda}{2}}(Q_T)$ (where $k=[2\alpha-2\varepsilon]$, and $\lambda=2\alpha-k-2\varepsilon$), and the new solution $(\varphi_1,\varphi_2)\in \{\varphi_1|\rho^{-\alpha+1}\varphi_1\in W^{2,1}_{2}{(M\times [T,T+\delta))}, \varphi_1=0 \mbox{ on } \Gamma\}\times W^{2,1}_{2}{(M\times [T,T+\delta))}$ and they agree on $M\times T$, they define a solution $(\varphi_1,\varphi_2)\in \{\varphi_1|\rho^{-\alpha+1}\varphi_1\in W^{2,1}_{2}{(M\times [0,T+\delta))}, \varphi_1=0 \mbox{ on } \Gamma\}\times W^{2,1}_{2}{(M\times [0,T+\delta))}$ with initial value $\varphi^0$. Therefore, we can prove that this solution has the same regularity as the solution in Corollary \ref{cor:regularity}. This is a contradiction. 
	
	Thus $T=\infty$, and the solutions satisfy the conclusion of Corollary \ref{cor:regularity}.
\end{proof}

\section{Convergence}\label{section5}
In this section, we will study the asymptotic behaviors of the solutions to equation (\ref{eq:equation}). We are interesting in the behavior as $t\rightarrow \infty$. Before presenting the final result, we will demonstrate a lemma.

\begin{lem}\label{lem:L2decay}
There exist positive constants $C$ and $C_0$ such that for every $0<t<t^{'}, x\in M\backslash \Gamma$, the following inequalities hold:
\begin{equation*}
\begin{aligned}
\int_M \rho^{-2\alpha}|\varphi_1(x,t)-\varphi_1(x,t^{'})|^2 +|\varphi_2(x,t)-\varphi_2(x,t^{'})|^2 \ud x\leq C e^{-\frac{C_0}{2}t} 
  \end{aligned}
  \end{equation*}
and
\begin{align*}
\rho^{\frac{3}{2}-\alpha} |\varphi_1(x,t)-\varphi_1(x,t^{'})|+\rho^{\frac{3}{2}}|\varphi_2(x,t)-\varphi_2(x,t^{'})|  \leq C e^{-\frac{C_0}{4}t}.
\end{align*}
\end{lem}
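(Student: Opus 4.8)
The plan is to derive the two decay estimates from the $L^2$-in-time decay of $\partial_t\varphi$ already established in Lemma~\ref{lem:L^2decay}, by writing the difference $\varphi(x,t)-\varphi(x,t')$ as a time integral of $\partial_t\varphi$. First I would treat the weighted $L^2(M)$ estimate. For fixed $x$ and $t<t'$ we have the identity
\[
\varphi_j(x,t)-\varphi_j(x,t')=-\int_t^{t'}\partial_s\varphi_j(x,s)\,\ud s,
\]
so by the Cauchy--Schwarz inequality in the $s$-variable (against an exponentially decaying weight, to keep the bound integrable up to $t'=\infty$),
\[
|\varphi_j(x,t)-\varphi_j(x,t')|^2\le\Big(\int_t^{t'}e^{\frac{C_0}{4}s}\,\ud s\Big)\Big(\int_t^{t'}e^{-\frac{C_0}{4}s}|\partial_s\varphi_j(x,s)|^2\,\ud s\Big).
\]
Here the clean way is instead to use the triangle inequality in the Hilbert space norm: set $\|w\|_t^2:=\int_M\rho^{-2\alpha}|w_1|^2+|w_2|^2\,\ud x$ and note $\rho^{-2\alpha}\le Ch^{-2\alpha}e^{-2\varphi_2}$ on $M\setminus\Gamma$ since $\varphi_2$ is uniformly bounded (Lemma~\ref{lem:boundness}) and $h\sim\rho$ near $\Gamma$. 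Then
\[
\Big\|\varphi(\cdot,t)-\varphi(\cdot,t')\Big\|_t\le\int_t^{t'}\|\partial_s\varphi(\cdot,s)\|_{L^2(M;\,h^{-\alpha})}\,\ud s\le\int_t^{\infty}Ce^{-\frac{C_0}{4}s}\,\ud s=C'e^{-\frac{C_0}{4}t},
\]
where I used Lemma~\ref{lem:L^2decay}, which gives $\int_M h^{-2\alpha}e^{-2\varphi_2}|\partial_s\varphi_1|^2+|\partial_s\varphi_2|^2\,\ud x(s)\le Ce^{-\frac{C_0}{2}s}$, hence the square root is $\le Ce^{-\frac{C_0}{4}s}$. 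Squaring gives the first claimed inequality with $e^{-\frac{C_0}{2}t}$ on the right (possibly after renaming $C_0$).

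For the pointwise weighted estimate, I would likewise integrate the second conclusion of Lemma~\ref{lem:L^2decay} in time: that lemma already gives $\rho^{\frac32-\alpha}|\partial_s\varphi_1|+\rho^{\frac32}|\partial_s\varphi_2|(x,s)\le Ce^{-\frac{C_0}{4}s}$ for all $(x,s)\in(M\setminus\Gamma)\times[0,T)$. Since $\rho(x)$ is independent of $s$, integrating from $t$ to $t'$ (and extending the upper limit to $\infty$) yields
\[
\rho(x)^{\frac32-\alpha}|\varphi_1(x,t)-\varphi_1(x,t')|+\rho(x)^{\frac32}|\varphi_2(x,t)-\varphi_2(x,t')|\le\int_t^{\infty}Ce^{-\frac{C_0}{4}s}\,\ud s=C'e^{-\frac{C_0}{4}t},
\]
which is exactly the second inequality. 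I should double-check that the constants $C,C_0$ produced match the normalization used in the statement (the exponents $e^{-\frac{C_0}{2}t}$ and $e^{-\frac{C_0}{4}t}$), adjusting $C_0$ by a harmless factor of $2$ if needed, since both claims only assert existence of such constants depending on $M$ and $\varphi^0$.

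The only genuine subtlety — and the step I would be most careful about — is justifying the exchange of the time integral with the spatial $L^2$ norm (i.e.\ the Minkowski integral inequality $\|\int_t^{t'}\partial_s\varphi\,\ud s\|_{L^2}\le\int_t^{t'}\|\partial_s\varphi\|_{L^2}\,\ud s$) together with the fact that $\varphi(x,\cdot)$ is genuinely the primitive of $\partial_s\varphi(x,\cdot)$ in these weighted spaces. This is where Remark~\ref{rem} is invoked: the regularity $(\varphi_1,\varphi_2)\in\{\varphi_1^0+\mathcal A\}\times\{\varphi_2^0+\mathcal B\}$ established in the long-time existence theorem guarantees $\partial_s\varphi_1\in C^{2+\beta,1+\frac\beta2}(\overline{Q_T};\rho^{-\gamma})$ and the corresponding $W^{4,2}_2$-type bounds, so $\partial_s\varphi$ is continuous in $s$ with values in the weighted $L^2$ space and the fundamental theorem of calculus applies pointwise in $x$ and in the $L^2(M;h^{-\alpha})$ sense. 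With that in hand the two displayed inequalities follow immediately from Lemma~\ref{lem:L^2decay}, so there is no further estimation to carry out; the proof is essentially a one-line consequence once the integration-in-time is set up correctly.
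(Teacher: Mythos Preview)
Your proposal is correct and follows essentially the same argument as the paper: write $\varphi_j(x,t)-\varphi_j(x,t')=-\int_t^{t'}\partial_s\varphi_j\,\ud s$, apply the Minkowski integral inequality (what you call the triangle inequality in the Hilbert norm), and invoke the two decay estimates of Lemma~\ref{lem:L^2decay}. Your extra remarks about comparing $\rho^{-2\alpha}$ with $h^{-2\alpha}e^{-2\varphi_2}$ via Lemma~\ref{lem:boundness} and about the justification through Remark~\ref{rem} are accurate and in fact make explicit points the paper leaves implicit.
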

\begin{proof}
For any $t< t^{'}$, with the help of the Minkowski inequality and Lemma \ref{lem:L^2decay}, we have
\begin{equation*}
\begin{aligned}
(\int_M |\varphi_2(x,t)-\varphi_2(x,t^{'})|^2 \ud x)^{\frac{1}{2}}
&=(\int_M |\int_t^{t^{'}}\partial_t\varphi_2(x,s) \ud s|^2\ud x)^{\frac{1}{2}}
\\&\leq \int_t^{t^{'}}(\int_M|\partial_t\varphi_2(x,s)|^2 \ud x )^{\frac{1}{2}}\ud s
\\&\leq C e^{-\frac{C_0}{4}t}.
  \end{aligned}
  \end{equation*}
It follows that
\begin{equation}\label{eq:convengence1}
\begin{aligned}
\int_M |\varphi_2(x,t)-\varphi_2(x,t^{'})|^2 \ud x\leq C e^{-\frac{C_0}{2}t}.
  \end{aligned}
  \end{equation}
For $\varphi_1$, we similarly have
\begin{equation}\label{eq:convengence2}
\begin{aligned}
\int_M \rho^{-2\alpha}|\varphi_1(x,t)-\varphi_1(x,t^{'})|^2 \ud x\leq C e^{-\frac{C_0}{2}t}.
  \end{aligned}
  \end{equation}
Using the Minkowski inequality and Lemma \ref{lem:L^2decay} again, we get
\begin{align*}
&\rho^{\frac{3}{2}-\alpha} |\varphi_1(x,t)-\varphi_1(x,t^{'})|+\rho^{\frac{3}{2}}|\varphi_2(x,t)-\varphi_2(x,t^{'})| \\
  =&|\rho^{\frac{3}{2}-\alpha}\int_t^{t^{'}}\partial_t\varphi_1(x,s) \ud s|+|\rho^{\frac{3}{2}}\int_t^{t^{'}}\partial_t\varphi_2(x,s) \ud s|\\
\leq& C e^{-\frac{C_0}{4}t}.
\end{align*}
  \end{proof}

With all of the preparatory work now completed, we are ready to establish the limit points for the heat flow upon approach to the prescribed singularity. 

Recall that $$\| w \|_{C_{*}^{2}(M)}(t):= \sum_{k=0}^{2} \max_M
		(|\nabla^k w_1| \rho^{k+\frac{3}{2}-\alpha}+|\nabla^kw_2| \rho^{k+\frac{3}{2}})(t).$$
\begin{thm}\label{thm:C2convengence}
Assume $\varphi=(\varphi_1,\varphi_2)$ is a
solution of \eqref{eq:equation}, let $\varphi(s)=\varphi(\cdot, s)$ for brevity, then 
$$\rho^{\frac{3}{2}-\alpha}\partial_t\varphi_1( s)\rightarrow 0, \enspace \rho^{\frac{3}{2}}\partial_t\varphi_2(s)\rightarrow 0, \enspace as \enspace s\rightarrow +\infty,$$   
$$\varphi(s)\rightarrow\overline {\varphi}\enspace in \enspace C_{*}^{2}(M),\enspace as \enspace s\rightarrow +\infty,$$
 and
\begin{align*}
\|\varphi(s)-\overline{\varphi}\|_{C_{*}^{2}(M)} \leq C e^{-\frac{C_0}{4}s},
\end{align*}
where $\overline{\varphi}$ satisfies
\begin{equation*}
\left\{\begin{aligned}
	\Delta\overline{\varphi}_1-2(\nabla\overline{\varphi}_2+\frac{\alpha\nabla h}{h})\nabla\overline{\varphi}_1&=0  &\mbox{in } Q_T\\
\Delta\overline{\varphi}_2+h^{-2\alpha}e^{-2\overline{\varphi}_2}|\nabla\overline{\varphi}_1|^2&=0  &\mbox{in } Q_T.\\
  \end{aligned}\right.
  \end{equation*}
Also for any $0<\varepsilon<2\alpha$, $(\overline{\varphi_1},\overline{\varphi_2})\in C^{k,\lambda}(M)$, where $k=[2\alpha-2\varepsilon], \lambda=2\alpha-2\varepsilon-k$.
\end{thm}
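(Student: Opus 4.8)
The plan is to combine the exponential $L^2$-decay already established in Lemma~\ref{lem:L2decay} and Lemma~\ref{lem:L^2decay} with the interior parabolic regularity for the equation~\eqref{eq:equation} to upgrade $L^2$-decay to pointwise weighted $C^2$-decay, and then to identify the limit as a weak (hence, by the regularity of Corollary~\ref{cor:regularity}, classical) singular harmonic map. First I would fix $t\geq 1$ and argue locally. Near a point of $\Gamma$, rescale as in Proposition~\ref{prop:diedai1}: for $x\in B_{1/2}(x_0)$ and $2\sigma=\rho(x)$, consider the rescaled differences $w_1(y,s):=\rho(x)^{-\alpha}\big(\varphi_1(x+\sigma y,t+\sigma^2 s)-\varphi_1(x+\sigma y,t')\big)$ and $w_2(y,s):=\varphi_2(x+\sigma y,t+\sigma^2 s)-\varphi_2(x+\sigma y,t')$ on $Q_1(0)$. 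These satisfy, by~\eqref{eq:equation}, uniformly parabolic equations with smooth bounded coefficients (the singular factor $\alpha\nabla h/h$ is harmless since $\rho\sim\sigma$ on $B_\sigma(x)$, cf. the computation preceding~\eqref{k2} and Lemma~\ref{key}). Standard interior Schauder/$L^p\to C^{1,\beta}$ estimates for parabolic systems then bound $\|w_j\|_{C^{2,1}(Q_{1/2}(0))}$ by $\|w_j\|_{L^2(Q_1(0))}$ plus the $C^\beta$-norm of the right-hand side, which is in turn controlled by the already-proven weighted bounds on $\nabla\varphi_1,\nabla\varphi_2$ (Lemma~\ref{lem:gradient1}, Corollary~\ref{cor:regularity}). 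Unwinding the scaling converts an $L^2$-bound of size $Ce^{-\frac{C_0}{2}t}$ on $Q_{\rho(x)}(x,t)$ into pointwise bounds $\rho^{k+\frac32-\alpha}|\nabla^k(\varphi_1(t)-\varphi_1(t'))|+\rho^{k+\frac32}|\nabla^k(\varphi_2(t)-\varphi_2(t'))|\leq Ce^{-\frac{C_0}{4}t}$ for $k=0,1,2$, uniformly in $x$ and in $t'>t$; away from $\Gamma$ the same is true by ordinary interior estimates. Summing over $k=0,1,2$ gives $\|\varphi(t)-\varphi(t')\|_{C^2_*(M)}\leq Ce^{-\frac{C_0}{4}t}$.

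Next I would extract the limit. The estimate just obtained shows $\{\varphi(t)\}$ is Cauchy in the complete (weighted) space measured by $\|\cdot\|_{C^2_*(M)}$ as $t\to\infty$; call the limit $\overline\varphi$, and letting $t'\to\infty$ in the Cauchy estimate yields $\|\varphi(s)-\overline\varphi\|_{C^2_*(M)}\leq Ce^{-\frac{C_0}{4}s}$. The statement $\rho^{\frac32-\alpha}\partial_t\varphi_1(s)\to 0$ and $\rho^{\frac32}\partial_t\varphi_2(s)\to 0$ is exactly the second assertion of Lemma~\ref{lem:L^2decay} (the pointwise bound there is $\leq Ce^{-\frac{C_0}{4}s}\to 0$). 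To see that $\overline\varphi$ solves the limiting elliptic system, I would pass to the limit $s\to\infty$ in the weak formulation of~\eqref{eq:equation}: for any $\zeta_1\in C_c^\infty(M\setminus\Gamma)$, $\zeta_2\in C_c^\infty(M)$, the identities $\int_M \partial_t\varphi_1\, h^{-2\alpha}e^{-2\varphi_2}\zeta_1 + h^{-2\alpha}e^{-2\varphi_2}\nabla\varphi_1\nabla\zeta_1 =0$ and $\int_M \partial_t\varphi_2\,\zeta_2+\nabla\varphi_2\nabla\zeta_2-h^{-2\alpha}e^{-2\varphi_2}|\nabla\varphi_1|^2\zeta_2=0$ hold for each $s$; the $\partial_t$ terms vanish in the limit by Lemma~\ref{lem:L^2decay}, the gradient terms converge by the $C^2_*$-convergence (which controls $\nabla\varphi_j$ locally uniformly away from $\Gamma$, and on compact subsets of $M\setminus\Gamma$ the weights are bounded), and the nonlinear term converges likewise. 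Hence $\overline\varphi$ is a weak solution on $M\setminus\Gamma$ of
\begin{equation*}
\left\{\begin{aligned}
\Delta\overline{\varphi}_1-2\Big(\nabla\overline{\varphi}_2+\tfrac{\alpha\nabla h}{h}\Big)\nabla\overline{\varphi}_1&=0,\\
\Delta\overline{\varphi}_2+h^{-2\alpha}e^{-2\overline{\varphi}_2}|\nabla\overline{\varphi}_1|^2&=0,
\end{aligned}\right.
\end{equation*}
where the first line is the divergence form $\mathrm{div}(h^{-2\alpha}e^{-2\overline\varphi_2}\nabla\overline\varphi_1)=0$ rewritten using $\Delta(\log h)=0$ on $M\setminus\Gamma$ and $h=\rho e^{u}$.

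Finally, for the boundary regularity of $\overline\varphi$ across $\Gamma$: the weighted bounds $|\nabla^k\overline\varphi_1(x)|\leq C_\varepsilon\rho(x)^{2\alpha-k-2\varepsilon}$ and $|\nabla^k\overline\varphi_2(x)|\leq C_\varepsilon\rho(x)^{2\alpha-k-2\varepsilon}$ pass to the limit from Corollary~\ref{cor:regularity} (these are uniform in $t$), and from them the membership $(\overline\varphi_1,\overline\varphi_2)\in C^{k,\lambda}(M)$ with $k=[2\alpha-2\varepsilon]$, $\lambda=2\alpha-2\varepsilon-k$, follows by the same interpolation/weighted-Hölder argument used earlier in the paper (as in the passage after Theorem~\ref{thm:k2} and in the proof of Corollary~\ref{cor:regularity}). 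The main obstacle I anticipate is the first step: carefully justifying that the rescaled systems are genuinely uniformly parabolic with controlled coefficients right up to (a rescaled copy of) $\Gamma$, so that the local parabolic estimates produce constants independent of $x\to\Gamma$; this is precisely where one must invoke the $\rho$-weighted gradient bounds of Lemma~\ref{lem:gradient1} and the structure of Lemma~\ref{key}/Remark~\ref{gkey} rather than naive interior theory, and where the exponent $\frac32$ in $C^2_*$ (dictated by the $L^2$-to-pointwise loss $\rho^{-3/2}$ in three dimensions, cf. the end of Lemma~\ref{lem:L^2decay}) enters.
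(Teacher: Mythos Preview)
Your proposal is correct and follows essentially the same route as the paper: take differences $\omega_j=\varphi_j(\cdot,t)-\varphi_j(\cdot,t+\tilde t)$, derive the linear parabolic system they satisfy, rescale at scale $\bar\rho=\rho(x)$ around a point $x\in M\setminus\Gamma$, apply interior parabolic regularity, and unwind to obtain the weighted $C^2_*$ Cauchy estimate; the paper then lets $\tilde t\to\infty$ exactly as you do. Two small clarifications. First, your stated ``main obstacle'' dissolves once you take the scaling seriously: since $2\sigma=\rho(x)$, the ball $B_\sigma(x)$ does not meet $\Gamma$ at all and $\rho\sim\bar\rho$ there, so after rescaling the singular coefficient $\alpha\nabla h/h$ becomes a bounded smooth vector field with bounds independent of $x$; there is no ``rescaled copy of $\Gamma$'' in the picture, and the interior estimates are genuinely uniform. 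Second, the paper's bootstrap is a bit more explicit than ``Schauder/$L^p\to C^{1,\beta}$'': it first uses elliptic $H^2$ regularity at fixed time (exploiting the separate $L^2$ control on $\partial_t\omega_j$ from Lemma~\ref{lem:L^2decay}), integrates to $W^{2,1}_2$, embeds into $L^6$, applies $W^{2,1}_6$ interior estimates, and then embeds into $C^{2+m}$. For the final regularity of $\overline\varphi$ the paper simply cites Theorem~1.1 of \cite{LT}; your alternative of passing to the limit in the uniform-in-$t$ bounds of Corollary~\ref{cor:regularity} is equally valid.
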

\begin{proof}

Fixing $\widetilde{t}>0$, define $\widetilde{\varphi}_i(x,t)=\varphi_i(x,t+\widetilde{t} )$, i=1,2.
We can easily find that $(\widetilde{\varphi}_1, \widetilde{\varphi}_2)$ satisfy
\begin{equation}\label{t+t}
\left\{
\begin{aligned}
	 \partial_t\widetilde{\varphi}_1-\Delta\widetilde{\varphi}_1+2(\nabla\widetilde{\varphi}_2+\frac{\alpha\nabla h}{h})\nabla\widetilde{\varphi}_1 &=0 \ \mbox{ in } M\\
\partial_t\widetilde{\varphi}_2-\Delta\widetilde{\varphi}_2-h^{-2\alpha}e^{-2\widetilde{\varphi}_2}|\nabla\widetilde{\varphi}_1|^2&=0 \ \mbox{ in } M.\\
  \end{aligned}
  \right. 
  \end{equation}
Set $\omega_1=\varphi_1-\widetilde{\varphi}_1, \omega_2=\varphi_2-\widetilde{\varphi}_2$. A straightforward computation gives
\begin{align*}
&h^{-2\alpha}e^{-2\varphi_2}|\nabla \varphi_1|^2-h^{-2\alpha}e^{-2\widetilde{\varphi}_2}|\nabla\widetilde{\varphi}_1|^2
\\=&h^{-2\alpha}e^{-2\varphi_2}\nabla (\varphi_1+\widetilde{\varphi}_1)\nabla \omega_1+h^{-2\alpha}e^{-2\widetilde{\varphi}_2}|\nabla\widetilde{\varphi}_1|^2(e^{-2\omega_2}-1),\\
&\nabla\varphi_1\nabla\varphi_2-\nabla\widetilde{\varphi}_1\nabla\widetilde{\varphi}_2
=\nabla\varphi_1\nabla \omega_2+\nabla\widetilde{\varphi}_2\nabla \omega_1.
\end{align*}
Then, by calculating the result of \eqref{eq:equation} minus \eqref{t+t}, we obtain
\begin{equation}
\left\{
\begin{aligned}
\partial_t\omega_1-&\Delta \omega_1+2(\nabla \widetilde{\varphi}_2+\frac{\alpha\nabla h}{h})\nabla\omega_1+2\nabla{\varphi}_1\nabla\omega_2=0  &\mbox{ in } Q_T\\
\partial_t\omega_2-&\Delta \omega_2 -h^{-2\alpha}e^{-2\varphi_2}\nabla (\varphi_1+\widetilde{\varphi}_1)\nabla \omega_1\\-&h^{-2\alpha}e^{-2\widetilde{\varphi}_2}|\nabla\widetilde{\varphi}_1|^2(e^{-2\omega_2}-1)   =0   &\mbox{ in }Q_T.                                                                                                                                                                                             
  \end{aligned}
  \right. \label{eq:equation3}
  \end{equation}

We note that the term involving $e^{-2w_2}-1$ in the second equation can be viewed as a linear term in $w_2$.  The system \eqref{eq:equation3} can be rewritten as
\begin{equation}
\left\{
\begin{aligned}
&\partial_t\omega_1-\Delta \omega_1=\xi_1 \mbox{ in } Q_T\\
 &\partial_t\omega_2-\Delta \omega_2=\xi_2 \mbox{ in } Q_T,
  \end{aligned}
  \right. \label{eq:equation4}
  \end{equation}
where
\begin{equation*}
\left\{
\begin{aligned}
	&\xi_1=b_{11}\nabla \omega_1+b_{12}\nabla \omega_2\\
&\xi_2=b_{21}\nabla \omega_1+b_{22}(e^{-2\omega_2}-1) \\
  \end{aligned}
  \right.
  \end{equation*}
and 
\begin{align*}
	&b_{11}=-2(\nabla \widetilde{\varphi}_2+\frac{\alpha\nabla h}{h}),\quad
	b_{12}=-2\nabla{\varphi}_1,\\
&b_{21}=h^{-2\alpha}e^{-2\varphi_2}\nabla (\varphi_1+\widetilde{\varphi}_1), b_{22}=h^{-2\alpha}e^{-2\widetilde{\varphi}_2}|\nabla\widetilde{\varphi}_1|^2.
\end{align*}
By Corollary \ref{cor:regularity}, for any $0<\varepsilon<2\alpha$, $|b_{11}(x,t)|\leq
C\rho(x)^{-1}$, $|b_{12}(x,t)|\leq
C\rho(x)^{2\alpha-1-2\varepsilon}$, $|b_{21}(x,t)|\leq
C\rho(x)^{-1-2\varepsilon}$, $|b_{22}(x,t)|\leq
C\rho(x)^{2\alpha-2-4\varepsilon}$. 
We conclude from Lemma \ref{lem:L2decay} that
\begin{equation*}\label{eq:convengence}
\begin{aligned}
\int_M \rho^{-2\alpha}|\omega_1(x,t)|^2 +|\omega_2(x,t)|^2\ud x\leq C e^{-\frac{C_0}{2}t}.
  \end{aligned}
  \end{equation*}
And Lemma \ref{lem:L^2decay} yields
\begin{equation}\label{eq:convengence3}
\begin{aligned}
\int_M \rho^{-2\alpha}|\partial_t \omega_1(x,t)|^2 +|\partial_t \omega_2(x,t)|^2\ud x\leq C e^{-\frac{C_0}{2}t}.
  \end{aligned}
  \end{equation}
Taking  $h^{-2\alpha}e^{-2\widetilde{\varphi}_2}\omega_1$ and $\omega_2$ as test functions for equations of \eqref{eq:equation4} respectively and using Cauchy inequality, we can obtain
\begin{equation*}
	\begin{aligned}
		\int_M h^{-2\alpha}e^{-2\widetilde{\varphi}_2}|\nabla \omega_1|^2\ud x
		&=\int_M-h^{-2\alpha}e^{-2\widetilde{\varphi}_2}\partial_t \omega_1 \omega_1+b_{12}\nabla\omega_2h^{-2\alpha}e^{-2\widetilde{\varphi}_2} \omega_1\\
		&\leq C_{\varepsilon_1}\int_M h^{-2\alpha}e^{-2\widetilde{\varphi}_2}(|\partial_t \omega_1|^2+| \omega_1|^2) \ud x+\varepsilon_1\int_M  |\nabla \omega_2|^2 \ud x,\\
		\int_M |\nabla \omega_2|^2\ud x
		&=\int_M -\partial_t \omega_2\omega_2+b_{21} \nabla \omega_1 \omega_2+b_{22}\omega_2(e^{-2\omega_2}-1)\ud x\\
		&\leq C_{\varepsilon_2}\int_M |\partial_t \omega_2|^2+| \omega_2|^2 \ud x+\varepsilon_2 \int_M h^{-2\alpha}e^{-2w_2} |\nabla w_1|^2\ud x.\\
	\end{aligned}
\end{equation*}
Select $\varepsilon_1$ and $\varepsilon_2$ small enough to obtain
\begin{equation}\label{eq:gradientestimates}
	\begin{aligned}
		&\int_M h^{-2\alpha}e^{-2\widetilde{\varphi}_2}|\nabla w_1|^2+|\nabla w_2|^2\ud x\\
		\leq &C \int_M (|\partial_tw_2|^2+| w_2|^2 )+h^{-2\alpha}e^{-2\widetilde{\varphi}_2}(|\partial_t w_1|^2+| w_1|^2)\ud x\\
		\leq &C e^{-\frac{C_0}{2}t}.
	\end{aligned}
\end{equation}

Fix any $ X=(x,s)\in M \backslash \Gamma \times (1,+\infty)$ and set $\overline{\rho}=\rho(x)$. Define
$Q_{\frac{\overline{\rho}}{2}}(X)=B_{\frac{\overline{\rho}}{2}}(x) \times (s-\frac{\overline{\rho}^2}{4},s)$, $Q_{\rho}= B_{\rho} \times (-1,0)$, where $B_{\rho}$ is the ball centered at $x=0$ with radius $\rho$. For any $(y,t)\in Q_{\frac{\overline{\rho}}{2}}(X)$, consider the transformation
\begin{equation*}
\begin{aligned}
y=x+\frac{\overline{\rho}}{2}z,\enspace t=s+\frac{\overline{\rho}^2}{4}r,
  \end{aligned}
  \end{equation*}
then $(z,r)\in Q_{1}$. Now we define
\begin{equation*}
\begin{aligned}
v_1(z,r)= \omega_1(x+\frac{\overline{\rho}}{2}z,s+\frac{\overline{\rho}^2}{4}r), \\
v_2(z,r)= \omega_2(x+\frac{\overline{\rho}}{2}z,s+
\frac{\overline{\rho}^2}{4}r),
  \end{aligned}
  \end{equation*}
then
 \begin{equation*}
  \left\{
\begin{aligned}
&\partial_rv_1-\Delta v_1=\widetilde{\xi}_1 \ \mbox{ in } Q_1\\
 &\partial_rv_2-\Delta v_2=\widetilde{\xi}_2 \ \mbox{ in } Q_1,\\
  \end{aligned}
  \right.
  \end{equation*}
where $\widetilde{\xi}_i=\frac{\overline{\rho}^2}{4}\xi_i, 
i=1,2$.
For any $r\in(-1,0)$ and $j=1,2$, the regularity theory of elliptic equations leads to
\begin{align*}
\|v_j\|^2_{H^2(B_{\frac{1}{2}})}(r)
&\leq C(\|v_j\|^2_{L^2(B_{1})}+\|\widetilde{\xi}_j\|^2_{L^2(B_{1})}+\|\partial_rv_j\|^2_{L^2(B_{1})})(r)\\
&=C\int_{B_{\frac{\overline{\rho}}{2}}(x)}(\omega_j^2+\frac{\overline{\rho}^4}{16} \xi_j^2+|\partial_t\omega_j|^2\frac{\overline{\rho}^4}{16})\overline{\rho}^{-3}\ud y(t), 
\end{align*}
where
\begin{equation}
\begin{aligned}\label{5.8}
	\overline{\rho}^4 \xi_1^2\leq C(\rho^{2}|\nabla \omega_1|^2+\rho^{4\alpha+2-4\varepsilon}| \nabla \omega_2|^2),\\
\overline{\rho}^4 \xi_2^2\leq C(\rho^{2-4\varepsilon}|\nabla \omega_1|^2+\rho^{4\alpha-8\varepsilon}| \omega_2|^2).
\end{aligned}\end{equation}
According to Lemma \ref{lem:L^2decay},
 \eqref{eq:convengence3}, and
\eqref{eq:gradientestimates}, we have
\begin{equation*}\label{eq:vtL2}
\begin{aligned}
\|\partial_rv_1\|_{L^2(B_{\frac{1}{2}})}(r)\leq C\overline{\rho}^{\alpha+\frac{1}{2}} e^{-\frac{C_0}{4}t},\\
\|\partial_rv_2\|_{L^2(B_{\frac{1}{2}})}(r)\leq C\overline{\rho}^{\frac{1}{2}} e^{-\frac{C_0}{4}t},
\end{aligned}
  \end{equation*}
and
  \begin{equation}\label{eq:vH2}
\begin{aligned}
\|v_1\|_{H^2(B_{\frac{1}{2}})}(r)\leq C \overline{\rho}^{\alpha-\frac{3}{2}}e^{-\frac{C_0}{4}t}, \forall r\in(-1,0), t\in (s-\frac{\overline{\rho}^2}
{4},s),\\
\|v_2\|_{H^2(B_{\frac{1}{2}})}(r)\leq C \overline{\rho}^{-\frac{3}{2}}e^{-\frac{C_0}{4}t},\forall r\in(-1,0), t\in (s-\frac{\overline{\rho}^2}
{4},s).
\end{aligned}
  \end{equation}
After integrating over the appropriate time interval, we conclude that
\begin{equation}\label{eq:v2W212}
	\begin{aligned}
		\|v_1\|_{W^{2,1}_2(Q_{\frac{1}{2}})} \leq C \overline{\rho}^{\alpha-\frac{3}{2}} e^{-\frac{C_0}{4}s},
	\end{aligned}
\end{equation}
\begin{equation}\label{eq:v1W212}
\begin{aligned}
\|v_2\|_{W^{2,1}_2(Q_{\frac{1}{2}})}
&\leq C \overline{\rho}^{-\frac{3}{2}}e^{-\frac{C_0}{4}s}.
\end{aligned}
  \end{equation}
By the Sobolev inequality and \eqref{eq:vH2}, for any $r\in(-1,0)$, $t\in (s-\frac{\overline{\rho}^2}{4},s)$,
\begin{align*}
&\| v_1\|_{L^6(B_{\frac{1}{2}})}(r)+\|\nabla v_1\|_{L^6(B_{\frac{1}{2}})}(r)\leq C\|v_1\|_{H^2(B_{\frac{1}{2}})}(r)\leq C \overline{\rho}^{\alpha-\frac{3}{2}}e^{-\frac{C_0}{4}
t}.
\end{align*}
Similarly,
\begin{align*}
\| v_2\|_{L^6(B_{\frac{1}{2}})}(r)+\|\nabla v_2\|_{L^6(B_{\frac{1}{2}})}(r)\leq C \overline{\rho}^{-\frac{3}{2}}e^{-\frac{C_0}{4}
t}.
\end{align*}
Integrating  $r$ from $-\frac{1}{2}$ to 0, we have
\begin{align*}
\| v_1\|_{L^6(Q_{\frac{1}{2}})}+\|\nabla v_1\|_{L^6(Q_{\frac{1}{2}})}
&\leq  C \overline{\rho}^{\alpha-\frac{3}{2}}  (\int_{s-\frac{\overline{\rho}^2}{8}}^s  e^{-\frac{3C_0}{2}t}  \overline{\rho}^{-2}  \ud t )^{\frac{1}{6}}
\leq  C \overline{\rho}^{\alpha-\frac{3}{2}} e^{-\frac{C_0}{4}s},
\end{align*}
\begin{align*}
\| v_2\|_{L^6(Q_{\frac{1}{2}})}+\|\nabla v_2\|_{L^6(Q_{\frac{1}{2}})}\leq  C \overline{\rho}^{-\frac{3}{2}}  e^{-\frac{C_0}{4}s}.
\end{align*}
The interior estimate in $W_p^{2,1}$ space of parabolic equations produce
\begin{align*}
\|v_{j}\|_{W^{2,1}_6(Q_{\frac{1}{4}})}\leq C \left(\|v_{j}\|_{W^{2,1}_2(Q_{\frac{1}{2}})}+\|\widetilde{\xi}_{j}\|_{L^6(Q_{\frac{1}{2}})}\right),
\enspace j=1,2.\end{align*}
Therefore, by (\ref{5.8}), (\ref{eq:v2W212}), and (\ref{eq:v1W212})
we obtain  \begin{align*}\|v_{1}\|_{W^{2,1}_6(Q_{\frac{1}{4}})}\leq C\overline{\rho}^{\alpha-\frac{3}{2}}  e^{-\frac{C_0}{4}s},\enspace \|v_{2}\|_{W^{2,1}_6(Q_{\frac{1}{4}}(0))}\leq C
\overline{\rho}^{-\frac{3}{2}}  e^{-\frac{C_0}{4} s}.
\end{align*}
By the embedding theorem in $W_p^{2,1}$ space (see Chapter 3 in \cite{C}), we obtain 
\begin{align*}
\|\nabla v_{1}\|_{C^{m,\frac{m}{2}}(Q_{\frac{1}{4}})}\leq C\|v_{1}\|_{W^{2,1}_6(Q_{\frac{1}{4}})}\leq C \overline{\rho}^{\alpha-
\frac{3}{2}}  e^{-\frac{C_0}{4}s},\\
\|\nabla v_{2}\|_{C^{m,\frac{m}{2}}(Q_{\frac{1}{4}})}\leq C\|v_{2}\|_{W^{2,1}_6(Q_{\frac{1}{4}})}\leq C \overline{\rho}^{-\frac{3}{2}}  e^{-\frac{C_0}{4}s},
\end{align*}
where $m=\frac{1}{6}$.
Therefore,
\begin{equation}\label{eq:fholder}
\begin{aligned}
\|\widetilde{\xi}_{1}\|_{C^{m,\frac{m}{2}}(Q_{\frac{1}{4}})}&\leq C \overline{\rho}^{\alpha-\frac{3}{2}}  e^{-\frac{C_0}{4}s},
\\
\|\widetilde{\xi}_{2}\|_{C^{m,\frac{m}{2}}(Q_{\frac{1}{4}})}&\leq C \overline{\rho}^{-2\varepsilon+\alpha-\frac{3}{2}}  e^{-\frac{C_0}{4}s}.
\end{aligned}
  \end{equation}
We now apply Schauder theory of parabolic equations, to find
\begin{align*}
\|v_{j}\|_{C^{2+m,1+\frac{m}{2}}(Q_{\frac{1}{8}})}\leq C \left(\|v_{j}\|_{W^{2,1}_2(Q_{\frac{1}{4}})}+\|\widetilde{\xi}_{j}\|_{C^{m,\frac{m}{2}}(Q_{\frac{1}{4}})}\right), \enspace j=1,2.
 \end{align*}
By  \eqref{eq:v2W212}, \eqref{eq:v1W212} and \eqref{eq:fholder}, we have
\begin{align*}
\|v_{1}\|_{C^{2+m,1+\frac{m}{2}}(Q_{\frac{1}{8}})}\leq C \overline{\rho}^{\alpha-\frac{3}{2}}  e^{-\frac{C_0}{4}s},\\
\| v_{2}\|_{C^{2+m,1+\frac{m}{2}}(Q_{\frac{1}{8}})}\leq C \overline{\rho}^{-\frac{3}{2}}  e^{-\frac{C_0}{4}s}.
\end{align*}
For $k=0,1,2$, observe that
\begin{align*}
\nabla^k_z v_j(z,r)=\nabla^k_y w_j(y,t)(\frac{\overline{\rho}}{2})^k, \enspace \partial_rv_j(z,r)= \partial_t w_j(y,t)
(\frac{\overline{\rho}}{2})^2, \enspace j=1,2.
\end{align*}
In conclusion, for any $s>1$, $\| w \|_{C_{\ast }^{2+\alpha}(M)}(s)\leq Ce^{-\frac{C_0}{4}s}$, that is $\|\varphi(s)-\varphi(s+\widetilde{t})\|_{C_{*}^{2}(M)}(s)\leq C  e^{-\frac{C_0}{4}s}$. Hence, $\varphi(s)$ converges in $C_{*}^{2+\alpha}(M)$ as
$s \rightarrow+\infty                                                                                                                                                                                                                                                                                                                                                                                                                                                                                                                                                                                                                                                                                 $, say to some $\overline{\varphi}\in C_{*}^{2}(M)$. Letting $\widetilde{t}\rightarrow +\infty$, $\|\varphi(s)-\overline{\varphi}\|_{C_{*}^{2}(M)}(s)\leq C e^{-\frac{C_0}{4}s}.$

Multiplying the first equation of \eqref{eq:equation} by $\rho^{-\alpha+\frac{7}{2}}$ and multiplying the second equation of \eqref{eq:equation} by $\rho^{\frac{7}{2}}$, we get
\begin{equation*}
	\left\{
\begin{aligned}
\rho^{\frac{7}{2}-\alpha}\partial_t\varphi_1-\rho^{\frac{7}{2}-\alpha}\Delta\varphi_1+2\rho^{\frac{7}{2}-\alpha}(\nabla \varphi_1+\frac{\alpha\nabla h}{h})\nabla \varphi_1&=0 &\mbox{ in } Q_T\\
\rho^{\frac{7}{2}}\partial_t\varphi_2-\rho^{\frac{7}{2}}\Delta\varphi_2-\rho^{\frac{7}{2}}h^{-2\alpha}e^{-2\varphi_2}|\nabla \varphi_1|^2&=0 &
\mbox{ in } Q_T,\\
  \end{aligned}\right.
  \end{equation*}
As $t\rightarrow +\infty$, by Lemma \ref{lem:L^2decay}, it holds that $\rho^{\frac{7}{2}-\alpha}
\partial_t\varphi_1(\cdot, s)\rightarrow 0$ and $\rho^{\frac{7}{2}}\partial_t\varphi_2(\cdot, s)\rightarrow 0$. Thus $\overline{\varphi}$ satisfies
\begin{equation}\label{L}
\left\{\begin{aligned}
\rho^{\frac{7}{2}}\Delta\overline{\varphi}_1+\rho^{\frac{7}{2}}h^{-2\alpha}e^{-2\overline{\varphi}_1}|\nabla \overline{\varphi}_2|^2&=0 &\mbox{ in }
Q_T,\\
\rho^{\frac{7}{2}-\alpha}\Delta\overline{\varphi}_2-2\rho^{\frac{7}{2}-\alpha}(\nabla \overline{\varphi}_1+\frac{\alpha\nabla h}{h})\nabla \overline{\varphi}_2&=0 &\mbox{ in } Q_T.
  \end{aligned}\right.
\end{equation}
By multiplying the first equation of \eqref{L} by $\rho^{\alpha-\frac{7}{2}}$ and the second equation of \eqref{L} by $\rho^{-\frac{7}{2}}$, we get
\begin{equation*}\left\{
\begin{aligned}
	\Delta\overline{\varphi}_1-2(\nabla
	\overline{\varphi}_2+\frac{\alpha\nabla h}{h})\nabla\overline{\varphi}_1&=0 &\mbox{ in } Q_T\\
\Delta\overline{\varphi}_2+h^{-2\alpha}e^{-2\overline{\varphi}_2}|\nabla\overline{\varphi}_1|^2&=0 &\mbox{ in } Q_T.\\
  \end{aligned}\right.
  \end{equation*}
By Theorem 1.1 of \cite{LT}, for any
$0<\varepsilon<2\alpha$, $(\overline{\varphi_1},\overline{\varphi_2})\in C^{k,\lambda}(M)$, where $k=[2\alpha-2\varepsilon],\lambda=2\alpha-2\varepsilon-k$.
\end{proof}

\setlength{\parindent}{0pt}
\textbf{Acknowledgements}
We thank Professor Jingang Xiong for suggesting the problem and his constant support.

\textbf{Statements and Declarations}
No funding was received to assist with the preparation of this manuscript. The authors declare that they have no conflicts of interests.


\bigskip
\end{document}